\theoremstyle{plain}
\newtheorem{thm}{Theorem}
\newtheorem{prop}{Proposition}[section]
\newtheorem{lem}[prop]{Lemma}
\newtheorem{cor}[prop]{Corollary}
\newtheorem{defi}[prop]{Definition}
\newtheorem{rmk}[prop]{Remark}
\crefname{thm}{Theorem}{Theorems}
\crefname{lem}{Lemma}{Lemmata}
\crefname{cor}{Corollary}{Corollaries}
\crefname{equation}{}{}
\crefname{figure}{Figure}{Figures}
\crefname{enumi}{}{}
\crefname{case}{case}{cases}
\newcommand {\R} {\mathbb{R}} \newcommand {\Z} {\mathbb{Z}}
\newcommand {\T} {\mathbb{T}} \newcommand {\N} {\mathbb{N}}
\newcommand {\C} {\mathbb{C}} 
\newcommand {\p} {\partial}
\newcommand {\supp} {\textup{supp}}
\newcommand {\diam} {\textup{diam}}
\newcommand {\rank} {\textup{rank}}
\newcommand {\diag} {\textup{diag}}
\newcommand{\Per}{\textup{Per}}
\newcommand{\K}{\mathcal{K}}
\newenvironment{psmallmatrix}
    {\left(\begin{smallmatrix}}
    {\end{smallmatrix}\right)}
\renewcommand{\S}{\mathbb{S}}
\DeclareMathOperator{\di}{div}
\DeclareMathOperator {\dist} {dist}
\DeclareMathOperator{\Id} {Id}
\definecolor{Orange}{RGB}{230, 159, 0}
\definecolor{LightBlue}{RGB}{86, 180, 233}
\definecolor{Green}{RGB}{0, 158, 115}
\definecolor{Yellow}{RGB}{240, 228, 66}
\definecolor{Blue}{RGB}{0, 144, 178}
\definecolor{Red}{RGB}{213, 94, 0}
\definecolor{Purple}{RGB}{204, 121, 167}
\title[On surface energies in scaling laws for singular perturbation problems]{On surface energies in scaling laws for singular perturbation problems for martensitic phase transitions}
\author{Angkana Rüland}
\address{Institute for Applied Mathematics and Hausdorff Center for Mathematics, Endenicher Allee 60, University of Bonn, 53115 Bonn, Germany}
\email{rueland@uni-bonn.de}
\author{Camillo Tissot}
\address{Institute for Applied Mathematics, Endenicher Allee 60, University of Bonn, 53115 Bonn, Germany}
\email{camillo.tissot@uni-bonn.de}
\author{Antonio Tribuzio}
\address{Institute for Applied Mathematics, Endenicher Allee 60, University of Bonn, 53115 Bonn, Germany}
\email{tribuzio@iam.uni-bonn.de}
\author{Christian Zillinger}
\address{Department of Mathematics, Karlsruhe Institute of Technology, Englerstrasse. 2, 76131 Karlsruhe,
Germany}
\email{christian.zillinger@kit.edu}
\begin{document}
\begin{abstract}
The objective of this article is to compare different surface energies for multi-well singular perturbation problems associated with martensitic phase transformations involving higher order laminates. We deduce scaling laws in the singular perturbation parameter which are \emph{robust} in the choice of the surface energy (e.g., diffuse, sharp, an interpolation thereof or discrete). Furthermore, we show that these scaling laws do not require the presence of isotropic surface energies but that generically also \emph{highly anisotropic surface energies} yield the same scaling results. More precisely, the presence of essentially generic partial directional derivatives in the regularization terms suffices to produce the same scaling behaviour as in the isotropic setting. The only sensitive directional dependences are directly linked to the \emph{lamination directions} of the well structure -- and even for these only the ``inner-most'' lamination direction is of significance in determining the scaling law. In view of experimental applications, this shows that also for higher-order laminates, the precise structure of the surface energies -- which is often very difficult to determine experimentally -- does not have a crucial impact on the scaling behaviour of the investigated structures but only enters when considering finer properties.
\end{abstract}

\keywords{Anisotropic surface energies, microstructure, higher order laminates, discretization, scaling laws}
\subjclass{74N15, 74B99}

\maketitle

\tableofcontents

\section{Introduction}

Surface energies play an important role in singular perturbation models for solid-solid phase transformations. Combined with elastic energies, they introduce a natural length scale into the models. Thus, the combination of elastic and surface energies provides important information on the different length scales present in the experimentally observed microstructures. However, from an experimental point of view, surface energies are notoriously difficult to measure and are often highly anisotropic. It is thus of particular significance to investigate the robustness of singular perturbation models with respect to different choices of surface energy regularizations. The purpose of this article is to prove that the scaling behaviour is robust with respect to a rather large class of modifications of the surface energies and that the resulting scaling laws do not depend on the fine structure of the singular perturbation term. In fact, only minimal requirements are necessary, which only depend on very basic information on the model, even if higher order laminates are involved.

We are particularly interested in martensitic phase transformations in shape-memory alloys. These materials are typically metal alloys such as CuAlNi or NiTi which undergo a first order, diffusionless, solid-solid phase transformation. In this transition symmetry is reduced from the high to the low temperature phase which gives rise to multiple energy wells and complex material behaviour.

In this article, we will consider simplified models without gauges for the formation of microstructures in these materials which are governed by energies of the following form \cite{B,M1}
\begin{align}
\label{eq:intro_energy1}
E_{\epsilon}(u):= E_{el}(u) + \epsilon E_{surf}(u).
\end{align}
Here $u: \Omega \subset \R^{d} \rightarrow \R^{d}$ denotes the deformation with respect to the reference configuration $\Omega$ and $E_{el}(u):= \int\limits_{\Omega} \dist^2(\nabla u, \mathcal{K}) dx$ models the elastic energy. Typically, the energy density is of multi-well nature with $\mathcal{K} \subset \R^{d \times d}$ a prescribed set in matrix space corresponding to the energy wells of the respective model. The main focus of this article is on the structure of the second energy contribution in \cref{eq:intro_energy1}, $E_{surf}(u)$, which models a surface energy. The parameter $\epsilon>0$ is material specific and typically small. Mathematically, the surface energy provides a higher order regularizing contribution which penalizes fine-scale oscillations in $\nabla u$. In what follows below, we will consider different models for $E_{surf}(u)$. Our objective is to prove that for our model class of martensitic phase transformations, the investigated scaling laws typically do not depend on the precise structure of the surface energies. On the contrary, relatively ``rough'' information suffices to produce equivalent scaling laws.
In order to illustrate this, we discuss different prototypical model classes for microstructures in shape-memory alloys.

\subsection{Sharp surface energies}
\label{sec:intro_L1}
We begin by considering sharp interface models. In particular, we focus on settings involving anisotropy. Here, we specify the set-up from \cref{eq:intro_energy1} as follows
\begin{align}
\label{eq:L1_energy}
E_{\epsilon}(u,\chi) := E_{el}(u,\chi) + \epsilon E_{surf}(\chi):=\int\limits_{\Omega} |\nabla u - \chi|^2 dx + \epsilon \sum\limits_{\nu \in \mathcal{N}} \|D_{\nu} \chi\|_{TV(\Omega)}.
\end{align}
For given $F \in \R^{d \times d}$, $u \in \mathcal{A}_F$ models the deformation, where
\begin{align} \label{eq:AdmissibleFunctionsL2}
\mathcal{A}_F := \{ v \in H^1(\Omega;\R^d): v(x) = Fx \text{ on } \p \Omega\},
\end{align}
while $\chi:\Omega \rightarrow \mathcal{K} \subset \R^{d \times d}$ represents the phase indicator.

As all the considered quantities are translation invariant, we could also consider a boundary condition of the form $u(x) = Fx +b$ on $\p \Omega$ for some $b \in \R^d$. For the sake of simplicity, we assume $b = 0$.

In our study below, the set $\mathcal{K}$ will represent the wells of phase
transformation models with a discrete set of minima for the energy density
(i.e., we do not consider typical gauges such as $SO(d)$ or $Skew(d)$
invariances in our model). The surface energy under consideration $ \|D_{\nu}
\chi\|_{TV(\Omega)}$ is of sharp-interface nature and highly anisotropic,
depending only on specified linearly independent directions $\nu \in
\mathcal{N}\subset \mathbb{S}^{d-1}$ with $\# \mathcal{N}\leq d$ (see
\cref{sec:prelim_directBV} for further discussion and definitions).
In what follows below, we will discuss minimal conditions on the choice of the directions $\nu \in  \mathcal{N}$
in order to ensure ``generic'' behaviour in the scaling laws -- thus proving their robustness in this choice of surface energy. In particular, in many instances it suffices that $\mathcal{N}$ consists of a single, non-degenerate direction $\mathcal{N}=\{\nu\}$.
We will investigate scaling laws for energies of the type \cref{eq:L1_energy} for various possible choices of the set $\mathcal{K}$ and discuss microstructures of different complexities.

\subsubsection{The two-well problem}
\label{sec:introL12}

We begin with an essentially scalar setting by considering $\mathcal{K} = \{A,B\}$ with $\mbox{rank}(A-B) = 1$. In this setting, the expected microstructure consists of a branched version of twinning \cite{KM1, KM2}. Such structures play an important role in austenite-martensite interfaces \cite{KM1, KM2, CO, CO1, TS21, TS21a}.
As already highlighted in the seminal works \cite{KM1, KM2}, in order to observe this phenomenon, it is not necessary to include all directional derivatives in the singular perturbation term. As expected from the experimental microstructure and the almost one-dimensional character of the problem, it suffices to regularize in the direction of oscillation. Hence, the scaling of the fully surface regularized and of the only in direction of oscillation regularized model behave analogously.
We recall this in the following proposition (cf. \cite{KM1, KM2}).

\begin{prop}
\label{prop:L1_2wells}
  Let $A,B \in \R^{d \times d}$ be such that $A-B = a \otimes e_1$ for some $a \in \R^d \setminus \{0\}$.
  Let $\Omega = (0,1)^d$ and $F_\alpha = \alpha A + (1-\alpha) B$ for some $\alpha \in (0,1)$.
  Let $\nu \in \S^{d-1}$ be such that $\nu \cdot e_1 \neq 0$.
  Consider for $\mathcal{N} = \{\nu\}$, $u \in \mathcal{A}_{F_\alpha}$, cf. \cref{eq:AdmissibleFunctionsL2}, and $\chi \in BV_\nu(\Omega;\{A,B\})$ the energy $E_{\epsilon}(u,\chi)$ as defined in \cref{eq:L1_energy}.
  There are constants $C =
  C(d,\alpha,|A-B|) > 0$ and $\epsilon_0 = \epsilon_0(d, \alpha, |A-B|, |\nu_1|) > 0$ such that for any $\epsilon \in (0,\epsilon_0)$
  \begin{align*}
     \inf_{\chi \in BV_\nu(\Omega;\{A,B\})}\inf_{u \in \mathcal{A}_{F_\alpha}}  E_\epsilon(u,\chi) \geq C^{-1} |\nu_1|^{\frac{2}{3}} \epsilon^{\frac{2}{3}}.
  \end{align*}
  Here $\nu_1$ denotes the $e_1$ component of the vector $\nu$.

    In two dimensions the matching upper bounds hold, i.e. for $d = 2$ and every $\epsilon \in (0,\epsilon_0)$ there are $u \in \mathcal{A}_{F_\alpha}$ and $\chi \in BV_\nu(\Omega;\{A,B\})$ such that
    \begin{align*}
        E_\epsilon(u,\chi) \leq C |\nu_1|^{\frac{2}{3}} \epsilon^{\frac{2}{3}}.
    \end{align*}

  Moreover (in $d$ dimensions), if $\nu \cdot e_1 =0$, then under the above assumptions for all $\epsilon >0$
  \begin{align*}
      \inf_{\chi \in BV_\nu(\Omega;\{A,B\})}\inf_{u \in \mathcal{A}_{F_{\alpha}}}  E_\epsilon(u,\chi) = 0.
  \end{align*}
\end{prop}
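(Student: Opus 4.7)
For part (iii), a simple laminate in the $e_1$ direction realises a vanishing-energy sequence. For $N\in\N$ take $\phi:[0,1]\to\R$ with $\phi'\in\{0,1\}$ a.e., $\phi(0)=0$, $\phi(1)=\alpha$, and period $1/N$, and set $\chi_N(x):=A\mathbf{1}_{\{\phi'(x_1)=1\}}+B\mathbf{1}_{\{\phi'(x_1)=0\}}$ together with $u_N(x):=Bx+\phi(x_1)a$. Then $\nabla u_N=\chi_N$ a.e.\ and, since $\chi_N$ depends only on $x_1$ while $\nu\cdot e_1=0$, one has $\|D_\nu\chi_N\|_{TV(\Omega)}=0$. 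The only defect is on the lateral faces $\{x_j\in\{0,1\}\}$, $j\geq 2$, where $u_N-F_\alpha x=(\phi(x_1)-\alpha x_1)a$ oscillates on scale $N^{-1}$; interpolating $u_N$ linearly to the affine datum $F_\alpha x$ in a layer of thickness $\eta$ (without touching $\chi_N$) introduces an elastic cost of order $\eta+(\eta N^2)^{-1}$. Choosing $\eta=N^{-1}$ and letting $N\to\infty$ yields total energy tending to $0$.

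For the lower bound (i) the plan is to reduce to a scalar one-dimensional Kohn-Müller problem on slices parallel to $\nu$. The rank-one structure $A-B=a\otimes e_1$ gives $A\nu-B\nu=\nu_1 a$, so on each slice $s\mapsto x_0+s\nu$ the rescaled scalar $f(s):=(u(x_0+s\nu)\cdot a)/(|a|^2\nu_1)$ satisfies $f'(s)\in\{c_0,c_0+1\}$ whenever $\nabla u=\chi$, with $c_0:=(B\nu\cdot a)/(|a|^2\nu_1)$, and the defect obeys $|f'-g|\lesssim(|a|\,|\nu_1|)^{-1}|\nabla u-\chi|$ for the correspondingly rescaled scalar indicator $g$. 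A Fubini decomposition of the directional TV yields $\|D_\nu\chi\|_{TV(\Omega)}=|A-B|\int_{\nu^\perp}N(x')\,d\mathcal{H}^{d-1}(x')$, where $N(x')$ counts the jumps of $\chi$ on the slice through $x'$. On each slice the affine boundary datum $F_\alpha x$ fixes $f$ at the two endpoints, so the classical one-dimensional Kohn-Müller interpolation gives $\int_I|f'-g|^2\,ds\gtrsim N(x')^{-2}$. Combining these ingredients and minimising the pointwise sum $N(x')^{-2}+\epsilon|\nu_1|N(x')$ in $N(x')$, followed by integration over $x'\in\nu^\perp$, delivers the bound $\gtrsim|\nu_1|^{2/3}\epsilon^{2/3}$.

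For the matching upper bound (ii) with $d=2$ the plan is to use the classical self-similar Kohn-Müller branching construction, oriented so that \emph{every} phase interface has normal $\pm e_1$. On $\Omega=(0,1)^2$, with $N$ branches at the fine boundary, this construction produces elastic energy of order $N^{-2}$ and total interface length of order $N$; because all interfaces have normal $\pm e_1$ one has $\|D_\nu\chi\|_{TV(\Omega)}=|\nu_1|\|D_{e_1}\chi\|_{TV(\Omega)}\lesssim|\nu_1|N$, hence $E_\epsilon\lesssim N^{-2}+\epsilon|\nu_1|N$. Optimising in $N\sim(\epsilon|\nu_1|)^{-1/3}$ yields the matching upper bound and determines $\epsilon_0$ as the largest $\epsilon$ for which the optimal $N$ is at least $1$.

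I expect the main technical obstacle to lie in the rigorous execution of the slicing argument in the lower bound (i) near $\partial\Omega$: slices through $x'\in\nu^\perp$ whose intersection with $\Omega$ is very short should be discarded, as the one-dimensional Kohn-Müller interpolation is only useful on slices of bounded-below length. I expect this to be resolved by restricting the Fubini integration to $\{x'\in\nu^\perp:|I(x')|\geq c\}$ for a geometric constant $c>0$ depending only on $d$ and $\nu$; the discarded region has small $(d-1)$-measure and its contribution can be absorbed into the constant $C$ of the statement.
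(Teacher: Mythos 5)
Your treatment of the degenerate case $\nu\cdot e_1=0$ is essentially the paper's construction (a laminate of period $1/N$ in the $e_1$-direction cut off near the lateral faces); both give a vanishing-energy sequence, so that part is fine.

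\smallskip

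For the upper bound with $d=2$ the plan (Kohn--M\"uller branching, track the $\nu$-dependence of the anisotropic interfacial term) is the same as the paper's, but the claim that \emph{all} phase interfaces in the branching construction have normal $\pm e_1$ is incorrect: branching necessarily uses tilted interfaces to transition from coarse to fine lamination, and these have normals of the form $(-h_j,\,c\ell_j)^T$ in the $j$-th generation. The paper shows that the anisotropic TV contribution of the tilted interfaces is only $\epsilon\,|\nu_2|\,|\log(\epsilon|\nu_1|)|$, which is sub-leading, so the scaling survives; but you would need this extra estimate to make the argument correct rather than just asserting the interfaces are vertical.

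\smallskip

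The lower bound is where there is a genuine gap. You claim that the one-dimensional two-well problem with fixed endpoint values $f(0),\,f(L)$ and $g\in\{c_0,c_0+1\}$ with $N$ jumps obeys $\int_I|f'-g|^2\,ds\gtrsim N^{-2}$, and then minimize $N^{-2}+\epsilon|\nu_1|N$ to get $(\epsilon|\nu_1|)^{2/3}$. This is false already at $N=1$: on a slice of length $L$, the endpoint constraint forces only $f(L)-f(0)=L(c_0+\alpha)$ for some $\alpha\in(0,1)$, which is \emph{exactly} achievable by $f'\equiv c_0+1$ on $(0,\alpha L)$ and $f'\equiv c_0$ on $(\alpha L,L)$ with a single jump and zero elastic energy; there is no obstruction at the 1D level, and the 1D infimum scales like $\epsilon$, not $\epsilon^{2/3}$. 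The $\epsilon^{2/3}$ scaling is a genuinely multi-dimensional phenomenon: the reason a single laminate is not admissible is that $u(x)=F_\alpha x$ is imposed along a codimension-one boundary, and the oscillation of $u$ in the $e_1$-direction is therefore controlled \emph{through} its transverse derivatives by Poincar\'e (this is the role of \cref{lem:LowFreqEst} in the paper, used in Step 1 of the proof of \cref{prop:L1_2wells}). A per-slice argument that only sees the two endpoints of each slice cannot detect this. Kohn--M\"uller's one-dimensional interpolation lemma -- which I suspect is what you have in mind -- is applied on thin \emph{rectangles}, not on line segments, and crucially controls the transverse derivative $u_y$ via the boundary data; restricting to 1D slices discards precisely that input. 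The paper's Fourier route avoids slicing altogether: elastic energy gives ellipticity off cones (\cref{lem:ElasticFourier}), the anisotropic surface energy controls $|k\cdot\nu|$ large (\cref{lem:HighFreqDir}), and the Poincar\'e/low-frequency bound (\cref{lem:LowFreqEst}) closes the gap near $k=0$. If you want a real-space proof, you would need to replace the per-slice inequality by a strip-averaged Kohn--M\"uller estimate that retains the transverse gradient.
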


Here and below the space $BV_{\nu}(\Omega; \{A,B\})$ denotes an anisotropic version of the space BV in which BV regularity is only required in the direction $\nu \in \mathbb{S}^{d-1}$. We refer to \cref{sec:prelim_directBV} for the precise definition of it.

\begin{rmk}
    The matching upper bound can also be generalized to hold in higher dimensions.
    For the sake of simplicity we do not discuss this in this article; the construction for an isotropic surface penalization can, for instance, be found in \cite[Section 6.2]{RT23}.
\end{rmk}

\subsubsection{The three-well problem of Lorent}
\label{sec:introL13}

With the almost one-dimensional two-well problem in mind, we turn to models involving higher order laminates. Here a model problem is given by the three-well configuration of Lorent \cite{L01}. In this setting, we have (up to normalization)
\begin{align} \label{eq:Lorent_wells}
\mathcal{K}_3:= \{ A_1, A_2, A_3\} \mbox{ with }
A_1 = \begin{pmatrix}
    0 & 0 \\ 0 & 0
\end{pmatrix}, \
A_2 = \begin{pmatrix}
    1 & 0 \\ 0 & 0
\end{pmatrix}, \
A_3 = \begin{pmatrix}
    \frac{1}{2} & 0 \\ 0 & 1
\end{pmatrix}.
\end{align}
These three matrices are chosen such that $\mbox{rank}(A_1-A_2)=1$, but neither $A_1$ nor $A_2$ are rank-one connected with the well $A_3$. The lamination convex hull $\K_{3}^{lc}$ of $\mathcal{K}_3$ and, hence, the observable microstructure, consists of laminates up to second order (we recall notions such as lamination convexity in \cref{sec:prelim_lam}). The first order laminates $\mathcal{K}^1_3$ consist of convex combinations of the wells $A_1,A_2$ while the second order laminates $\mathcal{K}^2_3$ are obtained by a convex combination of the well $A_3$ with the auxiliary matrix $\begin{pmatrix} \frac{1}{2} & 0 \\ 0 & 0\end{pmatrix}$ (see \cref{fig:Lorent}). More precisely,
\begin{align}
\label{eq:laminates_order}
    \K_3^1 &:= \K_3^{(1)} \setminus \K_3 = \left\{ \begin{pmatrix} \alpha & 0 \\ 0 & 0 \end{pmatrix} : \alpha \in (0,1) \right\}, &
    \K_3^2 & := \K_3^{(2)} \setminus \K_3^{(1)}= \left\{ \begin{pmatrix} \frac{1}{2} & 0 \\ 0 & \alpha \end{pmatrix}: \alpha \in (0,1) \right\}.
\end{align}

\begin{figure}[t]
  \centering
  \begin{tikzpicture}[thick, scale = 1.5]
    \draw[Blue] (0,0) -- (1,0);
    \draw[Red] (0.5,0) -- (0.5,1);

    \fill (0,0) circle (0.05) node[below] {$A_1$};
    \fill (1,0) circle (0.05) node[below] {$A_2$};
    \fill (0.5,1) circle (0.05) node[right] {$A_3$};
  \end{tikzpicture}
  \caption{The Lorent three-well setting. The first order laminates are shown in blue, the second order laminates in orange.}
  \label{fig:Lorent}
\end{figure}
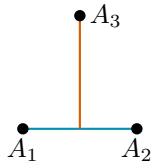

In \cite{RT23} in the setting with isotropic surface energies, the different scaling behaviour of microstructures with affine boundary conditions was deduced with the behaviour depending only on the complexity of the boundary data encoded in their lamination order.
Here we prove that this remains valid, if in \cref{eq:L1_energy} we consider generic regularization directions. Indeed, due to the one-dimensional structure of the lamination convex hull, it suffices to choose $\mathcal{N} = \{\nu\}$ such that $\nu \cdot e_1 \neq 0$ in order to recover the scaling from \cite{RT23} in which we considered the full gradient in the surface energy.
If, however, $\nu = \pm e_2$, then, the setting indeed changes and the scaling behaviour becomes that of a laminate of one order less.

\begin{thm}
\label{thm:L1_3wells}
Let $\K_3$ be given by \cref{eq:Lorent_wells}, $\Omega = (0,1)^2$, and $\nu \in \S^{1}$.
Consider the energy $E_\epsilon(u,\chi)$ as in \cref{eq:L1_energy} with $\mathcal{N} = \{\nu\}$, $u \in \mathcal{A}_F$, cf. \cref{eq:AdmissibleFunctionsL2}, for $F \in \K_3^{lc}\setminus \K_3$, and $\chi \in BV_\nu(\Omega;\K_3)$. Let $\mathcal{K}^1_3$ and $\mathcal{K}^2_3$ be as in \cref{eq:laminates_order}.
We then have the following scaling laws:
\begin{enumerate}
    \item\label[case]{itm:L1_3wells_i} \emph{First order laminates:} For $F \in \K_3^1$ there are constants $C = C(F) > 0$ and $\epsilon_0 = \epsilon_0(F,\nu)>0$ such that for any $\epsilon \in (0,\epsilon_0)$
    \begin{align*}
       C^{-1} |\nu_1|^{\frac{2}{3}} \epsilon^{\frac{2}{3}} \leq \inf_{\chi \in BV_\nu(\Omega;\K_3)} \inf_{u \in \mathcal{A}_F} E_\epsilon(u,\chi) \leq C |\nu_1|^{\frac{2}{3}} \epsilon^{\frac{2}{3}}.
    \end{align*}
    \item\label[case]{itm:L1_3wells_ii} \emph{Second order laminates:} For $F \in \K_3^2$ there are constants $C = C(F) > 0$ and $\epsilon_0 = \epsilon_0(F,\nu) > 0$ such that for any $\epsilon \in (0,\epsilon_0)$ we have
    \begin{align*}
        C^{-1} \left( |\nu_1|^{\frac{1}{2}} \epsilon^{\frac{1}{2}} + |\nu_2|^{\frac{2}{3}} \epsilon^{\frac{2}{3}} \right) \leq \inf_{\chi \in BV_\nu(\Omega;\K_3)} \inf_{u \in \mathcal{A}_F} E_\epsilon(u,\chi) \leq C \left( |\nu_1|^{\frac{1}{2}} \epsilon^{\frac{1}{2}} + |\nu_2|^{\frac{2}{3}} \epsilon^{\frac{2}{3}} \right).
    \end{align*}
\end{enumerate}
\end{thm}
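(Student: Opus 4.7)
The plan is to treat the two cases separately and in each case match an upper bound from an explicit construction with a lower bound via Fourier/decomposition arguments. For the upper bound in case \cref{itm:L1_3wells_i}, since $F \in \K_3^1$ is a convex combination of $A_1$ and $A_2$, I would restrict $\chi$ to take values in $\{A_1, A_2\}$ and apply directly the two-well branched laminate from \cref{prop:L1_2wells}; all inner interfaces have normal $e_1$, so the anisotropic surface cost carries the prefactor $|\nu_1|$ and the scaling $|\nu_1|^{2/3}\epsilon^{2/3}$ follows. For case \cref{itm:L1_3wells_ii}, I would write $F = \alpha A_3 + (1-\alpha) M$ with $M = \tfrac{1}{2}(A_1 + A_2) = \tfrac{1}{2}\, e_1 \otimes e_1$, noting that $A_3 - M = e_2 \otimes e_2$ is rank-one in the direction $e_2$. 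I then set up a nested construction: an outer Kohn--Müller-type branched laminate in the $e_2$-direction between $A_3$-regions and ``laminate-regions'' (contributing to the surface cost with prefactor $|\nu_2|$), and inside each laminate region an inner periodic oscillation between $A_1$ and $A_2$ with lamination normal $e_1$ (contributing with prefactor $|\nu_1|$). The inner period $h$ is chosen so that the $O(h)$ trace mismatch at the outer $A_3/M$-interfaces is absorbed by a thin elastic transition layer, while the outer branching profile is the standard Kohn--Müller one. Optimising the two scales separately yields additively the contributions $|\nu_2|^{2/3}\epsilon^{2/3}$ from the outer branching and $|\nu_1|^{1/2}\epsilon^{1/2}$ from the inner periodic oscillation.

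For the lower bounds I would exploit the diagonal structure of $\K_3$ and decompose $\chi$ into its two scalar diagonal components $\chi_{11}$ (valued in $\{0, \tfrac{1}{2}, 1\}$) and $\chi_{22}$ (valued in $\{0, 1\}$); the off-diagonal entries vanish throughout $\K_3$. The averaged constraints $\int_\Omega \chi_{ii}\,dx = F_{ii}\,|\Omega|$, together with the compatibility $\chi \approx \nabla u$, translate into scalar two-well-type problems for $\partial_i u_i$. In case \cref{itm:L1_3wells_i} we have $F_{22} = 0$, and since $A_3$ is rank-one incompatible with both $A_1$ and $A_2$, any occurrence of $\chi_{22} = 1$ is charged a definite elastic cost with no rank-one cancellation; a cut-off/localisation argument then reduces the problem to the two-well problem for $\{A_1, A_2\}$ in direction $e_1$, and the lower bound $C^{-1}|\nu_1|^{2/3}\epsilon^{2/3}$ follows from \cref{prop:L1_2wells}. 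In case \cref{itm:L1_3wells_ii} the two contributions are derived in parallel via a Fourier-based argument: the component $\chi_{22}$, with average $\alpha \in (0,1)$ and lamination direction $e_2$, yields the contribution $|\nu_2|^{2/3}\epsilon^{2/3}$ from a scalar two-well estimate; the component $\chi_{11}$, which inside the set $\{\chi_{22} = 0\}$ must oscillate between $0$ and $1$ with local average $\tfrac{1}{2}$ and which is coupled to the outer scale through the rank-one constraints, yields via a nested frequency-localisation argument the contribution $|\nu_1|^{1/2}\epsilon^{1/2}$. The two contributions are additive (up to constants) since they concern essentially disjoint frequency regimes of $\chi$.

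The hardest step is the lower bound $C^{-1}|\nu_1|^{1/2}\epsilon^{1/2}$ in case \cref{itm:L1_3wells_ii}: the two-well exponent $2/3$ is replaced here by $1/2$ precisely because the inner oscillation lives on outer strips of width $H$, and this coupling has to be captured using only the single-direction surface control $\|D_\nu \chi\|_{TV(\Omega)}$. The plan is to adapt the isotropic Fourier argument of \cite{RT23}: split the frequencies of $\chi$ along $e_2$ into a low-wavenumber regime carrying the outer laminate and a high-wavenumber regime carrying the inner laminate, and on each regime balance the elastic cost with the weighted surface cost $|\nu \cdot k|$ in order to extract the correct anisotropic prefactor. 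Handling the cross-terms between the two frequency regimes and verifying that the prefactors $|\nu_1|$ and $|\nu_2|$ survive the localisation, in particular when one of the components of $\nu$ is very small, is the technical heart of the argument.
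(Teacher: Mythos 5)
Your plan captures the broad Fourier-plus-branching philosophy, but there are two substantive gaps in case~\cref{itm:L1_3wells_ii} that would prevent the proposal from yielding the stated scaling with $\epsilon$-independent constants.

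\textbf{Upper bound, case \cref{itm:L1_3wells_ii}, $\nu_1\neq 0$.} You propose an \emph{outer branched} laminate (between $A_3$ and $\diag(1/2,0)$) filled with an \emph{inner periodic} laminate (between $A_1$ and $A_2$) of a single period. This does not give $|\nu_1|^{1/2}\epsilon^{1/2}$. The inner laminate must be cut off at every horizontal interface of the outer branching; summing the cut-off cost over all $\sim\log N$ generations gives an elastic contribution $\sim r\,\log N$, so optimizing against the inner surface cost $\sim\epsilon|\nu_1|/r$ produces $\sim(\epsilon|\nu_1|\log N)^{1/2}$, a $\sqrt{\log}$ loss. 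Since the theorem allows $C$ to depend only on $F$ (and $\epsilon_0$ on $F,\nu$), not on $\epsilon$, this is a genuine failure. What the paper actually does (following \cite{RT23}) is a \emph{double branching} construction: the inner laminate is itself branched, with the number of oscillations $M\sim(2\theta)^j N^2$ adapted to the outer generation $j$; the total inner surface cost then telescopes to $\sim\epsilon N^2|\nu_1|$ (no log), and balancing against the outer elastic $\sim 1/N^2$ gives $N\sim(\epsilon|\nu_1|)^{-1/4}$ and energy $\sim(\epsilon|\nu_1|)^{1/2}$. Your additive slogan (``$|\nu_2|^{2/3}\epsilon^{2/3}$ from the outer, $|\nu_1|^{1/2}\epsilon^{1/2}$ from the inner'') also mixes up the balancing: when $\nu_1\neq 0$ the outer scale $N$ is chosen to balance the outer \emph{elastic} cost against the inner \emph{surface} cost, not against the outer surface; the term $\epsilon^{2/3}|\nu_2|^{2/3}$ is produced only in the separate case $\nu_1 = 0$ by an unbranched inner laminate with arbitrarily fine period.

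\textbf{Lower bound, case \cref{itm:L1_3wells_ii}, $\nu_1\neq 0$.} You acknowledge this is ``the technical heart'' but stop at the level of ``split the frequencies of $\chi$ along $e_2$ and balance.'' This does not engage the actual difficulty of anisotropy: the surface term $\epsilon\|D_\nu\chi\|_{TV}$ only controls high frequencies \emph{along $\nu$}, so initially only the cone $C_{1,\mu,\lambda}$ for $\chi_{11}$ can be truncated (using $\nu_1\neq 0$), while $C_{2,\mu}$ for $\chi_{22}$ remains an infinite cone. The lower bound needs a truncated cone for $\chi_{22}$. The missing ingredient is the algebraic relation $\chi_{22} = g(\chi_{11})$ with $g$ a quadratic polynomial (for $\K_3$ after translating by $F$), which together with a Marcinkiewicz-type commutator estimate (\cref{lem:CommutatorEst}, \cref{cor:CommutatorEst_Applied}) transfers the truncation from $\chi_{11}$'s cone to $\chi_{22}$'s cone at the reduced scale $\lambda_2 = M\mu\lambda$; only then can \cref{lem:LowFreqEst} close the argument to give the $1/2$-exponent. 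Your description of $\chi_{11}$ being ``coupled to the outer scale through rank-one constraints'' gestures at this but does not identify the mechanism, and it is precisely this step where a naive adaptation of the isotropic RT23 argument would break, because the surface energy no longer truncates all cones at once.

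\textbf{Lower bound, case \cref{itm:L1_3wells_i}.} Your idea of exploiting that $F_{22}=0$ so that $\chi_{22}$ has vanishing mean does correspond to the paper's \cref{rmk:lower_bound_first_laminate} (the Jensen estimate $|\{\chi=A_3\}|\leq E_{el}^{1/2}$ plus absorption). However, ``charged a definite elastic cost with no rank-one cancellation'' is not a local statement and ``reduces to the two-well problem'' is too strong: you do not literally restrict the domain; you obtain a lower bound on $\|\chi_{11}\|_{L^2}^2$ up to an $E_{el}^{1/2}$ error and absorb. Also, the paper's \emph{primary} proof of \cref{lem:scal-first} handles the degenerate case ($\alpha=1/2$, where $\chi_{11}$ may vanish identically) via the \emph{same} commutator mechanism, applied already for first-order laminates, which does not appear in your plan.

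\textbf{Minor.} Your claim that the two lower-bound contributions ``are additive since they concern essentially disjoint frequency regimes'' is unnecessary and unsubstantiated; the paper simply observes that for $\epsilon<\epsilon_0(F,\nu)$ the term $|\nu_1|^{1/2}\epsilon^{1/2}$ dominates $|\nu_2|^{2/3}\epsilon^{2/3}$ (or vice versa if $\nu_1 = 0$), so the sum is comparable to the larger term; proving one of the two bounds suffices.
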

\begin{rmk}
We highlight that the estimate from \cref{itm:L1_3wells_ii} for the second order laminates in fact includes matching bounds for both the non-degenerate case in which $\nu \cdot e_1 \neq 0$ and the degenerate case in which $\nu \cdot e_1 = 0$. Indeed, by a case distinction, for $F \in \K_3^2$, on the one hand, if $\nu \cdot e_1 \neq 0$ and if $\epsilon_0>0$ is sufficiently small we have that
    \begin{align*}
       C^{-1} |\nu_1|^{\frac{1}{2}} \epsilon^{\frac{1}{2}} \leq \inf_{\chi \in BV_\nu(\Omega;\K_3)} \inf_{u \in \mathcal{A}_F} E_\epsilon(u,\chi) \leq C |\nu_1|^{\frac{1}{2}} \epsilon^{\frac{1}{2}}.
    \end{align*}
    On the other hand, if $\nu \cdot e_1 = 0$, then
    \begin{align*}
        C^{-1} |\nu_2|^{\frac{2}{3}} \epsilon^{\frac{2}{3}} \leq \inf_{\chi \in BV_\nu(\Omega;\K_3)} \inf_{u \in \mathcal{A}_F} E_\epsilon(u,\chi) \leq C |\nu_2|^{\frac{2}{3}} \epsilon^{\frac{2}{3}}.
    \end{align*}
\end{rmk}

\subsubsection{Settings involving a higher number of wells}
\label{sec:introL14}

In order to illustrate that the above phenomenon is no coincidence, we show that it persists for a certain class of diagonal wells $\mathcal{K}_N$ having one-dimensional lamination convex hulls,
which had also been discussed with a full surface energy regularization in \cite{RT23}. Indeed, for $N \leq d+1$ we consider
\begin{align*}
\mathcal{K}_N:= \{A_1,A_2,\dots,A_N\} \subset \R^{d \times d}_{\diag},
\end{align*}
with
    \begin{gather} \label{eq:HigherLaminates_wells}
    \begin{aligned}
        A_1 &= 0, & A_2 & = \diag(1,0,\dots,0), \\
        A_3 &= \diag(\frac{1}{2},1,0,\dots,0), & A_4 &= \diag(\frac{1}{2},\frac{1}{2},1,\dots,0), \\
        A_j & = \diag(\frac{1}{2},\frac{1}{2},\dots,\frac{1}{2},1,0,\dots,0), &&
        \end{aligned}
    \end{gather}
for $j=5,6,\dots,N$, where for $A_j$ the entry $1$ is at the $(j-1)$-th diagonal entry.
For $d=2$ and $N = 3$ the set $\K_3$ is exactly the one defined in \cref{eq:Lorent_wells} above.
As in \cref{sec:introL13}, we again obtain a structure such that the lamination convex hull of the set $\mathcal{K}_N$ consists of one-dimensional segments (see \cref{fig:4_wells}): For $2 \leq \ell \leq N-1$
\begin{align*}
\K_N^{1} &:= \K_N^{(1)} \setminus \K_N = \{ \diag(\alpha,0,0,\dots,0) \in \R^{d \times d}: \alpha \in (0,1)\}, \\
\K_N^2 &:= \K_N^{(2)} \setminus \K_N^{(1)} = \{ \diag(\frac{1}{2},\alpha,0,\dots,0) \in \R^{d \times d}: \alpha \in (0,1) \}, \\
\K_N^3 &:= \K_N^{(3)} \setminus \K_N^{(2)} = \{ \diag(\frac{1}{2},\frac{1}{2},\alpha,0,\dots,0) \in \R^{d \times d}: \alpha \in(0,1)\}, \\
\mathcal{K}_N^{\ell} &:= \K_N^{(\ell)} \setminus \K_N^{(\ell-1)} = \{ \diag(\frac{1}{2},\frac{1}{2},\dots,\frac{1}{2},\alpha,0,\dots,0) \in \R^{d \times d} : \alpha \in (0,1)\},
\end{align*}
where $\K_N^\ell$ then has $(\ell-1)$ entries $1/2$, cf. \cref{sec:prelim_lam} for the definition of $K_N^{(\ell)}$.
Also in this setting, we show that in terms of lower bounds only the direction of the ``inner-most'' lamination is relevant for the scaling of the singularly perturbed model \cref{eq:L1_energy}, i.e. to obtain the ``classical'' isotropic scaling only $\nu \cdot e_1 \neq 0$ is necessary.

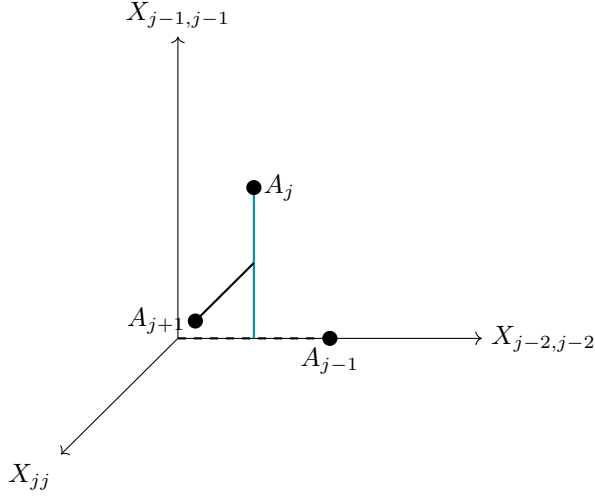
\begin{figure}
    \centering
    \begin{tikzpicture}[thick, scale = 2]
    \draw[thin,->] (0,0,0) --++ (2,0,0) node[right]{$X_{j-2,j-2}$};
    \draw[thin,->] (0,0,0) --++ (0,2,0) node[above] {$X_{j-1,j-1}$};
    \draw[thin,->] (0,0,0) --++ (0,0,2) node[below left] {$X_{jj}$};

        \draw[dashed] (0,0,0) -- (1,0,0);
        \draw[Blue] (0.5,0,0) -- (0.5,1,0);
        \draw (0.5,0.5,0) -- (0.5,0.5,1);

        \fill (1,0,0) circle (0.05) node[below] {$A_{j-1}$};
        \fill (0.5,1,0) circle (0.05) node[right] {$A_j$};
        \fill (0.5,0.5,1) circle (0.05) node[left] {$A_{j+1}$};
    \end{tikzpicture}
    \caption{Illustration of the relation between $A_{j-1}, A_j, A_{j+1} \in \K_N$ for $3 \leq j \leq N-1$. The lines show rank one connections, where the dashed line is connected to the structure spanned by the previous wells. The set $\K_N^{j-1}$ is highlighted in blue.}
    \label{fig:4_wells}
\end{figure}

\begin{thm}
\label{thm:L1_mult_wells}
Let $d \geq 2$, $N\leq d+1$, $\Omega = (0,1)^d$, $\nu \in \S^{d-1}$, $\K_N$ be given by \cref{eq:HigherLaminates_wells}, and $\ell \in \{1,2,\dots,N-1\}$.
Consider the energy $E_\epsilon(u,\chi)$ given in \cref{eq:L1_energy} with $\mathcal{N} = \{\nu\}$, $u \in \mathcal{A}_F$ for some $F \in \K^{\ell}_N$, cf. \cref{eq:AdmissibleFunctionsL2}, and $\chi \in BV_\nu(\Omega;\K_N)$.
Then, there are $C = C(d,F,\ell) > 0$ and $\epsilon_0 = \epsilon_0(d,F,\ell,\nu) > 0$ such that for any $\epsilon \in (0,\epsilon_0)$
\begin{align*}
    \inf_{\chi \in BV_\nu(\Omega;\K_N)} \inf_{u \in \mathcal{A}_F} E_\epsilon(u,\chi) \geq C \sum_{j=0}^{\ell-1} |\nu_{j+1}|^{\frac{2}{\ell-j+2}} \epsilon^{\frac{2}{\ell-j+2}} .
\end{align*}
\end{thm}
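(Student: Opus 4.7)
The plan is to argue by induction on the lamination order $\ell$. Since the right-hand side is a finite sum of at most $\ell \leq d$ terms, it suffices to establish each summand individually as a lower bound up to a constant depending only on $\ell$. The base case $\ell = 1$ is immediate from \cref{prop:L1_2wells}, since $F \in \K_N^1 = \{\diag(\alpha,0,\dots,0) : \alpha \in (0,1)\}$ is simply a two-well mixture between $A_1$ and $A_2$ with rank-one connection in direction $e_1$, producing exactly the term $|\nu_1|^{2/3}\epsilon^{2/3}$.

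For the inductive step, I would first address the outermost contribution $j = \ell - 1$. The outermost lamination of $F \in \K_N^\ell$ lies in direction $e_\ell$: writing $F = (1-\alpha)\,\mathrm{aux}_\ell + \alpha A_{\ell+1}$ with $\mathrm{aux}_\ell := \diag(\tfrac12,\dots,\tfrac12,0,\dots,0) \in \K_N^{\ell-1}$ (with $\ell-1$ diagonal entries equal to $\tfrac12$), one has the rank-one relation $A_{\ell+1} - \mathrm{aux}_\ell = e_\ell \otimes e_\ell$. Projecting the variational problem onto the $(\ell,\ell)$-diagonal entry reduces it to a scalar two-well situation: the induced function $\chi_{\ell\ell}$ takes values in $\{0, \tfrac12, 1\}$ with boundary datum $F_{\ell\ell} = \alpha \in (0,1)$. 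Using the elementary bounds $|\partial_\ell u^{(\ell)} - \chi_{\ell\ell}|^2 \leq |\nabla u - \chi|^2$ and $\|D_\nu \chi_{\ell\ell}\|_{TV(\Omega)} \leq \|D_\nu \chi\|_{TV(\Omega)}$, the argument of \cref{prop:L1_2wells} applied in direction $e_\ell$ then yields $|\nu_\ell|^{2/3}\epsilon^{2/3}$. For the inner terms $j < \ell - 1$, one iterates this reduction by restricting to a typical slab in direction $e_\ell$ where $\chi$ takes values only in the sub-collection $\{A_1, \dots, A_\ell\}$; after restricting to the upper-left $(\ell-1) \times (\ell-1)$ block, these form a system of the structure \cref{eq:HigherLaminates_wells} of order $\ell - 1$, with effective boundary datum $\mathrm{aux}_\ell \in \K_\ell^{\ell-1}$. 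Applying the inductive hypothesis on the slab and tracking the rescaling of $\epsilon$ with the slab thickness then yields the inner terms $|\nu_{j+1}|^{2/(\ell-j+2)}\epsilon^{2/(\ell-j+2)}$ for $j = 0, \dots, \ell - 2$.

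The main obstacle is this slab-restriction step together with its rescaling. No single deterministic slab need exhibit the required restricted boundary configuration, so a Fubini-type averaging in $x_\ell$ combined with a Chebyshev-type selection of a good slab on which both the elastic energy and the $\nu$-surface penalization remain small is required. The choice of slab thickness is delicate: it must be small enough that the restricted boundary datum is close to $\mathrm{aux}_\ell$, but large enough that the rescaling of $\epsilon$ to the slab size produces the exponents $2/(\ell-j+2)$ rather than the weaker $2/(\ell-j+1)$ one would obtain by a naive restriction. Finally, restriction of a $BV_\nu$ function to a codimension-one slice preserves only the in-slice portion of the $\nu$-derivative, and it is precisely this feature that produces the correct $|\nu_{j+1}|$-factor in the reduced problem -- provided the boundary-trace terms arising from the slicing are absorbed back into the elastic energy by a standard interpolation argument.
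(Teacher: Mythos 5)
Your proposal takes a genuinely different route from the paper's. The paper's proof of \cref{thm:L1_mult_wells} is entirely Fourier-analytic: \cref{lem:Localization} uses the elastic energy to localize the Fourier mass of each $\hat{\chi}_{jj}$ into a cone about the $k_j$-axis and uses the anisotropic surface energy to truncate the $k_1$-cone; the iterated commutator estimate of \cref{cor:CommutatorEst_Applied} (in the variant of \cref{rmk:corollary-variant}), together with the nonlinear relations $\sum_{n>j}\alpha_n\chi_{nn}=g_j(\chi_{jj})$, propagates the truncation from the $k_1$-cone to the $k_2,\dots,k_\ell$-cones, losing a factor $\mu$ per iteration; and \cref{lem:LowFreqEst} controls the remaining low-frequency mass via a Poincar\'e argument exploiting the zero Dirichlet data and the diagonal structure of $\chi$. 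Optimizing $\mu\sim|\nu_1|^{1/\ell}\lambda^{-1/\ell}$ and $\lambda\sim|\nu_1|^{2/(\ell+2)}\epsilon^{-\ell/(\ell+2)}$ yields the exponent $2/(\ell+2)$ in one shot; the degenerate cases $\nu_1=\cdots=\nu_n=0$, $\nu_{n+1}\neq0$ are handled by starting the same iteration at the $k_{n+1}$-axis, and the sum over $j$ is recovered by taking $\epsilon_0$ small enough that the leading term dominates.

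Your treatment of the outermost summand $j=\ell-1$ via projection onto $\chi_{\ell\ell}$ is sound in spirit (it is, modulo the fact that $\chi_{\ell\ell}$ takes three values, essentially the same Fourier argument the paper uses, rotated to the $e_\ell$-axis), but the slab-restriction step for $j<\ell-1$ contains the genuine gap, and your own text flags rather than closes it. Two concrete problems. First, boundary data: the inductive hypothesis is stated for $u\in\mathcal{A}_F$ with affine Dirichlet data on \emph{all} of $\p\Omega$, but the restriction of $u$ to a thin slab $\{|x_\ell-t|<\delta\}$ inherits no such data on the two faces transverse to $e_\ell$. One needs quantitative control of the transverse fluctuations of $u$ on the chosen slab, and it is exactly this role that \cref{lem:LowFreqEst} plays in the Fourier argument through the zero-trace condition and $\chi_{\ell m}=0$; nothing in your Fubini/Chebyshev selection supplies it. Second, rescaling: an anisotropic rescaling of the slab to a unit cube distorts the elastic and surface terms by different powers of $\delta$, so the ``effective'' $\epsilon'$ on the slab is not simply $\epsilon/\delta$, and you yourself note that a naive choice gives only $\epsilon^{2/(\ell-j+1)}$, which is strictly weaker than the claimed $\epsilon^{2/(\ell-j+2)}$. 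You state that the thickness ``must be large enough'' to upgrade the exponent but do not say how to choose it, how to average, or why the optimization recovers the extra ``$+1$''. Without resolving both of these, the induction does not close. The paper's commutator scheme is precisely the device that replaces this geometric restriction step: it never leaves Fourier space, and the exponent $2/(\ell+2)$ emerges from $\ell-1$ iterated losses of $\mu$ together with a single surface-energy truncation, rather than from a nested family of slabs.
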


\begin{rmk}
    As above, the bound in \cref{thm:L1_mult_wells} includes various individual estimates which arise depending on the dominating degree of degeneracy of the data with respect to the direction $\nu \in \mathbb{S}^{d-1}$. Indeed, by considering different cases for $\nu$, we can also state the following lower scaling estimates:
\begin{enumerate}
\item For $F \in \mathcal{K}^{\ell}_{N}$ and $\nu \cdot e_1 \neq 0$ there are $C = C(d,F,\ell) >0$ and $\epsilon_0 = \epsilon_0(d,F,\ell,\nu) > 0$ such that for any $\epsilon \in (0,\epsilon_0)$
\begin{align*}
    \inf_{\chi \in BV_\nu(\Omega;\K_{N})} \inf_{u \in \mathcal{A}_F} E_{\epsilon}(u,\chi) \geq C|\nu_1|^{\frac{2}{\ell+2}}\epsilon^{\frac{2}{\ell+2}}.
\end{align*}
\item For $F \in \K_N^{\ell}$, $0<k < \ell$ and $\nu \cdot e_1 = \nu \cdot
  e_2 = \dots = \nu \cdot e_{k} = 0$ and $\nu \cdot e_{k+1} \neq 0$ there are $C
  = C(d,F,\ell) > 0$ and $\epsilon_0 = \epsilon_0(d,F,\ell,\nu) > 0$
   such that for any $\epsilon \in (0,\epsilon_0)$
\begin{align*}\inf_{u \in \mathcal{A}_F}
   \inf_{\chi \in BV_\nu(\Omega;\K_{N})} E_\epsilon(u,\chi) \geq C |\nu_{k+1}|^{\frac{2}{\ell-k+2}} \epsilon^{\frac{2}{\ell - k+2}}.
\end{align*}
\item For $F \in \K_N^{\ell}$ and $\nu \cdot e_1 = \nu \cdot e_2 = \dots = \nu \cdot e_{\ell} = 0$ we have $\inf_{\chi \in BV_\nu(\Omega;\K_{N})} \inf_{u \in \mathcal{A}_F} E_\epsilon (u,\chi) = 0$.
\end{enumerate}
\end{rmk}

We expect that this behaviour is sharp in the sense that there are matching upper bound constructions. As, however, already the upper bounds in \cite{RT23} were rather involved, we do not discuss these here.

\subsubsection{Main ideas for the sharp interface anisotropic surface energies}
Let us briefly comment on the ideas for the derivation of the above scaling laws with anisotropic surface energy contributions. As in \cite{RT23} these results rely on a Fourier space perspective on the elastic and surface energies \cite{K91, CO1, CO, RT23a, RT22}. In all these results, the role of the surface energy is to control high frequencies. While in the previous works this was done in an isotropic way with a frequency cut-off in all directions, we here leverage on the fact that the (diagonal) components of the phase indicator are already controlled by the elastic energy in most directions. In fact, in all the above models, the elastic energy provides strong control outside of certain cones along one-dimensional axes. Hence, only for these directions high frequency control becomes necessary. As a second ingredient, we exploit that the components of the phase indicator in the respective cones are not independent but are all functions of the inner-most one. This allows us to reduce the high frequency control even further and to require a singular perturbation regularization only in a single direction which is given by the direction of the inner-most component.

\subsection{Diffuse $L^{q}$-based surface energies}
\label{sec:intro_Lp}
Next, we turn to $L^{q}$-based diffuse surface energies with microstructures governed by energy functionals of the following form:
\begin{align}
\label{eq:Lp_energy}
E_{\epsilon,q}(u,\chi) := \int\limits_{\Omega} |\nabla u - \chi|^2 dx + \epsilon^{q} \sum\limits_{\nu \in \mathcal{N}} \int\limits_{\Omega}|\p_{\nu} (\nabla u)|^{q} dx, \ q \geq1,
\end{align}
with $\mathcal{N} \subset \mathbb{S}^{d-1}$.
As above, we will be particularly interested in anisotropic variants of this energy with $\# \mathcal{N} \leq d$ a finite set of linearly independent directions.
Here, due to the regularizing property of the surface energy, a novel phenomenon arises compared to the sharp structures which renders their analysis more challenging: due to the diffuse regularization, a second length scale emerges, which regularizes the ``zig-zag'' structures. This is already observed in one-dimensional models \cite{M93}, see also \cite{AM01}. In contrast to the sharp setting from above, we hence do not discuss these energies in Fourier space (although we believe that such a strategy should be possible) but deduce direct lower bounds in terms of the sharp energies.

Moreover, as the same arguments apply if the gradient is replaced by the symmetrized gradient $\nabla^{\textup{sym}}u= 1/2(\nabla u + \nabla u^T)$ we prove the results in a more general framework in order to be applicable also for this case.
For this let $p,q \in [1,\infty)$, $F \in \R^n$, $\nu \in \S^{d-1}$, $r \in \S^{n-1}$, and $L(D)$ a differential operator. We consider functions
 \begin{align*}
    U \in \{ U \in L^p_{\textup{loc}}(\R^d;\R^n): \ U = F \text{ outside } \Omega, \ \p_\nu (U \cdot r) \in L^q(\R^d;\R),\ L(D) U = 0 \text{ in } \R^d\},
\end{align*}
where the equation $L(D) U = 0$ holds in the distributional sense in $\R^d$.
For $L(D) = \operatorname{curl}$, this translates directly to $U = \nabla u$ for some $u \in W^{1,p}(\Omega;\R^d)$ such that $u(x) = Fx$ on $\p \Omega$.

Since this is not the main focus point of our article, in the following we will consider $L(D) \in \{\operatorname{curl}, \operatorname{curl} \operatorname{curl}\}$ but the same result holds for a more general class of operators, as for instance those considered in \cite{RRTT24}.

\begin{thm}
\label{thm:comparison}
Let $d \geq 2$, $n \geq 1$, let $p,q \in [1,\infty)$, $\nu \in \S^{d-1}$, $r \in \S^{n-1}$, and $\Omega \subset \R^d$ be a bounded Lipschitz
  domain and let $\K = \{A_1,\dots,A_N\} \subset \R^n$.
  For any $U \in L^p(\R^d;\R^{n})$ with $U = 0$ outside $\Omega$ and $\p_\nu (U \cdot r) \in L^q(\R^d;\R)$,
  $\chi \in L^\infty(\Omega;\K)$, there exists $\tilde{\chi} \in L^\infty(\Omega;\K)$ with $\tilde{\chi} \cdot r \in BV_{\nu}(\Omega;\K)$ and a constant $C =
  C(\K,p,q)>0$ such that for any $\epsilon >0$
  \begin{align*}
    \int_\Omega |U-\chi|^p + \epsilon^q |\p_\nu (U \cdot r)|^q dx \geq C \epsilon  \Vert D_{\nu} (\tilde{\chi} \cdot r) \Vert_{TV(\Omega)}
        \quad\text{and}\quad
    \int_\Omega|U-\chi|^p dx \geq C \int_\Omega|U-\tilde{\chi}|^p dx.
  \end{align*}
  In particular,
  \begin{align*}
    \int_\Omega |U-\chi|^p + \epsilon^q |\p_\nu (U \cdot r)|^q dx \geq \frac{C}{2}\left( \int_\Omega
    |U-\tilde{\chi}|^p dx + \epsilon \Vert D_\nu (\tilde{\chi} \cdot r)\Vert_{TV(\Omega)} \right).
  \end{align*}
\end{thm}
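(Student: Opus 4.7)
The plan is to construct $\tilde\chi$ by thresholding the scalar function $u_r := U \cdot r$ at levels carefully chosen inside the gaps between the finitely many values $\{A_i \cdot r\}_{i=1}^N \subset \R$, and then to verify both inequalities for this single construction. I would first list the distinct values of $\{A_i \cdot r\}$ as $b_1 < \cdots < b_M$ (the degenerate case $M = 1$ is trivial: $\tilde\chi \cdot r$ is then constant and I would just take $\tilde\chi(x) := \argmin_{A \in \K} |U(x) - A|$ so that $|U - \tilde\chi| \leq |U - \chi|$ pointwise). Put $\delta := \tfrac14 \min_j(b_{j+1} - b_j)$ and $I_j := (b_j + \delta, b_{j+1} - \delta)$. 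For each threshold vector $\vec t = (t_1, \ldots, t_{M-1}) \in \prod_j I_j$, I define $j_{\vec t}(x) \in \{1, \ldots, M\}$ by $u_r(x) \in (t_{j_{\vec t}(x)-1}, t_{j_{\vec t}(x)})$ (with $t_0 := -\infty$, $t_M := +\infty$) and set $\tilde\chi_{\vec t}(x) := \argmin\{|U(x) - A| : A \in \K,\ A \cdot r = b_{j_{\vec t}(x)}\}$, breaking ties measurably.

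\textbf{Elastic comparison.} The second inequality follows pointwise for every admissible $\vec t$. Indeed, if $\chi(x) \cdot r = b_{j_{\vec t}(x)}$ then $|U(x) - \tilde\chi_{\vec t}(x)| \leq |U(x) - \chi(x)|$ directly from the argmin definition. Otherwise $\chi(x) \cdot r = b_{j'}$ for some $j' \neq j_{\vec t}(x)$ and the $\delta$-gap in the threshold choice forces $|U(x) - \chi(x)| \geq |u_r(x) - b_{j'}| \geq \delta$, while trivially $|U(x) - \tilde\chi_{\vec t}(x)| \leq |U(x) - \chi(x)| + \diam(\K)$. Combining gives $|U - \tilde\chi_{\vec t}| \leq (1 + \diam(\K)/\delta) |U - \chi|$ pointwise on $\Omega$.

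\textbf{BV estimate via coarea and Fubini.} The key observation is that $\tilde\chi_{\vec t} \cdot r$ depends only on the index $j_{\vec t}(x)$, and hence on each line $\ell_y := y + \R\nu$ it can only jump when $s \mapsto u_r(y + s\nu)$ crosses some threshold $t_j$. Absolute continuity of these slices (guaranteed by $\partial_\nu u_r \in L^q(\R^d)$) makes this rigorous, and the slice total variation is then bounded by $\diam(\K) \sum_j N_j^y(t_j)$, where $N_j^y(t_j)$ counts the crossings of level $t_j$ on $\ell_y \cap \Omega$. Combining the one-dimensional coarea identity $\int_{I_j} N_j^y(t) \, dt = \int_{\ell_y \cap \Omega} \mathbf{1}_{\{v_y \in I_j\}} |v_y'| \, ds$ with integration over $y \in \nu^\perp$ and a Fubini/averaging argument over $\vec t \in \prod_j I_j$ produces a deterministic $\vec t^*$ with
$$\|D_\nu(\tilde\chi_{\vec t^*} \cdot r)\|_{TV(\Omega)} \leq C \int_\Omega \mathbf{1}_{\{u_r \in \cup_j I_j\}} |\partial_\nu u_r| \, dx,$$
for a constant $C$ depending only on $\K$.

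\textbf{Coupling and closing.} Since $|u_r - \chi \cdot r| \geq \delta$ on $\{u_r \in \cup_j I_j\}$, Chebyshev's inequality yields $|\{u_r \in \cup_j I_j\}| \leq \delta^{-p} \int_\Omega |U - \chi|^p \, dx$. For $q > 1$ I apply Hölder with exponents $q/(q-1), q$ to the bound above; for $q = 1$ I just use the crude $L^1$ estimate. Multiplying the resulting inequality by $\epsilon$ and applying Young's inequality to the product $\left(\int_\Omega |U - \chi|^p\right)^{1-1/q} \bigl(\epsilon^q \int_\Omega |\partial_\nu u_r|^q\bigr)^{1/q}$ produces the first inequality, and the ``in particular'' statement follows by adding the two inequalities and dividing by two. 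The main obstacle is the threshold selection itself: any fixed level $t$ can fail catastrophically when $u_r$ oscillates heavily near $t$, so the averaging argument --- with thresholds confined to the $\delta$-gaps so that the elastic comparison survives with a clean constant --- is essential for producing a single $\vec t^*$ that validates both estimates simultaneously.
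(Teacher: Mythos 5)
Your proof is correct and reaches the same conclusion as the paper, but it takes a noticeably different technical route. The paper proceeds in three stages (\cref{lem:LowerDiffSharp_1D} $\to$ \cref{cor:DiffuseToSharp_Aniso_1D} $\to$ \cref{thm:comparison}): it first handles the scalar isotropic case by applying Young's inequality at the very outset to the product $|U-f|^{p(q-1)/q}|\nabla U|$, invoking the coarea formula for the full gradient of $U$, and selecting a single threshold $t_k$ per gap as a Lebesgue point of $t\mapsto\mathcal{H}^{d-1}(\{U=t\})$; it then obtains the anisotropic scalar version by slicing in direction $\nu$ and discretizing $\nu^\perp$ into $\delta$-cubes, choosing a separate threshold vector per cube via a representative slice $y_j$ (which sidesteps measurability of $y\mapsto t_k^y$ at the cost of a subsequent approximation-in-$\delta$ argument to recover the elastic comparison); and finally it passes to the vector case via projection onto $r$. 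You instead define $\tilde\chi$ pointwise by a fiber-constrained argmin from the start, and select one global threshold vector $\vec t^*$ by averaging the slice-wise crossing counts over the full hypercube $\prod_j I_j$ via Fubini. Because the elastic comparison is uniform over all admissible $\vec t$, the averaged $\vec t^*$ is automatically valid for both inequalities simultaneously, which removes the need for the paper's cube discretization and the accompanying $\delta\to0$ approximation. You also defer Young's inequality to the very end (after Chebyshev and H\"older), whereas the paper applies it at the beginning; the net effect is the same. What the paper's modular route buys is reusability — the scalar lemma and its anisotropic corollary are invoked independently elsewhere with varying $p,q,r$ — while your single-pass construction is arguably leaner for this particular theorem. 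One small point to flag in both treatments: the constants $\delta$ and hence $C$ implicitly depend on the projected set $\{A_i\cdot r\}$ and thus on $r$, not merely on $\K,p,q$ as stated; this is a pre-existing imprecision in the statement, not a defect you introduced.
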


This relies on Modica-Mortola type arguments which have been discussed in various contexts in the literature \cite{MM77, KK11, Z14, CC14}, in particular, we follow the argument from \cite{KK11}.
Here the surface energy is even more degenerate than in the previous section in the sense that we only consider a single component of $U$ or $\tilde{\chi}$. We further elaborate on this additional degeneracy in \cref{rmk:lower_bound_component}.
We emphasize that the same estimates remain valid for the full data in which $U \cdot r$ and $\tilde{\chi} \cdot r$ are replaced by $U$ and $\tilde{\chi}$, respectively.

Together with the associated upper bound constructions, as a corollary, in our setting involving higher order laminates, we directly obtain that the scaling behaviour of the microstructures from the previous subsection is unchanged. As an example, we formulate this for the Lorent three-well problem from above where we consider $U = \nabla u$:

\begin{cor}\label{cor:Lp_3wells}
Let $q \in [1,\infty)$, $\K_3$ be given by \cref{eq:Lorent_wells}, $\Omega = (0,1)^2$, $\nu \in \S^1$, and consider for $\epsilon>0$, $F \in \K_3^{lc} \setminus \K_3$, $u \in \mathcal{A}_F$, cf. \cref{eq:AdmissibleFunctionsL2}, such that $\p_{\nu} \nabla u \in L^q(\Omega;\R^{2 \times 2})$, and $\chi \in L^{\infty}(\Omega;\K_3)$ the energy $E_{\epsilon,q}$ given by \cref{eq:Lp_energy} with $\mathcal{N}=\{\nu\}$.
The following scaling laws hold:

\begin{enumerate}
\item \emph{First order laminates:} For $F \in \K_3^1$ there is a constant $C = C(F, q)  > 0$ and $\epsilon_0 = \epsilon_0(F,q, \nu) > 0$ such that for any $\epsilon \in (0,\epsilon_0)$
\begin{align*}
    C^{-1} |\nu_1|^{\frac{2}{3}} \epsilon^{\frac{2}{3}} \leq \inf_{\chi \in L^2(\Omega;\K_3)} \inf_{\substack{u \in \mathcal{A}_F \\ \p_\nu \nabla u \in L^q(\Omega;\R^{2 \times 2})}}  E_{\epsilon,q}(u,\chi) \leq C |\nu_1|^{\frac{2}{3}} \epsilon^{\frac{2}{3}}.
\end{align*}
\item \emph{Second order laminates:} For $F \in \K_3^2$ there is a constant $C = C(F,q) > 0$ and $\epsilon_0 = \epsilon_0(F,q,\nu) > 0$ such that for any $\epsilon \in (0,\epsilon_0)$
\begin{align*}
    C^{-1}\left( |\nu_1|^{\frac{1}{2}} \epsilon^{\frac{1}{2}} + |\nu_2|^{\frac{2}{3}} \epsilon^{\frac{2}{3}} \right) \leq \inf_{\chi \in L^2(\Omega;\K_3)} \inf_{\substack{u \in \mathcal{A}_F \\ \p_\nu \nabla u \in L^q(\Omega;\R^{2 \times 2})}}  E_{\epsilon,q}(u,\chi) \leq C \left( |\nu_1|^{\frac{1}{2}} \epsilon^{\frac{1}{2}} + |\nu_2|^{\frac{2}{3}} \epsilon^{\frac{2}{3}}\right).
\end{align*}
\end{enumerate}
\end{cor}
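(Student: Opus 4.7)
The strategy is to combine the comparison principle of \cref{thm:comparison} with the sharp-interface scaling of \cref{thm:L1_3wells}: lower bounds follow by reducing the diffuse surface term to a sharp-interface total variation, while upper bounds follow by smoothing the sharp-interface near-optimizer on a length scale chosen to equilibrate the $L^q$ penalty with the $L^1$ penalty.

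\textbf{Lower bound.} Given $u \in \mathcal{A}_F$ with $\partial_\nu \nabla u \in L^q$ and $\chi \in L^{\infty}(\Omega;\K_3)$, extend $u$ by $Fx$ outside $\Omega$ so that $U := \nabla u - F$ vanishes outside $\Omega$. I apply the ``full-data'' variant of \cref{thm:comparison} (with $p=2$, the same $q$, $L(D)=\operatorname{curl}$, and with the wells shifted by $F$) to produce $\tilde\chi \in BV_\nu(\Omega;\K_3)$ (again modulo the shift) satisfying
\begin{align*}
E_{\epsilon,q}(u,\chi) \geq \tfrac{C}{2} \Bigl( \int_\Omega |\nabla u - \tilde\chi|^2 \, dx + \epsilon \|D_\nu \tilde\chi\|_{TV(\Omega)} \Bigr) = \tfrac{C}{2}\, E_\epsilon(u, \tilde\chi),
\end{align*}
with $E_\epsilon$ the sharp-interface energy \cref{eq:L1_energy} for $\mathcal{N} = \{\nu\}$. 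Taking the infimum and invoking the lower bounds of \cref{thm:L1_3wells} (which are insensitive to whether the infimum is over $BV_\nu$ or $L^\infty$ since the comparison theorem first projects onto $BV_\nu$) yields the stated $|\nu_1|^{2/3}\epsilon^{2/3}$ (case (i)) and $|\nu_1|^{1/2}\epsilon^{1/2} + |\nu_2|^{2/3}\epsilon^{2/3}$ (case (ii)) bounds, with the same $\epsilon_0(F,q,\nu)$ condition inherited from the sharp theorem.

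\textbf{Upper bound.} I start from the near-optimal branched construction $(u_\epsilon,\chi_\epsilon)$ achieving the sharp upper bound of \cref{thm:L1_3wells}. Denote by $L(\epsilon)$ the total length of interfaces of $\chi_\epsilon$ transverse to $\nu$, so that the sharp surface contribution is comparable to $\epsilon L(\epsilon)$ and matches the target scaling. Setting $h = \epsilon$, I mollify $\nabla u_\epsilon$ across each interface by an anisotropic kernel of width $h$ in direction $\nu$, defining $\tilde u_\epsilon$ by a compatible antiderivative and applying a thin boundary cut-off layer to preserve $\tilde u_\epsilon \in \mathcal{A}_F$. Keeping $\chi_\epsilon$ unchanged, on each smoothed slab of measure $\sim h$ around an interface $|\nabla \tilde u_\epsilon - \chi_\epsilon| = O(1)$ and $|\partial_\nu \nabla \tilde u_\epsilon| = O(h^{-1})$. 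Hence the added elastic contribution is $O(h L(\epsilon)) = O(\epsilon L(\epsilon))$ and the diffuse surface term is $\epsilon^q \cdot h^{1-q} L(\epsilon) = \epsilon L(\epsilon)$; both are of the same order as the sharp surface penalty, giving the desired upper bound.

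\textbf{Main obstacles.} The principal technical point in the lower bound is to verify that \cref{thm:comparison} can be invoked in its full-data variant on the curl-free field $\nabla u - F$ (compatible with the ``zero outside $\Omega$'' hypothesis via the affine shift) so that the resulting $\tilde\chi$ is a valid competitor in $BV_\nu(\Omega;\K_3)$ for \cref{thm:L1_3wells}. In the upper bound the main care required is to execute the mollification without destroying the affine boundary condition or the curl-free structure, which forces a boundary cut-off layer whose contribution must remain below the target scaling; one further has to check that the smoothing scale $h = \epsilon$ is strictly smaller than the finest length scale in the branching refinement of \cref{thm:L1_3wells}, so that the smoothed slabs around distinct interfaces remain disjoint and the counting argument above is valid.
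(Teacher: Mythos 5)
Your proposal follows the same route as the paper: the lower bound is obtained by the full-data variant of \cref{thm:comparison} applied to the curl-free field $\nabla u - F$, and the upper bound by anisotropically mollifying the sharp-interface near-optimizer on scale $\epsilon$ and controlling the resulting $L^q$ derivative penalty (the paper packages this mollification as \cref{cor:DiffToSharp_Aniso_Upper}).

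One caveat on your stated verification that ``$h=\epsilon$ is strictly smaller than the finest length scale in the branching refinement'': this check is both unnecessary and, for the second-order laminate construction, false. In the upper bound for $F\in\K_3^2$ with $\nu\cdot e_1\neq 0$, the inner branching runs down to scales of order $1/N^4\sim\epsilon|\nu_1|\le\epsilon$, so the smoothed slabs around adjacent inner interfaces do overlap. Your slab-counting derivation of the bound $\int_\Omega|\p_\nu\nabla\tilde u_\epsilon|^q\,dx\lesssim h^{1-q}L(\epsilon)$ is therefore not directly valid. The way around this (which is what the paper's \cref{cor:DiffToSharp_Aniso_Upper} does) is the $L^\infty$--$L^1$ interpolation estimate
\begin{align*}
\int_\Omega|\p_\nu \nabla\tilde u_\epsilon|^q\,dx\le\|\p_\nu \nabla\tilde u_\epsilon\|_{L^\infty}^{q-1}\,\|\p_\nu \nabla\tilde u_\epsilon\|_{L^1}\lesssim\epsilon^{1-q}\,\|\nabla u_\epsilon\|_{L^\infty}^{q-1}\,\|D_\nu\nabla u_\epsilon\|_{TV},
\end{align*}
which is insensitive to overlapping slabs (the key input being that $\|\nabla u_\epsilon\|_{L^\infty}$ is bounded independently of $\epsilon$, by \cref{eq:PropertiesBranching}). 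Replacing the slab count by this interpolation, together with the bound $\|D\nabla u_\epsilon\|_{TV}\lesssim\|D\chi_\epsilon\|_{TV}+\Per(\Omega)$ from the construction, makes your upper bound argument rigorous.
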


\subsection{Fractional $L^2$-based surface energies}
\label{sec:frac_surface}

Building on the Fourier analysis from the sharp interface setting, we also consider fractional models for surface energies.
To this end, for $\mathcal{K} \subset \R^{d\times d}$ and $s \in (0,\frac{1}{2})$, we define the fractional (directional) Sobolev space on the torus using the Fourier transform, cf. \cref{eq:FourierTrafo},
\begin{align}
\label{eq:anisotropic_Hs}
    H^s_\nu(\T^d;\R^{d \times d}) = \{u \in L^2(\T^d;\R^{d \times d}): \sum_{k \in \Z^d} (1+4\pi^2 |k \cdot \nu|^2)^s |\hat{u}(k)|^2 < \infty\}
\end{align}
and set for $\chi \in H^s_\nu(\T^d;\K)$
\begin{align*}
    E_{surf}^s(\chi) := \left( \sum_{k \in \Z^d \setminus \{0\}} |k \cdot \nu|^{2s} |\hat{\chi}(k)|^2 \right)^{\frac{1}{2s}}.
\end{align*}
For $s < 1/2$ it then holds $BV_\nu(\T^d;\K) \subset H^s_\nu(\T^d;\K)$, see the proof of \cref{thm:frac_energy}. Moreover, in the following by extending $\chi \in L^2((0,1)^d;\K)$ one-periodically, we view it as a function on the torus $\chi \in L^2(\T^d;\K)$ and define the space $H^s_\nu((0,1)^d;\R^{d \times d})$ analogously.
The power of $1/2s$ is chosen such that the surface energy admits the correct behaviour in the length of interfaces.
In addition, by the choice of $\nu \in \mathbb{S}^{d-1}$ we also focus in this nonlocal context on anisotropic settings.
The full energy is then defined to be
\begin{align}
\label{eq:frac_energy}
E_{\epsilon,s}(u,\chi) := \int\limits_{\Omega} |\nabla u - \chi|^2 dx + \epsilon   E_{surf}^s(\chi).
\end{align}

We prove that also in this setting, the scaling laws from \cref{sec:intro_L1} remain valid.

\begin{thm}
\label{thm:frac_energy}
Let $d \geq 2$, $N \leq d+1$, $s \in (0,\frac{1}{2})$, $\Omega = (0,1)^d$, $\nu \in \S^{d-1}$, and let $\K_N$ be defined in \cref{eq:HigherLaminates_wells}, and $\ell\in\{1,\dots,N-1\}$. For $F \in \K_N^\ell$, cf. \cref{sec:prelim_lam}, consider the energy $E_{\epsilon,s}(u,\chi)$ defined in \cref{eq:frac_energy} for $u \in \mathcal{A}_F$, cf. \cref{eq:AdmissibleFunctionsL2}, and $\chi \in H^s_\nu(\Omega;\K_N)$.
Then there are $C = C(d,F,s,\ell) > 0$ and $\epsilon_0 = \epsilon_0(d,F,s,\ell,\nu) > 0$ such that for any $\epsilon \in (0,\epsilon_0)$
\begin{align*}
    \inf_{\chi \in H^s_\nu(\Omega;\K_N)} \inf_{u \in \mathcal{A}_F} E_{\epsilon,s}(u,\chi) \geq C \left( \sum_{j=0}^{\ell-1} |\nu_{j+1}|^{\frac{2}{\ell-j+2}} \epsilon^{\frac{2}{\ell-j+2}} \right).
\end{align*}
Moreover, for $d=2$ and $N = 3$ a matching upper bound holds
\begin{align*}
    \inf_{\chi \in H^s_\nu(\Omega;\K_3)} \inf_{u \in \mathcal{A}_F} E_{\epsilon,s}(u,\chi) \leq C^{-1} \left( \sum_{j=0}^{\ell-1} |\nu_{j+1}|^{\frac{2}{\ell-j+2}} \epsilon^{\frac{2}{\ell-j+2}} \right).
\end{align*}
\end{thm}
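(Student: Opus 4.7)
The strategy is to extend the Fourier-analytic framework of \cref{thm:L1_mult_wells} to the fractional setting by replacing the $BV_\nu$-seminorm with the $H^s_\nu$-seminorm. For the lower bound we use a cone decomposition in Fourier space, balancing the elastic control outside the cones against the fractional surface control inside. For the upper bound in the Lorent case $d=2$, $N=3$, we lift the optimal BV constructions underlying \cref{thm:L1_3wells} and verify by a direct Fourier computation that the resulting $\chi$ satisfies the required fractional bound; the key ingredient is a quantitative version of the embedding $BV_\nu(\Omega) \hookrightarrow H^s_\nu(\Omega)$ valid for $s<\tfrac12$, which, as noted in the paper, is part of the proof of this very theorem.

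For the lower bound, after periodizing $\chi$ and $\nabla u$ onto $\T^d$ (with the boundary contribution absorbed into the constants), the pointwise minimization of the Fourier representation of the elastic energy over $\hat u(k) \in \C^d$ yields
\begin{align*}
E_{el}(u,\chi) \geq \sum_{k \in \Z^d \setminus \{0\}} \sum_{j=1}^d \Bigl(1 - \frac{k_j^2}{|k|^2}\Bigr) |\hat\chi_{jj}(k)|^2.
\end{align*}
Thus for each $j \in \{1,\dots,\ell\}$ and each aperture $R>0$ the elastic energy controls the $L^2$-mass of $\hat\chi_{jj}$ outside the cone $C_j^R := \{k \in \Z^d : |k|^2 - k_j^2 < R|k|^2\}$ around $\pm e_j$. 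Inside $C_j^R$, as soon as $|\nu_j|^2 > 2R$, we have $|k\cdot\nu| \geq \tfrac{|\nu_j|}{2}|k|$, so that the fractional surface energy dominates the high-frequency tail of $\hat\chi_{jj}$ through $|k\cdot\nu|^{2s} \geq (|\nu_j|/2)^{2s}|k|^{2s}$. The $1/(2s)$-power in the definition of $E^s_{surf}$ is exactly the weight needed for this control to have the same spatial scaling as $\|D_\nu\chi\|_{TV}$ in the sharp-interface setting.

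With these two Fourier controls in hand, the lamination rigidity of $\K_N^\ell$ -- each $\chi_{jj}$ can only deviate from its parent value on the subregion where the preceding components have already realized their rank-one connections, see \cref{fig:4_wells} -- enforces a fixed Fourier mass on each $\hat\chi_{jj}$ dictated by the boundary datum $F \in \K_N^\ell$. Iterating from the innermost component outward, and optimizing in each layer the frequency cutoff $\rho_j$ together with the aperture $R_j$ exactly as in the proof of \cref{thm:L1_mult_wells}, one recovers the asserted sum. For the upper bound, the branched construction underlying \cref{thm:L1_3wells} produces a $\chi$ which is a finite union of axis-aligned rectangles with controlled $BV_\nu$-seminorm, and a direct Fourier computation (using the $|k\cdot\nu|^{-1}$-decay of $\hat\chi$ transverse to the interfaces) verifies that $E^s_{surf}(\chi)$ is controlled by $\|D_\nu\chi\|_{TV(\Omega)}$ up to constants, so that the same construction achieves the matching upper bound.

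The main obstacle is the cone-splitting argument in the lower bound: because $E^s_{surf}$ enters the energy through a $1/(2s)$-exterior power, the different Fourier modes are coupled non-additively, so one cannot simply freeze each component $\hat\chi_{jj}$ and reduce to the scalar sharp-interface problem. The hierarchical iteration from the innermost lamination direction outward, together with a careful choice of the apertures $R_j$ ensuring compatibility with $|\nu_j|$ and essentially disjoint supports of the cones $C_j^{R_j}$, is what makes the scheme go through; the case $\nu_j = 0$ is degenerate and the $j$-th term simply drops from the asserted sum.
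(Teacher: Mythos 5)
Your lower-bound outline is in the right spirit but misstates the mechanism and leaves the one genuinely new technical point unresolved. You claim that for \emph{each} $j \in \{1,\dots,\ell\}$ the fractional surface energy controls high frequencies inside the cone $C_j^R$, which requires $|\nu_j|^2 > 2R$ for every $j$; this fails already for $\nu = e_1$, the most basic case of the theorem. In the paper only the cone around the first coordinate direction with $\nu_j \neq 0$ is truncated by the surface energy (via the calculation you quote), and the truncation is then \emph{propagated} to the remaining cones $C_{j,\mu}$ by the commutator estimate (Corollary~\ref{cor:CommutatorEst_Applied} and its iterations) — this works regardless of whether $\nu_j = 0$ for $j>1$. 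You do gesture at ``iterating from the innermost component outward exactly as in Theorem~\ref{thm:L1_mult_wells}'', which is correct, but your earlier paragraph is inconsistent with this. More importantly, you identify the $1/(2s)$-power as ``the main obstacle'' and describe its resolution only vaguely (``careful choice of apertures''). The paper's resolution is short and concrete: the fractional surface energy yields a high-frequency bound $\sum_{|k\cdot\nu|\geq\lambda}|\hat\chi(k)|^2 \leq (\lambda\epsilon)^{-2s}E_{\epsilon,s}^{2s}$, i.e.\ $\psi_{2s}$ of the sharp-interface control (Lemma~\ref{lem:LocalizationFrac}), and one then uses $\psi_{1-\gamma}\circ\psi_{2s} = \psi_{(1-\gamma)2s}$ to set $\gamma_s = 1-2s+2s\gamma$ and run the Theorem~\ref{thm:L1_mult_wells} argument verbatim with $\gamma_s$ in place of $\gamma$. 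Without that composition identity your scheme does not close.

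The upper bound has a genuine gap. The embedding $BV_\nu(\T^d;\K) \hookrightarrow H^s_\nu(\T^d;\K)$ for $s<\tfrac12$ is the entire content of the upper bound, and you assert it can be ``verified by a direct Fourier computation using the $|k\cdot\nu|^{-1}$-decay of $\hat\chi$ transverse to the interfaces.'' This does not work. First, the branched construction is \emph{not} a finite union of axis-aligned rectangles — the cells $\omega_2, \omega_3$ in the unit cell have slanted boundaries, and their number grows unboundedly as $\epsilon\to0$, so a naive superposition of $|k\cdot\nu|^{-1}$-tails does not produce a bound by $\|D_\nu\chi\|_{TV(\Omega)}$ (the crude sum overcounts the $H^s$ mass by a power of the number of interfaces). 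What is actually needed — and what the paper provides — is an interpolation: rewrite the $H^s_\nu$ seminorm as the Gagliardo double integral $\int\!\!\int |\chi(x+h\nu)-\chi(x)|^2/|h|^{1+2s}\,dx\,dh$, slice along $\nu$, apply a one-dimensional Gagliardo–Nirenberg inequality $\|\tilde\chi_y^\nu\|_{\dot H^s(\R)}^2 \lesssim (\|\tilde\chi_y^\nu\|_{L^1} + \|D\tilde\chi_y^\nu\|_{TV})^{2s}$ (from \cite{BM18}, see also \cite{RZZ19}), and then use Jensen's inequality in $y$ (valid since $2s<1$) to pass the $2s$-power outside the slice integral. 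This produces $E^s_{surf}(\chi) \leq C(1+\Per(\Omega)+\|D_\nu\chi\|_{TV(\Omega)})$ and hence the required comparison $E_{\epsilon,s}(\chi) \leq C E_\epsilon(\chi) + C\epsilon(1+\Per(\Omega))$. Your proposal acknowledges that this embedding is ``part of the proof of this very theorem'' but then does not actually prove it, and the proof sketch you offer for it is incorrect.
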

As above, this shows the robustness of the scaling bounds with respect to possible choices of the (anisotropic) surface energies. While we only discuss the upper bound in the case of the Lorent three-well problem, our interpolation strategy of \cref{sec:frac_proof} shows that the fractional energy bounds are sharp whenever they are sharp in the sharp interface settings.

\subsection{Discrete models}
\label{sec:intro_Tartar}
As a last prototypical setting we turn to discrete models as regularized versions of continuum models. It is well-known \cite{D03,L01,L09,CM99,ALP2017} in the context of shape-memory alloys and, more generally, in phase transition problems \cite{ABC07} that under suitable conditions, discretizations lead to surface-type energies in a first order expansion. In fact, \cite{L09} proved the equivalence of scaling laws for continuous singular perturbation models with isotropic surface energy regularization and a class of associated discrete models. The article \cite{L09} even considered settings in the presence of gauges, i.e., with the full $SO(2)$ symmetry. These results, hence, prove that also for shape-memory type models, one expects that -- provided that the lattice structures and the rank-one geometry of the wells match -- the scaling behaviour in the discretization parameter corresponds to that of the associated singular perturbation model.

Similarly as above, in this section, we seek to extend these observations to \emph{anisotropic} scenarios. As above, we search for minimal conditions on the lattice with respect to the
geometry of the wells to guarantee this behaviour. For a fixed class of lattice structures, we here show that discrete energies with an ``anisotropic lattice structure'' can be compared to singular perturbation models with anisotropic surface regularization contributions.

\subsubsection{The discrete set-up}
In order to outline our result, let us introduce some notation. We consider a specific choice of lattice structure. Here
the triangulation is given by using the following ``upper" and ``lower" triangles (see \cref{fig:triangulation_referencetriags})
\begin{align}
\label{eq:lattice}
    T_h := \{x \in [0,h)^2: x_2 < h - x_1\},\quad T_h' := \{x \in [0,h)^2: x_2 \geq h-x_1\},
\end{align}
and, hence,
\begin{align*}
    \mathcal{T}_h := \left\{  T_h+z: z \in h \Z^2 \right\} \cup \left\{T_h'+z : z \in h \Z^2 \right\}.
\end{align*}
We also consider rotated variants of this triangulation (see \cref{fig:triangulation_unit_square}), that is for $R \in SO(2)$, consider
\begin{align} \label{eq:Triangulation}
    \mathcal{T}_h^R := R \mathcal{T}_h =  \left\{ R(T_h+z): z \in h \Z^2 \right\} \cup \left\{ R(T_h'+z): z \in h \Z^2\right\}.
\end{align}

Given the lattice structure, we associate an energy to it:
For $\Omega \subset \R^2$ bounded and polygonal, $F \notin \K$, and $p \in [1,\infty)$, we define the sets of admissible functions
\begin{align}
\label{eq:admissible_discrete}
\begin{split}
\mathcal{A}_{h, F}^{p,R} &:= \{u\in W^{1,p}_{\textup{loc}}(\R^2;\R^2) : u \text{ is affine on each triangle } \tau \in \mathcal{T}_h^R, u(x) = Fx \text{ outside } \Omega\}, \\
\mathcal{C}_{h}^{R} &:= \{\chi \in L^\infty(\R^2;\K) : \chi \text{ is constant on  each triangle } \tau  \in \mathcal{T}_h^R\}.
\end{split}
\end{align}
We consider the energy given by
\begin{align}
\label{eq:energy_discrete}
    E_{el,h}^{p}(u,\chi) := \int_\Omega |\nabla u - \chi|^p dx,
    \quad
    \text{for }u\in\mathcal{A}_{h, F}^{p,R}, \chi\in\mathcal{C}_{h}^{R}.
\end{align}
We highlight that by the choice of the admissible deformations this corresponds to a discrete energy. Upper bounds for multi-well energies of this type have been systematically investigated in \cite{C99} for a rich class of well structures. The two-well problem had been studied with matching upper and lower bounds in \cite{CM99}.
We also refer to \cite{CCK95, BP04} for further upper bound constructions and matching lower bounds on the two-well problem.

\begin{figure}
    \centering
    \begin{subfigure}[t]{0.3\textwidth}
        \centering
        \begin{tikzpicture}[scale=2]
            \draw (0,0) -- (1,0) -- (0,1) -- cycle;
            \draw (1,0) -- (1,1) -- (0,1) -- cycle;

            \node at (0,0) [above right] {$T_h$};
            \node at (1,1) [below left] {$T_h'$};

            \draw[|-|] (-0.2,0) -- (-0.2,1) node[midway,left] {$h$};
        \end{tikzpicture}
        \caption{Illustration of the upper and lower triangles as defined in \cref{eq:lattice}.}
        \label{fig:triangulation_referencetriags}
    \end{subfigure}%
    ~
    \begin{subfigure}[t]{0.6\textwidth}
    \centering
    \begin{tikzpicture}[scale = 0.7]
    \draw[very thick, Blue, fill = Blue, fill opacity = 0.3] (0,0) rectangle (6,6);
    \begin{scope}[rotate = 15]
        \draw (-1,-2) rectangle (8,6);
        \foreach \x in {0,1,2,3,4,5,6,7}{
            \draw (\x,-2) -- (\x,6);
        }
        \foreach \y in {-1,0,1,2,3,4,5}{
            \draw (-1,\y) -- (8,\y);
        }
        \foreach \x in {0,1,2,3,4,5,6,7}{
            \draw (\x,-2) -- (-1,\x-1);
            \draw (8,\x-2) -- (\x,6);
        }
        \draw[|-|] (-1.2,-2) -- (-1.2,-1);
        \node at (-1.2,-1.5)[left] {$h$};
        \end{scope}
    \end{tikzpicture}
    \caption{Triangulation $\mathcal{T}_h^R$ of $(0,1)^2$ (highlighted in blue) with $h = 1/6$ and $R$ a rotation by $15^\circ$.}
    \label{fig:triangulation_unit_square}
    \end{subfigure}
    \caption{Illustration of the definition of triangulations used in this section.}
    \label{fig:triangulation}
\end{figure}
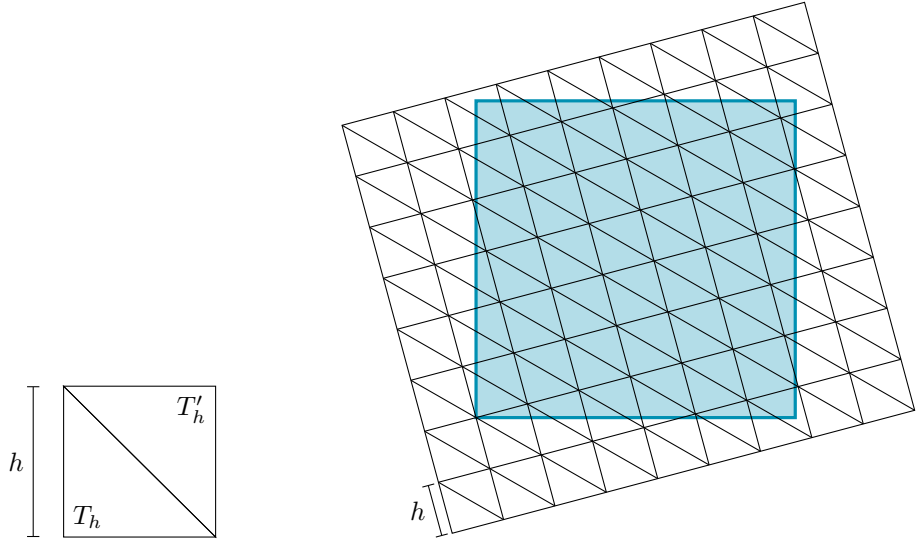

For ease of notation, in this section we consider the row-wise cross product of a vector and a matrix defined by
\begin{align*}
    (v \times M)_j := v_1 M_{j2} - v_2 M_{j1}, \ v \in \R^2, M \in \R^{2 \times 2}, j=1,2.
\end{align*}
In particular, using this notation, there is a rank-one connection between two matrices $A, B \in \R^{2 \times 2}$, i.e., $A-B = a \otimes n$ for $a, n \in \R^2 \setminus \{0\}$, if and only if $A\neq B$ and
\begin{align*}
    n \times (A-B) = 0.
\end{align*}

Given this set-up, our main result is given by the following comparison estimate.

\begin{thm} \label{cor:DiscreteToSharp}
Let $\Omega \subset \R^2$ be an open, bounded polygonal domain. Let $R \in SO(2)$.
   Let $\K = \{A_1,A_2,\dots,A_N\} \subset \R^{2 \times 2}$ and $F \notin \K$.
    Let $p \in [1,\infty)$ and consider $\mathcal{T}_h^R$ defined in \cref{eq:Triangulation}, and the discrete energy $E_{el,h}^{p}(u,\chi)$ for $u \in \mathcal{A}^{p,R}_{h,F}$ and $\chi \in \mathcal{C}^R_h$, cf. \cref{eq:admissible_discrete}, defined in \cref{eq:energy_discrete}.
    Then the following results hold:
    \begin{enumerate}
    \item \emph{Isotropic setting:}
     Assume that for any $A_j, A_k \in \K$ with $j \neq k$ it holds
\begin{align}\label{eq:assumption-wells}
(Re_1) \times (A_j - A_k) \neq 0, \ (Re_2) \times (A_j - A_k) \neq 0, \ \Big(R\begin{psmallmatrix} 1/\sqrt{2} \\ 1/\sqrt{2} \end{psmallmatrix}\Big) \times (A_j - A_k) \neq 0.
\end{align}
   Then, there are constants $C = C(\K,R,p,F,\Omega) > 0$ and $h_0 = h_0(\Omega,R,p)>0$ such that for all $h \in (0,h_0)$
    \begin{align*}
        E_{el,h}^p(u,\chi) \geq C \left( \int_{\Omega} |\nabla u - \chi|^p dx + h \Vert D \chi \Vert_{TV(\Omega)} +h \right).
    \end{align*}
    In particular, we have
    \begin{align*}
        \inf_{u\in\mathcal{A}^{p,R}_{h,F}} \inf_{\chi\in\mathcal{C}^R_h}  E_{el,h}^p(u,\chi) \geq C \Big( \inf_{\chi \in BV(\Omega;\K)} \inf_{\substack{u \in W^{1,p}(\Omega;\R^2) \\ u(x) = Fx  \text{ on } \p \Omega}}\int_{\Omega} |\nabla u - \chi|^p dx + h \Vert D \chi \Vert_{TV(\Omega)} +h\Big).
    \end{align*}

    \item \emph{Anisotropic setting:}
    Assume that there exists $v \in \Big\{e_1, e_2, \begin{psmallmatrix} 1/\sqrt{2} \\ 1/\sqrt{2} \end{psmallmatrix}\Big\}$ such that for any $j\neq k$
    \begin{align}
    \label{eq:assumption-wells-deg}
        (Rw) \times (A_j - A_k) \neq 0, \quad \text{for every } w\in\Big\{e_1,e_2,\begin{psmallmatrix} 1/\sqrt{2} \\ 1/\sqrt{2} \end{psmallmatrix}\Big\}\setminus\{v\}.
    \end{align}
    Then, there are constants $C = C(\K,R,p,F,\Omega,v) > 0$ and $h_0 = h_0(\Omega,R,p)>0$ such that for all $h \in (0,h_0)$
    \begin{align*}
        E_{el,h}^p(u,\chi)  \geq C \left( \int_\Omega|\nabla u - \chi|^p + h \Vert D_{R \nu} \chi \Vert(\Omega)+h\right),
    \end{align*}
    with $\nu \in \S^1$ such that $\nu \cdot v = 0$.
    \end{enumerate}
\end{thm}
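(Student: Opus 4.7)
The plan is to exploit the Hadamard jump condition across each interior edge of $\mathcal{T}_h^R$ together with the assumed rank-one incompatibility of $\K$ in the three edge-normal directions. On any triangle $\tau \in \mathcal{T}_h^R$ the admissible $u$ has constant gradient $M_\tau := \nabla u|_\tau$ and $\chi$ takes a constant value $C_\tau := \chi|_\tau \in \K$. Continuity of $u$ across any interior edge $e$ with unit normal $\nu_e$ forces $M_{\tau^+(e)} - M_{\tau^-(e)}$ to be rank-one with normal $\nu_e$, equivalently $\nu_e \times (M_{\tau^+} - M_{\tau^-}) = 0$. Writing $M_\tau = (M_\tau - C_\tau) + C_\tau$ and applying $\nu_e \times (\cdot)$ together with the triangle inequality gives the key edge-wise estimate
\[
|\nu_e \times (C_{\tau^+(e)} - C_{\tau^-(e)})| \leq |M_{\tau^+(e)} - C_{\tau^+(e)}| + |M_{\tau^-(e)} - C_{\tau^-(e)}|.
\]
The crucial geometric fact exploited in the sequel is that the edges of $\mathcal{T}_h^R$ appear in exactly three orientations, with unit normals (up to sign) $Re_1$, $Re_2$, and $R(1/\sqrt 2, 1/\sqrt 2)^T$.

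For the isotropic case, assumption \cref{eq:assumption-wells} immediately yields a uniform lower bound $|\nu_e \times (A_j - A_k)| \geq c_0 = c_0(\K, R) > 0$ for all $A_j \neq A_k$ and every admissible normal. For each jumping interior edge (one with $C_{\tau^+} \neq C_{\tau^-}$) the previous display and convexity of $t \mapsto t^p$ then give $|M_{\tau^+} - C_{\tau^+}|^p + |M_{\tau^-} - C_{\tau^-}|^p \geq 2^{1-p} c_0^p$. Summing over all such edges and noting that each triangle is incident to at most three edges, hence appears at most three times on the right-hand side, I obtain
\[
\#\{\text{jumping interior edges in } \Omega\} \lesssim \sum_\tau |M_\tau - C_\tau|^p \sim h^{-2} E_{el,h}^p(u,\chi).
\]
Since $\chi$ is $\mathcal{T}_h^R$-piecewise constant with bounded jumps and every edge has $\mathcal{H}^1$-length $\sim h$, the distributional total variation obeys $\Vert D\chi\Vert_{TV(\Omega)} \lesssim h \cdot \#\{\text{jumping edges}\} \lesssim h^{-1} E_{el,h}^p(u,\chi)$. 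Combining this inequality with the trivial $E_{el,h}^p \geq E_{el,h}^p$ and the boundary-layer bound discussed below yields the isotropic statement.

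For the anisotropic case, I restrict the counting to the two edge classes whose normals $Rw$ satisfy $w \neq v$; on these \cref{eq:assumption-wells-deg} still furnishes the uniform bound $|\nu_e \times (A_j - A_k)| \geq c_0 > 0$, while edges with normal $Rv$ are discarded. The link to $\Vert D_{R\nu}\chi\Vert_{TV}$ with $\nu \perp v$ is then clean: writing the distributional derivative as $D\chi = \sum_{e} (C_{\tau^+(e)} - C_{\tau^-(e)}) \otimes \nu_e \,\mathcal{H}^1|_e$, the discarded edges contribute weight $|(Rv)\cdot(R\nu)| = |v \cdot \nu| = 0$ to $D_{R\nu}\chi$, whereas the two retained classes contribute with uniformly positive weight $|w \cdot \nu| \geq c_1 > 0$. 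The edge-counting bound therefore upgrades directly to $\Vert D_{R\nu}\chi\Vert_{TV(\Omega)} \lesssim h^{-1} E_{el,h}^p(u,\chi)$. Finally, the additive $h$ term in both cases originates from a boundary layer: any triangle of $\mathcal{T}_h^R$ containing a positive-measure subset of $\R^2 \setminus \Omega$ is forced by the boundary condition $u(x) = Fx$ outside $\Omega$ to have $M_\tau = F$, and since $F \notin \K$ and $C_\tau \in \K$ it contributes $\geq \dist(F, \K)^p |\tau \cap \Omega| \gtrsim h^2$. For $h < h_0(\Omega, R, p)$ there are $\gtrsim h^{-1}$ such triangles along $\partial \Omega$, summing to $\gtrsim h$.

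The main obstacle is the careful identification between the discrete edge-count and the (anisotropic) total variation — specifically verifying that for a $\mathcal{T}_h^R$-piecewise-constant $\chi$, the distributional measure $D_{R\nu}\chi$ indeed decomposes as the sum over interior edges of $(C_{\tau^+(e)} - C_{\tau^-(e)})(\nu_e \cdot R\nu)\,\mathcal{H}^1|_e$ with the weights that make the previous weighted edge-counting yield the anisotropic directional TV with the correct constant. The Hadamard step and the conversion of edge counts to $h$-scaled TV are otherwise elementary.
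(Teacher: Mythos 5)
Your interior-edge Hadamard argument is a clean alternative to the paper's three-triangle-patch lemma (Lemma \ref{lem:DiscreteToSharp-aniso}): the paper encodes the same tangential-continuity constraint via explicit manipulations on the middle triangle $T_h'$, but your per-edge formulation of the jump condition $\nu_e\times(M_{\tau^+}-M_{\tau^-})=0$ gets directly to
\begin{align*}
|\nu_e \times (C_{\tau^+}-C_{\tau^-})| \leq |M_{\tau^+}-C_{\tau^+}| + |M_{\tau^-}-C_{\tau^-}|,
\end{align*}
and the subsequent edge-counting and identification of $D_{R\nu}\chi$ as a sum over edges with weight $\nu_e\cdot R\nu$ is correct. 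With the modification that you should restrict the count to edges well inside $\Omega$ (so that both triangles have area $h^2/2$ in $\Omega$), this part of your argument does match the paper's conclusion \eqref{eq:DiscreteToSharp_inner}.

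However, your derivation of the additive $+h$ term has a genuine gap, and it is precisely the part that occupies most of the paper's proof (Step 2, cases 2(a), 2(b), and Step 2.2). Your claim is that every triangle with $|\tau \setminus \Omega|>0$ has $\nabla u|_\tau = F$, and that there are $\gtrsim h^{-1}$ such triangles each contributing $\gtrsim h^2$. This fails in two ways. First, a crossing triangle can have $|\tau\cap\Omega|$ arbitrarily small, so one cannot conclude a contribution $\gtrsim h^2$; the paper introduces the threshold $\sigma h^2$ and the set $\mathcal{I}_{\textup{small}}$ precisely for this reason. Second, and more seriously, if $\Omega$ is a polygon whose edges lie exactly on lattice lines (e.g.\ $\Omega=(0,1)^2$, $R=\Id$, $h=1/N$), then \emph{no} triangle has positive measure outside $\Omega$, so no triangle is forced to carry $\nabla u = F$, and your count $\gtrsim h^{-1}$ is zero. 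For such aligned domains only the tangential component of $\nabla u$ is pinned by the boundary data, and if some $A_w\in\K$ satisfies $w\times(A_w-F)=0$, then the choice $\chi=A_w$ together with $\nabla u = A_w$ on the boundary layer incurs no elastic penalty near $\partial\Omega$ and, in the anisotropic setting, contributes zero to $\|D_{R\nu}\chi\|$ if the phase boundary is aligned with $Rv$. This is exactly case 2(b) of the paper, which requires a genuinely global argument: tracking a connected component $E'$ of $\{\chi=A_w\}$ from the boundary, arguing that either its boundary inside $\Omega$ has a non-degenerate projection onto the $\nu$-direction (yielding a TV lower bound), or that one can integrate $\partial_\nu u$ along $\nu$-oriented rays contained in $E'$ to extract a bulk elastic contribution $\gtrsim \ell$. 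Your proposal does not address this, nor the complementary misaligned case of Step 2.2; without these, the constant $C$ and threshold $h_0$ cannot be made uniform over general polygonal $\Omega$.
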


We emphasize that the result on the \emph{isotropic} case is not new. It only recovers the setting already analysed in \cite{L09} for the general $N$-well problem in two-dimensions which in \cite{L09} is even considered with $SO(2)$ symmetry. In the two-well case, it recovers the result from \cite{CM99}. Our main contribution is the bound in the \emph{anisotropic} setting. Its proof relies on similar ideas as in \cite{L09}.

\subsubsection{Applications}
\label{sec:intro_appl}
We will consider two prototypical examples -- the Lorent three-well setting and the Tartar square. In both settings, it is interesting to trace the effects of anisotropy.

We begin by discussing the discrete version of the Lorent three-well problem, introduced in \cref{sec:introL13}. It admits laminates up to order two.

\begin{cor}
\label{cor:Lorent_discrte}
    Let $\Omega = (0,1)^2$, $R \in SO(2)$, $\mathcal{T}_h^R$ be the triangulation defined in \cref{eq:Triangulation}, $\K_3$ be given in \cref{eq:Lorent_wells}, and $F \in \K_3^{lc} \setminus \K = \K_3^1 \cup \K_3^2$, cf. \cref{eq:laminates_order}.
    Let $E_{el,h}(u,\chi) := E_{el,h}^2(u,\chi)$ be defined in \cref{eq:energy_discrete} for $p=2$ and $u \in \mathcal{A}^{2,R}_{h,F}$, $\chi \in \mathcal{C}^R_h$, cf. \cref{eq:admissible_discrete}.
    Then there are constants $C = C(R,F) > 0$ and $h_0 = h_0(R,F)> 0$ such that one of the following applies.
    \begin{enumerate}
    \item\label[case]{itm:Lorent_discrete_i} \emph{Isotropic setting.} If $R \in SO(2)$ is such that $R^T e_1 \notin \{\pm e_1, \pm e_2, \pm 2^{-1/2}(e_1+e_2)\}$ we have for any $h \in (0,h_0)$
    \begin{align*}
        \inf_{u \in \mathcal{A}^{2,R}_{h,F}} \inf_{\chi \in \mathcal{C}^R_h} E_{el,h}(u,\chi) \geq \begin{cases}
            C h^{\frac{2}{3}}, & F \in \K_3^1, \\
            C h^{\frac{1}{2}}, & F \in \K_3^2.
        \end{cases}
    \end{align*}
    \item\label[case]{itm:Lorent_discrete_ii} \emph{Anisotropic setting.}
    If $R^T e_1 \in \{\pm e_1, \pm e_2, \pm 2^{-1/2}(e_1+e_2)\}$ it holds for any $h \in (0,h_0)$
    \begin{align*}
        \inf_{u \in \mathcal{A}^{2,R}_{h,F}} \inf_{\chi \in \mathcal{C}^R_h} E_{el,h}(u,\chi) \geq \begin{cases}
            C h, & F \in \K_3^1, \\
            C h^{\frac{2}{3}}, & F \in \K_3^2.
        \end{cases}
    \end{align*}
    \end{enumerate}
    Moreover, the matching upper bounds hold, that is for every $h \in (0,h_0)$
    \begin{align*}
        \inf_{u \in \mathcal{A}^{2,R}_{h,F}} \inf_{\chi \in \mathcal{C}^R_h} E_{el,h}(u,\chi) \leq \begin{cases}
            C^{-1} h^{\frac{2}{3}}, & F \in \K_3^1, \\ C^{-1} h^{\frac{1}{2}}, & F \in \K_3^2,
        \end{cases}
    \end{align*}
    in the \emph{isotropic setting}, and
    \begin{align*}
        \inf_{u \in \mathcal{A}^{2,R}_{h,F}} \inf_{\chi \in \mathcal{C}^R_h} E_{el,h}(u,\chi) \leq \begin{cases}
            C^{-1} h, & F \in \K_3^1, \\ C^{-1} h^{\frac{2}{3}}, & F \in \K_3^2,
        \end{cases}
    \end{align*}
    in the \emph{anisotropic setting}.
\end{cor}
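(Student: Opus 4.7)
The plan is to combine the comparison estimate of \cref{cor:DiscreteToSharp} with the continuum scaling laws for the Lorent three--well problem from \cref{thm:L1_3wells}, and to match the resulting lower bounds by explicit piecewise affine constructions on $\mathcal{T}_h^R$. To apply \cref{cor:DiscreteToSharp} we first verify its hypotheses. Within $\K_3$ the only rank-one pair is $A_1-A_2 = -e_1\otimes e_1$ (with normal $\pm e_1$), whereas $A_1-A_3$ and $A_2-A_3$ have full rank. Hence for $j\neq k$ the identity $(Rw)\times(A_j-A_k)=0$ with $Rw\neq 0$ forces $Rw=\pm e_1$, so the isotropic hypothesis \eqref{eq:assumption-wells} reduces to $R^T e_1\notin\{\pm e_1,\pm e_2,\pm 2^{-1/2}(e_1+e_2)\}$, which is precisely \cref{itm:Lorent_discrete_i}. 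In \cref{itm:Lorent_discrete_ii} we set $v:=R^T e_1$; then $Rw\neq\pm e_1$ for every $w\in\{e_1,e_2,2^{-1/2}(e_1+e_2)\}\setminus\{v\}$, so \eqref{eq:assumption-wells-deg} holds with this $v$.

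For the lower bound in the isotropic case, \cref{cor:DiscreteToSharp}\,(i), the trivial estimate $\|D\chi\|_{TV(\Omega)}\geq\|D_{e_1}\chi\|_{TV(\Omega)}$, and \cref{thm:L1_3wells} applied with $\nu=e_1$ (so $|\nu_1|=1$) yield $Ch^{2/3}$ for $F\in\K_3^1$ and $Ch^{1/2}$ for $F\in\K_3^2$. In the anisotropic case we pick $\nu\in\S^1$ with $\nu\cdot v=0$. Since $Rv=\pm e_1$ and $R\in SO(2)$, $R$ maps $v^\perp$ onto $(\pm e_1)^\perp=\R e_2$, so $R\nu=\pm e_2$ in each of the three subcases. \cref{cor:DiscreteToSharp}\,(ii) combined with \cref{thm:L1_3wells} applied at direction $R\nu=\pm e_2$ then gives: for $F\in\K_3^1$ the continuum contribution $|(R\nu)_1|^{2/3}h^{2/3}$ vanishes and the lower bound $Ch$ follows from the trivial $+h$ term; for $F\in\K_3^2$, $|(R\nu)_2|^{2/3}h^{2/3}=Ch^{2/3}$ is obtained directly.

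The matching upper bounds are produced by explicit competitors. In the isotropic setting, where $R$ is in general position, we discretize on $\mathcal{T}_h^R$ the branching constructions underlying \cref{thm:L1_3wells}: a Kohn--Müller branching laminate between $A_1$ and $A_2$ with innermost period $\sim h^{1/3}$ gives $h^{2/3}$ for $F\in\K_3^1$, and its nesting inside an outer $A_3$-laminate yields $h^{1/2}$ for $F\in\K_3^2$. In the anisotropic setting the lattice edges are aligned with the rank-one normal $e_1$. For $F\in\K_3^1$ we then take a simple non-branching $\{A_1,A_2\}$-laminate in strips parallel to $Re_2$; the bulk elastic energy is zero, while the triangles straddling $\p\Omega$ are forced to carry $u=Fx$ with $\chi$ a well, contributing an elastic cost of order $h$. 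For $F\in\K_3^2$ we perform a single Kohn--Müller branching of the outer $A_3$-laminate at period $\mu$, with the inner $\{A_1,A_2\}$-laminate pinned at the lattice scale $h$; balancing the outer boundary-layer cost $\sim\mu^2$ against the inner compatibility mismatch $\sim h/\mu$ at $\mu\sim h^{1/3}$ gives total cost $h^{2/3}$.

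The lower bounds reduce essentially mechanically to the two input theorems. The main technical work lies in the upper bound constructions on the fixed triangulation $\mathcal{T}_h^R$: one has to realize the branched laminates of \cite{RT23} as piecewise constant $\chi$ and piecewise affine $u$ whose rank-one interfaces lie along triangle edges, and to verify that the unavoidable discretization error stays below the claimed scaling. In the anisotropic case, the interplay between the inner lamination scale (pinned at $h$), the outer period, and the boundary cost needs to be balanced carefully to obtain exactly $h$, respectively $h^{2/3}$; this is where the genuinely new analysis is located and where we diverge from \cite{RT23}.
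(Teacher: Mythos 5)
Your proposal is correct and follows the paper's argument: lower bounds via \cref{cor:DiscreteToSharp} combined with \cref{thm:L1_3wells} (using $\nu=e_1$ together with $\|D\chi\|_{TV}\ge\|D_{e_1}\chi\|_{TV}$ in the isotropic case, and $R\nu=\pm e_2$ together with the additive $+h$ term in the anisotropic case), and upper bounds by discretizing the continuum branching constructions, holding the innermost laminate at the lattice scale in the anisotropic cases. The only small slip is cosmetic: in the anisotropic $F\in\K_3^1$ upper bound the laminate strips should be taken parallel to $e_2=R\nu$ with $\nu\perp v$, rather than to $Re_2$ (these differ when $v\in\{e_2,2^{-1/2}(e_1+e_2)\}$), but the essential point that the $A_1$--$A_2$ interfaces (normal $e_1=Rv$) lie along lattice edges is unchanged.
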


Let us comment on these results. In the isotropic setting, we recover the lower scaling bounds from the continuous setting. A discretized version of the upper bounds from the continuous setting shows that these estimates are sharp. The more interesting setting arises in the anisotropic situation. Here, for second order laminate boundary conditions, the lower bounds also match the anisotropic ones in the Lorent three-well problem in \cref{thm:L1_3wells}. For boundary data consisting of first order laminates, we observe a difference to the continuum -- due to the limitation of the size of the lengths scales, a scaling bound of the order $h$ emerges in the discrete setup, while it vanishes completely in the continuum. Again, a discretization of the upper bounds from the continuous setting yields the sharpness of these estimates. We expect that similar behaviour is observed for other well configurations.

As a second example and as an extreme case, we consider the setting of the Tartar square. This is a setting of four diagonal matrices which play a prominent role both in inner-mathematical settings \cite{T93, MS03, FS08} and materials \cite{CS13,BFKK94,IKRTZ24}. Indeed, while these four diagonal matrices are pairwise incompatible, they still admit microstructure and display a first loss of rigidity in that approximate solutions become flexible.
  They are given by $T_4 := \{A_1,A_2,A_3,A_4\}$ with
  \begin{align} \label{eq:Tartar}
    A_1 &= \begin{pmatrix} -1 & 0 \\ 0 & -3 \end{pmatrix}, & A_2 &= \begin{pmatrix} -3 & 0 \\ 0 & 1 \end{pmatrix}, &  A_3 &= \begin{pmatrix} 1 & 0 \\ 0 & 3 \end{pmatrix}, & A_4 &= \begin{pmatrix} 3 & 0 \\ 0 & -1 \end{pmatrix}.
  \end{align}
As is well-known in the case of the Tartar square, we have that $T_4^{lc} = T_4$, but its quasiconvex hull is given by $T_4^{qc}:= \{J_1, J_2, J_3, J_4 \}^{\textup{conv}} \cup \bigcup\limits_{j=1}^{4}[A_j,J_j]$ (see \cite[Theorem 9.4]{Ri18}), where
for a set $M\subset \R^{d\times d}$, the notation $M^{\textup{conv}}$ denotes the convex hull of $M$. Here the auxiliary matrices $J_1, \dots, J_4$ are given as
  \begin{align} \label{eq:Tartar_aux}
    J_1 &= \begin{pmatrix} -1 & 0 \\ 0 & -1 \end{pmatrix}, & J_2 &= \begin{pmatrix} -1 & 0 \\ 0 & 1 \end{pmatrix}, &  J_3 &= \begin{pmatrix} 1 & 0 \\ 0 & 1 \end{pmatrix}, & J_4 &= \begin{pmatrix} 1 & 0 \\ 0 & -1 \end{pmatrix}.
  \end{align}

With this notation in hand, we prove the following scaling law for the discrete version of the Tartar energy.

\begin{cor}
\label{cor:Tartar_discrete}
    Let $\Omega=(0,1)^2$, $R \in SO(2)$, and let $T_4 = \{A_1,A_2,A_3,A_4\}$ be the Tartar square given in \cref{eq:Tartar} and let $F \in T_4^{qc}\setminus T_4$. Consider the discrete energy $E_{el,h}(u,\chi) := E_{el,h}^2(u,\chi)$ for $u \in \mathcal{A}^{2,R}_{h,F}$, $\chi \in \mathcal{C}^R_h$, cf. \cref{eq:admissible_discrete}, defined in \cref{eq:energy_discrete}.
    Then, for any $R \in SO(2)$ and any $\eta>0$ there exist constants $C_{\eta} >0,C  = C(F,R) >0$ and $h_0 = h_0(F,R) > 0$ such that for all $h \in (0,h_0)$
    \begin{align*}
        \inf_{u \in \mathcal{A}^{2,R}_{h,F}} \inf_{\chi \in \mathcal{C}^R_h} E_{el,h}(u,\chi) \geq C \exp(-C_\eta |\log h|^{\frac{1}{2}+\eta}).
    \end{align*}
\end{cor}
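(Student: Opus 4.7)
The plan is to reduce the discrete problem to a continuum sharp-interface singular perturbation problem via \cref{cor:DiscreteToSharp}, and then to invoke the known continuum scaling bound for the Tartar square (as recorded in \cite{IKRTZ24}). The nice observation is that, in contrast to the Lorent setting, the diagonal structure of the Tartar wells forces us into the \emph{isotropic} case of \cref{cor:DiscreteToSharp} for \emph{every} rotation $R\in SO(2)$, so there is no case distinction to track.

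First I would verify that \cref{eq:assumption-wells} holds unconditionally. For a diagonal matrix $M=\diag(a,b)$ and $v\in\R^2$, the row-wise cross product unwinds to $v\times M=(-v_2 a,\, v_1 b)$, which vanishes only if $v_1=v_2=0$ whenever $a,b\neq 0$. Inspection of all pairwise differences in \cref{eq:Tartar} shows that each $A_j-A_k$ is diagonal with both diagonal entries nonzero. Since $R\in SO(2)$ preserves norms, $Rv\neq 0$ for every $v\in\{e_1,e_2,2^{-1/2}(e_1+e_2)\}$, and hence \cref{eq:assumption-wells} is satisfied for all $R\in SO(2)$ and all $j\neq k$. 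Consequently, the isotropic case of \cref{cor:DiscreteToSharp} applied with $p=2$ yields constants $C=C(T_4,R,F,\Omega)>0$ and $h_0>0$ such that for every $h\in(0,h_0)$
\[
    \inf_{u\in\mathcal{A}^{2,R}_{h,F}}\inf_{\chi\in\mathcal{C}^R_h} E_{el,h}(u,\chi)
    \;\geq\; C\,\inf_{\chi\in BV(\Omega;T_4)}\inf_{\substack{u\in W^{1,2}(\Omega;\R^2)\\ u(x)=Fx\text{ on }\p\Omega}} \left(\int_\Omega |\nabla u-\chi|^2\,dx + h\,\Vert D\chi\Vert_{TV(\Omega)}\right).
\]
This converts the discrete energy into the continuum singular perturbation problem of the type \cref{eq:L1_energy} with full (isotropic) surface penalization and small parameter $\epsilon=h$.

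The concluding step is then to invoke the continuum lower scaling bound for the Tartar square from \cite{IKRTZ24}: for every $\eta>0$ there exist constants $\tilde C=\tilde C(F)>0$ and $C_\eta>0$ such that, for every sufficiently small $\epsilon>0$ and every $F\in T_4^{qc}\setminus T_4$,
\[
    \inf_{\chi\in BV(\Omega;T_4)}\inf_{u\in \mathcal{A}_F}\left(\int_\Omega|\nabla u-\chi|^2\,dx+\epsilon\,\Vert D\chi\Vert_{TV(\Omega)}\right)\;\geq\; \tilde C\,\exp\!\bigl(-C_\eta\, |\log\epsilon|^{\frac{1}{2}+\eta}\bigr).
\]
Specializing to $\epsilon=h$ and combining with the reduction above produces exactly the claimed bound. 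The main conceptual work is therefore entirely carried out elsewhere: \cref{cor:DiscreteToSharp} does the discrete-to-continuum passage, and the delicate commutator/Fourier-based argument underlying the continuum Tartar bound does the scaling estimate. The only obstacle one might anticipate is a rotation-dependent loss in the continuum step, but since the continuum problem is translation- and rotation-covariant (up to an absorbable change of the set of wells and of $F$) the estimate applies uniformly in $R$, which is why both constants $C$ and $C_\eta$ can be chosen independently of $R$ (with $C$ depending on $F$ and $R$ only through the elementary reduction constant of \cref{cor:DiscreteToSharp}).
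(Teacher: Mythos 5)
Your argument mirrors the paper's proof precisely: observe that since the Tartar square has no rank-one connections, assumption \cref{eq:assumption-wells} holds for every $R\in SO(2)$, apply the isotropic case of \cref{cor:DiscreteToSharp}, and conclude by the known continuum lower scaling bound for the Tartar square with $\epsilon=h$. The only slip is the citation for that continuum bound: the paper attributes it to \cite{RT22} (as also flagged in the discussion preceding the corollary, where the loss of $\eta$ is traced back to \cite{RT22}), not to \cite{IKRTZ24}.
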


Let us comment on this result. Contrary to the setting from \cref{cor:Lorent_discrte}, the lower scaling bound holds for all lattice structures in our class of lattices, independently of the choice of the rotation $R$.
This is due to the fact that the Tartar square is extremely rigid, in the sense that no rank-one connections are present. Hence, independently of the precise lattice structure, the lower bound directly follows from an application of the isotropic setting from \cref{cor:DiscreteToSharp} for any rotation $R \in SO(2)$. We remark that the lower bound is essentially sharp. An upper bound of the form $C^{-1} \exp(-C' |\log h|^{\frac{1}{2}})$ (with a constant $C'>0$)  had been deduced in \cite{C99}. The loss of $\eta>0$ in the exponent in \cref{cor:Tartar_discrete} is expected to be a technical artifact, which had already been present in the discussion of its continuous counterpart in \cite{RT22}.

\subsection{Relation to the literature}
Let us connect our results to the literature on martensitic phase transformations. In the literature, martensitic phase transformations are considered with various types of surface energies, including sharp \cite{KM1,KM2,CO,CO1}, diffuse \cite{Z14},
mixed ones \cite{BMC09} and discrete regularizations \cite{D03, L01, L09, ABC07}. In fact, Lorent \cite{L09} proved that under additional technical assumptions on the finite element discretization, for a model with frame indifference, the discrete and continuous, diffuse energies are scaling equivalent.
 We refer to \cite{KK11, CC14, CC15, CDZ17, KO19, RTZ19, RRT23, RT24, GRTZ24, TZ24, AKKR24, IKRTZ24} for further results on scaling laws for shape-memory alloys with various types of surface energy regularizations.
Also experimentally, one observes different transitions: In \cite{BvTA86} one observes atomistically sharp interfaces while in \cite{MvTA86} rather diffuse boundaries are observed. Measurements of surface energies are experimentally notoriously difficult and, hence, the precise structure is often not known. Thus, it is particularly important to deduce mathematical results \emph{independently} of the precise form of the surface energies. By discussing the scaling laws from \cite{RT23, RT22} in the context of various surface regularizations, it is our objective to illustrate and prove that these results are rather robust in the choice of the surface energies and to identify for an interesting class of martensitic phase transformations minimal anisotropic conditions recovering the known scaling laws for isotropic singular perturbation contributions.

\subsection{Outline of the article}
The remainder of the article is structured as follows. We begin by discussing some preliminary results in \cref{sec:prelim}. In particular, we recall the relevant $BV$ set-up and the associated high-frequency bounds and the Fourier analysis from \cite{RT22, RT23}. In \cref{sec:L1AnisotropicSurf} we turn to the setting of anisotropic, sharp surface energies. We present both upper and lower bound results in the outlined highly anisotropic settings and thus provide the proofs of \cref{thm:L1_3wells,thm:L1_mult_wells}. In \cref{sec:diffuse} we turn to the discussion of diffuse energies. Here we provide comparison results such as \cref{thm:comparison} which relate diffuse surface energies with the sharp energies and also present the lower bounds in the setting of anisotropic fractional energies. Finally, we consider discrete (anisotropic) situations in \cref{sec:discrete} and present the proof of \cref{cor:DiscreteToSharp} and the applications from \cref{cor:Lorent_discrte,cor:Tartar_discrete}.

\section{Preliminaries}
\label{sec:prelim}

In this section, we collect various auxiliary results which we will use in the following sections.

\subsection{On the lamination convex hull}
\label{sec:prelim_lam}
For the convenience of the reader we recall the lamination convex hull of a set $\K \subset \R^{d\times d}$.

\begin{defi}
\label{def:lam_hull}
Let $\K \subset \R^{d \times d}$. The \emph{lamination convex hull} $\K^{lc}$ of $\K$ is given by
\begin{align*}
\K^{lc}:= \bigcup\limits_{j=0}^{\infty} \K^{(j)},
\end{align*}
where
\begin{align*}
\K^{(0)}:= \K, \
\K^{(j)}:= \{\lambda A + (1-\lambda) B: \ A, B \in \K^{(j-1)}, \ \lambda \in [0,1], \ \rank(A-B) = 1\}.
\end{align*}
For $j\geq 1$, we refer to the elements of $\K^j := \K^{(j)}\setminus \K^{(j-1)}$ as \emph{laminates of order $j$}.
\end{defi}

In what follows below, we will prove that in our geometric settings with anisotropic surface energies, the interaction between the directional dependences in the anisotropy (in the surface energies) and the lamination orders of the boundary data will determine the scaling behaviour of the investigated microstructures.

\subsection{Directional derivative in a $BV$ sense} \label{sec:prelim_directBV}
Given $\Omega\subseteq\R^d$ open, for a function $f \in BV(\Omega;\R^n)$ we denote the total variation norm of the measure $Df$ by
$\Vert Df \Vert_{TV(\Omega)}$.
Building on the definition of $BV$ functions, we consider functions for which only a single directional
derivative exists as a measure.
Let $f \in L^1(\Omega;\R^n)$ and $\nu \in \S^{d-1}$. We write $f \in BV_\nu(\Omega;\R^n)$ if the
distributional derivative of $f$ in direction $\nu$ is a $\R^n$-valued Radon-measure, denoted by $D_\nu f$,
i.e. for every $\phi \in C^\infty_c(\Omega;\R^n)$ it holds that
\begin{align*}
  \int_\Omega f \cdot \p_\nu \phi dx = - \int_\Omega \phi \cdot d(D_\nu f).
\end{align*}
The
total variation norm of this measure, again denoted by $\Vert D_\nu f \Vert_{TV(\Omega)}$, is given as
\begin{align*}
  \Vert D_\nu f \Vert_{TV(\Omega)} = \sup \left\{\int_\Omega f \cdot \p_\nu \phi dx: \phi \in
  C^1_c(\Omega; \R^n), \Vert \phi \Vert_{\infty} \leq 1 \right\}.
\end{align*}

As shown in \cite[Thm 3.103]{AFP00}, for $f \in BV_\nu(\Omega;\R^n)$ it holds
\begin{align} \label{eq:AnisoBV_slicing}
  \Vert D_\nu f \Vert_{TV(\Omega)} = \int_{\Omega_\nu} \Vert Df_y^{\nu} \Vert_{TV(\Omega^{\nu}_{y})} dy,
\end{align}
where we introduce the notation
\begin{align*}
  \Omega_\nu := \Pi_{\nu^\perp} \Omega \subset \R^d
\end{align*}
as the orthogonal projection of $\Omega$ onto $\nu^\perp$, and for $y \in \Omega_\nu$, we set
\begin{align*}
  \Omega^\nu_y := \{t \in \R: y + t \nu \in \Omega\} \subset \R.
\end{align*}
Hence, the total variation norm of $D_\nu f$ is given by integration of the one-dimensional total
variation norms of the distributional derivatives of
\begin{align*}
  f^\nu_y: \Omega^\nu_y \to \R^n, \quad f^\nu_y(t) := f(y+t\nu).
\end{align*}
In particular, for $f \in BV_\nu(\Omega;\R^n)$ for almost every $y \in \Omega_\nu$ we have
$f^\nu_y \in BV(\Omega^\nu_y;\R^n)$.

As a remark,  let $E \subset \Omega$ be a set of finite perimeter, then it holds
\begin{align*}
    \Vert D_\nu \chi_{E} \Vert_{TV(\Omega)} = \int_{\p E \cap \Omega} |n_{\p E} \cdot \nu| d \mathcal{H}^{d-1},
\end{align*}
where $n_{\p E}$ is the outer unit normal of $E$.
We will often exploit this in the sequel.

\subsection{Directional high frequency control}
As a preparation for our discussion of sharp energies, we deduce lower bounds for the surface energies. Here we follow similar ideas as in \cite{CO,CO1,RT22, RRT23} but with only directional control for the surface energies. For the convenience of the reader, we hence recall the arguments.
These arguments use Fourier methods, thus let us recall the definition of the Fourier transform of one-periodic functions $u \in L^2(\T^d;\R^n)$ as $\mathcal{F}[u] \in \ell^2(\Z^d;\C^n)$
\begin{align} \label{eq:FourierTrafo}
    \mathcal{F}[u](k): = \hat{u}(k) := \int_{\T^d} u(x) e^{-2\pi i k \cdot x} dx, \quad k \in \Z^d.
\end{align}
In the following, we often consider the one-periodic extension of a function $u \in L^2((0,1)^d;\R^n)$ without changing the notation and hence also define the Fourier transform of such functions as above.

In order to motivate our arguments in the following sections, we recall part of the strategy from the isotropic setting from \cite{CO,CO1,RT22, RRT23} in two dimensions.
As a central ingredient for the lower bound, we use the surface energy to control high frequencies. More precisely, viewing $\chi \in BV((0,1)^2;\K)$ as a periodic function, we have
\begin{align*}
  \sum_{|k| \geq \lambda} |\hat{\chi}(k)|^2 \leq C \lambda^{-1} (\Vert D \chi \Vert_{TV((0,1)^2)} + \Per((0,1)^2)).
\end{align*}
As a complementary step, the elastic energy is used to control the frequencies of the associated ``multiplier'' in the form of a coercivity bound in the complement of certain non-elliptic regions. The analysis of these multipliers shows that the only regions without coercivity are given by the complement of cones (cf.\ the discussion below).
 As a consequence, it seems natural to conjecture that it suffices to have only control of high frequencies in direction of the axes of these cones instead of requiring control in all directions.

Our first step towards the anisotropic setting, thus, is showing an analogous high frequency control as above, but using only a single direction.
\begin{lem} \label{lem:HighFreqDir}
Let $d\geq 2$, $n \geq 1$, let
  $\Omega = (0,1)^d$, $\nu \in \S^{d-1}$, and $f \in BV_\nu(\Omega;\R^n) \cap L^\infty(\Omega;\R^n)$, then there is a constant $C = C(d) > 0$ independent of $\nu$ and $f$, such that for any $\lambda >0$ it
  holds
  \begin{align*}
    \sum_{k \in \Z^d: |k \cdot \nu| \geq \lambda} |\hat{f}(k)|^2 \leq C \lambda^{-1} \Vert f\Vert_{L^\infty} \left( \Vert D_\nu f \Vert_{TV(\Omega)} + \Vert f \Vert_{L^\infty} \Per(\Omega) \right),
  \end{align*}
  where we extend $f$ one-periodically.
\end{lem}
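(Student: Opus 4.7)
The plan is to follow the classical Fourier / difference-quotient strategy (as used in the isotropic variants in \cite{CO, CO1, RT22}), but with the isotropic $BV$ control on translates replaced by its purely directional counterpart.

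\emph{Step 1: Periodization.} I would first extend $f$ one-periodically to $\tilde f \in L^\infty(\T^d;\R^n)$. The distributional derivative $D_\nu \tilde f$ is then a finite $\R^n$-valued Radon measure on $\T^d$ that differs from $D_\nu f$ only by the jumps introduced at the faces of $\Omega=(0,1)^d$ under the periodic identification. Each such trace jump has amplitude at most $2\|f\|_{L^\infty}$, and the slicing formula \eqref{eq:AnisoBV_slicing} shows that these boundary contributions are bounded by $2\|f\|_{L^\infty}\Per(\Omega)$. Hence
\begin{equation*}
\|D_\nu \tilde f\|_{TV(\T^d)} \leq \|D_\nu f\|_{TV(\Omega)} + 2\|f\|_{L^\infty}\Per(\Omega),
\end{equation*}
with a constant independent of $\nu$.

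\emph{Step 2: Difference quotients and Parseval.} For any $h \in \R$, applying the one-dimensional $BV$-inequality to the one-dimensional traces of $\tilde f$ along lines parallel to $\nu$, together with \eqref{eq:AnisoBV_slicing}, yields $\int_{\T^d}|\tilde f(x+h\nu)-\tilde f(x)|\,dx \leq |h|\,\|D_\nu\tilde f\|_{TV(\T^d)}$. Since $|\tilde f|\leq\|f\|_{L^\infty}$, this upgrades to
\begin{equation*}
\int_{\T^d}|\tilde f(x+h\nu)-\tilde f(x)|^{2}\,dx \leq 2\|f\|_{L^\infty}\,|h|\,\|D_\nu \tilde f\|_{TV(\T^d)}.
\end{equation*}
Parseval identifies the left-hand side with $4\sum_{k\in\Z^d}|\hat f(k)|^{2}\sin^{2}(\pi h\,k\cdot\nu)$. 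Averaging this identity in $h$ over the interval $(0,1/\lambda)$ and computing the resulting multiplier explicitly gives, for every $k$ with $|k\cdot\nu|\geq\lambda$, a lower bound of the form $c/\lambda$ on the averaged kernel (since $|\sin(2\pi k\cdot\nu/\lambda)/(\pi k\cdot\nu)|\leq 1/(\pi\lambda)$), while the right-hand side averages to at most $C\lambda^{-2}\|f\|_{L^\infty}\|D_\nu \tilde f\|_{TV(\T^d)}$. Rearranging and substituting the estimate from Step 1 yields the claimed bound.

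\emph{Main obstacle.} The only delicate point is Step 1: one must verify carefully that the boundary jumps created by periodization are controlled by $\|f\|_{L^\infty}\Per(\Omega)$ (and not by some larger quantity depending on $\nu$), and that all constants entering the final inequality remain independent of $\nu$ and $d$. Once this quantification is in place, the remainder is a standard Fourier computation on the torus.
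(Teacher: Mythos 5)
Your proposal is correct and follows essentially the same strategy as the paper's proof: reduce to a $BV$ difference-quotient bound along the direction $\nu$, square it using $\|f\|_{L^\infty}$, apply Parseval, average in $h$ over a window of length $\sim 1/\lambda$, and restrict the sum to the high frequencies $|k\cdot\nu|\ge\lambda$, finally returning from $\|D_\nu \tilde f\|_{TV(\T^d)}$ to $\|D_\nu f\|_{TV(\Omega)}+\|f\|_{L^\infty}\Per(\Omega)$. The only cosmetic difference is that you state the torus translation estimate directly via \eqref{eq:AnisoBV_slicing}, whereas the paper works on a fixed rotated cube $Q\supset\Omega\cup(\Omega-h\nu)$ in $\R^d$ — where the Euclidean slicing formula applies verbatim and lines in direction $\nu$ are honest intervals — at the cost of restricting to $|h|<1$ and therefore treating $\lambda\le1$ separately; your route avoids the case split but would most cleanly justify the torus bound by mollification of $\tilde f$ rather than by slicing on $\T^d$, where generic $\nu$-lines are dense. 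These are removable technicalities, not gaps, and you have correctly identified the norm comparison on $\T^d$ as the point requiring care — the paper also states that comparison without proof.
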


\begin{proof}
We preliminarily notice that, in the case $\lambda\le1$, the statement is straightforward.
Indeed,
$$
\sum_{k \in \Z^d: |k \cdot \nu| \geq \lambda} |\hat{f}(k)|^2\le\|f\|_{L^2(\Omega)}^2\le\|f\|_{L^\infty(\Omega)}^2\le\lambda^{-1}\|f\|_{L^\infty(\Omega)}^2,
$$
which yields the statement by taking $C\ge\Per(\Omega)^{-1}.$
We are then left to prove the result in the case $\lambda>1$.

We use the notation introduced in \cref{sec:prelim_directBV}, i.e.\ we set $\Omega_\nu = \Pi_{\nu^\perp} \Omega$ the orthogonal projection of $\Omega$ onto $\nu^\perp$, and for $y \in \Omega_\nu$ we set $\Omega_y^\nu = \{t \in \R: y + t \nu \in\Omega\}$.
For given $f \in BV_\nu(\Omega;\R^n)$, we consider the one-periodic extension of $f$. Thus, let us first consider $f \in BV_\nu(\T^d;\R^n)$.

    \emph{Step 1: Estimate on a difference quotient.} We claim that, for every $|h|<1$
    \begin{align} \label{eq:DiffQuotBV}
        \int_{\T^d} \frac{|f(x+h\nu)-f(x)|}{|h|} dx \leq \Vert D_\nu f \Vert_{TV(\T^d)}.
    \end{align}

    To show \cref{eq:DiffQuotBV}, we begin by noting that for a given connected domain of integration $Q \subset \R^d$ and for  $f^\nu_y \in BV(Q^\nu_y;\R^n)$, by the Fundamental Theorem (and approximation), we obtain that
    \begin{align*}
        \int_{Q_y^\nu\cap(Q_y^\nu-h)}|f^\nu_y(t+h) - f^\nu_y(t)| dt \leq |h| \Vert {D}f^\nu_y \Vert_{TV(Q^\nu_y)}.
    \end{align*}

    We choose $Q \subset \R^d$ as a rotated cube with one face in direction $\nu$, such that $\Omega \subset Q\cap(Q-h\nu)$,
    see \cref{fig:HighFreq_Cube}.
    Thus, by \cref{eq:AnisoBV_slicing} and the periodicity of $f$
    \begin{align*}
        \int_{\T^d} |f(x+h\nu) - f(x)| dx &\leq \int_{Q\cap(Q-h\nu)} |f(x+h \nu) - f(x)| dx \\
        &= \int_{Q_{\nu}} \int_{Q_y^\nu\cap(Q_y^\nu-h)} |f^\nu_y(t+h) - f^\nu_y(t)| dt dy \\
        & \leq |h| \int_{Q_{\nu}} \Vert D f^\nu_y \Vert_{TV(Q^\nu_y)} dy = |h| \Vert D_\nu f \Vert_{TV(Q)} \leq |h| C(Q,d) \Vert D_\nu f  \Vert_{TV(\T^d)},
    \end{align*}
    where the constant $C(Q,d) \geq \#\{z \in \Z^d: (\overline{\Omega} + z) \cap Q \neq \emptyset\}> 0$ is chosen large enough and independent of $\nu$ such that it is larger than the number of copies of $\overline{\Omega}$ required to cover $Q$.
    \begin{figure}
        \centering
        \begin{tikzpicture}
            \foreach \x in {-2,-1,...,2}{
                \foreach \y in {-2,-1,...,2}{
                    \draw[dotted,thin] (\x,\y) rectangle (\x+1,\y+1);
                }
            }

            \draw (0,0) rectangle (1,1);
            \draw[gray] (0.3,0.4) rectangle (1.3,1.4);
            \draw[->] (0,0) -- (0.3,0.4);

            \draw (0.97,-1.15) --++ (1.5,2) --++ (-2,1.5) --++ (-1.5,-2) --++ (2,-1.5);
            \draw[fill = Blue, opacity = 0.3] (0.97,-1.15) --++ (1.2,1.6) --++ (-2,1.5) --++ (-1.2,-1.6) --++ (2,-1.5);

            \node[above] at (0.5,0) {\scriptsize $\Omega$};
            \node[above] at (0.8,0.4) {\scriptsize $\Omega + h \nu$};
            \node[above] at (0.97,-1.15) {$Q$};
        \end{tikzpicture}
        \caption{Illustration of the choice $Q$, with $Q\cap(Q-h\nu)$ highlighted in blue.}
        \label{fig:HighFreq_Cube}
    \end{figure}
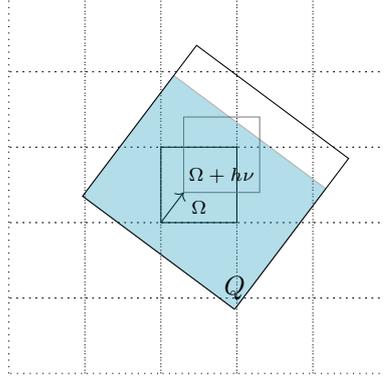

  \emph{Step 2: Fourier estimate.} Let $f \in BV_\nu(\Omega;\R^n) \cap L^\infty(\Omega;\R^n)$ be as in the statement. Then by step 1, the one-periodic extension, still denoted by $f$, satisfies
  \begin{align*}
    \int_{\T^d} \frac{|f(x+h \nu)-f(x)|^2}{|h|} dx \leq 4 \Vert f \Vert_{L^\infty} \int_{\T^d}
    \frac{|f(x+h \nu)-f(x)|}{|h|} dx \leq 4 C \Vert f \Vert_{L^\infty} \Vert D_\nu f \Vert_{TV(\T^d)}.
  \end{align*}

 With this estimate in hand, we can reformulate the difference quotient in Fourier space which turns the estimate into
  \begin{align*}
    4 C \Vert f \Vert_{L^\infty} \Vert D_\nu f \Vert_{TV(\T^d)} \geq |h|^{-1}\sum_{k \in \Z^d} |(e^{2 \pi i h k
    \cdot \nu} -1) \hat{f}(k)|^2.
  \end{align*}
  Integrating this inequality for $L < 1$ over $h \in (-L,L)$, we get
  \begin{align*}
    8 L C \Vert f \Vert_{L^\infty} \Vert D_\nu f \Vert_{TV(\T^d)}& \geq L^{-1} \int_{-L}^L \sum_{k \in \Z^d}
                                                        |e^{2\pi i h k \cdot \nu} -1|^2 |\hat{f}(k)|^2 dh \\
    &= L^{-1} \sum_{k \in \Z^d} |\hat{f}(k)|^2
    \int_{-L}^{L} |e^{2 \pi i h k \cdot \nu}-1|^2 dh.
  \end{align*}
  Calculating the integral on the right-hand side yields for $k \cdot \nu \neq 0$
  \begin{align*}
    \int_{-L}^L |e^{2 \pi i h k \cdot \nu} -1|^2 dh & = \int_{-L}^L (2-2\cos(2\pi h k \cdot \nu)) dh
                                                      = \Big[2h - \frac{2 \sin(2 \pi h k \cdot \nu)}{2
                                                      \pi k \cdot \nu}\Big]_{h = -L}^L \\
    & = 4L - \frac{4 \sin(2 \pi L k \cdot \nu)}{2 \pi k \cdot \nu} \geq 4L - \frac{2}{\pi| k \cdot \nu|}.
  \end{align*}
  Hence, after restricting the series in $k$ to $|k \cdot \nu| \geq L^{-1}$
  \begin{align*}
    8 L C \Vert f \Vert_{L^\infty} \Vert D_\nu f \Vert_{TV(\T^d)} &\geq L^{-1} \sum_{|k \cdot \nu|\geq L^{-1}} |\hat{f}(k)|^2 (4L - \frac{2}{\pi |k\cdot \nu|}) \\
    &\geq L^{-1} \sum_{|k \cdot \nu|\geq L^{-1}}|\hat{f}(k)|^2 (4L - \frac{2}{\pi L^{-1}}) \\
    &=\frac{4\pi-2}{\pi} \sum_{|k \cdot \nu|\geq L^{-1}} |\hat{f}(k)|^2.
  \end{align*}
  Setting $L = \lambda^{-1} <1$, and observing that $\Vert D_\nu f \Vert_{TV(\T^d)} \leq \Vert D_\nu f \Vert_{TV(\Omega)} + \Vert f \Vert_{L^\infty}  \Per(\Omega)$, shows the statement.
\end{proof}

\subsection{Fourier localization}
In this section we recall some of the relevant tools to derive our Fourier-based lower bounds in \cref{sec:intro_L1}. They build on the strategy from \cite{RT23}, see also \cite{CO1,KW16}.
For this we introduce the following truncated cone for $j \in \{1,2,\dots,d\}$ and $\mu, \lambda > 0$
\begin{align} \label{eq:TruncCone_1}
    C_{j,\mu,\lambda} &:= \{k \in \Z^d: |k|^2 - k_{j}^2 \leq \mu^2 |k|^2, \ |k_{j}| \leq \frac{2}{|\nu_1|} \lambda \},
\end{align}
and the following (infinitely extended) cones 
\begin{align} \label{eq:Cone_j}
    C_{j,\mu} &:= \{k \in \Z^d: |k|^2 - k_j^2 \leq \mu^2 |k|^2\}.
\end{align}
For these cones let $m_{j,\mu,\lambda}(D), m_{j,\mu}(D)$ be the corresponding Fourier multipliers acting on $f \in L^2(\T^d;\R)$ as
\begin{align*}
    m_{j,\mu,\lambda}(D) f(x) & = \sum_{k \in \Z^d} m_{j,\mu,\lambda}(k) \hat{f}(k) e^{2 \pi i k \cdot x}.
\end{align*}
They are given (for instance) by
\begin{align} \label{eq:multiplier}
    \begin{split}
    m_{j,\mu,\lambda}(k) &= \big(1-\varphi(2|k|)\big)\varphi\Big(\frac{\sqrt{|k|^2-k_{j}^2}}{\mu |k|} \Big) \varphi\Big(\frac{|\nu_1||k_{j}|}{2 \lambda}\Big) + \varphi(2|k|) \in C^\infty(\R^d), \\
    m_{j,\mu}(k) &= \big(1-\varphi(2|k|)\big)\varphi\Big(\frac{\sqrt{|k|^2-k_j^2}}{\mu |k|} \Big)  + \varphi(2|k|) \in C^\infty(\R^d),
    \end{split}
\end{align}
where
\begin{align*}
        \varphi \in C^\infty([0,+\infty);[0,1]), \quad \varphi(x) = 1,\varphi'(x)\le0, x \in [0,1], \quad \varphi(x) = 0, x \notin (0,2).
    \end{align*}
    Thus the multipliers satisfy for $k \in \Z^d$
    \begin{align*}
        m_{j,\mu,\lambda}(k) & = 1, \ k \in C_{j,\mu,\lambda}, & m_{j,\mu,\lambda}(k) & = 0, \ k \notin C_{j,2\mu,2\lambda}, \\
        m_{j,\mu}(k) & = 1, \ k \in C_{j,\mu} & m_{j,\mu}(k) & =0, \ k \notin C_{j,2\mu}.
    \end{align*}
    With this choice of multipliers we are able to apply Marcinkiewicz's multiplier theorem \cite[Cor. 6.2.5]{Grafakos} (combined with the transference principle \cite[Thm. 4.3.7]{Grafakos}).
    These multipliers have been considered in the context of shape-memory alloys in \cite{RT22,RT23}.

We start by giving a Fourier interpretation of the elastic energy. For notational convenience, here and in what follows below, we will use the notation $\R^{d \times d}_{\diag}$ to denote the diagonal matrices.

\begin{lem} \label{lem:ElasticFourier}
Let $d \geq 2$ and $\Omega = (0,1)^d$.
For $u \in H^1_0(\Omega;\R^d)$ and $\chi \in L^2(\Omega;\R^{d \times d}_{\diag})$ let the elastic energy $E_{el}(u,\chi)$ be given by
\begin{align*}
    E_{el}(u,\chi) = \int_\Omega |\nabla u - \chi|^2 dx.
\end{align*}
Then,
\begin{align*}
    E_{el}(u,\chi) \geq |\hat{\chi}(0)|^2 + \sum_{k \in \Z^d \setminus \{0\}} \sum_{j =1}^d \frac{|k|^2-k_j^2}{|k|^2} |\hat{\chi}_{jj}(k)|^2,
\end{align*}
    where we consider the one-periodic extensions of $u$ and $\chi$ without changing the notation.
\end{lem}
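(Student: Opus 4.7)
The plan is a direct Plancherel computation combined with a pointwise (in Fourier space) lower bound that exploits the rank-one structure of $\widehat{\nabla u}(k)$ together with the diagonal structure of $\hat{\chi}(k)$.

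First I would justify the periodic set-up: since $u \in H^1_0(\Omega;\R^d)$ with $\Omega=(0,1)^d$, the zero extension of $u$ to $\R^d$ lies in $H^1_{\textup{loc}}$ and, after identifying $(0,1)^d$ with $\T^d$, defines an element of $H^1(\T^d;\R^d)$. The periodic extension of $\chi$ (by zero outside $\Omega$) lies in $L^2(\T^d;\R^{d\times d}_{\diag})$. Both $\nabla u$ and $\chi$ vanish outside $\Omega$, hence
$$
\int_\Omega |\nabla u - \chi|^2\,dx = \int_{\T^d} |\nabla u - \chi|^2\,dx = \sum_{k \in \Z^d} |\widehat{\nabla u}(k) - \hat{\chi}(k)|^2
$$
by Parseval. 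Here $\widehat{\nabla u}(k)$ is the matrix with entries $(\widehat{\nabla u}(k))_{ij} = 2\pi i\,k_j \hat{u}_i(k)$, so in particular $\widehat{\nabla u}(k) = 2\pi i\, \hat{u}(k) \otimes k$ is rank one (as a complex matrix).

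Next I would separate the contribution of $k=0$ from that of $k \neq 0$. Since $u \in H^1_0(\Omega)$, the divergence theorem gives $\widehat{\nabla u}(0) = \int_\Omega \nabla u\,dx = 0$, so the $k=0$ term in the sum is exactly $|\hat\chi(0)|^2 = \sum_{j=1}^d |\hat\chi_{jj}(0)|^2$, as required.

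For $k \neq 0$ I would perform a pointwise minimization over $\hat{u}(k) \in \C^d$. Writing $b_j := 2\pi i k_j$, $a_i := \hat{u}_i(k)$ and $c_j := \hat\chi_{jj}(k)$, one has
$$
|\widehat{\nabla u}(k) - \hat\chi(k)|^2 = \sum_{i,j=1}^d |a_i b_j - \delta_{ij} c_i|^2,
$$
which decouples into $d$ independent problems in the variables $a_i$. For each fixed $i$, the quantity $\sum_j |a_i b_j - \delta_{ij} c_i|^2 = |b|^2|a_i|^2 - 2\Re(a_i b_i \overline{c_i}) + |c_i|^2$ is a quadratic function of $a_i \in \C$ whose minimum equals $|c_i|^2 - \frac{|b_i|^2}{|b|^2}|c_i|^2 = \frac{|k|^2-k_i^2}{|k|^2}|c_i|^2$, because $|b_i|^2/|b|^2 = k_i^2/|k|^2$. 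Summing over $i$ yields
$$
|\widehat{\nabla u}(k) - \hat\chi(k)|^2 \geq \sum_{j=1}^d \frac{|k|^2-k_j^2}{|k|^2} |\hat\chi_{jj}(k)|^2,
$$
independently of $\hat{u}(k)$. Summing this estimate over $k \neq 0$ and adding the $k=0$ contribution yields the claim.

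There is no real obstacle; the only point that requires some care is the justification of the periodic extension and the vanishing of the zero-th Fourier mode of $\nabla u$, but both follow immediately from $u \in H^1_0$. The key computational ingredient is the rank-one versus diagonal projection argument, which is elementary linear algebra once the sum has been decoupled row by row.
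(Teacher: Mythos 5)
Your proof is correct and follows essentially the same route as the paper: pass to the torus via Parseval, isolate the $k=0$ term (which vanishes in $\widehat{\nabla u}$ by the Dirichlet data), and for each $k\neq 0$ minimize the quadratic $|\widehat{\nabla u}(k)-\hat\chi(k)|^2$ over $\hat u(k)\in\C^d$. The paper phrases the pointwise minimization as solving the Euler--Lagrange equation $2\pi i(\hat u\otimes k)k=\hat\chi k$, i.e.\ $\hat u_i(k)=\hat\chi_{ii}(k)k_i/(2\pi i|k|^2)$, and then substituting; your row-by-row decoupling (minimizing in each $a_i$ separately) produces the identical optimizer and the identical multiplier $(|k|^2-k_j^2)/|k|^2$, so the two arguments are the same computation organized slightly differently.
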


\begin{proof}
    We consider the one-periodic extensions of $u$ and $\chi$ and switch to Fourier space
    \begin{align*}
        E_{el}(u,\chi) =|\hat{\chi}(0)|^2 + \sum_{k \in \Z^d \setminus \{0\}} |2 \pi i \hat{u} \otimes k - \hat{\chi}|^2.
    \end{align*}
    For any given $\chi$, we solve the related Euler-Lagrange equation.
    That is, we choose $\hat{u}(k)$ such that it fulfils
     \begin{align*}
     2 \pi i (\hat{u} \otimes k ) k = \hat{\chi} k, \quad k \neq 0.
     \end{align*}
     Plugging this choice of $\hat{u}(k)$ into the above Fourier representation and using that $\chi$ is diagonal then gives the desired result.
\end{proof}

Combining \cref{lem:HighFreqDir,lem:ElasticFourier}, we can deduce a first Fourier localization argument.
\begin{lem} \label{lem:Localization}
Let $d \geq 2$, $\Omega = (0,1)^d$, and $\nu \in \S^{d-1}$ be such that $\nu \cdot e_1 \neq 0$.
Consider the energy to be given for $\epsilon > 0$ and $\chi \in BV_\nu(\Omega;\R^{d \times d}_{\diag})\cap L^\infty(\Omega;\R^{d\times d}_{\diag})$ by
\begin{align*}
    E_\epsilon(\chi) = \inf_{u \in H^1_0(\Omega;\R^d)} \int_\Omega |\nabla u - \chi|^2 dx + \epsilon \Vert D_\nu \chi \Vert_{TV(\Omega)}.
\end{align*}
Extending $\chi$ one-periodically and
considering the cones $C_{1,\mu,\lambda}$ and $C_{j,\mu}$ for $\mu \in (0,\frac{|\nu_1|}{2})$, $\lambda >0$, and $j=2,3,\dots,d$ defined in \cref{eq:TruncCone_1,eq:Cone_j}
it holds
\begin{align*}
    \sum_{k \notin C_{1,\mu,\lambda}} |\hat{\chi}_{11}(k)|^2 + \sum_{j=2}^d \sum_{k \notin C_{j,\mu}} |\hat{\chi}_{jj}(k)|^2 \leq C \left( \mu^{-2} + \Vert \chi \Vert_{L^\infty} (\lambda \epsilon)^{-1} \right) E_\epsilon(\chi) + C \Vert \chi \Vert_{L^\infty}^2 \lambda^{-1} \Per(\Omega),
\end{align*}
where $C = C(d) > 0$ is a constant independent of $\nu,\mu,\lambda$ and $\chi$.
\end{lem}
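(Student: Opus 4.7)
The strategy is to decompose the complement of each (truncated) cone into a piece where the elastic energy (via \cref{lem:ElasticFourier}) provides coercivity and a piece where the anisotropic surface energy (via \cref{lem:HighFreqDir}) supplies the missing high-frequency control. For the untruncated cones $C_{j,\mu}$ with $j\geq 2$, only the elastic bound is needed; for the truncated cone $C_{1,\mu,\lambda}$ we will need both ingredients, and the delicate step is checking the geometric compatibility between the truncation threshold $|k_1|\leq 2\lambda/|\nu_1|$ and the hypothesis $|k\cdot\nu|\geq \lambda$ required by \cref{lem:HighFreqDir}.

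\textbf{Diagonal entries with $j\geq 2$.} Let $u\in H^1_0(\Omega;\R^d)$ be near-optimal in the infimum defining $E_\epsilon(\chi)$, extend $u$ and $\chi$ one-periodically. By \cref{lem:ElasticFourier},
\begin{align*}
E_\epsilon(\chi)\ \geq\ E_{el}(u,\chi)\ \geq\ \sum_{k\in \Z^d\setminus\{0\}}\sum_{j=1}^d \frac{|k|^2-k_j^2}{|k|^2}\,|\hat\chi_{jj}(k)|^2.
\end{align*}
If $k\notin C_{j,\mu}$ then by definition the multiplier $(|k|^2-k_j^2)/|k|^2$ exceeds $\mu^2$, so for each fixed $j\in\{2,\dots,d\}$
\begin{align*}
\mu^2\sum_{k\notin C_{j,\mu}}|\hat\chi_{jj}(k)|^2\ \leq\ E_\epsilon(\chi).
\end{align*}
Summing over $j=2,\dots,d$ yields the $\mu^{-2}E_\epsilon(\chi)$ contribution (with a dimensional constant absorbed into $C$).

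\textbf{The first diagonal entry and the truncation.} The complement of $C_{1,\mu,\lambda}$ splits as $A\cup B$, where
\begin{align*}
A:=\{k:\ |k|^2-k_1^2>\mu^2|k|^2\},\qquad B:=\{k:\ |k_1|>2\lambda/|\nu_1|\}.
\end{align*}
On $A$, the elastic bound applied to the $(1,1)$-component gives $\mu^2\sum_{k\in A}|\hat\chi_{11}(k)|^2\leq E_\epsilon(\chi)$ exactly as above. To treat $B$, note $B\subseteq A\cup(B\cap C_{1,\mu})$, so only $B\cap C_{1,\mu}$ remains. Here is the main geometric check: if $k\in C_{1,\mu}$ then $k_1^2\geq(1-\mu^2)|k|^2$ and, by Cauchy--Schwarz,
\begin{align*}
\sum_{j\geq 2}|k_j\nu_j|\ \leq\ \Bigl(\sum_{j\geq 2}k_j^2\Bigr)^{\!1/2}\!\Bigl(\sum_{j\geq 2}\nu_j^2\Bigr)^{\!1/2}\ \leq\ \mu|k|\sqrt{1-\nu_1^2}\ \leq\ \frac{\mu\sqrt{1-\nu_1^2}}{\sqrt{1-\mu^2}}\,|k_1|.
\end{align*}
Using $\mu<|\nu_1|/2$, a short computation shows $\mu\sqrt{1-\nu_1^2}/\sqrt{1-\mu^2}\leq|\nu_1|/2$, so
\begin{align*}
|k\cdot\nu|\ \geq\ |k_1||\nu_1|-\sum_{j\geq 2}|k_j\nu_j|\ \geq\ |k_1||\nu_1|/2\ \geq\ \lambda\qquad\text{on }B\cap C_{1,\mu}.
\end{align*}
This is the key point where the choice $\mu<|\nu_1|/2$ is used and the only step that requires real work.

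\textbf{Conclusion.} Applying \cref{lem:HighFreqDir} to the scalar component $\chi_{11}$ (whose directional $BV$ norm and $L^\infty$ norm are controlled by those of $\chi$) gives
\begin{align*}
\sum_{k\in B\cap C_{1,\mu}}|\hat\chi_{11}(k)|^2\ \leq\ \sum_{|k\cdot\nu|\geq\lambda}|\hat\chi_{11}(k)|^2\ \leq\ C\lambda^{-1}\|\chi\|_{L^\infty}\bigl(\|D_\nu\chi\|_{TV(\Omega)}+\|\chi\|_{L^\infty}\Per(\Omega)\bigr),
\end{align*}
and bounding $\|D_\nu\chi\|_{TV(\Omega)}\leq\epsilon^{-1}E_\epsilon(\chi)$ produces the $\|\chi\|_{L^\infty}(\lambda\epsilon)^{-1}E_\epsilon(\chi)$ term together with the $\|\chi\|_{L^\infty}^2\lambda^{-1}\Per(\Omega)$ residual. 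Adding the contribution from $A$ and the $j\geq 2$ estimates completes the proof. The only real obstacle is the geometric estimate of the previous paragraph; everything else is a direct combination of \cref{lem:ElasticFourier} and \cref{lem:HighFreqDir}.
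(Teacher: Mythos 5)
Your proof is correct and follows essentially the same route as the paper's: combine the ellipticity estimate of \cref{lem:ElasticFourier} for the untruncated cones, decompose the complement of the truncated cone $C_{1,\mu,\lambda}$ into a piece controlled by ellipticity and a piece controlled by \cref{lem:HighFreqDir}, and verify the geometric inclusion via the reverse triangle inequality plus the constraint $\mu<|\nu_1|/2$ (your Cauchy--Schwarz computation is the same as the paper's estimate in \cref{eq:ineq_inclusion_cones}, just phrased directly on $B\cap C_{1,\mu}$ rather than as the contrapositive inclusion). The only cosmetic difference is the order of presentation; the substance matches.
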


\begin{proof}
    By \cref{lem:ElasticFourier}, we get
    \begin{align*}
        \inf_{u \in H^{1}_0(\Omega;\R^d)} \int_\Omega |\nabla u - \chi|^2 dx \geq \sum_{j=1}^d \sum_{k \in \Z^d} \frac{|k|^2-k_j^2}{|k|^2} |\hat{\chi}_{jj}(k)|^2,
    \end{align*}
    where we set the multiplier to be equal to one for $k =0$.
    For $j \in \{1,\dots,d\}$ and $k \notin C_{j,\mu}$ it therefore holds
    \begin{align*}
        \frac{|k|^2-k_j^2}{|k|^2} > \mu^2,
    \end{align*}
    and thus
    \begin{align} \label{eq:EllipticityEst}
        \sum_{j=1}^d \sum_{k \notin C_{j,\mu}} |\hat{\chi}_{jj}(k)|^2 \leq \mu^{-2} \sum_{j=1}^d \sum_{k \in \Z^d} \frac{|k|^2-k_j^2}{|k|^2} |\hat{\chi}_{jj}(k)|^2 \leq \mu^{-2} \inf_{u \in H^1_0(\Omega;\R^d)} \int_\Omega |\nabla u - \chi|^2 dx.
    \end{align}

    To improve the estimate on $\chi_{11}$ to a bound outside of the truncated cone $C_{1,\mu,\lambda}$ instead of in the complement of the infinitely extended cone $C_{1,\mu}$, we note that due to the assumptions that $\nu \cdot e_1 \neq 0$ and $\mu < \frac{|\nu_1|}{2}$
    \begin{align*}
        \{k \in \Z^d: |k|^2 - k_1^2 \leq \mu^2 |k|^2, |k \cdot \nu| \leq \lambda \} \subset C_{1,\mu,\lambda}.
    \end{align*}
    Indeed, for $k=(k_1,k')$ and $\nu = (\nu_1,\nu') \neq \pm e_1$, using that $\mu < \frac{|\nu_1|}{2}$, we have $\mu^2/(1-\mu^2) \leq |\nu_1|^2/(4 |\nu'|^2)$.
    Hence, for $k$ such that $|k \cdot \nu| \leq \lambda$ and $|k'|^2=|k|^2 -k_1^2 \leq \mu^2 |k|^2$, by exploiting the reverse triangle inequality we observe that
    \begin{align} \label{eq:ineq_inclusion_cones}
    |k\cdot\nu|=|k_1\nu_1+k'\cdot\nu'|\ge|k_1||\nu_1|-\mu|k||\nu'|\ge|k_1|\Big(|\nu_1|-\frac{\mu|\nu'|}{\sqrt{1-\mu^2}}\Big)\ge|k_1|\frac{|\nu_1|}{2}.
    \end{align}
    Hence, it holds that
    \begin{align*}
        |k_1| \leq \frac{2}{|\nu_1|} \lambda.
    \end{align*}
    Note, that here it is of importance that $\nu_1 \neq 0$. 

    By \cref{eq:EllipticityEst,lem:HighFreqDir} it follows that
    \begin{align*}
        \sum_{k \notin C_{1,\mu,\lambda}} |\hat{\chi}_{11}(k)|^2 & \leq \sum_{k\notin C_{1,\mu}} |\hat{\chi}_{11}(k)|^2 + \sum_{|k \cdot \nu| > \lambda} |\hat{\chi}_{11}(k)|^2 \\
        & \leq \mu^{-2} \sum_{k \in \Z^d} \frac{|k|^2-k_1^2}{|k|^2} |\hat{\chi}_{11}(k)|^2 \\
        & \quad + C(d) \lambda^{-1} \Vert \chi_{11} \Vert_{L^\infty} \big( \Vert D_\nu \chi_{11} \Vert_{TV(\Omega)} + \Vert \chi_{11} \Vert_{L^\infty} \Per(\Omega) \big).
    \end{align*}

    Summing this estimate and \cref{eq:EllipticityEst} for $j=2,3,\dots,d$, we get for $\chi \in L^\infty(\Omega;\R^{d\times d}_{\text{diag}})$
    \begin{align*}
        \sum_{k \notin C_{1,\mu,\lambda}} |\hat{\chi}_{11}(k)|^2 + \sum_{j=2}^d \sum_{k \notin C_{j,\mu}} |\hat{\chi}_{jj}(k)|^2
        &\leq C(d)(\mu^{-2} + \Vert \chi \Vert_{L^\infty} (\lambda \epsilon)^{-1}) E_\epsilon(\chi) \\
        & \quad + C(d) \Vert \chi \Vert_{L^\infty}^2 \lambda^{-1} \Per(\Omega).
    \end{align*}
\end{proof}

With this result we have a combination of an ellipticity estimate of the form \cref{eq:EllipticityEst} and a high frequency control in one direction using \cref{lem:HighFreqDir}. The next result provides a low frequency control, with similar methods as in \cite[Lem. 4.2]{RT23}.

\begin{lem} \label{lem:LowFreqEst}
    Assume that the same conditions hold as in \cref{lem:ElasticFourier} and further assume that $\inf_{u \in H^1_0(\Omega;\R^d)} E_{el}(u,\chi) > 0$. Viewing $\chi \in L^2(\Omega;\R^{d \times d}_{\diag})$ as a function on $\T^d$, there is a constant $C > 0$ such that for any $\bar{\lambda}> 1$
    \begin{align*}
        \sum_{|k_1| \leq \bar{\lambda}} |\hat{\chi}_{11}(k)|^2 \leq C \bar{\lambda}^2 \inf_{u \in H^1_0(\Omega;\R^d)} E_{el}(u,\chi).
    \end{align*}
\end{lem}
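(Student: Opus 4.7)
My plan is to split the low-frequency sum according to whether the transverse component $k' := (k_2,\dots,k_d)$ vanishes, and treat the two resulting cases separately.

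For modes with $k' \neq 0$, I apply \cref{lem:ElasticFourier} directly. It gives
\[
\inf_{u \in H^1_0(\Omega;\R^d)} E_{el}(u,\chi) \ge \sum_{k \ne 0} \frac{|k|^2-k_1^2}{|k|^2} |\hat\chi_{11}(k)|^2 = \sum_{k:\, k' \ne 0} \frac{|k'|^2}{|k|^2} |\hat\chi_{11}(k)|^2.
\]
For $|k_1| \le \bar\lambda$ and $|k'| \ge 1$ we have $|k|^2/|k'|^2 = 1 + k_1^2/|k'|^2 \le 1 + \bar\lambda^2 \le 2\bar\lambda^2$, so summing over these $k$ yields $\sum_{|k_1|\le\bar\lambda,\, k' \ne 0} |\hat\chi_{11}(k)|^2 \le 2\bar\lambda^2 \inf_u E_{el}(u,\chi)$. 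Moreover, the zero mode is controlled by $|\hat\chi_{11}(0)|^2 \le |\hat\chi(0)|^2 \le \inf_u E_{el}(u,\chi)$, again from \cref{lem:ElasticFourier}.

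The delicate case is that of modes $(k_1,0,\dots,0)$ with $1 \le |k_1| \le \bar\lambda$, which lie in the kernel of the elliptic multiplier. Here I reduce to a one-dimensional problem by averaging in $x'$. Let $u_*$ be the minimizer of $E_{el}(\cdot,\chi)$ in $H^1_0(\Omega;\R^d)$, and define
\[
g(x_1) := \int_{(0,1)^{d-1}} \chi_{11}(x_1,x')\, dx', \qquad h(x_1) := \int_{(0,1)^{d-1}} (u_*)_1(x_1,x')\, dx'.
\]
Then $\hat g(k_1) = \hat\chi_{11}(k_1,0,\dots,0)$, and since $(u_*)_1 \in H^1_0(\Omega)$ we have $h(0) = h(1) = 0$, hence $h \in H^1_0((0,1))$. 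By Cauchy--Schwarz in $x'$,
\[
\|g - \partial_1 h\|_{L^2((0,1))}^2 \le \int_\Omega |\chi_{11} - \partial_1 (u_*)_1|^2 dx \le \inf_u E_{el}(u,\chi).
\]
Viewing $h$ as a function on $\T^1$ so that $\widehat{\partial_1 h}(k_1) = 2\pi i k_1 \hat h(k_1)$, I obtain the pointwise Fourier estimate $|\hat g(k_1)|^2 \le 8\pi^2 k_1^2 |\hat h(k_1)|^2 + 2|\hat g(k_1) - 2\pi i k_1 \hat h(k_1)|^2$. Summing over $|k_1| \le \bar\lambda$ and using Parseval for the second term gives
\[
\sum_{|k_1|\le\bar\lambda} |\hat g(k_1)|^2 \le 8\pi^2 \bar\lambda^2 \|h\|_{L^2((0,1))}^2 + 2\inf_u E_{el}(u,\chi).
\]
The one-dimensional Poincaré inequality applied to $h \in H^1_0((0,1))$, combined with the energy identity $\|\chi\|_{L^2}^2 = \|\nabla u_*\|_{L^2}^2 + \inf_u E_{el}(u,\chi)$ (coming from testing the Euler--Lagrange equation for $u_*$ with $u_*$ itself), then allows controlling $\|h\|_{L^2}^2$ by the elastic energy (up to a constant depending on $\Omega$ and $\|\chi\|_{L^\infty}$, which is the relevant regime of application).

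The main obstacle is Case~3: the modes aligned with the $e_1$-axis are invisible to \cref{lem:ElasticFourier}, and one must exploit the Dirichlet boundary condition on $u_*$ (inducing $h \in H^1_0((0,1))$) to turn the 1D Poincaré inequality into an estimate by $\inf_u E_{el}(u,\chi)$. The positivity assumption $\inf_u E_{el}(u,\chi) > 0$ is the nondegeneracy needed to close the argument, following the same scheme as [RT23, Lem.~4.2]. Everything else reduces to bookkeeping in Fourier space and collecting the three contributions into the claimed bound.
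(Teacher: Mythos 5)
Your decomposition into transverse modes ($k' \ne 0$), the zero mode, and axial modes $(k_1,0,\dots,0)$ differs from the paper's one-shot argument, and the first two parts are fine: the elliptic multiplier bound $|k|^2/|k'|^2 \le 2\bar\lambda^2$ handles $k' \ne 0$, and $|\hat\chi(0)|^2 \le E_{el}$ handles the zero mode. Your reduction of the axial modes to the 1D averaged functions $g$ and $h$, the decomposition $\hat g = 2\pi i k_1 \hat h + (\hat g - 2\pi i k_1 \hat h)$, and the Parseval estimate on the error are all correct as well.

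The gap is in controlling $\|h\|_{L^2((0,1))}^2$. The 1D Poincar\'e inequality you invoke acts in $x_1$, giving $\|h\|_{L^2}^2 \le C\|\partial_1 h\|_{L^2}^2 \le C\|\partial_1 (u_*)_1\|_{L^2(\Omega)}^2$. The energy identity $\|\chi\|_{L^2}^2 = \|\nabla u_*\|_{L^2}^2 + E_{el}(u_*,\chi)$ then yields only $\|\nabla u_*\|_{L^2}^2 \le \|\chi\|_{L^2}^2$, hence $\|h\|_{L^2}^2 \lesssim \|\chi\|_{L^\infty}^2$. Plugged into your estimate, this gives
\[
\sum_{1 \le |k_1| \le \bar\lambda,\ k'=0} |\hat\chi_{11}(k)|^2 \le 8\pi^2\bar\lambda^2\, C\|\chi\|_{L^\infty}^2 + 2 E_{el},
\]
which is \emph{not} of the claimed form $C\bar\lambda^2 E_{el}$: in the regime of interest $E_{el}$ is small (of order $\epsilon^{2/3}$ or smaller) while $\|\chi\|_{L^\infty}$ stays of order one, so the $\bar\lambda^2\|\chi\|_{L^\infty}^2$ term cannot be absorbed. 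Your parenthetical remark that the constant may depend on $\|\chi\|_{L^\infty}$ does not fix this, because here $\|\chi\|_{L^\infty}$ multiplies the $\bar\lambda^2$ prefactor rather than $E_{el}$, so you would be proving a genuinely weaker (and for the applications insufficient) inequality.

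The missing ingredient is the paper's key observation, which exploits the diagonal structure of $\chi$: apply Poincar\'e to $(u_*)_1$ in the \emph{transverse} direction $x_2$ (valid since $(u_*)_1 \in H^1_0(\Omega)$ vanishes on $\{x_2 = 0\}$ and $\{x_2 = 1\}$), and then insert $\chi_{12} = 0$ to get
\[
\|(u_*)_1\|_{L^2(\Omega)}^2 \le C\|\partial_2 (u_*)_1\|_{L^2(\Omega)}^2 = C\|\partial_2 (u_*)_1 - \chi_{12}\|_{L^2(\Omega)}^2 \le C\, E_{el}(u_*,\chi).
\]
By Jensen, $\|h\|_{L^2((0,1))}^2 \le \|(u_*)_1\|_{L^2(\Omega)}^2 \le C\,E_{el}$, and this closes Case 3. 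So the off-diagonal vanishing of $\chi$ is not just cosmetic -- it is precisely what turns the Dirichlet Poincar\'e bound into an estimate by the elastic energy rather than by $\|\chi\|_{L^\infty}$.
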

\begin{proof}
    The proof relies on the diagonal structure of $\chi$ and the zero boundary data which we impose on the functions $u$.
    For $u \in H^1_0(\Omega;\R^d)$, seen as a function on $\T^d$, it holds by Poincar\'e's inequality in $x_2$ and as $\chi_{12} = 0$
    \begin{align*}
        \sum_{|k_1| \leq \bar{\lambda}} |\hat{u}_1(k)|^2 & \leq \int_{\Omega} |u_1(x)|^2 dx \le \int_\Omega |\p_2 u_1(x)|^2 dx \\
        & = \int_\Omega |\p_2 u_1 - \chi_{12}|^2 dx \leq E_{el}(u,\chi).
    \end{align*}
    Choosing $v \in H^1_0(\Omega;\R^d)$ such that $E_{el}(v,\chi) \leq 2 \inf_{u \in H^1_0(\Omega;\R^d)} E_{el}(u,\chi)$, where we use that $\inf_{u \in H^1_0(\Omega;\R^d)} E_{el}(u,\chi) > 0$, implies for $\bar{\lambda} > 1$
    \begin{align*}
        \sum_{|k_1| \leq \bar{\lambda}} |\hat{\chi}_{11}(k)|^2 & \leq 2 \sum_{|k_1| \leq \bar{\lambda}} \Big( |\widehat{\p_1 v_1}(k) - \hat{\chi}_{11}(k)|^2 + |2 \pi i k_1 \hat{v}_1(k)|^2\Big) \\
        & \leq 2 E_{el}(v,\chi) + 8 \pi^2 \bar{\lambda}^2 \sum_{|k_1| \leq \bar{\lambda}} |\hat{v}_{1}(k)|^2 \\
        & \leq C \bar{\lambda}^2 E_{el}(v,\chi) \leq 2 C \bar{\lambda}^2 \inf_{u \in H^1_0(\Omega;\R^d)} E_{el}(u,\chi)
    \end{align*}
    which yields the result.
\end{proof}

As a final ingredient, for more than two wells, we rely on a commutator estimate. With this we can use the truncation of one cone, cf. \cref{lem:Localization}, and carry it over to another one and reduce its size in the process.
This gives rise to an iterative procedure, where the number of iterations determines the scaling in \cref{thm:L1_3wells,thm:L1_mult_wells}.

\begin{lem}[{\cite[Lem. 3]{RT22}, \cite[Prop. 4.6]{RT23}}] \label{lem:CommutatorEst}
    Let $d \geq 2$, $\Omega = (0,1)^d$, $\nu \in \S^{d-1}$ with $\nu \cdot e_1 \neq 0$.
    For $\mu \in (0,\frac{|\nu_1|}{2})$ and $\lambda > 0$,
    let $m_{1,\mu,\lambda}(D)$ be given by \cref{eq:multiplier}.
    Moreover, for $t > 0$ let $\psi_t(x) = \max\{|x|,|x|^t\}$.
    Let $f_1, f_2 \in BV_\nu(\Omega;\R) \cap L^\infty(\Omega;\R)$ and let $g: \R \to \R$ be a polynomial of degree two with $f_2 = g(f_1)$. Then for any $\gamma \in (0,1)$ there is a constant $C = C(g,\gamma,\Vert f_1 \Vert_{L^\infty}) >0$ such that
    \begin{align*}
        \Vert f_2 - g(m_{1,\mu,\lambda}(D) f_1) \Vert_{L^2} \leq C \psi_{1-\gamma}\left( \Vert f_1 - m_{1,\mu,\lambda}(D) f_1 \Vert_{L^2} \right).
    \end{align*}
\end{lem}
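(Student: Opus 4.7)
The plan is to split the polynomial $g(x) = ax^{2} + bx + c$ into its quadratic and linear parts, writing
\[
g(f_1) - g(m f_1) = a\bigl(f_1^{2} - (m f_1)^{2}\bigr) + b\bigl(f_1 - m f_1\bigr),
\]
where $m := m_{1,\mu,\lambda}(D)$. The linear piece is trivially bounded by $|b|\,\|f_1 - m f_1\|_{L^{2}}$, so the entire difficulty lies in the quadratic contribution, which I would factor as $(f_1 - m f_1)(f_1 + m f_1)$ and attack by Hölder.

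The central preparatory ingredient is a uniform Marcinkiewicz-type bound: for every $p \in (1,\infty)$,
\[
\|m f_1\|_{L^{p}(\T^{d})} \leq C_{p}\,\|f_1\|_{L^{p}(\T^{d})},
\]
with $C_{p}$ independent of $\mu \in (0,|\nu_1|/2)$ and $\lambda > 0$. I would obtain this by combining Marcinkiewicz's multiplier theorem \cite[Cor.\ 6.2.5]{Grafakos} with the transference principle \cite[Thm.\ 4.3.7]{Grafakos}. The key point to verify is that the multiplier in \cref{eq:multiplier} is smooth and 0-homogeneous outside a neighbourhood of the origin with $|k^{\alpha}\partial^{\alpha} m(k)| \leq C_{\alpha}$ uniformly in $\mu,\lambda$: differentiating $\varphi(|k'|/(\mu|k|))$ produces a factor $\mu^{-|\alpha|}$, but this is absorbed by the gain $|k'|^{|\alpha|} \sim (\mu|k|)^{|\alpha|}$ on the conic shell where derivatives are supported. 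Consequently $\|f_1 \pm m f_1\|_{L^{p}}$ is uniformly bounded by a constant depending only on $p$ and $\|f_1\|_{L^{\infty}}$.

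With these $L^{p}$ bounds in hand, given $\gamma \in (0,1)$, I would set $p_1 := 4/(2-\gamma) \in (2,4)$ and $p_2 := 4/\gamma$, so that $1/p_1 + 1/p_2 = 1/2$. Hölder's inequality followed by $L^{2}$--$L^{4}$ interpolation (note $1/p_1 = (1-\gamma)/2 + \gamma/4$) then yields
\[
\|f_1^{2} - (m f_1)^{2}\|_{L^{2}} \leq \|f_1 - m f_1\|_{L^{p_1}} \|f_1 + m f_1\|_{L^{p_2}} \leq C\,\|f_1 - m f_1\|_{L^{2}}^{1-\gamma},
\]
where $C = C(g,\gamma,\|f_1\|_{L^{\infty}})$. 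Adding the linear estimate gives $\|f_2 - g(m f_1)\|_{L^{2}} \leq C\bigl(\|f_1 - m f_1\|_{L^{2}} + \|f_1 - m f_1\|_{L^{2}}^{1-\gamma}\bigr) \leq 2C\,\psi_{1-\gamma}(\|f_1 - m f_1\|_{L^{2}})$, since $\psi_{1-\gamma}$ is precisely the pointwise maximum of its linear and $(1-\gamma)$-power branches.

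The main obstacle I expect is securing the $\mu,\lambda$-uniform $L^{p}$ operator bound on $m_{1,\mu,\lambda}(D)$; once this is granted the rest is elementary Hölder and interpolation. The underlying reason for the $\gamma$-loss (preventing a clean Lipschitz estimate) is that Fourier multipliers do not preserve $L^{\infty}$: the bound $\|f_1\|_{L^{\infty}}$ does not transfer to $\|m f_1\|_{L^{\infty}}$, which forces an interpolation via a finite $L^{p}$-scale and costs an arbitrarily small amount in the exponent.
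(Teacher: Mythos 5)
Your proof is correct and follows essentially the same route as the paper: factor $f_1^2-(m f_1)^2$ via Hölder at conjugate exponents summing to $1/2$, invoke $\mu,\lambda$-uniform Marcinkiewicz $L^p$ multiplier bounds plus the transference principle, and then $L^p$-interpolate the low factor against $L^2$ to extract the $(1-\gamma)$-power. The only differences are cosmetic — the paper first reduces to $g(x)=x^2$ by the triangle inequality and uses exponents $(2+2\gamma,\ (2+2\gamma)/\gamma)$ interpolating against $L^{(2+2\gamma)/\gamma}$, whereas you keep the full quadratic and interpolate against $L^4$ — and your explicit verification that the Marcinkiewicz constant is uniform in $\mu$ and $\lambda$ spells out a point the paper delegates to \cite{Grafakos} and \cite{RT23}.
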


\begin{proof}
    By the triangle inequality we can assume without loss of generality that $g(x) = x^2$.
    Invoking Hölder's inequality, we get
    \begin{align*}
        \Vert f_2 - g(m_{1,\mu,\lambda}(D) f_1) \Vert_{L^2} & = \left\Vert f_1^2 - \left( m_{1,\mu,\lambda}(D) f_1 \right)^2 \right\Vert_{L^2} \\
        & \leq \Vert f_1 - m_{1,\mu,\lambda}(D) f_1 \Vert_{L^{2+2\gamma}} \ \Vert f_1 + m_{1,\mu,\lambda}(D) f_1 \Vert_{L^{\frac{2+2\gamma}{\gamma}}}.
    \end{align*}
    By virtue of the interpolation inequality of $L^p$ spaces and the $L^p$-$L^p$ multiplier bounds from Marcinkiewicz's theorem and the transference principle \cite[Cor. 6.2.5, Thm. 4.3.7]{Grafakos}, we get the desired estimate. For details we refer to \cite[Prop. 4.6]{RT23}.
\end{proof}

\begin{cor} \label{cor:CommutatorEst_Applied}
Let $d \geq 2$, $\Omega = (0,1)^d$, $\nu \in \S^{d-1}$ with $\nu \cdot e_1 \neq 0$, $\lambda > 0$, and $\mu \in (0,\frac{|\nu_1|}{16})$.
Let $\chi\in BV_\nu(\Omega;\R^{d\times d}_\diag)\cap L^\infty(\Omega;\R^{d\times d}_\diag)$, extended one-periodically, and suppose that
    \begin{align*}
        \sum_{j=2}^d \alpha_j \chi_{jj} = g(\chi_{11}),
    \end{align*}
    for a polynomial $g: \R \to \R$ of degree two and coefficients $\alpha_j \in \R$.
    Let $m_{j,\mu,\lambda}$ and $m_{j,\mu}$ be as in \cref{eq:multiplier}.
    Then for any $\gamma \in (0,1)$ there is $C = C(d,g,\gamma, \Vert \chi_{11} \Vert_{L^\infty})>0$ such that
    \begin{align} \label{eq:CommutatorEst_Applied_1}
    \begin{split}
        \Big\Vert \sum_{j=2}^d \alpha_j m_{j,\mu}(D) \chi_{jj} - g(m_{1,\mu,\lambda}(D) \chi_{11}) \Big\Vert_{L^2}
        &\leq C \psi_{1-\gamma}(\Vert \chi_{11} - m_{1,\mu,\lambda}(D) \chi_{11} \Vert_{L^2})\\
        &\quad + \sum_{j=2}^d |\alpha_j| \Vert \chi_{jj} - m_{j,\mu}(D) \chi_{jj} \Vert_{L^2}.
        \end{split}
    \end{align}
    Moreover, there is $M >0$ such that for $\lambda_2 = M \mu \lambda < \lambda$ it holds that
    \begin{align*}
        \sum_{j=2}^d |\alpha_j| \Vert \chi_{jj} - m_{j,\mu,\lambda_2}(D) \chi_{jj} \Vert_{L^2}
      & \leq C \psi_{1-\gamma}(\Vert \chi_{11} - m_{1,\mu,\lambda}(D) \chi_{11} \Vert_{L^2}) \\
      & \quad + 2 \sum_{j=2}^d |\alpha_j| \Vert \chi_{jj} - m_{j,\mu}(D) \chi_{jj} \Vert_{L^2}.
    \end{align*}
\end{cor}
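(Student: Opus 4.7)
My plan is to prove the two inequalities sequentially, exploiting the Fourier localization provided by the multipliers in \eqref{eq:multiplier}.

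\textbf{Part 1.} For \eqref{eq:CommutatorEst_Applied_1}, I insert the intermediate term $\sum_{j=2}^d \alpha_j \chi_{jj} = g(\chi_{11})$ and apply the triangle inequality. The outer error $\Vert\sum_{j=2}^d\alpha_j(m_{j,\mu}(D)\chi_{jj}-\chi_{jj})\Vert_{L^2}$ is bounded directly by $\sum_{j=2}^d|\alpha_j|\,\Vert\chi_{jj}-m_{j,\mu}(D)\chi_{jj}\Vert_{L^2}$, while the inner error equals $\Vert g(\chi_{11})-g(m_{1,\mu,\lambda}(D)\chi_{11})\Vert_{L^2}$ and is controlled by $C\psi_{1-\gamma}(\Vert\chi_{11}-m_{1,\mu,\lambda}(D)\chi_{11}\Vert_{L^2})$ via \cref{lem:CommutatorEst} applied with $f_1=\chi_{11}$ and $f_2=g(\chi_{11})$.

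\textbf{Part 2.} Denote the error from Part 1 by $F$, so that $\sum_{i\geq2}\alpha_i m_{i,\mu}(D)\chi_{ii}=g(m_{1,\mu,\lambda}(D)\chi_{11})+F$ and $\Vert F\Vert_{L^2}$ is bounded by the right-hand side of \eqref{eq:CommutatorEst_Applied_1}. Two Fourier-support observations are at the heart of the second estimate. First, since $\widehat{m_{1,\mu,\lambda}(D)\chi_{11}}$ is supported in $C_{1,2\mu,2\lambda}$, the convolution rule together with $g$ having degree at most two forces $\widehat{g(m_{1,\mu,\lambda}(D)\chi_{11})}$ to be supported in $\{k\in\Z^d:|k_\ell|\le C_0\mu\lambda/|\nu_1|\text{ for all }\ell\ge 2\}$ for an absolute constant $C_0$; setting $M:=C_0$, the hypothesis $\mu<|\nu_1|/16$ guarantees $\lambda_2=M\mu\lambda<\lambda$. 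Second, since $\mu<1/\sqrt{2}$, the cones $C_{i,2\mu}$ for distinct $i$ are pairwise disjoint away from the origin, so on $C_{j,2\mu}$ only the term with index $j$ survives in the above identity.

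Setting $A_j:=C_{j,2\mu}\cap\{|k_j|>2\lambda_2/|\nu_1|\}$, the Fourier projection $\mathbf{1}_{A_j}$ simultaneously annihilates $g(m_{1,\mu,\lambda}(D)\chi_{11})$ (by the support bound) and every $m_{i,\mu}(D)\chi_{ii}$ with $i\neq j$ (by cone disjointness). Applying $\mathbf{1}_{A_j}$ to the identity and using Plancherel produces $|\alpha_j|\,\Vert\mathbf{1}_{A_j}m_{j,\mu}(D)\chi_{jj}\Vert_{L^2}\le\Vert F\Vert_{L^2}$. To conclude, I decompose $\chi_{jj}-m_{j,\mu,\lambda_2}(D)\chi_{jj}=(\chi_{jj}-m_{j,\mu}(D)\chi_{jj})+(m_{j,\mu}-m_{j,\mu,\lambda_2})(D)\chi_{jj}$. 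From \eqref{eq:multiplier} the symbol $m_{j,\mu}-m_{j,\mu,\lambda_2}$ factors as the $C_{j,2\mu}$-cone factor of $m_{j,\mu}$ times $(1-\varphi(|\nu_1||k_j|/(2\lambda_2)))$, hence is supported in $A_j$ and pointwise bounded by $1$; on the sub-region where $m_{j,\mu}\equiv 1$ (i.e.\ inside $C_{j,\mu}$) this contribution is controlled by $\Vert\mathbf{1}_{A_j}m_{j,\mu}(D)\chi_{jj}\Vert_{L^2}\le\Vert F\Vert_{L^2}/|\alpha_j|$, while in the soft transition layer $C_{j,2\mu}\setminus C_{j,\mu}$ it is absorbed into a multiple of $\Vert\chi_{jj}-m_{j,\mu}(D)\chi_{jj}\Vert_{L^2}$, producing the factor $2$ in the final estimate. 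Summing over $j\ge 2$ with weights $|\alpha_j|$ and invoking the Part 1 bound on $\Vert F\Vert_{L^2}$ yields the claim.

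The main technical obstacle I anticipate is the Fourier-support computation for the Minkowski sum $C_{1,2\mu,2\lambda}+C_{1,2\mu,2\lambda}$: verifying that the degree-two piece of $g$ spreads the $|k_\ell|$ components only by a multiplicative factor of order $\mu$. This is precisely the computation that fixes the scale $\lambda_2=M\mu\lambda$ and is the mechanism underlying the iterative shrinking of the truncated cones in the subsequent applications of the corollary.
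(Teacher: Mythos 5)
Your proposal follows essentially the same route as the paper: Part 1 is identical (insert $g(\chi_{11})$, triangle inequality, Lemma~\ref{lem:CommutatorEst}), and Part 2 rests on the same two Fourier-support facts — the Minkowski-sum bound on the support of $\mathcal{F}[g(m_{1,\mu,\lambda}(D)\chi_{11})]$ (which fixes $\lambda_2 = M\mu\lambda$) and pairwise disjointness of the cones $C_{j,2\mu}$ for $\mu < 1/(2\sqrt{2})$. The only organizational difference is that you apply the projection $\mathbf{1}_{A_j}$ component-by-component, whereas the paper sums with weights $|\alpha_j|$ first and exploits Plancherel on the disjoint-support sum in one go; this only changes the $d$-dependent constant.

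One step in your final paragraph is not quite right, though it is easily repaired. You split the error multiplier $(m_{j,\mu}-m_{j,\mu,\lambda_2})$ into the region $C_{j,\mu}$ (where $m_{j,\mu}\equiv 1$) and the transition layer $C_{j,2\mu}\setminus C_{j,\mu}$, and claim the latter is "absorbed into a multiple of $\Vert\chi_{jj}-m_{j,\mu}(D)\chi_{jj}\Vert_{L^2}$." This absorption fails pointwise: in the transition layer $(m_{j,\mu}-m_{j,\mu,\lambda_2})(k)$ can be close to $1$ while $(1-m_{j,\mu})(k)$ is close to $0$. The case split is unnecessary — one has the clean pointwise bound $|m_{j,\mu}(k)-m_{j,\mu,\lambda_2}(k)| \leq \mathbf{1}_{\{|k_j|\geq 2\lambda_2/|\nu_1|\}}(k)\, m_{j,\mu}(k)$ on all of $\Z^d\setminus\{0\}$, which directly gives $\Vert(m_{j,\mu}-m_{j,\mu,\lambda_2})(D)\chi_{jj}\Vert_{L^2} \leq \Vert\mathbf{1}_{A_j}(D)m_{j,\mu}(D)\chi_{jj}\Vert_{L^2}$; the first term $\chi_{jj}-m_{j,\mu}(D)\chi_{jj}$ of your decomposition then contributes once on its own, and a second copy appears inside $\Vert F\Vert_{L^2}$, which is where the factor in front of $\sum_j|\alpha_j|\Vert\chi_{jj}-m_{j,\mu}(D)\chi_{jj}\Vert_{L^2}$ actually originates — not from a transition-layer absorption.
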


\begin{proof}
    The first statement is a direct consequence of \cref{lem:CommutatorEst} and the triangle inequality:
    \begin{align*}
    &\Big\Vert \sum_{j=2}^d \alpha_j m_{j,\mu}(D) \chi_{jj} - g(m_{1,\mu,\lambda}(D) \chi_{11}) \Big\Vert_{L^2} \\
    &\qquad \leq \Big\Vert \sum_{j=2}^d \alpha_j m_{j,\mu}(D) \chi_{jj} - \sum_{j=2}^d \alpha_j  \chi_{jj} \Big\Vert_{L^2}
         + \Big\Vert \sum_{j=2}^d \alpha_j  \chi_{jj} - g(m_{1,\mu,\lambda}(D) \chi_{11}) \Big\Vert_{L^2} \\
    & \qquad \leq \sum_{j=2}^d |\alpha_j| \Vert \chi_{jj} - m_{j,\mu}(D) \chi_{jj} \Vert_{L^2} + \Vert f_2 - g(m_{1,\mu,\lambda}(D) f_1) \Vert_{L^2},
    \end{align*}
    with $f_1 = \chi_{11}$ and $f_2 = \sum_{j=2}^d \alpha_j \chi_{jj}$.

    To see the second claim, we note that,
    since the support of $\mathcal{F}[g(m_{1,\mu,\lambda}(D) \chi_{11})]$ is contained in $C_{1,2\mu,2\lambda}+C_{1,2\mu,2\lambda}$ (in the sense of the Minkowski sum),
    there is an $M > 0$, independent of $\mu$, $\lambda$, $\nu$, such that with $\lambda_2 = M \mu \lambda$
    \begin{align} \label{eq:convolution_supp}
        \mathcal{F}[g(m_{1,\mu,\lambda}(D) \chi_{11})](k) = 0, \quad \text{for } |k_j| > \frac{2}{|\nu_1|} \lambda_2, \text{ for any } j \in \{2,\dots,d\}.
    \end{align}
    Furthermore, we use that $|m_{j,\mu}(k) - m_{j,\mu,\lambda_2}(k)| \leq \chi_{\{k: |k_j| \geq \frac{2}{|\nu_1|} \lambda_2\}}(k) m_{j,\mu}(k)$, thus
    \begin{align*}
        \Vert m_{j,\mu}(D) \chi_{jj} - m_{j,\mu,\lambda_2}(D) \chi_{jj} \Vert_{L^2} & \leq \Vert \chi_{\{k: |k_j| \geq \frac{2}{|\nu_1|} \lambda_2\}}(D) m_{j,\mu}(D) \chi_{jj} \Vert_{L^2},
    \end{align*}
    and after an application of the triangle inequality
    \begin{align} \label{eq:improving_truncation}
    \begin{split}
        \Vert \chi_{jj} - m_{j,\mu,\lambda_2}(D) \chi_{jj} \Vert_{L^2} & \leq \Vert \chi_{jj} - m_{j,\mu}(D) \chi_{jj} \Vert_{L^2} \\
        & \quad + \Vert \chi_{\{k: |k_j| \geq \frac{2}{|\nu_1|} \lambda_2\}}(D) m_{j,\mu}(D) \chi_{jj} \Vert_{L^2}.
        \end{split}
    \end{align}
    Here, with slight abuse of notation, we define $\chi_{\{k: |k_j| \geq \frac{2}{|\nu_1|} \lambda_2\}}(D)$ as the Fourier multiplier associated with the function $\chi_{\{k: |k_j| \geq \frac{2}{|\nu_1|} \lambda_2\}}(k) $ in Fourier space. 

    Using the fact that as $\mu < \frac{1}{2\sqrt{2}}$ we have $m_{j,\mu}(k) m_{\ell,\mu}(k) = \delta_0(k)$ for $j \neq \ell$, we see that $m_{j,\mu}(D) \chi_{jj}$ and $m_{\ell,\mu}(D) \chi_{\ell \ell}$ have disjoint Fourier support away from zero.
    In particular the functions $\alpha_j \chi_{\{|k_j| \geq 2 |\nu_1|^{-1} \lambda_2\}}(D)m_{j,\mu}(D) \chi_{jj}$ have pairwise disjoint Fourier support, hence, after summing \cref{eq:improving_truncation} over $j = 2, \dots, d$ with the weights $|\alpha_j|$, we get
    \begin{align}
    \label{eq:cor_comm1}
    \begin{split}
        \sum_{j=2}^d |\alpha_j| \Vert \chi_{jj} - m_{j,\mu,\lambda_2}(D) \chi_{jj} \Vert_{L^2} & \leq \sum_{j=2}^d |\alpha_j| \Vert \chi_{jj} - m_{j,\mu}(D) \chi_{jj} \Vert_{L^2}\\
        & \quad + C(d) \Big\Vert \sum_{j=2}^d \alpha_j \chi_{\{|k_j| \geq \frac{2}{|\nu_1|} \lambda_2\}}(D) m_{j,\mu}(D) \chi_{jj} \Big\Vert_{L^2}.
        \end{split}
    \end{align}

    In the following, we write $k = (k_1,k')$ with $k' = (k_2,\dots,k_d) \in \Z^{d-1}$ and also use $|k'|_\infty = \max\{|k_2|,\dots,|k_d|\}$.
    Since $\mu < \frac{1}{4}$, we remark that $\chi_{\{|k_j| \geq \frac{2}{|\nu_1|} \lambda_2\}}(k) m_{j,\mu}(k) = \chi_{\{|k'|_\infty \geq \frac{2}{|\nu_1|} \lambda_2\}}(k) m_{j,\mu}(k)$ for $j=2,\dots,d$ and moreover by \cref{eq:convolution_supp}, we can further control the second term in \cref{eq:cor_comm1} as follows
    \begin{align}
    \label{eq:cor_comm2}
    \begin{split}
        & \Big\Vert \sum_{j=2}^d \alpha_j \chi_{\{|k_j| \geq \frac{2}{|\nu_1|} \lambda_2 \}}(D) m_{j,\mu}(D) \chi_{jj} \Big\Vert_{L^2} \\
        &\qquad = \Big\Vert \chi_{\{|k'|_\infty \geq \frac{2}{|\nu_1|} \lambda_2 \}}(D) \Big( \sum_{j=2}^d \alpha_j m_{j,\mu}(D) \chi_{jj} - g(m_{1,\mu,\lambda}(D) \chi_{11}) \Big) \Big\Vert_{L^2} \\
        &\qquad \leq \Big\Vert \sum_{j=2}^d \alpha_j m_{j,\mu}(D) \chi_{jj} - g(m_{1,\mu,\lambda}(D) \chi_{11}) \Big\Vert_{L^2}.
    \end{split}
    \end{align}

    Thus, gathering \cref{eq:cor_comm1}, \cref{eq:cor_comm2} and the first statement \cref{eq:CommutatorEst_Applied_1}, we can conclude
    \begin{align*}
        \sum_{j=2}^d |\alpha_j| \Vert \chi_{jj} - m_{j,\mu,\lambda_2}(D) \chi_{jj} \Vert_{L^2} & \leq (C(d)+1) \sum_{j=2}^d |\alpha_j| \Vert \chi_{jj} - m_{j,\mu}(D) \chi_{jj} \Vert_{L^2} \\
        & \quad + C(d) \psi_{1-\gamma}(\Vert \chi_{11}- m_{1,\mu,\lambda}(D) \chi_{11} \Vert_{L^2}).
    \end{align*}
\end{proof}

Let us compare the use of the nonlinear relation with the one from \cite{RT23}. In \cite{RT23} the nonlinear relation is exploited in the form $\chi_{kk} = g(\sum_{j \neq k} \alpha_j \chi_{jj})$. This strategy hence requires working with truncated cones in \emph{all but one} direction. In contrast, in the formulation of \cref{cor:CommutatorEst_Applied}, since the polynomial relation is inverted, it is clear why there is only a single truncation necessary in order to ``propagate'' the truncation to the other diagonal entries.

\begin{rmk}\label{rmk:corollary-variant}
In the sequel, when dealing with higher order laminates, we need to iterate the cone localization from above.
For this reason, we will exploit the following variant of \cref{cor:CommutatorEst_Applied}, whose proof is identical.

In the context of \cref{cor:CommutatorEst_Applied}, let $1\le k\le d$ be the component we want to (further) localize and let $\lambda_k>0$ denote the localization length of $\chi_{kk}$.
If $\chi_{kk}$ has the nonlinear relation
$$
\sum_{j=k+1}^d\alpha_j\chi_{jj}=g(\chi_{kk})
$$
then, for any $\gamma\in(0,1)$ and given $\lambda_{k+1}=M\mu\lambda_k$ it holds that
\begin{align*}
\sum_{j=k+1}^d|\alpha_j| \|\chi_{jj}-m_{j,\mu,\lambda_{k+1}}(D)\chi_{jj}\|_{L^2} &\le C\psi_{1-\gamma}\big(\|\chi_{kk}-m_{k,\mu,\lambda_k}(D)\chi_{kk}\|_{L^2}\big) \\
&\qquad +2\sum_{j=k+1}^d|\alpha_j|\|\chi_{jj}-m_{j,\mu}(D)\chi_{jj}\|_{L^2}.
\end{align*}

We will also apply \cref{cor:CommutatorEst_Applied} and the above variant by replacing $\nu\cdot e_1$ with $\nu\cdot e_n$ for some $1\le n\le d$, with the definitions of $C_{j,\mu,\lambda}, C_{j,\mu}, m_{j,\mu,\lambda}$ and $m_{j,\mu}$ modified accordingly.
\end{rmk}

\subsection{Fractional surface energies in $L^2$-based settings}
\label{sec:prelim_frac}

For the proofs of the lower bound of \cref{thm:L1_3wells,thm:L1_mult_wells} we invoke Fourier methods, relying on \cref{lem:HighFreqDir} for the surface energy.
Therefore we can also replace the sharp surface energy by a fractional $L^2$-based surface energy, giving rise to a similar high frequency control as in \cref{lem:HighFreqDir}.

By extending $\chi \in L^\infty((0,1)^d;\R^{d\times d}_\diag)$ one-periodically, we consider for $s \in (0,\frac{1}{2})$
\begin{align*}
    E_{surf}^s(\chi) := \left( \sum_{k \in \Z^d\setminus\{0\}} |k \cdot \nu|^{2s} |\hat{\chi}(k)|^2 \right)^{\frac{1}{2s}}.
\end{align*}
It then follows directly that it holds
\begin{align*}
   \left( \sum_{k \in \Z^d: |k \cdot \nu| \geq \lambda} |\hat{\chi}(k)|^2 \right)^{\frac{1}{2s}} \leq \lambda^{-1} E_{surf}^s(\chi).
\end{align*}

Based on this observation, we have an analogous result as in \cref{lem:Localization}.
\begin{lem} \label{lem:LocalizationFrac}
    Let $d \geq 2$, $\Omega = (0,1)^d$, and $\nu \in \S^{d-1}$ such that $\nu \cdot e_1 \neq 0$.
    Consider for $\epsilon > 0$, $s \in (0,\frac{1}{2})$ and $\chi \in H^s_\nu(\Omega;\R^{d\times d}_{\diag})$, cf. \cref{eq:anisotropic_Hs}, the following energy
    \begin{align*}
        E_{\epsilon,s}(\chi) = \inf_{u \in H^{1}_0(\Omega;\R^d)} \int_{\Omega} |\nabla u - \chi|^2 dx + \epsilon E_{surf}^s(\chi).
    \end{align*}
    Then, it holds
    \begin{align*}
        \sum_{k \not\in C_{1,\mu,\lambda}} |\hat{\chi}_{11}(k)|^2 + \sum_{j=2}^d \sum_{k \notin C_{j,\mu}} |\hat{\chi}_{jj}(k)|^2 \leq \psi_{2s}\big( (\mu^{-2} + (\lambda \epsilon)^{-1}) E_{\epsilon,s}(\chi) \big),
    \end{align*}
    where, for $\mu \in (0,\frac{|\nu_1|}{2})$, $\lambda > 0$, the cones $C_{1,\mu,\lambda},C_{j,\mu}$ are given as in \cref{eq:TruncCone_1,eq:Cone_j}, and $\psi_t(x) = \max\{|x|, |x|^t\}$ for $t > 0$.
\end{lem}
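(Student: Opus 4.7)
The plan is to mirror the proof of \cref{lem:Localization}, replacing the sharp $BV$-based high-frequency bound with the fractional analog recorded just before the statement. First, I would invoke \cref{lem:ElasticFourier} and use that the multiplier $(|k|^2-k_j^2)/|k|^2$ exceeds $\mu^2$ for $k \notin C_{j,\mu}$. This immediately produces the ellipticity estimate
\begin{align*}
    \sum_{j=1}^{d}\sum_{k \notin C_{j,\mu}} |\hat{\chi}_{jj}(k)|^2 \leq \mu^{-2}\inf_{u\in H^1_0(\Omega;\R^d)}\int_\Omega |\nabla u - \chi|^2\, dx \leq \mu^{-2} E_{\epsilon,s}(\chi),
\end{align*}
which already handles all summands with $j \geq 2$.

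For $j=1$ I would upgrade the complement of the infinite cone $C_{1,\mu}$ to that of the truncated cone $C_{1,\mu,\lambda}$ using the same inclusion argument as in \cref{lem:Localization}. Since $\mu < |\nu_1|/2$, the reverse triangle inequality shows that $|k|^2-k_1^2\leq \mu^2 |k|^2$ together with $|k\cdot\nu|\leq \lambda$ forces $|k_1|\leq 2\lambda/|\nu_1|$, so
\begin{align*}
    \{k\notin C_{1,\mu,\lambda}\} \subset \{k\notin C_{1,\mu}\} \cup \{|k\cdot\nu|>\lambda\}.
\end{align*}
The remaining high-frequency tail is then controlled by the surface energy: extracting the pointwise inequality $|k\cdot\nu|^{2s}\geq \lambda^{2s}$ on $\{|k\cdot\nu|\geq\lambda\}$ from the definition of $E_{surf}^s(\chi)$, together with $\epsilon E_{surf}^s(\chi)\leq E_{\epsilon,s}(\chi)$, yields
\begin{align*}
    \sum_{|k\cdot\nu|\geq\lambda}|\hat{\chi}_{11}(k)|^2 \leq \lambda^{-2s} E_{surf}^s(\chi)^{2s} \leq \bigl((\lambda\epsilon)^{-1}E_{\epsilon,s}(\chi)\bigr)^{2s}.
\end{align*}

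Adding the two bounds produces an upper estimate of the form $\mu^{-2}E_{\epsilon,s}(\chi)+((\lambda\epsilon)^{-1}E_{\epsilon,s}(\chi))^{2s}$. The one subtle step, and the reason $\psi_{2s}$ enters the statement, is to rewrite this sum as (a universal constant times) $\psi_{2s}\bigl((\mu^{-2}+(\lambda\epsilon)^{-1})E_{\epsilon,s}(\chi)\bigr)$. Setting $y=(\mu^{-2}+(\lambda\epsilon)^{-1})E_{\epsilon,s}(\chi)$, I would split into the two regimes $y\leq 1$ and $y>1$: in the first regime, using $2s\in(0,1)$, one has $x\leq x^{2s}$ on $[0,1]$, which dominates the linear term by $y^{2s}=\psi_{2s}(y)$; in the second regime one uses $x^{2s}\leq \max(x,1)\leq y$ to dominate the nonlinear term by $y=\psi_{2s}(y)$. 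This purely algebraic comparison is the only truly delicate piece; all the analytic content of the lemma is carried by the elastic ellipticity bound and the fractional high-frequency control established above.
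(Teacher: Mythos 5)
Your proof is correct and mirrors the paper's argument step for step: the ellipticity estimate from \cref{lem:ElasticFourier}, the cone inclusion $\{k\notin C_{1,\mu,\lambda}\}\subset\{k\notin C_{1,\mu}\}\cup\{|k\cdot\nu|>\lambda\}$ valid for $\mu<|\nu_1|/2$, the pointwise fractional bound for the high-frequency tail, and the algebraic comparison to $\psi_{2s}$. Your observation that a universal constant is needed in the final step is accurate — the paper's own proof arrives at $2\psi_{2s}\big((\mu^{-2}+(\lambda\epsilon)^{-1})E_{\epsilon,s}(\chi)\big)$, with the factor silently absorbed into the stated bound.
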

\begin{proof}
    We argue as in the proof of \cref{lem:Localization}.
    By \cref{lem:ElasticFourier} we have
    \begin{align*}
        E_{\epsilon,s}(\chi) \geq \sum_{j=1}^d \sum_{k \in \Z^d} \frac{|k|^2-k_j^2}{|k|^2} |\hat{\chi}_{jj}(k)|^2 + \epsilon \left( \sum_{k \in \Z^d \setminus \{0\}} |k \cdot \nu|^{2s} |\hat{\chi}(k)|^2 \right)^{\frac{1}{2s}}.
    \end{align*}
    Consequently, for $j \in \{1,\dots,d\}$ and $k \notin C_{j,\mu}$ we have $|k|^2 - k_j^2 \geq \mu^2 |k|^2$ and, hence,
    \begin{align*}
        \sum_{j=1}^d\sum_{k \notin C_{j,\mu}} |\hat{\chi}_{jj}(k)|^2 \leq \mu^{-2} \sum_{j=1}^d \sum_{k \in \Z^d} \frac{|k|^2-k_j^2}{|k|^2} |\hat{\chi}_{jj}(k)|^2 \leq \mu^{-2} E_{\epsilon,s}(\chi).
    \end{align*}

    For the surface energy it is immediate that it holds
    \begin{align*}
        \sum_{|k \cdot \nu| \geq \lambda} |\hat{\chi}(k)|^2 \leq \lambda^{-2s} \sum_{k \in \Z^d \setminus \{0\}} |k \cdot \nu|^{2s} |\hat{\chi}(k)|^2 \leq (\lambda \epsilon)^{-2s} E_{\epsilon,s}(\chi)^{2s}.
    \end{align*}
    Moreover, if we combine both the above estimates 
    we arrive at
    \begin{align*}
        \sum_{k \notin C_{1,\mu,\lambda}} |\hat{\chi}_{11}(k)|^2 & + \sum_{j=2}^{d}\sum_{k \notin C_{j,\mu}} |\hat{\chi}_{jj}(k)|^2 \leq \sum_{j=1}^d \sum_{k \notin C_{j,\mu}} |\hat{\chi}_{jj}(k)|^2 + \sum_{|k \cdot \nu| \geq \lambda} |\hat{\chi}_{11}(k)|^2 \\
        & \leq \mu^{-2} E_{\epsilon,s}(\chi) + (\lambda \epsilon)^{-2s} E_{\epsilon,s}(\chi)^{2s} \\
        & \leq 2\max\{ (\mu^{-2} + (\lambda \epsilon)^{-1}) E_{\epsilon,s}(\chi), \big((\mu^{-2} + (\lambda \epsilon)^{-1}) E_{\epsilon,s}(\chi)\big)^{2s}\} \\
        & = 2\psi_{2s}((\mu^{-2} + (\lambda \epsilon)^{-1})E_{\epsilon,s}(\chi))
    \end{align*}
    and the result is proven.
\end{proof}

As we will see below in \cref{sec:L1AnisotropicSurf} an estimate of this form is sufficient to deduce the lower scaling estimates, consequently, we will be able to generalize \cref{thm:L1_mult_wells} to fractional surface energies as stated in \cref{thm:frac_energy}.

\section{Sharp surface energies -- Proofs of \cref{thm:L1_3wells,thm:L1_mult_wells}}
\label{sec:L1AnisotropicSurf}

In this section we focus on the sharp anisotropic surface energies introduced in \cref{sec:intro_L1} and seek to identify minimal assumptions on the surface energy in order to ensure the same scaling as with isotropic surface energy penalizations.

\subsection{Two-well problem}

As motivation, we begin by considering a simple two-well gradient inclusion $\nabla u \in \{0,e_1 \otimes e_1\}$.
In the standard models in the literature a quantification of this problem is most often considered with an isotropic surface penalization.
There are also instances where only the oscillation in $e_1$ direction is penalized, as for instance in the seminal works \cite{KM1,KM2,C1}. As we view the two-well problem as a prototypical model set-up which we will then, in the following sections, generalize to more complex microstructures, we briefly present the proof of \cref{prop:L1_2wells}.

We use similar Fourier methods as in \cite{RT22, RT23} and argue in three steps.
We first give the arguments for the lower and upper bounds in the case of $\nu \cdot e_1 \neq 0$, afterwards we present the upper bound construction for $\nu \cdot e_1 = 0$.
With this we have a full characterisation of which directions are required in the surface energy to have the same scaling as the isotropic surface penalization.

\begin{proof}[Proof of \cref{prop:L1_2wells} for $\nu \cdot e_1 \neq 0$]

  \emph{Step 1: Lower bound.}
  We first assume that $F_\alpha = 0$ and $A-B = e_1 \otimes e_1$,  namely $A=(1-\alpha)e_1\otimes e_1$, $B=-\alpha e_1\otimes e_1$.
  The general case will be recovered at the end of the proof.
  We define for $\chi \in BV_\nu(\Omega;\{A,B\})$
  \begin{align*}
      E_\epsilon(\chi):= \inf_{u \in \mathcal{A}_{F_\alpha}} E_\epsilon(u,\chi) = \inf_{u \in \mathcal{A}_{F_\alpha}} \int_{\Omega} |\nabla u - \chi|^2 dx + \epsilon \Vert D_\nu \chi\Vert_{TV(\Omega)}.
  \end{align*}
  Thus, after
  extending $\chi$ and $\nabla u$ one-periodically, we can apply \cref{lem:ElasticFourier} to obtain
  \begin{align*}
    \int_{\Omega} |\nabla u - \chi|^2 dx \geq \sum_{k \in \Z^d \setminus \{0\}}
    \frac{|k|^2-k_1^2}{|k|^2} |\hat{f}|^2 + |\hat{f}(0)|^2,
  \end{align*}
  where we wrote $f = \chi_{11} \in BV_\nu(\Omega;\{1-\alpha,-\alpha\})$.

\Cref{lem:Localization} then implies for $\mu \in (0,\frac{|\nu_1|}{2})$, $\lambda >1$
\begin{align*}
    \sum_{k \notin C_{1,\mu,\lambda}} |\hat{f}(k)|^2 \leq C(d,\alpha) \Big( (\mu^{-2} + (\lambda \epsilon)^{-1}) E_\epsilon(\chi) + \lambda^{-1} \Per(\Omega) \Big),
\end{align*}
where the truncated cone $C_{1,\mu,\lambda}$ is defined in \cref{eq:TruncCone_1}.

Moreover, by an application of \cref{lem:LowFreqEst} for $\bar{\lambda} = \frac{2}{|\nu_1|} \lambda > 1$, for $\lambda > 1 > \frac{|\nu_1|}{2}$,
\begin{align}\label{eq:LowFreqEst_2wells}
    \sum_{|k_1| \leq 2 \lambda/|\nu_1|} |\hat{f}(k)|^2 \leq C \frac{\lambda^2}{|\nu_1|^2} E_\epsilon(\chi).
\end{align}
With this, we have control over the Fourier mass of $f$ in the whole space partitioned into $\Z^d =(C_{1,\mu,\lambda}^c\cap\Z^d)\cup \{k \in \Z^d :|k_1| \leq \frac{2}{|\nu_1|} \lambda\}$, cf. \cref{fig:TwoWellFourier}, as follows
\begin{align*}
    \sum_{k \in \Z^d} |\hat{f}(k)|^2 & \leq \sum_{|k_1| \leq 2\lambda/|\nu_1|} |\hat{f}(k)|^2 + \sum_{k \notin C_{1,\mu,\lambda}} |\hat{f}(k)|^2 \\
    & \leq C \left( \frac{\lambda^2}{|\nu_1|^2} + \mu^{-2} + (\lambda \epsilon)^{-1} \right) E_{\epsilon}(\chi) + C \lambda^{-1} \Per(\Omega).
\end{align*}

  \begin{figure}
    \centering
    \tdplotsetmaincoords{10}{0}
    \tdplotsetrotatedcoords{0}{30}{0}
    \begin{tikzpicture}[thick,tdplot_rotated_coords]
      \draw[->] (-5.1,0,0) -- (5.1,0,0);
      \draw[->] (0,-5.1,0) -- (0,5.1,0);
      \draw[->] (0,0,-5.1) -- (0,0,5.1);

      \draw (-5,-5/4,0) -- (0,0,0) -- (5,5/4,0);
      \draw (-5, 5/4,0) -- (0,0,0) -- (5,-5/4,0);
      \draw[dashed, thin] (-5,0,-5/4) -- (5,0,5/4);
      \draw[dashed, thin] (-5,0,5/4) -- (5,0,-5/4);

      \draw[dashed, canvas is yz plane at x=5, thin, gray] (0,0) circle (5/4);
      \draw[dashed, canvas is yz plane at x=-5, thin, gray] (0,0) circle (5/4);

      \draw[dashed, thin, canvas is yz plane at x=3, fill = Blue, fill opacity = 0.15] (0,0) circle (3/4);
      \draw[dashed, thin, canvas is yz plane at x=-3, fill = Blue, fill opacity = 0.15] (0,0) circle (3/4);

      \draw[Blue] (3,-3/4) -- (3,3/4);
      \draw[Blue] (-3,-3/4) -- (-3,3/4);

      \draw[->] (0,0,0) -- (0.6,0.8,0) node[above] {$\nu$};

      \draw[fill = Red, opacity = 0.1] (-25/6,5,5) -- (5,15/8-15/4,5) -- (5,15/8-15/4,-5) --
      (-25/6,5,-5) -- cycle;
      \draw[fill = Red, opacity = 0.1] (25/6,-5,5) -- (-5,-15/8+15/4,5) -- (-5,-15/8+15/4,-5) --
      (25/6,-5,-5) -- cycle;

      \draw[Red] (-25/6,5,0) -- (5,15/8-15/4,0);
      \draw[Red] (25/6,-5,0) -- (-5,-15/8+15/4,0);

      \draw[Orange] (15/4,-15/16,0) -- (15/4,15/16,0);
      \draw[Orange] (-15/4,-15/16,0) -- (-15/4,15/16,0);
      \draw[dotted, canvas is yz plane at x=-15/4, fill = Orange, fill opacity = 0.15] (0,0) circle (15/16);
      \draw[dotted, canvas is yz plane at x= 15/4, fill = Orange, fill opacity = 0.15] (0,0) circle (15/16);
    \end{tikzpicture}
    \caption{Illustration of regions of different Fourier mass control in the two-well
      setting. Choosing $\bar{\lambda} = \bar{\lambda}(\lambda)$ such that the blue (dashed) and orange (dotted) circles coincide we control the Fourier mass everywhere. }
    \label{fig:TwoWellFourier}
  \end{figure}
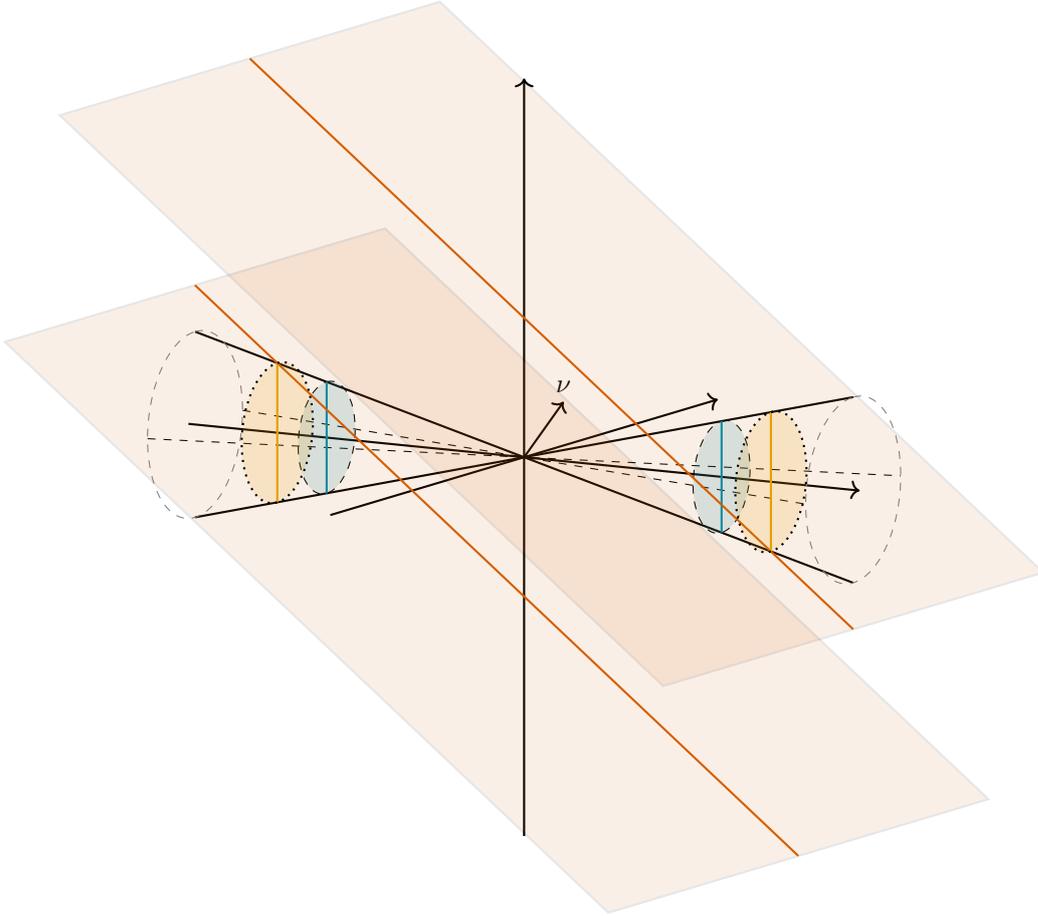

  To balance the first two terms $\lambda^2\nu_1^{-2} + \mu^{-2}$, the optimal choice of $\mu$ is $\mu \sim
  \lambda^{-1}|\nu_1|$, e.g. $\mu = \frac{|\nu_1|}{2} \lambda^{-1} < \frac{|\nu_1|}{2}$. This yields
  \begin{align*}
    \sum_{k \in \Z^d} |\hat{f}(k)|^2 \leq C\left( \frac{\lambda^2}{\nu_1^2} + (\lambda \epsilon)^{-1} \right)
    E_\epsilon(\chi) + C \lambda^{-1} \Per(\Omega),
  \end{align*}
  which again we optimize in $\lambda$, by choosing $\lambda =
  |\nu_1|^\frac{2}{3}\epsilon^{-\frac{1}{3}} > 1$ for $\epsilon < |\nu_1|^2$.

  By this choice, and as $\sum_{k \in \Z^d} |\hat{f}(k)|^2 = \int_\Omega |f(x)|^2 dx \geq
  \min\{1-\alpha,\alpha\}^2$, after an absorption of the perimeter term into the
  left-hand side,
  we derive the lower scaling bound for $\epsilon < \epsilon_0(|\nu_1|,\alpha,d)$
  \begin{align*}
    E_\epsilon(\chi) \geq  C |\nu_1|^{\frac{2}{3}} \epsilon^{\frac{2}{3}}.
  \end{align*}
  Fixing $\epsilon_0 < |\nu_1|^2$ we can also ensure $\lambda > 1$ as required above.

For the general case $F_\alpha\neq0$ and $A-B = a \otimes e_1$ for $a \in \R^d \setminus \{0\}$ we consider the functions $\tilde{u} = R(u-F_\alpha x)$, $\tilde{\chi} = R(\chi-F_\alpha)$ with a rotation $R \in SO(d)$ such that $Ra = |a|e_1$, then $\tilde{\chi}= |a| f e_1 \otimes e_1$, and we can apply the above arguments as $E_\epsilon(\tilde\chi)\sim E_\epsilon(\chi)$.\\

\emph{Step 2: Upper bound.}
  Since simple branching constructions are well-understood (cf. \cite{KM1,CC15}), our sole goal in this step is to make the $\nu_1$ dependence of the prefactor explicit.
  For this, our proof is only a minor adaptation of the ``usual'' branching construction (see, for instance,  \cite[Lemma 3.2]{RT23}), and we work only in two dimensions for simplicity.
  For later use (cf. proof of \cref{lem:scal-second} below) we provide the main estimate on a general rectangular domain $Q=(0,L)\times(0,H)$, which in particular includes the case of the unit square.

  For the reader's convenience, we recall that the domain $Q$ is subdivided in cells $\{\omega_{j,k} : j=0,\dots,j_0+1, k=1,\dots,2^jN\}$ for some $N\in\N$ sufficiently large, where the cells $\omega_{j,k}$ coincide (up to translations) with $(0,\ell_j)\times(0,h_j)$, where
  \begin{equation}\label{eq:parameters}
    \ell_j:=\frac{L}{N 2^j}, \quad h_j:=\frac{(1-\theta)H}{2}\theta^j,
  \end{equation}
  for some $ \theta  \in (1/4, 1/2)$. We refer to \cref{fig:branching} for an illustration of this.
  One produces a lamination (which doubles the frequency from the bottom to the top) in a reference rectangular cell $\omega = (0,\ell) \times (0,h)$, cf. \cref{fig:branching_unitcell}.
  This lamination is transferred on every $\omega_{j,k}$ via rescaling, then obtaining the global construction by attaching all the self-similar copies together, see \cref{fig:branching_combined}.

  In particular for the constructed functions $u \in W^{1,\infty}(\Omega;\R^2),\chi \in BV(\Omega;\{A,B\})$ it holds
  \begin{align} \label{eq:PropertiesBranching}
      \nabla u \in BV(\Omega;\R^{2 \times 2}), \ \Vert \nabla u \Vert_{L^\infty} \leq C(\alpha,\Omega), \Vert D\nabla u \Vert_{TV(\Omega)} \leq C(\alpha,\Omega)(\Vert D \chi \Vert_{TV(\Omega)} + \Per(\Omega)).
  \end{align}

  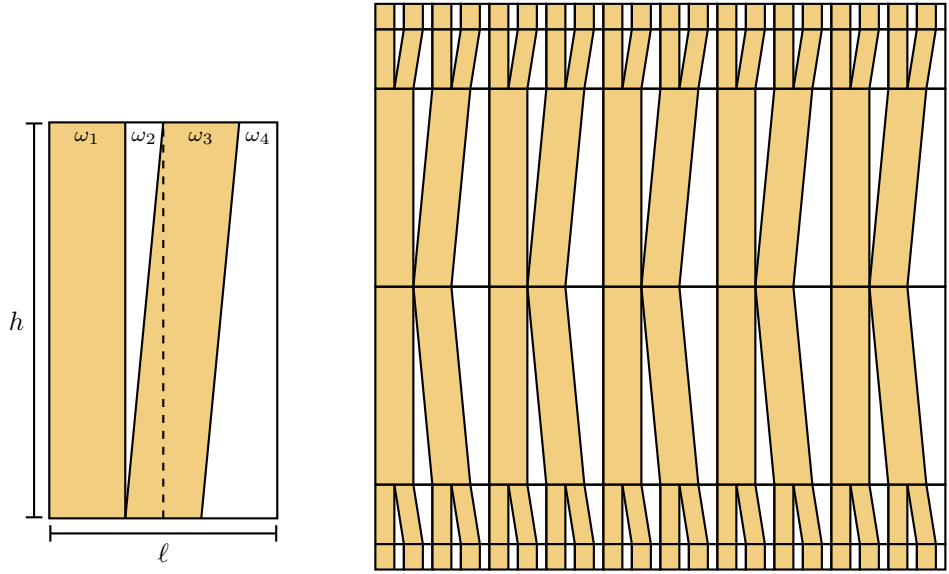
\begin{figure}[t!]
    \centering
    \begin{subfigure}[t]{0.3\textwidth}
        \centering
        \begin{tikzpicture}[thick]
            \fill[draw = none, color = Orange, opacity = 0.5] (0,0) rectangle (1,5.25);
            \fill[draw = none, color = Orange, opacity = 0.5] (1,0) -- (1.5,5.25) -- (2.5,5.25) -- (2,0) -- cycle;

            \draw (0,0) rectangle (3,5.25);
            \draw (1,0) -- (1,5.25);
            \draw (1,0) -- (1.5,5.25);
            \draw (2,0) -- (2.5,5.25);
            \draw[dashed] (1.5,0) -- (1.5,5.25);
            \draw[|-|] (0,-0.2) -- (3,-0.2) node[midway, below] {$\ell$};
            \draw[|-|] (-0.2,0) -- (-0.2,5.25) node[midway, left] {$h$};

            \node at (0.5,5.25)[below] {\footnotesize$\omega_1$};
            \node at (1.25,5.25)[below] {\footnotesize$\omega_2$};
            \node at (2,5.25)[below] {\footnotesize$\omega_3$};
            \node at (2.75,5.25)[below] {\footnotesize$\omega_4$};
        \end{tikzpicture}
        \caption{Unit cell $\omega$ for the branching construction. At the top the oscillation is twice as fast compared to the bottom.}
        \label{fig:branching_unitcell}
    \end{subfigure}%
    ~
    \begin{subfigure}[t]{0.6\textwidth}
        \centering
        \begin{tikzpicture}[thick, scale = 0.5]
        \foreach \y in {-1,1}{
            \begin{scope}[yscale = \y]
                \foreach \x in {0,3,6,9,12}{
                    \fill[draw = none, color = Orange, opacity = 0.5] (\x,0) rectangle (\x+1,5.25);
                    \fill[draw = none, color = Orange, opacity = 0.5] (\x+1,0) -- (\x+1.5,5.25) -- (\x+2.5,5.25) -- (\x+2,0) -- cycle;

                    \draw (\x,0) rectangle (\x+3,5.25);
                    \draw (\x+1,0) -- (\x+1,5.25);
                    \draw (\x+1,0) -- (\x+1.5,5.25);
                    \draw (\x+2,0) -- (\x+2.5,5.25);
                }

                \begin{scope}[yshift = 5.25cm, xscale=0.5, yscale = 0.3]
                    \foreach \x in {0,3,6,9,12,15,18,21,24,27}{
                        \fill[draw = none, color = Orange, opacity = 0.5] (\x,0) rectangle (\x+1,5.25);
                        \fill[draw = none, color = Orange, opacity = 0.5] (\x+1,0) -- (\x+1.5,5.25) -- (\x+2.5,5.25) -- (\x+2,0) -- cycle;

                        \draw (\x,0) rectangle (\x+3,5.25);
                        \draw (\x+1,0) -- (\x+1,5.25);
                        \draw (\x+1,0) -- (\x+1.5,5.25);
                        \draw (\x+2,0) -- (\x+2.5,5.25);

                    }
                \end{scope}

                \begin{scope}[yshift = 6.825cm, xscale=0.25]
                    \foreach \x in {0,3,6,9,...,45,48,51,54,57}{
                        \fill[draw = none, color = Orange, opacity = 0.5] (\x,0) rectangle (\x+2,0.675);

                        \draw (\x,0) rectangle (\x+3,0.675);
                        \draw (\x+2,0) -- (\x+2,0.675);
                    }
                \end{scope}
            \end{scope}
        }
        \end{tikzpicture}
        \caption{Self similar refinement of the unit cell.}
        \label{fig:branching_combined}
    \end{subfigure}
    \caption{Branching construction in the unit cube.}
    \label{fig:branching}
\end{figure}

  In what follows, we now make the $\nu$ dependence explicit by considering the unit-cell construction in more detail.
  The interfaces of the optimal (in the sense of scaling) construction in $\omega$
  (see \cite[Lemma 3.1]{RT23} for details) are either given by a straight line
  with unit normal $e_1$, or normal in direction $(-h, \frac{1-\alpha}{2}\ell)^T$,
  and thus, denoting the normal by $n$, in the first case  it holds that $|n
  \cdot \nu| = |\nu_1|$  and in the second case
  \begin{align*}
  |n \cdot \nu| \leq C(\alpha)( |\nu_1| + \frac{\ell}{h} |\nu_2|).
  \end{align*}
  Hence, through a scaling argument and by summing all the self-similar contributions of $\omega_{j,k}$, we get for sufficiently large $N \in \N$
  \begin{equation}\label{eq:bra1-nu}
      \int_{Q} |\nabla u - \chi|^2 dx + \epsilon \Vert D_\nu \chi \Vert_{TV(Q)}
      \leq C(\alpha) \left( \frac{L^3}{N^2 H}+\epsilon HN|\nu_1|+\epsilon L j_0|\nu_2| \right),
  \end{equation}
  for
  \begin{align*}
      j_0 + 1 \sim \log\Big(\frac{HN}{L}\Big).
  \end{align*}
  We now turn to the case $Q=\Omega$.
  Hence, for $H=L=1$, after optimizing the first two terms in $N$, i.e. choosing $N \sim |\nu_1|^{-1/3} \epsilon^{-1/3}$, we get
  \begin{align*}
      \int_\Omega |\nabla u - \chi|^2 dx + \epsilon \Vert D_\nu \chi \Vert_{TV(\Omega)} \leq C(\alpha) \left(|\nu_1|^{\frac{2}{3}} \epsilon^{\frac{2}{3}} + \epsilon |\log (|\nu_1|\epsilon)| |\nu_2| \right).
  \end{align*}
  Using that $\epsilon |\log(|\nu_1|\epsilon)| |\nu_2| \leq C |\nu_1|^{2/3} \epsilon^{2/3}$ for small $\epsilon < \epsilon_0(|\nu_1|)$, we deduce the desired upper bound.
  \end{proof}

\begin{proof}[Proof of \cref{prop:L1_2wells} for $\nu \cdot e_1 = 0$]
In the case of $\nu \cdot e_1 = 0$ the argument uses that for a simple laminate in $e_1$ direction, we do not pay surface energy, and thus can do an infinitely fine oscillation of the two phases.
Indeed, $E_\epsilon(u,\chi) \geq 0$ is direct. To obtain a suitable upper bound on the energy, we now choose for any $N \in \N$ the functions
\begin{align*}
    \tilde{u}_N(x_1) &= \begin{cases} (1-\alpha) x_1 a & x_1 \in (0,\frac{\alpha}{N}), \\
    - \alpha x_1 a + \frac{\alpha}{N} a& x_1 \in [\frac{\alpha}{N},\frac{1}{N}),
    \end{cases} \\
    \chi_N(x) & = \begin{cases} A & x_1 \in (0,\frac{\alpha}{N}), \\
    B & x_1 \in [\frac{\alpha}{N},\frac{1}{N}),
    \end{cases}
\end{align*}
and extend both $\frac{1}{N}$-periodically.
We fix $u_N \in W^{1,\infty}(\Omega;\R^{d})$ as
\begin{align*}
    u_N(x_1,x') = \tilde{u}_N(x_1) \psi(N \dist(x',\p (0,1)^{d-1})) + F_\alpha x
\end{align*}
for some bump function $\psi \in C^\infty(\R;[0,1])$ such that $\psi(t) = 1$ for $t  \geq 1$ and $\psi(t) = 0$ for $t \leq \frac{1}{2}$.
Here $a \in \R^d \setminus \{0\}$ is given by the relation $A-B = a \otimes e_1$.
We have that $\chi_{N} \in BV(\Omega;\{A,B\})$ and $u_{N} \in W^{1,\infty}(\Omega;\R^{d})$ fulfils the Dirichlet data.
Moreover, as $\nu \cdot e_1 = 0$, it holds
\begin{align*}
    \int_\Omega \chi_N(x) \p_\nu \phi(x) dx = \int_{(0,1)} \chi_N\Big(x_1,\frac{1}{2}\Big) \int_{(0,1)^{d-1}} {\rm div}'(\phi(x_1,x')\nu) dx' dx_1 =  0
\end{align*}
for every $\phi \in C^1_c(\Omega;\R)$, thus $\Vert D_\nu \chi \Vert_{TV(\Omega)} =0$.
Hence, we infer that
\begin{align*}
    E_\epsilon(u_N,\chi_N) = \int_\Omega |\nabla u_N(x) - \chi_N(x)|^2 dx \leq \frac{C}{N}.
\end{align*}
    As the functions $u$ and $\chi$ are admissible for any $N \in \N$, passing to the limit yields the optimal energy $\inf_{\chi \in BV_\nu(\Omega;\{A,B\})} \inf_{u \in \mathcal{A}_{F_{\alpha}}} E_\epsilon(u,\chi) = 0$.
\end{proof}

\subsection{The three-well problem of Lorent}
In this section we now consider the three-well setting due to Lorent. As outlined in \cref{sec:introL13}, in this situation one obtains both first and second order laminates, depending on the boundary condition.

We split the proof of \cref{thm:L1_3wells} into several steps. We start by showing the scaling for $F \in \K_3^1$ for both $\nu \cdot e_1 \neq 0$ and $\nu \cdot e_1 = 0$, both of which are essentially covered by \cref{prop:L1_2wells}. In contrast to the argument given above, due to our specific choices of the possible boundary data, we do not only work with one of the entries of the matrix $\chi-F$ (see the comments in \cref{rmk:lower_bound_first_laminate} below).

In a second step, we will consider $F \in \K_3^2$ where we can exploit the determinedness of $\chi_{22}$ in terms of $\chi_{11}$ to obtain the scaling of second order laminates. Here as long as $\nu \cdot e_1 \neq 0$, we obtain a scaling law of the order $\epsilon^{\frac{1}{2}}$ as in the isotropic setting. If, however, $\nu \cdot e_1 = 0$, the scaling does not change with respect to the one for first order laminates, as we can construct an infinitely fine simple laminate within a branching construction, without paying surface energy for the simple laminate. This then yields the same scaling as in the two-well setting in \cref{prop:L1_2wells}, cf. \cref{fig:LaminateInBranching}.

\begin{lem}
\label{lem:scal-first}
    Under the same assumptions as in \cref{thm:L1_3wells}, let $F_\alpha = \diag(\alpha,0) \in \K_3^1$ for $\alpha \in (0,1)$.
    If $\nu \cdot e_1 \neq 0$ there is a constant $C = C(\alpha) >0$ and $\epsilon_0 = \epsilon_0(\alpha,|\nu_1|) > 0$ such that
    \begin{align*}
    C^{-1} |\nu_1|^{\frac{2}{3}} \epsilon^{\frac{2}{3}} \leq \inf_{\chi \in BV_\nu(\Omega;\K_3)} \inf_{u \in \mathcal{A}_{F_\alpha}} E_\epsilon(u,\chi)  \leq C |\nu_1|^{\frac{2}{3}} \epsilon^{\frac{2}{3}}.
    \end{align*}
    If instead $\nu \cdot e_1 = 0$, we have
    \begin{align*}
        \inf_{\chi \in BV_\nu(\Omega;\K_3)} \inf_{u \in \mathcal{A}_{F_\alpha}} E_\epsilon(u,\chi) = 0.
    \end{align*}
\end{lem}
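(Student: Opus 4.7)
The plan is to split on whether $\nu\cdot e_1$ vanishes. In the nondegenerate case the \emph{upper bound} comes for free: since $F_\alpha=(1-\alpha)A_1+\alpha A_2$ and $A_2-A_1=e_1\otimes e_1$, I would restrict to competitors $\chi\in BV_\nu(\Omega;\{A_1,A_2\})\subset BV_\nu(\Omega;\K_3)$ and recycle the two-well branching of \cref{prop:L1_2wells} (in dimension two) verbatim, yielding $E_\epsilon\le C|\nu_1|^{2/3}\epsilon^{2/3}$. For the \emph{lower bound}, after the shift $v(x)=u(x)-F_\alpha x$, $\tilde\chi=\chi-F_\alpha$ (which makes $v\in H^1_0$ and preserves the diagonal structure), I would combine two Fourier estimates. \cref{lem:Localization} supplies $\sum_{k\notin C_{1,\mu,\lambda}}|\hat\chi_{11}(k)|^2\lesssim(\mu^{-2}+(\lambda\epsilon)^{-1})E_\epsilon+\lambda^{-1}$ for $\mu<|\nu_1|/2$, while \cref{lem:LowFreqEst} applied to $\tilde\chi$ with $\bar\lambda=2\lambda/|\nu_1|$ controls the complementary frequencies, since by construction $C_{1,\mu,\lambda}\subset\{|k_1|\le\bar\lambda\}$. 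Adding the two yields
\begin{align*}
\sum_{k\ne 0}|\hat\chi_{11}(k)|^2\le C\bigl(\lambda^2|\nu_1|^{-2}+\mu^{-2}+(\lambda\epsilon)^{-1}\bigr)E_\epsilon+C\lambda^{-1}.
\end{align*}
I would then balance the three multiplicative factors via $\mu\sim|\nu_1|/\lambda$ and $\lambda\sim|\nu_1|^{2/3}\epsilon^{-1/3}$ (admissible for $\epsilon<\epsilon_0(\alpha,|\nu_1|)$), so that the right-hand side becomes of order $|\nu_1|^{-2/3}\epsilon^{-2/3}E_\epsilon+|\nu_1|^{-2/3}\epsilon^{1/3}$.

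The main obstacle, and the only genuine novelty over the two-well argument, is producing a positive lower bound on $\sum_{k\ne 0}|\hat\chi_{11}(k)|^2=\overline{\chi_{11}^2}-(\overline{\chi_{11}})^2$ despite the presence of the extra well $A_3$. Here I would exploit that the elementary mean estimate $|F_\alpha-\overline\chi|^2\le\int_\Omega|\nabla u-\chi|^2\,dx\le E_\epsilon$ yields both $|\overline{\chi_{11}}-\alpha|\lesssim\sqrt{E_\epsilon}$ and, crucially, $\overline{\chi_{22}}\lesssim\sqrt{E_\epsilon}$. Since $\chi_{22}\in\{0,1\}$ takes the value $1$ exactly on $\{\chi=A_3\}$, this forces the volume fraction $\theta_3:=|\{\chi=A_3\}|=\overline{\chi_{22}}\lesssim\sqrt{E_\epsilon}$. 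A direct computation from $\chi_{11}\in\{0,\tfrac12,1\}$ gives $\overline{\chi_{11}^2}=\overline{\chi_{11}}-\theta_3/4$, hence
\begin{align*}
\sum_{k\ne 0}|\hat\chi_{11}(k)|^2=\overline{\chi_{11}}(1-\overline{\chi_{11}})-\tfrac{\theta_3}{4}=\alpha(1-\alpha)+O(\sqrt{E_\epsilon}).
\end{align*}
For $E_\epsilon$ below a threshold depending on $\alpha$ this exceeds $\alpha(1-\alpha)/2$, and combining with the Fourier estimate above and absorbing the $\lambda^{-1}$ remainder yields the claimed bound $E_\epsilon\ge C^{-1}|\nu_1|^{2/3}\epsilon^{2/3}$.

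For the degenerate case $\nu\cdot e_1=0$ I would copy the vanishing construction from the corresponding part of \cref{prop:L1_2wells}: a $\tfrac1N$-periodic simple laminate between $A_1$ and $A_2$ in direction $e_1$, with a thin boundary layer of width $\tfrac1N$ to restore the Dirichlet datum. Since $\nu\perp e_1$, the measure $D_\nu\chi$ vanishes on the bulk lamination so that the surface term is zero, while the elastic energy tends to $0$ as $N\to\infty$, giving $\inf E_\epsilon=0$.
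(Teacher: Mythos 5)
Your upper bound and the degenerate case ($\nu\cdot e_1=0$) coincide with the paper's argument verbatim: the reduction to the two-well construction of \cref{prop:L1_2wells} and the cut-off simple laminate with vanishing surface energy.

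For the lower bound when $\nu\cdot e_1\neq 0$, your route is correct but genuinely different from the paper's main proof. The paper handles the potential degeneracy $\chi_{11}\equiv 0$ (which can occur after the shift when $\alpha=1/2$) by running the Fourier localization for \emph{both} $\chi_{11}$ and $\chi_{22}$, propagating the truncation from $C_{1,\mu,\lambda}$ to a truncated cone $C_{2,\mu,\lambda_2}$ via the commutator estimate \cref{cor:CommutatorEst_Applied}, and concluding from $\|\chi_{11}\|_{L^2}^2+\|\chi_{22}\|_{L^2}^2\geq c(\alpha)>0$. Your proposal instead keeps the Fourier control on $\chi_{11}$ only, and uses the elementary mean estimate $|F_\alpha-\overline\chi|^2\leq E_{el}$ to force $\theta_3=\overline{\chi_{22}}\lesssim\sqrt{E_\epsilon}$; the identity $\overline{\chi_{11}^2}-(\overline{\chi_{11}})^2=\overline{\chi_{11}}(1-\overline{\chi_{11}})-\theta_3/4$ (which is correct given $\chi_{11}\in\{0,\tfrac12,1\}$) then yields the uniform lower bound $\alpha(1-\alpha)/2$ once $E_\epsilon$ is small. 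This is precisely the \emph{alternative} argument the paper itself describes in \cref{rmk:lower_bound_first_laminate}. Your version has the merit of being more elementary and of avoiding the $\psi_{1-\gamma}$ nonlinearity (your final estimate is linear in $E_\epsilon$, not passed through $\psi_{1-\gamma}$); the trade-off is that the commutator technique generalizes directly to the higher-order laminates of \cref{thm:L1_mult_wells}, where a mean-field argument is no longer available, which is why the paper prefers it as the main line.

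One point to spell out more carefully if you write this up: the Fourier lemmata \cref{lem:Localization} and \cref{lem:LowFreqEst} are stated for functions on the torus extended one-periodically from $H^1_0$; they therefore apply to the \emph{shifted} $\tilde\chi=\chi-F_\alpha$. Since $\hat{\tilde\chi}_{11}(k)=\hat\chi_{11}(k)$ for all $k\neq 0$, your computation of $\sum_{k\neq 0}|\hat\chi_{11}(k)|^2$ in terms of the un-shifted volume fractions $\theta_j$ is consistent with the Fourier bounds for $\tilde\chi$, but this identification should be made explicit rather than implicit.
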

\begin{proof}

\emph{Upper bounds:}
    As $A_1$ and $A_2$ are rank-one connected in $e_1$ direction, we directly note that the upper bound construction in the proof of \cref{prop:L1_2wells} also yields an upper bound construction in this setting (after an adaptation of parameters). Thus, we have
    \begin{align*}
        \inf_{\chi \in BV_\nu(\Omega;\K_3)} \inf_{u \in \mathcal{A}_{F_\alpha}} E_\epsilon(u,\chi) \leq \begin{cases}
            C(\alpha)|\nu_1|^{\frac{2}{3}} \epsilon^{\frac{2}{3}}, & \nu \cdot e_1 \neq 0, \\
            0, & \nu \cdot e_1 = 0.
        \end{cases}
    \end{align*}

    \emph{Lower bound for $\nu \cdot e_1 \neq 0$:} For any $u\in\mathcal{A}_{F_\alpha}, \chi\in BV_\nu(\Omega;\K_3)$, by considering $\nabla u - F_\alpha$ and $\chi - F_\alpha$, we can reduce to the case $F_\alpha = 0$ (cf.\ proof of \cref{prop:L1_2wells}).
    With a slight abuse of notation, we still write $u$ and $\chi$ for the modified functions, hence $u\in H^1_0(\Omega;\R^2)$ and $\chi \in \K_3 - F_\alpha$.
    For the function $\chi_{11}$ we exploit the same ideas as in the two-well case but will make use of both the $\chi_{11}$ and the $\chi_{22}$ components.
    To be more precise by \cref{lem:ElasticFourier} it holds
    \begin{align*}
        \int_\Omega |\nabla u - \chi|^2 dx \geq \sum_{k \in \Z^2} \left( \frac{|k|^2 - k_1^2}{|k|^2} |\hat{\chi}_{11}(k)|^2 + \frac{|k|^2 - k_2^2}{|k|^2} |\hat{\chi}_{22}(k)|^2 \right) \geq \sum_{k \in \Z^2} \frac{|k|^2 - k_1^2}{|k|^2} |\hat{\chi}_{11}(k)|^2,
    \end{align*}
    where we fix the multipliers to be equal to one in $k = 0$.
    Moreover, it holds
    \begin{align*}
        \Vert D_\nu \chi \Vert_{TV(\Omega)} \geq \Vert D_\nu \chi_{11} \Vert_{TV(\Omega)},
    \end{align*}
    and thus, following the ideas of \cref{prop:L1_2wells}, by \cref{lem:Localization}, we deduce for $\lambda>1$ and $\mu\in(0,\frac{|\nu_1|}{16})$, that
    \begin{align*}
        \Vert \chi_{11} - m_{1,\mu,\lambda}(D) \chi_{11} \Vert_{L^2}^{2} + \Vert \chi_{22} - m_{1,\mu}(D) \chi_{22} \Vert_{L^2}^{2} &\leq \sum_{k \notin C_{1,\mu,\lambda}} |\hat{\chi}_{11}(k)|^2 + \sum_{k \notin C_{2,\mu}} |\hat{\chi}_{22}(k)|^2 \\
        &\leq C (\mu^{-2} + (\lambda \epsilon)^{-1}) E_\epsilon(u,\chi) + C \lambda^{-1} \Per(\Omega),
    \end{align*}
    where we consider the smooth cut-off multipliers as in \cref{eq:multiplier}.
    Applying moreover \cref{cor:CommutatorEst_Applied} with $\chi_{22} = -4 (\chi_{11}+\alpha)(\chi_{11}-1+\alpha)$, yields for $\lambda_2=M\mu\lambda < \lambda$ 
    \begin{align*}
        \Vert \chi_{22} - m_{2,\mu,\lambda_2}(D) \chi_{22} \Vert_{L^2}^{2} &\leq C(\alpha) \psi_{1-\gamma}(\Vert \chi_{11} - m_{1,\mu,\lambda}(D) \chi_{11} \Vert_{L^2}^{2}) + 4 \Vert \chi_{22} - m_{2,\mu}(D) \chi_{22} \Vert_{L^2}^{2} \\
        & \leq C(\alpha) \psi_{1-\gamma}\Big((\mu^{-2} + (\lambda \epsilon)^{-1}) E_\epsilon(u,\chi) + \lambda^{-1} \Per(\Omega)\Big),
    \end{align*}
    with $\psi_{1-\gamma}(x) = \max\{|x|,|x|^{1-\gamma}\}$.

    In conclusion, using that $1-m_{2,\mu,\lambda_2}(k) \geq 1 - m_{2,\mu,\lambda}(k) \geq 0$,
    \begin{align*}
       & \Vert \chi_{11}-m_{1,\mu,\lambda}(D) \chi_{11} \Vert_{L^2}^{2} + \Vert \chi_{22} - m_{2,\mu,\lambda}(D) \chi_{22} \Vert_{L^2}^{2} \\
       &\leq C \psi_{1-\gamma}\Big((\mu^{-2} + (\lambda \epsilon)^{-1}) E_\epsilon(u,\chi) + \lambda^{-1} \Per(\Omega)\Big).
    \end{align*}
    Applying \cref{lem:LowFreqEst} for both $\chi_{11}$ and $\chi_{22}$ with the frequency cut-off given by $\bar{\lambda} = \frac{4}{|\nu_1|} \lambda$ we get
    \begin{align*}
        \Vert \chi_{11} \Vert_{L^2}^{2} + \Vert \chi_{22} \Vert_{L^2}^{2} & \leq 2 \Vert \chi_{11} - m_{1,\mu,\lambda}(D) \chi_{11} \Vert_{L^2}^{2} + 2 \sum_{|k_1| \leq \frac{4}{|\nu_1|} \lambda} |\hat{\chi}_{11}(k)|^2 + \\
        & \quad + 2 \Vert \chi_{22} - m_{2,\mu,\lambda}(D) \chi_{22} \Vert_{L^2}^{2} + 2 \sum_{|k_2| \leq \frac{4}{|\nu_1|}\lambda} |\hat{\chi}_{22}(k)|^2 \\
        & \leq C \psi_{1-\gamma}\Big((\mu^{-2} + (\lambda \epsilon)^{-1}) E_\epsilon(u,\chi) + \lambda^{-1} \Per(\Omega)\Big)\\
        & \quad + C |\nu_1|^{-2} \lambda^2 E_\epsilon(u,\chi) \\
        & \leq C \psi_{1-\gamma}\Big((|\nu_1|^{-2} \lambda^2 + \mu^{-2} + (\lambda \epsilon)^{-1}) E_\epsilon(u,\chi) + \lambda^{-1} \Per(\Omega)\Big).
    \end{align*}
    Fixing $\mu^{-2} \sim |\nu_1|^{-2} \lambda^2$ and $\lambda \sim |\nu_1|^{2/3} \epsilon^{-1/3}$ (which are compatible with the constrains $\mu<\frac{|\nu_1|}{16}$ and $\bar\lambda>1$) yields
    \begin{align}
    \label{eq:second_trunc}
        \Vert \chi_{11} \Vert_{L^2}^{2} + \Vert \chi_{22} \Vert_{L^2}^{2} \leq C \psi_{1-\gamma}\Big(|\nu_1|^{-\frac{2}{3}} \epsilon^{-\frac{2}{3}} E_\epsilon(u,\chi) + |\nu_1|^{-\frac{2}{3}} \epsilon^{\frac{1}{3}} \Per(\Omega)\Big).
    \end{align}

    By the fact that $\Vert \chi_{11} \Vert_{L^2}^{2} + \Vert \chi_{22} \Vert_{L^2}^{2} = \Vert \chi \Vert_{L^2}^{2} \geq c > 0$, we can show the desired lower bound for $\epsilon < \epsilon_0(|\nu_1|,\alpha)$.
    Indeed, we have either
    \begin{align*}
        C \leq |\nu_1|^{-\frac{2}{3}} \epsilon^{-\frac{2}{3}} E_\epsilon(u,\chi) + |\nu_1|^{-\frac{2}{3}} \epsilon^{\frac{1}{3}} \Per(\Omega),
    \end{align*}
    or
    \begin{align*}
        C^{\frac{1}{1-\gamma}} \leq |\nu_1|^{-\frac{2}{3}} \epsilon^{-\frac{2}{3}} E_\epsilon(u,\chi) + |\nu_1|^{-\frac{2}{3}} \epsilon^{\frac{1}{3}} \Per(\Omega),
    \end{align*}
    depending on the case distinction in $\psi_{1-\gamma}$. In conclusion, after absorbing the perimeter term, we arrive at
    \begin{align*}
        E_{\epsilon}(u,\chi) \geq \frac{1}{2}\min\{C,C^{\frac{1}{1-\gamma}}\} |\nu_1|^{\frac{2}{3}} \epsilon^{\frac{2}{3}}.
    \end{align*}
\end{proof}

\begin{rmk} \label{rmk:lower_bound_first_laminate}
Let us comment on a technical aspect which is specific to our anisotropic surface energies and which does not arise in this form in the isotropic setting.
    We observe that for specific choices of $F_\alpha \in \K^{1}_3$ (both in the isotropic and anisotropic settings) it may happen that $\chi_{11}$ vanishes. Indeed, we recall that $\chi \in \K_3 - F_\alpha = \{\diag(-\alpha, 0), \diag(1-\alpha,0), \diag(1/2-\alpha,1)\}$ for $\alpha \in (0,1)$ and, hence, $\chi_{11}=0$ can occur for $\alpha = 1/2$.
    For this reason, in the above proof, we also used $\chi_{22}$ in the lower bound, in order to deduce a uniform lower bound in \cref{eq:second_trunc}. To this end, in the above argument, we used a commutator estimate already for \emph{first} order laminates (while in the isotropic setting commutators only enter for \emph{second} and \emph{higher} order laminates).
    In the setting of \cite{RT23} (while also here $\chi_{11}=0$ may arise) it is \emph{not} necessary to use a commutator estimate of the form \cref{lem:CommutatorEst} for first order laminates, as -- due to the isotropy of the surface energy -- in that article the first high frequency localization truncates the cones in \emph{all} directions.
    In our setting this truncation is not present due to the anisotropy of the surface energy and we, hence, require an extra step to show the lower bound.

    As an alternative argument, also in our anisotropic setting, one could have avoided an application of the commutator estimate at the expense of using further information on the $\chi_{22}$ component. Indeed, one could have restricted to a bound of the form
    \begin{align}
    \label{eq:single_trunc}
        \Vert \chi_{11} \Vert_{L^2}^{2}  \leq C \psi_{1-\gamma}\Big(|\nu_1|^{-\frac{2}{3}} \epsilon^{-\frac{2}{3}} E_\epsilon(u,\chi) + |\nu_1|^{-\frac{2}{3}} \epsilon^{\frac{1}{3}} \Per(\Omega)\Big),
    \end{align}
    and then, in a second step, invoked information on the average of $\chi_{22}$. To this end, we note that by Jensen's inequality, it holds that
    \begin{align*}
    |\langle \nabla u \rangle_{\Omega} - \langle \chi \rangle_{\Omega}|^{2} \leq E_{el}(u,\chi),
    \end{align*}
    where $\langle \cdot \rangle_{\Omega}$ denotes the average on $\Omega$. In particular, we here use that by the imposed boundary conditions and the fundamental theorem of calculus, $\langle \nabla u \rangle_{\Omega} = F_\alpha$ and that $F_{22} = 0$ for all $F \in \K^1_3$.

    Hence, considering the second component we have
    \begin{align*}
        |\{x \in \Omega: \chi(x) =A_3\}|^2 \leq |\langle \chi_{22}\rangle_{\Omega}|^2 \leq |F_\alpha-\langle \chi \rangle_{\Omega}|^2 \leq E_{el}(u,\chi).
    \end{align*}
    In particular,
    \begin{align*}
        \Vert \chi_{11} \Vert_{L^2}^2 &\geq \min\{\alpha^2,(1-\alpha)^2\} |\{x \in \Omega: \chi \neq A_3\}| \geq C(\alpha) (|\Omega| - |\{x \in \Omega: \chi = A_3\}|) \\
        &\geq C(\alpha)(|\Omega|-E_{el}(u,\chi)^{\frac{1}{2}}).
    \end{align*}
    Returning to \cref{eq:single_trunc} with this additional information and rearranging the inequality, one then infers that
    \begin{align*}
      c \leq  \Vert \chi_{11} \Vert_{L^2}^{2} + C E_{el}(u,\chi)^{\frac{1}{2}} \leq C \psi_{1-\gamma}\Big(|\nu_1|^{-\frac{2}{3}} \epsilon^{-\frac{2}{3}} E_\epsilon(u,\chi) + |\nu_1|^{-\frac{2}{3}} \epsilon^{\frac{1}{3}} \Per(\Omega)\Big) + C E_{el}(u,\chi)^{\frac{1}{2}},
    \end{align*}
    which also concludes the argument after an absorption of the perimeter and additional elastic energy terms.
\end{rmk}

Turning now to the second order laminates, we consider
\begin{align*}
    F = \begin{pmatrix}
        \frac{1}{2} & 0 \\ 0 & \alpha
    \end{pmatrix} \in \K_3^2
\end{align*}
for some $\alpha \in (0,1)$ and note that $\chi_{22} - F_{22} \in \{-\alpha,1-\alpha\} \not\ni 0$. Thus, we now aim to control the Fourier mass of $\chi_{22}$.

\begin{lem}\label{lem:scal-second}
    Under the same assumptions as in \cref{thm:L1_3wells}, let $F_\alpha = \diag(\frac{1}{2},\alpha) \in \K_3^2$ for $\alpha \in (0,1)$.
    If $\nu \cdot e_1 \neq 0$ there are constants $C = C(\alpha) > 0$ and $\epsilon_0 = \epsilon_0(\alpha,|\nu_1|) > 0$ such that for any $\epsilon \in (0,\epsilon_0)$
    \begin{align*}
        C^{-1} |\nu_1|^{\frac{1}{2}} \epsilon^{\frac{1}{2}} \leq \inf_{u \in \mathcal{A}_{F_\alpha}} \inf_{\chi \in BV_\nu(\Omega;\K_3)} E_\epsilon(u,\chi) \leq C |\nu_1|^{\frac{1}{2}} \epsilon^{\frac{1}{2}}.
    \end{align*}

    If $\nu \cdot e_1 = 0$, we have with $C = C(\alpha) > 0$ and $\epsilon_0 = \epsilon_0(\alpha,|\nu_2|) > 0$ for all $\epsilon \in (0,\epsilon_0)$
    \begin{align*}
        C^{-1} |\nu_2|^{\frac{2}{3}} \epsilon^{\frac{2}{3}} \leq \inf_{u \in \mathcal{A}_{F_\alpha}} \inf_{\chi \in BV_\nu(\Omega;\K_3)} E_\epsilon(u,\chi) \leq C |\nu_2|^{\frac{2}{3}} \epsilon^{\frac{2}{3}}.
    \end{align*}
\end{lem}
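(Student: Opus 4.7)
The proof splits into the cases $\nu \cdot e_1 \neq 0$ and $\nu \cdot e_1 = 0$, each requiring matching lower and upper bounds. As in the proof of \cref{lem:scal-first}, we may assume after translation that $F_\alpha = 0$ and $\chi \in \K_3 - F_\alpha$. The feature specific to second-order laminates is that $\chi_{22}$ takes only the two values $\{-\alpha, 1-\alpha\}$ and is pointwise bounded away from zero, so $\|\chi_{22}\|_{L^2}^2 \geq \min\{\alpha, 1-\alpha\}^2 |\Omega| =: c_\alpha$, while the quadratic relation $\chi_{22} = (1-\alpha) - 4 \chi_{11}^2$ fits the hypotheses of \cref{cor:CommutatorEst_Applied} with $g(t) = (1-\alpha) - 4 t^2$.

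For the lower bound when $\nu \cdot e_1 \neq 0$, the plan is to follow the Fourier strategy of \cref{lem:scal-first} but now apply the low-frequency step to $\chi_{22}$ in place of $\chi_{11}$. First, \cref{lem:Localization} concentrates $\chi_{11}$ in the truncated cone $C_{1,\mu,\lambda}$ and $\chi_{22}$ in the infinite cone $C_{2,\mu}$. Next, the commutator \cref{cor:CommutatorEst_Applied} applied to $\chi_{22} = g(\chi_{11})$ propagates the truncation to $\chi_{22}$, forcing it to be essentially localized in $C_{2,\mu,\lambda_2}$ with $\lambda_2 = M\mu\lambda$, up to a $\psi_{1-\gamma}$ error controlled by the previous cone bound. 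Finally, \cref{lem:LowFreqEst} is applied to $\chi_{22}$ (legitimate by the symmetric role of the diagonal indices, using $\chi_{21}=0$ and Poincar\'e in $x_1$) with cut-off $\bar\lambda \sim \mu\lambda/|\nu_1|$, so that the low-frequency region $\{|k_2| \leq \bar\lambda\}$ covers the complement of $C_{2,\mu,\lambda_2}$. Combining both bounds with $\|\chi_{22}\|_{L^2}^2 \geq c_\alpha$ yields an estimate of the form
\begin{equation*}
c_\alpha \leq C\, \psi_{1-\gamma}\!\left( \left( \frac{\mu^2 \lambda^2}{|\nu_1|^2} + \mu^{-2} + \frac{1}{\lambda \epsilon} \right) E_\epsilon(u,\chi) + \frac{1}{\lambda} \right).
\end{equation*}
Balancing $\mu^2 \sim |\nu_1|/\lambda$ (first two terms) and then $\lambda \sim |\nu_1|^{1/2}\epsilon^{-1/2}$ (balance with the singular-perturbation term) gives $E_\epsilon(u,\chi) \gtrsim |\nu_1|^{1/2}\epsilon^{1/2}$, after absorbing the lower-order perimeter contribution. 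The main obstacle is the coupled optimization in $\mu, \lambda$ together with the $\psi_{1-\gamma}$ commutator bookkeeping and the verification of the constraints $\mu < |\nu_1|/16$ and $\bar\lambda > 1$ for $\epsilon < \epsilon_0(\alpha, |\nu_1|)$.

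For the lower bound when $\nu \cdot e_1 = 0$, so that $\nu = \pm e_2$, no commutator is needed. Since $A_3 - \tfrac{1}{2}(A_1+A_2) = e_2 \otimes e_2$ is rank-one in direction $e_2$, the component $\chi_{22}$ behaves as a two-valued phase indicator rank-one compatible in direction $e_2$. The plan is then to apply the two-well argument of \cref{prop:L1_2wells} directly to $\chi_{22}$ with the roles of $e_1$ and $e_2$ swapped: the elastic energy confines $\hat\chi_{22}$ to the cone $C_{2,\mu}$ and the surface bound $\|D_{e_2} \chi_{22}\|_{TV}$ truncates high $|k_2|$ frequencies via \cref{lem:HighFreqDir}. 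The two-well optimization $\lambda \sim |\nu_2|^{2/3}\epsilon^{-1/3}$ then directly delivers $E_\epsilon(u,\chi) \gtrsim |\nu_2|^{2/3}\epsilon^{2/3}$.

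For both upper bounds, I would use the same class of two-scale second-order laminates: in direction $e_2$, alternate $A_3$ layers with layers of the $A_1, A_2$ simple laminate (of inner period $q$ in direction $e_1$, averaging to $A_{\text{mix}} = \tfrac{1}{2}(A_1+A_2)$) at outer period $p$, and then enforce the Dirichlet data on the vertical sides via an affine cut-off in a boundary layer of width of order one. Summing contributions from (i) elastic transitions of the inner laminate across outer interfaces, (ii) boundary-layer matching of the $u_2$ oscillation induced by the outer laminate, (iii) inner surface energy and (iv) outer surface energy yields an upper bound of the form $E_\epsilon \lesssim q^2/p^2 + p^2 + \epsilon|\nu_1|/q + \epsilon|\nu_2|/p$, up to logarithmic corrections from slanted transition interfaces. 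When $\nu \cdot e_1 \neq 0$, the choice $p \sim (|\nu_1|\epsilon)^{1/4}$ and $q \sim (|\nu_1|\epsilon)^{1/2}$ balances the dominant terms and produces the upper bound $|\nu_1|^{1/2}\epsilon^{1/2}$. When $\nu \cdot e_1 = 0$, the inner surface term vanishes identically; taking $q \to 0$ reduces the optimization to $p^2 + \epsilon|\nu_2|/p$, minimized by $p \sim (|\nu_2|\epsilon)^{1/3}$ and yielding $|\nu_2|^{2/3}\epsilon^{2/3}$. The main technical obstacle here is the careful justification of the $p^2$ boundary-layer cost, which reflects the outer $\chi_{22}$ oscillation that is absent in the first-order laminate setting.
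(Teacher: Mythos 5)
Your lower bound arguments are correct and track the paper's proof almost step by step: for $\nu \cdot e_1 \neq 0$ you use \cref{lem:Localization} to place $\hat\chi_{11}$ in the truncated cone and $\hat\chi_{22}$ in the infinite cone, then \cref{cor:CommutatorEst_Applied} with $g(t)=1-\alpha-4t^2$ to propagate the truncation to $\chi_{22}$, then \cref{lem:LowFreqEst} applied to $\chi_{22}$ (the paper also does exactly this, using Poincar\'e in $x_1$ and $\chi_{21}=0$), and then optimize $\mu^{-2} \sim |\nu_1|^{-1}\lambda$, $\lambda \sim |\nu_1|^{1/2}\epsilon^{-1/2}$ while handling the $\psi_{1-\gamma}$ case distinction and absorbing the perimeter term. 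For $\nu \cdot e_1 = 0$ you reduce to the two-well Fourier argument for $\chi_{22}$ with the roles of $e_1,e_2$ swapped, which is again the paper's route.

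There is, however, a genuine gap in your upper bound. You describe a \emph{simple} two-scale laminate (outer period $p$ between $A_3$ and $\tfrac12(A_1+A_2)$, inner period $q$ between $A_1$ and $A_2$, affine cut-off near the boundary) and then write down the energy estimate $q^2/p^2 + p^2 + \epsilon|\nu_1|/q + \epsilon|\nu_2|/p$. These exponents are not what a two-scale laminate produces: a laminate of outer period $p$ cut off against affine data in a boundary layer costs order $p$ (width $\times$ depth $\sim 1 \times p$), not $p^2$, and the inner transitions at outer interfaces cost order $q/p$ ($\sim 1/p$ interfaces, each with a width-$q$ transition layer of $O(1)$ density), not $q^2/p^2$. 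With the correct exponents $p + q/p + \epsilon|\nu_1|/q + \epsilon|\nu_2|/p$ the optimization yields $p \sim (\epsilon|\nu_1|)^{1/3}$, $q \sim (\epsilon|\nu_1| p)^{1/2}$ and an upper bound of order $(\epsilon|\nu_1|)^{1/3}$, which does \emph{not} match the lower bound $(\epsilon|\nu_1|)^{1/2}$; similarly for $\nu\cdot e_1=0$ a laminate with $q\to0$ gives only $(\epsilon|\nu_2|)^{1/2}$, not $(\epsilon|\nu_2|)^{2/3}$. The exponents $p^2$ and $q^2/p^2$ you wrote are exactly those of a \emph{double branching} construction ($p^2 \sim N^{-2}$ for the outer branching elastic cost, inner period $q\sim p^2$), which is what the paper actually builds: an outer branching between $A_3$ and $\diag(1/2,0)$ with $N$ zeroth-generation layers, refined inside each cell by a second branching between $A_1,A_2$ with $M\sim (2\theta)^j N^2$ layers, yielding $\frac{1}{N^2}+\epsilon N^2|\nu_1|+\epsilon N|\nu_2|$ up to logarithms and optimized at $N\sim(\epsilon|\nu_1|)^{-1/4}$. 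For $\nu\cdot e_1=0$ the paper instead places an arbitrarily fine inner laminate (period $r<\epsilon$ with a cut-off of negligible cost $rh$) inside the \emph{outer} branching, again giving the branching exponent $p^2+\epsilon|\nu_2|/p$. So your claimed scaling is right but the construction you describe cannot produce it; you need to replace the laminate-plus-affine-cut-off by a self-similar (indeed nested) branching refinement toward the boundary and toward the outer interfaces, respectively.
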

 Again, we split the proof into two parts, first for $\nu \cdot e_1 \neq 0$ and the second part for $\nu \cdot e_1 =0$.

\begin{proof}[Proof for $\nu \cdot e_1 \neq 0$]
    \emph{Upper bound:} As for the first-order branching construction, the claimed upper scaling bound in $\epsilon$ is already known (cf. \cite{KW16,RT23}).
    We just need to focus on the dependence of the prefactor on $\nu$.
    In particular, we will follow the strategy of the proof of \cite[Thm. 1.2 (ii)]{RT23}. This consists in concatenating two orders of branching constructions; an outer one between gradients $A_3$ and $\diag(1/2,0)$, and then replacing the regions in which $\nabla u\approx \diag(1/2,0)$ with an inner branched lamination between $A_1$ and $A_2$.

    Let $\{\omega_{j,k}\}$ be the (outer) first-order branching covering as in the Step 2 of the proof of \cref{prop:L1_2wells} (now with switched roles between $x_1$ and $x_2$ because of the structure of the wells).
    In each cell $\omega_{j,k}$ we can produce an inner branching construction
    (see \cite{RT23} for details) so that estimate \cref{eq:bra1-nu} applies as follows
    \begin{align*}
    \int_{\omega_{j,k}} |\nabla u-\chi|^2 dx + \epsilon\Vert D_\nu\chi\Vert_{TV(\omega_{j,k})} \leq C(\alpha)\Big( \frac{h_j^3}{M^2\ell_j}+\epsilon\ell_j(M|\nu_1|+k_0|\nu_2|) \Big),
    \end{align*}
    where $M\in\N$, $M\sim(2\theta)^jN^2$ denotes the number of oscillations of the zeroth generation of this inner branching construction (the dependence of $M$ on $j$ is dropped for notational simplicity) and $k_0\sim\log(N)$.
    Summing this for every $k$ and every (outer) generation $j$ and by also adding the surface energy term of the outer branching construction (which comes by attaching the $\omega_{j,k}$-cells together), by the relations \cref{eq:parameters} we obtain
    \begin{align*}
    \int_{\Omega} |\nabla u-\chi|^2 dx + \epsilon\Vert D_\nu\chi\Vert_{TV(\Omega)} \leq C(\alpha) \Big( \frac{1}{N^2}+\epsilon (N^2+j_0)|\nu_1|+\epsilon (k_0+N)|\nu_2| \Big).
    \end{align*}
    Optimizing as $N\sim(\epsilon|\nu_1|)^{-\frac{1}{4}}$ we deduce the claimed upper scaling bound.
    Due to the construction, the bounds from  \cref{eq:PropertiesBranching} still hold for $u$ and $\chi$.

    \emph{Lower bound:}
    Analogously as in the previous proofs, by subtracting the boundary conditions we can assume that $u\in H^1_0(\Omega;\R^2)$ and $\chi\in\K_3-F_\alpha$.
    For the readers' convenience we recall the (truncated) cones from \cref{eq:TruncCone_1,eq:Cone_j}: for $\mu \in (0,\frac{|\nu_1|}{16})$,
$\lambda > 0$, $j=1,2$
\begin{align*}
  C_{j,\mu,\lambda} = \{ k \in \Z^d: |k|^2 - k_{j}^2 \leq \mu^2 |k|^2, \ |k_{j}| \leq \frac{2}{|\nu_1|} \lambda\} , \quad C_{2,\mu} = \{ k \in
  \Z^d: |k|^2 - k_2^2 \leq \mu^2 |k|^2\}.
\end{align*}
For the multipliers defined in \cref{eq:multiplier} 
we infer by \cref{lem:Localization}
\begin{align*}
  \Vert \chi_{11} - m_{1,\mu,\lambda}(D) \chi_{11} \Vert_{L^2}^2 + \Vert \chi_{22} -
  m_{2,\mu}(D) \chi_{22} \Vert_{L^2}^2 &\leq \sum_{k \notin C_{1,\mu,\lambda}} |\hat{\chi}_{11}(k)|^2 + \sum_{k \notin C_{2,\mu}} |\hat{\chi}_{22}(k)|^2 \\
  & \leq C (\mu^{-2} + (\lambda \epsilon)^{-1}) E_\epsilon(u,\chi) + C \lambda^{-1} \Per(\Omega).
\end{align*}
Here the constant $C>0$ only depends on $\alpha$.

Using that $\chi_{22} = 1-\alpha-4\chi_{11}^2$, we deduce from an application of \cref{cor:CommutatorEst_Applied} with $\lambda_2 = M \mu \lambda$ for any $\gamma \in (0,1)$
\begin{align} \label{eq:Lorent_iteration}
  \Vert \chi_{22} - m_{2,\mu,\lambda_2}(D) \chi_{22} \Vert_{L^2}^{2} \leq C(\alpha) \psi_{1-\gamma}(\Vert
  \chi_{11} - m_{1,\mu,\lambda}(D) \chi_{11} \Vert_{L^2}^{2}) + 4 \Vert \chi_{22} - m_{2,\mu}(D)
  \chi_{22} \Vert_{L^2}^{2},
\end{align}
with $\psi_t(x) = \max\{|x|^t,|x|\}$ for $t > 0$.

Now we use this estimate on $\chi_{22} - m_{2,\mu,\lambda_2}(D) \chi_{22}$ to improve the lower
bound in comparison to the two-well case.
Using \cref{lem:LowFreqEst} for $\chi_{22}$, we get for $\bar{\lambda} = \frac{4}{|\nu_1|} \lambda_2$
\begin{align} \label{eq:Lorent_low_freq}
\sum_{|k_2| \leq 4 \lambda_2/|\nu_1|} |\hat{\chi}_{22}(k)|^2 \leq C \frac{\lambda_2^2}{|\nu_1|^2} E_{el}(u,\chi).
\end{align}
Thus, as in the proof of the two-well problem in \cref{prop:L1_2wells}, the idea is to combine \cref{eq:Lorent_iteration,eq:Lorent_low_freq} to have an estimate of $\Vert \chi_{22} \Vert_{L^2}^2$ in terms of the energy depending on the parameters $\mu, \lambda_2$ and then to optimize in these parameters.
To be precise, we have
\begin{align*}
    \Vert \chi_{22} \Vert_{L^2}^2 & \leq 2 \sum_{k \in C_{2,2\mu,2\lambda_2}} |\hat{\chi}_{22}(k)|^2 + 2 \Vert \chi_{22} - m_{2,\mu,\lambda_2}(D) \chi_{22} \Vert_{L^2}^2 \\
    & \leq 2 \sum_{|k_2| \leq 4 \lambda_2/|\nu_1|} |\hat{\chi}_{22}(k)|^2 + 2 \Vert \chi_{22} - m_{2,\mu,\lambda_2}(D) \chi_{22} \Vert_{L^2}^2.
\end{align*}
Thus, plugging in \cref{eq:Lorent_iteration,eq:Lorent_low_freq}, yields with $\lambda_2 = M \mu \lambda$ and a constant $C = C(\alpha) > 0$

\begin{align*}
    \Vert \chi_{22} \Vert_{L^2}^2 & \leq C |\nu_1|^{-2} \lambda_2^2 E_{el}(u,\chi) + C \psi_{1-\gamma}(\Vert \chi_{11}- m_{1,\mu,\lambda}(D) \Vert_{L^2}^2) + C \Vert \chi_{22} - m_{2,\mu}(D) \chi_{22} \Vert_{L^2}^2 \\
    & \leq C \left( |\nu_1|^{-2} \lambda_2^2 E_\epsilon(u,\chi) + \psi_{1-\gamma}\Big((\mu^{-2} + (\lambda \epsilon)^{-1}) E_\epsilon(u,\chi) + \lambda^{-1} \Per(\Omega) \Big) \right) \\
    & \leq C \psi_{1-\gamma}\Big((|\nu_1|^{-2} \mu^2 \lambda^2 + \mu^{-2} + (\lambda \epsilon)^{-1}) E_\epsilon(u,\chi) + \lambda^{-1} \Per(\Omega) \Big).
\end{align*}
As $\Vert \chi_{22} \Vert_{L^2}^2 \geq C \min\{\alpha^2,(1-\alpha)^2\} > 0$, fixing $\mu^{-2} \sim |\nu_1|^{-1} \lambda$ and $\lambda \sim |\nu_1|^{1/2} \epsilon^{-1/2}$, we argue as in the proof of \cref{lem:scal-first}, i.e. considering the two cases for $\psi_{1-\gamma}$ and carrying out an absorption argument for the perimeter term, we arrive at
\begin{align*}
    E_\epsilon(u,\chi) \geq \frac{1}{2} \min\{C,C^{\frac{1}{1-\gamma}}\} |\nu_1|^{\frac{1}{2}} \epsilon^{\frac{1}{2}}.
\end{align*}
\end{proof}

\begin{proof}[Proof for $\nu \cdot e_1 = 0$]
\emph{Upper bound:}
We construct a (cut-off) simple laminate of $A_1$ and $A_2$ within a branching construction using $A_3$.
See \cref{fig:LaminateInBranching} for an illustration of a laminate within a branching construction.
As a first-order branching construction has been already explained in the proof of \cref{prop:L1_2wells}, we only give an outline of the argument.

Considering a reference cell $\omega = (0,h) \times (0,\ell)$ with $0 < \ell < h \leq 1$, we decompose this into further subdomains given by
\begin{align*}
  \omega_1 &= \{ (x_1,x_2) \in \omega: x_2 \in (0,(1-\alpha) \frac{\ell}{2})\},\\
  \omega_2 &= \{ (x_1,x_2) \in \omega: x_2 \in [(1-\alpha) \frac{\ell}{2}, (1-\alpha) \frac{\ell}{2} + \alpha \frac{\ell}{2}\frac{x_1}{h}) \}, \\
  \omega_3 & = \{ (x_1,x_2) \in \omega: x_2 \in [(1-\alpha) \frac{\ell}{2} + \alpha \frac{\ell}{2} \frac{x_1}{h}, (1-\alpha) \ell + \alpha \frac{\ell}{2} \frac{x_1}{h}) \}, \\
  \omega_4 &= \{ (x_1,x_2) \in \omega: x_2 \in [(1-\alpha) \ell + \alpha \frac{\ell}{2} \frac{x_1}{h},\ell)\}.
\end{align*}

For $r < h$ such that $\frac{h}{r} \in \N$ and a bump function $\phi \in C^\infty(\R;[0,1])$ with $\phi(t) = 0$ for $t \leq 0$ and $\phi(t) = 1$ for $t \geq 1$, we then define the continuous function
\begin{align*}
 \tilde{u}(x_1,x_2) =
  \begin{cases}
    \begin{pmatrix}
      \operatorname{Lam}_r(x_1) \phi(\frac{x_2}{r}) \phi(\frac{(1-\alpha) \frac{\ell}{2}-x_2}{r}) \\ -\alpha x_2
    \end{pmatrix}, & (x_1,x_2) \in \omega_1, \\[15pt]
    \begin{pmatrix}
      0 \\ (1-\alpha)x_2 - (1-\alpha) \frac{\ell}{2}
    \end{pmatrix}, & (x_1,x_2) \in \omega_2, \\[15pt]
    \begin{pmatrix}
      \operatorname{Lam}_r(x_1) \phi(\frac{x_2-(1-\alpha) \frac{\ell}{2} - \alpha \frac{\ell}{2} \frac{x_1}{h}}{r}) \phi(\frac{(1-\alpha)\ell + \alpha
      \frac{\ell}{2} \frac{x_1}{h}-x_2}{r}) \\ - \alpha x_2 + \alpha \frac{\ell}{2} \frac{x_1}{h}
    \end{pmatrix}, & (x_1,x_2) \in \omega_3, \\[15pt]
    \begin{pmatrix}
      0 \\ (1-\alpha)x_2 - (1-\alpha) \ell
    \end{pmatrix}, & (x_1,x_2) \in \omega_4.
  \end{cases}
\end{align*}
Here we used
\begin{align*}
  \operatorname{Lam}_r(t) = \begin{cases} - \frac{1}{2}t, & t \in [0,\frac{r}{2}), \\ \frac{1}{2}t-\frac{r}{2}, & t \in [\frac{r}{2},r),
  \end{cases}
\end{align*}
  and extended it $r$-periodically.
Setting also
\begin{align*}
  \tilde{\chi}(x_1,x_2) =
  \begin{cases}
    \begin{pmatrix}
       \operatorname{Lam}'_r(x_1) & 0 \\ 0 & -\alpha
    \end{pmatrix}, & (x_1,x_2) \in \omega_1 \cup \omega_3, \\
    \begin{pmatrix}
      0 & 0 \\ 0 & 1-\alpha
    \end{pmatrix}, & (x_1,x_2) \in \omega_2 \cup \omega_4,
  \end{cases}
\end{align*}
we can calculate the energy contribution of $u(x) = \tilde{u}(x) + F_\alpha x$ and $\chi = \tilde{\chi} + F_\alpha$. Note that $F_\alpha = \operatorname{diag}(1/2,\alpha)$.
We have
\begin{align*}
  \int_\omega |\nabla u (x) - \chi(x)|^2 dx = \int_\omega |\nabla\tilde{u}(x) - \tilde{\chi}(x)|^2 dx \leq C(\alpha)(rh + \frac{\ell^3}{h}),
\end{align*}
where the first term is determined by the size of the cut-off areas in $\omega_1$ and $\omega_2$ (i.e., by the inner laminate), and the second term is due to the error we make by adjusting the interfaces away from $e_2$ to achieve
the refinement in $e_1$ direction (i.e., the usual elastic energy originating from branching).
Moreover, as we do not penalize oscillation in the $e_1$ direction, it holds
\begin{align*}
  \Vert D_{e_2} \chi \Vert_{TV(\omega)} + \Per(\omega) = \Vert D_{e_2} \tilde{\chi} \Vert_{TV(\omega)} + \Per(\omega) \leq C(\alpha) h,
\end{align*}
and thus in total
\begin{align*}
  \int_\omega |\nabla u(x) - \chi(x)|^2 dx + \epsilon \Vert D_{e_2} \chi \Vert_{TV(\omega)} + \epsilon \Per(\omega) \leq C(\alpha)(r h + \frac{\ell^3}{h} + \epsilon h).
\end{align*}
As for any $h > 0$ we can choose $r<\epsilon$ 
the contribution of the $r h$ term is negligible.
Hence,
\begin{align*}
  \int_\omega |\nabla u(x) - \chi(x)|^2 dx + \epsilon \Vert D_{e_2} \chi \Vert_{TV(\omega)} + \epsilon \Per(\omega) \leq C(\alpha) (\frac{\ell^3}{h} + \epsilon h).
\end{align*}
This is the same energy contribution as for the standard branching construction, thus concluding as in the proof of \cref{prop:L1_2wells} (see also \cite[Sec. 3]{RT23}), we get
\begin{align*}
 \inf_{\chi \in BV_\nu(\Omega;\K_3)} \inf_{u \in \mathcal{A}_{F_\alpha}} \int_{(0,1)^2} |\nabla u - \chi|^2 dx + \epsilon \Vert D_{e_2} \chi \Vert_{TV((0,1)^2)} \leq C(\alpha) \epsilon^{\frac{2}{3}}.
\end{align*}

\begin{figure}
  \centering
  \tdplotsetmaincoords{-60}{-110}
  \tdplotsetrotatedcoords{0}{0}{0}
  \begin{tikzpicture}[thick,tdplot_rotated_coords]
    \draw[->] (-0.5,-0.5,0) --++ (0.5,0,0) node[above] {$x_1$};
    \draw[->] (-0.5,-0.5,0) --++ (0,0.5,0) node[left] {$x_2$};
    \draw[->] (-0.5,-0.5,0) --++ (0,0,0.5);

    \draw (0,0,0) -- (6,0,0) -- (6,4,0) -- (0,4,0) -- cycle;
    \draw (0,1,0) -- (6,1,0);
    \draw (0,1,0) -- (6,2,0);
    \draw (0,2,0) -- (6,3,0);

    \draw[fill = LightBlue!50, draw = black] (0,1,0) -- (6,1,0) -- (6,2,0) -- (0,1,0); 
    \draw[fill = LightBlue!50, draw = black] (0,2,0) -- (6,3,0) -- (6,4,0) -- (0,4,0) -- (0,2,0); 
    \foreach \x in {5,4,3,...,0}{
        \draw[fill = Orange!50, draw = black] (\x,0,0) -- (\x+0.5,0,0) -- (\x+0.5,1,0) -- (\x,1,0) -- (\x,1,0) -- (\x,0,0); 
        \draw[fill = Orange!50, draw = black] (\x,1+\x/6,0) -- (\x+0.5,1+\x/6+1/12,0) -- (\x+0.5,2+\x/6+1/12,0) -- (\x,2+\x/6,0) -- (\x,1+\x/6,0); 
    }

    \foreach \x in {5,4,3,...,0}{
      \draw[fill = gray!40, draw = black] (\x+1,1,-1) -- (\x+1,0,0) -- (\x+0.5,0.2,-0.7) -- (\x+0.5,0.8,-1.3) -- (\x+1,1,-1); 
      \draw[draw = none, fill = gray]  (\x+1,0,0) -- (\x,0,0) -- (\x+0.5,0.2,-0.7) -- (\x+1,0,0); 
      \draw[draw = black, fill = gray] (\x+1,1,-1) -- (\x,1,-1) -- (\x+0.5,0.8,-1.3) -- (\x+1,1,-1); 
      \draw[fill = Orange, draw = black] (\x,0,0) -- (\x+0.5,0.2,-0.7) -- (\x+0.5,0.8,-1.3) -- (\x,1,-1) -- (\x,0,0); 
    }

    \draw[fill = LightBlue, draw = black] (0,1,-1) -- (6,1,-1) -- (6,2,0) -- (0,1,-1);

    \foreach \x in {5,4,3,...,0}{
        \draw[fill = gray!40, draw = black] (\x+1,2+\x/6+1/6,-2+\x/6+1/6) -- (\x+1,1+\x/6+1/6,-1+\x/6+1/6) -- (\x+0.5,1.2+\x/6+1/12,-1.7+\x/6+1/12) -- (\x+0.5,1.8+\x/6+1/12,-2.3+\x/6+1/12) -- (\x+1,2+\x/6+1/6,-2+\x/6+1/6); 
        \draw[draw = black, fill = gray] (\x,1+\x/6,-1+\x/6) -- (\x+0.5,1.2+\x/6+1/12,-1.7+\x/6+1/12) -- (\x+1,1+\x/6+1/6,-1+\x/6+1/6) -- (\x,1+\x/6,-1+\x/6); 
        \draw[draw = black, fill = gray] (\x+1,2+\x/6+1/6,-2+\x/6+1/6) -- (\x,2+\x/6,-2+\x/6) -- (\x+0.5,1.8+\x/6+1/12,-2.3+\x/6+1/12) -- (\x+1,2+\x/6+1/6,-2+\x/6+1/6); 
        \draw[fill = Orange, draw = black] (\x,1+\x/6,-1+\x/6) -- (\x+0.5,1.2+\x/6+1/12,-1.7+\x/6+1/12) -- (\x+0.5,1.8+\x/6+1/12,-2.3+\x/6+1/12) -- (\x,2+\x/6,-2+\x/6) -- (\x,1+\x/6,-1+\x/6); 
    }

    \draw[fill = LightBlue, draw = black] (0,4,0) -- (0,2,-2) -- (6,3,-1) -- (6,4,0) -- (0,4,0);

    \node[canvas is xy plane at z=0, rotate = 270] at (0.25,1.5) {\scriptsize$A_1$};
    \node[canvas is xy plane at z=0, rotate = 270] at (0.75,1.5+1/12) {\scriptsize$A_2$};
    \node[canvas is xy plane at z=0, rotate = 270] at (0.5,3) {\scriptsize$A_3$};
  \end{tikzpicture}

  \caption{Illustration of a laminate within a branching with boundary condition $F_{\alpha} = 0$. This does not depict the exact same situation as in \cref{thm:L1_3wells} for $\nu = e_2$, as here we only show a scalar-valued map.}
  \label{fig:LaminateInBranching}
\end{figure}
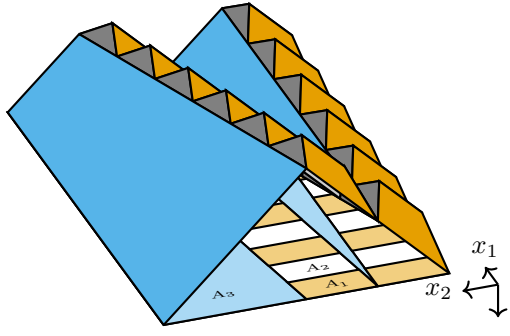

As it is used below in the proof of \cref{cor:Lp_3wells}, we emphasize that the above constructed function satisfies that $u \in W^{1,\infty}(\Omega;\R^2), \chi \in BV(\Omega;\K_3)$ with
\begin{align*}
    \nabla u \in BV(\Omega;\R^{2\times2}), \ \Vert D\nabla u\Vert_{TV(\Omega)} \leq C(\alpha) (\Vert D \chi \Vert_{TV(\Omega)} + \Per(\Omega)),
\end{align*}
and thus the bounds from \cref{eq:PropertiesBranching} also hold in this setting.

\emph{Lower bound:}
Again, by considering $\K_3-F_\alpha$, we reduce to the case of zero boundary conditions.
In the case of $\nu \cdot e_1 = 0$ we have $\nu \cdot e_2 \neq 0$, instead of the high frequency control in $k_1$, we consider a high frequency control in $k_2$ for $\chi_{22}$.
Indeed, let $\mu < \frac{|\nu_2|}{16}$, $\lambda >0$, as in \cref{eq:ineq_inclusion_cones} we have
\begin{align*}
    C_{2,\mu,\lambda} := \{ k \in \Z^{2}: |k|^2 - k_2^2 \leq \mu^2 |k|^2, \ |k_2| \leq \frac{2}{|\nu_2|} \lambda\} \supset \{k \in \Z^{2}: |k|^2 - k_1^2 \leq \mu^2 |k|^2,\ |k \cdot \nu| \leq \lambda\}
\end{align*}
    and thus we can bound the Fourier mass of $\chi_{22}$ outside of $C_{2,\mu,\lambda}$ in terms of the energy, cf. \cref{lem:Localization},
    \begin{align*}
        \sum_{k \notin C_{2,\mu,\lambda}} |\hat{\chi}_{22}(k)|^2 \leq C (\mu^{-2} + (\lambda \epsilon)^{-1}) E_\epsilon(u,\chi) + C \lambda^{-1} \Per(\Omega),
    \end{align*}
    with a constant $C = C(\alpha) > 0$.
    Notice that the definition of the cone is analogous to that in \cref{eq:TruncCone_1}, but different as the second direction plays the role of the first. This is not the cone given in \cref{eq:TruncCone_1} for $j=2$, as there the truncation is dependent on the parameter $|\nu_1|$, here on $|\nu_2|$.
    Following now the proof of \cref{prop:L1_2wells} for $f = \chi_{22}$ with a suitable change of coordinates, we get
    \begin{align*}
        E_{\epsilon}(u,\chi) \geq C |\nu_2|^{\frac{2}{3}} \epsilon^{\frac{2}{3}}.
    \end{align*}
\end{proof}
With \cref{lem:scal-first,lem:scal-second} proved, we combine their estimates to deduce the desired lower bound in \cref{thm:L1_3wells}.

\begin{proof}[Conclusion of \cref{thm:L1_3wells}]
We finally combine the above discussion by rewriting the lower bound estimate in a concise way.
We notice that for $\nu_1 = 0$ there is nothing to prove as \cref{itm:L1_3wells_i,itm:L1_3wells_ii} are given by \cref{lem:scal-first,lem:scal-second} respectively.
If $\nu_1 \neq 0$, we choose $\epsilon_0$ to be small depending on $\nu$, i.e.\
fix $\epsilon_0$ to fulfil
\begin{align*}
    |\nu_2|^{\frac{2}{3}} \epsilon_0^{\frac{2}{3}} \leq |\nu_1|^{\frac{1}{2}} \epsilon_0^{\frac{1}{2}}.
\end{align*}
With this choice, for every $\epsilon<\epsilon_0$, it holds
\begin{align*}
    \inf_{u \in \mathcal{A}_F} \inf_{\chi \in BV_\nu(\Omega;\K_3)} E_\epsilon(u,\chi) \geq C |\nu_1|^{\frac{1}{2}} \epsilon^{\frac{1}{2}} \geq \frac{C}{2} \left( |\nu_1|^{\frac{1}{2}} \epsilon^{\frac{1}{2}} + |\nu_2|^{\frac{2}{3}} \epsilon^{\frac{2}{3}} \right).
\end{align*}
The upper bound follows by adding the two upper bounds from \cref{lem:scal-first,lem:scal-second}.
\end{proof}

\begin{rmk} \label{rmk:lower_bound_component}
    As can be seen in the above proof of \cref{lem:scal-second} (and later analogously in the proof of \cref{thm:L1_mult_wells}), it would be possible to deduce the above scaling behaviour for $\nu \cdot e_1 \neq 0$ also for an even more degenerate anisotropic surface energy of the following type
    \begin{align*}
        E_{\textup{surf}}^{r}(\chi) = \Vert D_\nu (\chi : r) \Vert_{TV(\Omega)},
    \end{align*}
    for $r\in \R^{d \times d}$ such that $|r| = 1$ and
    \begin{align*}
        \Vert D_\nu (\chi : r) \Vert_{TV(\Omega)} \geq C(\K,d, r) \Vert D_{\nu} \chi_{11} \Vert_{TV(\Omega)}.
    \end{align*}
    Here we denoted the Frobenius scalar product of matrices by $\chi : r = \sum_{i,j=1}^{d} \chi_{ij} r_{ij}$.
    A sufficient condition on $r \in \R^{d \times d}$ is the following:
    \begin{align*}
        (A_{j})_{11} \neq (A_{k})_{11} \text{ if and only if } A_j :r \neq A_k : r.
    \end{align*}
    For the three-wells from $\K_3$ in \cref{eq:Lorent_wells} this translates into the condition that
    \begin{align*}
        r \notin \{r_{11} = 0\} \cup \{r_{11} = 2 r_{22}\} \cup \{ r_{11} = -2 r_{22}\}.
    \end{align*}
\end{rmk}

\subsection{Higher order laminates}
We next turn to the scaling behaviour of higher order laminates.
Due to the anisotropic energies, we cannot immediately invoke the argument from \cite{RT23}. As an important technical novelty, we have to treat the relevant nonlinear relations between the components of the phase indicator substantially more carefully. This is due to the fact that, initially, our surface energy (potentially) only controls high frequencies in the $k_1$ direction. Thus, particularly the first step in the localization is crucial, as this will iteratively allow us the further high frequency reduction steps in the other conical directions.

As in the setting of the three-well problem of Lorent in the previous section, the key idea is to use that $\chi_{11}$ determines the other diagonal entries. In \cite{RT23} $\chi_{22}$ is written as a nonlinear polynomial of a combination of the remaining diagonal entries $\chi_{11},\chi_{33},\dots,\chi_{N-1,N-1}$.
  In the case of $\nu = e_1$, if only such a relation were available, this would lead to an issue with our argument as none of the cones in the $e_3, \dots, e_{N-1}$ directions is initially localized in the high frequencies (due to our anisotropic energies).
  The central novel idea is to rely on additional structure: More precisely, we will instead use that $\chi_{22}$ is given by a \emph{nonlinear} polynomial of $\chi_{11}$ plus a \emph{linear} one in $\chi_{33},\dots,\chi_{N-1,N-1}$.

To elaborate on the strategy of the proof of \cref{thm:L1_mult_wells}, let us analyse the four well setting in three dimensions in the case of $\nu \cdot e_1 \neq 0$ first.
  Let $N=4$ and $d=3$, and thus consider $\K_4 = \{A_1, A_2, A_3, A_4\}$ with
  \begin{align*}
  A_1 & = \begin{pmatrix}
      0 & 0 & 0 \\ 0 & 0 & 0 \\ 0 & 0 & 0
  \end{pmatrix}, &
  A_2 &= \begin{pmatrix}
      1 & 0 & 0 \\ 0 & 0 & 0 \\ 0 & 0 & 0
  \end{pmatrix}, &
  A_3 & = \begin{pmatrix}
      \frac{1}{2} & 0 & 0 \\ 0 & 1 & 0 \\ 0 & 0 & 0
  \end{pmatrix}, &
  A_4 &= \begin{pmatrix}
      \frac{1}{2} & 0 & 0 \\ 0 & \frac{1}{2} & 0 \\ 0 & 0 & 1
  \end{pmatrix}.
  \end{align*}
We will subsequently generalize the lower bound to the family of wells from \cref{sec:introL14}.

\begin{prop} \label{prop:L1_4wells}
    Let $\alpha \in (0,1)$ and $F_\alpha = \diag(1/2,1/2,\alpha) \in \K_4^3$. Let $\nu \in \S^2$ with $\nu \cdot e_1 \neq 0$.
    There exist constants $C= C(\alpha)>0$ and $\epsilon_0 = \epsilon_0(\alpha,|\nu_1|) > 0$ such that for any $\epsilon \in (0,\epsilon_0)$
    \begin{align*}
        \inf_{u \in \mathcal{A}_{F_\alpha}} \inf_{\chi \in BV_\nu(\Omega;\K_4)} E_\epsilon (u,\chi) \geq C |\nu_1|^{\frac{2}{5}} \epsilon^{\frac{2}{5}},
    \end{align*}
    where $\mathcal{A}_{F_\alpha}$ is given in  \cref{eq:AdmissibleFunctionsL2}.
\end{prop}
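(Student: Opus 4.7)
The plan is to follow the three-step scheme (elastic/surface localization, iterated commutator truncation, low-frequency absorption) used in the Lorent three-well case of \cref{lem:scal-second}, but now with two nonlinear commutator steps corresponding to the two further lamination levels.

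As a preliminary, I would reduce to zero boundary conditions by replacing $u$ and $\chi$ by $\tilde u=u-F_\alpha x\in H^1_0(\Omega;\R^3)$ and $\tilde\chi=\chi-F_\alpha$ taking values in $\K_4-F_\alpha$. Inspection of the four shifted wells shows that they satisfy the nonlinear relations
\begin{align*}
\tilde\chi_{22}+\tfrac12\tilde\chi_{33}&=\tfrac{1-\alpha}{2}-4\tilde\chi_{11}^2=g_1(\tilde\chi_{11}),\\
\tilde\chi_{33}&=1-\alpha-4\tilde\chi_{22}^2=g_2(\tilde\chi_{22}),
\end{align*}
with $g_1,g_2$ polynomials of degree two, and that $\tilde\chi_{33}\in\{-\alpha,1-\alpha\}$ so that $\|\tilde\chi_{33}\|_{L^2}^2\geq\min\{\alpha^2,(1-\alpha)^2\}|\Omega|=:c(\alpha)>0$.

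The Fourier-side argument would then proceed in four ingredients. First, \cref{lem:Localization} applied with $\mu\in(0,|\nu_1|/16)$ and $\lambda_1>1$ produces
\[
\|\tilde\chi_{11}-m_{1,\mu,\lambda_1}(D)\tilde\chi_{11}\|_{L^2}^2+\sum_{j=2}^{3}\|\tilde\chi_{jj}-m_{j,\mu}(D)\tilde\chi_{jj}\|_{L^2}^2\leq C\bigl(\mu^{-2}+(\lambda_1\epsilon)^{-1}\bigr)E_\epsilon+C\lambda_1^{-1}\Per(\Omega).
\]
Second, I would apply \cref{cor:CommutatorEst_Applied} with the relation $\tilde\chi_{22}+\tfrac12\tilde\chi_{33}=g_1(\tilde\chi_{11})$ and $\lambda_2=M\mu\lambda_1$ to localize $\tilde\chi_{22}$ and $\tilde\chi_{33}$ to the truncated cone in direction $e_2$, obtaining (up to a $\psi_{1-\gamma}$) control of $\|\tilde\chi_{jj}-m_{j,\mu,\lambda_2}(D)\tilde\chi_{jj}\|_{L^2}$ for $j=2,3$. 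Third, the variant of the commutator estimate in \cref{rmk:corollary-variant} applied with $k=2$ and the relation $\tilde\chi_{33}=g_2(\tilde\chi_{22})$ sharpens the cone truncation for $\tilde\chi_{33}$ to $\lambda_3=M\mu\lambda_2=M^2\mu^2\lambda_1$, yielding
\[
\|\tilde\chi_{33}-m_{3,\mu,\lambda_3}(D)\tilde\chi_{33}\|_{L^2}\leq C\,\psi_{1-\gamma}\!\left(\psi_{1-\gamma}\bigl((\mu^{-2}+(\lambda_1\epsilon)^{-1})E_\epsilon+\lambda_1^{-1}\Per(\Omega)\bigr)\right).
\]
Fourth, because $C_{3,\mu,\lambda_3}$ is contained in $\{|k_3|\leq 2\lambda_3/|\nu_1|\}$, the low-frequency estimate of \cref{lem:LowFreqEst}, adapted to the $(3,3)$ component via the Poincaré inequality in $x_1$ together with $\chi_{31}=0$, gives
\[
\sum_{|k_3|\leq 2\lambda_3/|\nu_1|}|\hat{\tilde\chi}_{33}(k)|^2\leq C\frac{\lambda_3^2}{|\nu_1|^2}E_{el}(\tilde u,\tilde\chi).
\]

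Combining, the lower bound $c(\alpha)\leq\|\tilde\chi_{33}\|_{L^2}^2$ produces
\[
c(\alpha)\leq C\,\psi_{1-\gamma}\!\left(\Bigl(\tfrac{\mu^4\lambda_1^2}{|\nu_1|^2}+\mu^{-2}+(\lambda_1\epsilon)^{-1}\Bigr)E_\epsilon+\lambda_1^{-1}\Per(\Omega)\right),
\]
after composing the two $\psi_{1-\gamma}$ into a single $\psi_{(1-\gamma)^2}$ and absorbing the perimeter term for $\epsilon<\epsilon_0(\alpha,|\nu_1|)$. Optimising $\mu^{-2}\sim\mu^{4}\lambda_1^{2}|\nu_1|^{-2}$ yields $\mu\sim|\nu_1|^{1/3}\lambda_1^{-1/3}$ and hence $\mu^{-2}\sim|\nu_1|^{-2/3}\lambda_1^{2/3}$; balancing with $(\lambda_1\epsilon)^{-1}$ forces $\lambda_1\sim|\nu_1|^{2/5}\epsilon^{-3/5}$, which is consistent with $\lambda_1>1$ and $\mu<|\nu_1|/16$ for $\epsilon$ small, and gives $(\lambda_1\epsilon)^{-1}\sim|\nu_1|^{-2/5}\epsilon^{-2/5}$. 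A standard case distinction on the two branches of $\psi_{(1-\gamma)^2}$ then produces $E_\epsilon\geq C|\nu_1|^{2/5}\epsilon^{2/5}$, as desired.

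The main obstacle I anticipate is of a bookkeeping nature rather than conceptual: one has to ensure that each commutator application preserves the disjoint-Fourier-support structure so that the norms separate into individual components $\tilde\chi_{22}$ and $\tilde\chi_{33}$ (this is what the sums with disjoint supports in \cref{cor:CommutatorEst_Applied} accomplish), and that the iteration of $\psi_{1-\gamma}$ does not destroy the final scaling -- this is handled by choosing $\gamma$ small and keeping the case distinction to the very end, exactly as in the three-well proof.
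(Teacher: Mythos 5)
Your proposal is correct and follows essentially the same route as the paper's proof: first localize via \cref{lem:Localization}, then apply \cref{cor:CommutatorEst_Applied} with the relation $2\tilde\chi_{22}+\tilde\chi_{33}=g_1(\tilde\chi_{11})$ to truncate the $j=2$ and $j=3$ cones at scale $\lambda_2=M\mu\lambda$, iterate via the variant in \cref{rmk:corollary-variant} using $\tilde\chi_{33}=g_2(\tilde\chi_{22})$ to sharpen the $j=3$ truncation to $\lambda_3=M^2\mu^2\lambda$, combine with \cref{lem:LowFreqEst} on $\tilde\chi_{33}$, and optimize $\mu\sim|\nu_1|^{1/3}\lambda^{-1/3}$, $\lambda\sim|\nu_1|^{2/5}\epsilon^{-3/5}$, exploiting that $\tilde\chi_{33}$ never vanishes. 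The only (harmless) imprecisions are cosmetic: the truncated cones are along the $e_j$ axis for each $j\in\{2,3\}$ (not both along $e_2$), the low-frequency cut should be $\bar\lambda=4\lambda_3/|\nu_1|$ to cover the support $C_{3,2\mu,2\lambda_3}$ of the smoothed multiplier, and the displayed combined inequality should carry $\psi_{(1-\gamma)^2}$ rather than $\psi_{1-\gamma}$, as you note in the following sentence.
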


Let us outline the strategy of proof for the derivation of the lower bound in \cref{prop:L1_4wells}.
To show this result, we first use the nonlinear relation for $\tilde{\chi} = \chi - F_\alpha: \Omega \to \K_3 - F_\alpha$
  \begin{align} \label{eq:nonlinRel4Wells}
    2 \tilde{\chi}_{22} +  \tilde{\chi}_{33} = 1-8\tilde{\chi}_{11}^2 - \alpha.
  \end{align}
  After a first application of the first localization \cref{lem:Localization} with
  \begin{align*}
    C_{1,\mu,\lambda} = \{k \in \Z^3: |k|^2-k_1^2 \leq \mu^2 |k|^2, |k_1| \leq \frac{2}{|\nu_1|} \lambda\}, \ C_{j,\mu} = \{ k \in \Z^3: |k|^2-k_j^2 \leq \mu^2 |k|^2\},
  \end{align*}
  for $\mu < \frac{|\nu_1|}{16}$, $\lambda > 0$ and $j=2,3$ and their smoothed out multipliers, cf.\ \cref{eq:multiplier}, we have
  \begin{align*}
    &\Vert\tilde{\chi}_{11} - m_{1,\mu,\lambda}(D) \tilde{\chi}_{11} \Vert_{L^2}^2 + \sum_{j=2,3} \Vert \tilde{\chi}_{jj} - m_{j,\mu}(D) \tilde{\chi}_{jj} \Vert_{L^2}^2\\
    &\qquad\qquad \leq C(\alpha)(\mu^{-2} + (\lambda \epsilon)^{-1}) E_\epsilon(u,\chi) + C(\alpha) \lambda^{-1} \Per(\Omega).
  \end{align*}
  Using the above nonlinear relation \cref{eq:nonlinRel4Wells}, we aim to truncate $C_{2,\mu}$ and $C_{3,\mu}$ by means of \cref{cor:CommutatorEst_Applied}.

\begin{proof}[Proof of \cref{prop:L1_4wells}]
Without loss of generality, by subtracting $F_\alpha$, we assume $F_\alpha = 0$ and $\chi \in \K_4-F_\alpha$.
    We observe that differently from the situation in \cref{rmk:lower_bound_component}, it suffices to only deduce a lower bound for $\chi_{33}$. 
    We aim to use \cref{cor:CommutatorEst_Applied}. After an application of  the first localization \cref{lem:Localization} for $\mu < \frac{|\nu_1|}{16}$, $\lambda > 0$ and a constant $C = C(\alpha) > 0$
    \begin{align}\label{eq:estimate}
    \begin{split}
        &\Vert \chi_{11} - m_{1,\mu,\lambda}(D) \chi_{11} \Vert_{L^2}^2 + \sum_{j=2,3} \Vert \chi_{jj} - m_{j,\mu}(D) \chi_{jj} \Vert_{L^2}^2 \\
        &\qquad\qquad \leq C (\mu^{-2} + (\lambda \epsilon)^{-1}) E_\epsilon(u,\chi) + C \lambda^{-1} \Per(\Omega).
    \end{split}
    \end{align}
    To achieve the energy bound, we use \cref{cor:CommutatorEst_Applied} to truncate the cone corresponding to $\chi_{22}$, which in turn is used to reduce the size of $C_{3,\mu}$ even further.
    In detail we apply \cref{cor:CommutatorEst_Applied} compounded with \cref{eq:estimate} for $2\chi_{22} +\chi_{33} = 1 - \alpha - 8 \chi_{11}^2$ and get for $\lambda_2 = M \mu \lambda$
    \begin{align} \label{eq:iteration_1_4wells}
    \Vert \chi_{22} - m_{2,\mu,\lambda_2}(D) \chi_{22} \Vert_{L^2}^{2} \leq C \psi_{1-\gamma}\Big( (\mu^{-2} + (\lambda \epsilon)^{-1}) E_\epsilon(u,\chi) + C \lambda^{-1} \Per(\Omega)\Big).
    \end{align}
    Here we used the truncated cone $C_{2,\mu,\lambda_2}$ defined in \cref{eq:TruncCone_1} with the corresponding smooth multiplier defined in \cref{eq:multiplier}.

    Iterating now the comparison argument, using $\chi_{33} = 1- \alpha - 4\chi_{22}^2$ we can improve the estimate on the Fourier mass of $\chi_{33}$ in the sense that we can truncate the corresponding cone $C_{3,\mu}$ on a scale $\lambda_3 = M \mu \lambda_2 = M^2 \mu^2 \lambda$.
    Indeed, by a variant of the commutation estimate from \cref{cor:CommutatorEst_Applied} (see \cref{rmk:corollary-variant}) and \cref{eq:iteration_1_4wells}
\begin{align*}
    \Vert \chi_{33} - m_{3,\mu,\lambda_3}(D) \chi_{33} \Vert_{L^2}^{2} &\leq C \psi_{1-\gamma}\Big(\Vert \chi_{22}-m_{2,\mu,\lambda_2}(D) \chi_{22} \Vert_{L^2}^{2}\Big) + 4 \Vert \chi_{33} - m_{3,\mu}(D) \chi_{33} \Vert_{L^2}^{2} \\
    & \leq C \psi_{(1-\gamma)^2}\Big((\mu^{-2}+(\lambda \epsilon)^{-1})E_\epsilon(u,\chi) + \lambda^{-1} \Per(\Omega)\Big).
\end{align*}
As above, the multiplier is of the form as in \cref{eq:multiplier} for the truncated cone $C_{3,\mu,\lambda_3}$ defined in \cref{eq:TruncCone_1}.

    By \cref{lem:LowFreqEst} with $\bar{\lambda} = 4 \lambda_3/|\nu_1|$ we can control the ``missing'' (low frequency) region and infer
    \begin{align*}
        \Vert \chi_{33} \Vert_{L^2}^2 &\leq 2 \Vert \chi_{33}- m_{3,\mu,\lambda_3} \chi_{33} \Vert_{L^2}^2 + 2 \sum_{|k_3| \leq 4 \lambda_3/|\nu_1|} |\hat{\chi}_{33}(k)|^2 \\
        & \leq C \psi_{(1-\gamma)^2}\Big((\mu^{-2}+(\lambda \epsilon)^{-1}) E_\epsilon(u,\chi) + \lambda^{-1} \Per(\Omega)\Big) + C \frac{\lambda_3^2}{|\nu_1|^2} E_\epsilon(u,\chi).
    \end{align*}
    We now choose the parameters $\mu$ and $\lambda_3$, that is, $\lambda$, in an optimal way.
    To be more precise, we fix $\mu^{-1} \sim \frac{\lambda_3}{|\nu_1|} \sim \frac{\mu^2 \lambda}{|\nu_1|}$, i.e. $\mu \sim |\nu_1|^{1/3} \lambda^{-1/3}$ and get
    \begin{align*}
        \Vert \chi_{33} \Vert_{L^2}^{2} & \leq C \psi_{(1-\gamma)^2}\Big((|\nu_1|^{-2/3} \lambda^{2/3} + (\lambda \epsilon)^{-1})E_\epsilon(u,\chi) + \lambda^{-1} \Per(\Omega)\Big) + |\nu_1|^{-2/3} \lambda^{2/3} E_\epsilon(u,\chi) \\
        & \leq C \psi_{(1-\gamma)^2} \Big((|\nu_1|^{-2/3} \lambda^{2/3} + (\lambda \epsilon)^{-1}) E_\epsilon(u,\chi) + \lambda^{-1} \Per(\Omega)\Big).
    \end{align*}
    Optimizing the energy contributions in $\lambda$ by choosing $\lambda \sim |\nu_1|^{2/5} \epsilon^{-3/5}$ yields, after absorbing the perimeter term,
    \begin{align*}
        E_\epsilon(u,\chi) \geq C |\nu_1|^{\frac{2}{5}} \epsilon^{\frac{2}{5}}.
    \end{align*}
    Here we used that $\Vert \chi_{33} \Vert_{L^2}^{2} \geq C \min^2\{\alpha,1-\alpha\}$ and the fact that the function $\psi_{(1-\gamma)^2}$ does not influence the scaling behaviour, cf. the proof of \cref{lem:scal-first}.
\end{proof}

\begin{proof}[Proof of \cref{thm:L1_mult_wells}]
With the previous results in hand, the remainder of the proof of \cref{thm:L1_mult_wells} is exactly the same as in \cite{RT23} with the same modifications as above for four wells. For the convenience of the reader we recall the main ideas.
Without loss of generality, we reduce to the case of $F = 0$ by considering $\chi - F \in \K_N - F$.
We first clarify the nonlinear relations. Each component determines the following ones, that is
\begin{align*}
    \sum_{n=j+1}^{N-1} \alpha_k \chi_{nn} = g_j(\chi_{jj}).
\end{align*}
The relations are given as follows

\begin{align*}
    \sum_{n=j+1}^{N-1} 2^{-n+j+1} \chi_{nn}= 4 F_{jj}-4F_{jj}^2 - \sum_{n=j+1}^{N-1} 2^{-n+j+1} F_{nn} +4\chi_{jj} - 8 F_{jj} \chi_{jj} - 4 \chi_{jj}^2 =: g_j(\chi_{jj}).
\end{align*}

\emph{The case $\nu \cdot e_1 \neq 0$:}
By \cref{lem:Localization} we obtain for a constant $C = C(d,F) > 0$
\begin{align} \label{eq:Localization_mult_wells}
\begin{split}
    &\Vert \chi_{11} - m_{1,\mu,\lambda}(D) \chi_{11} \Vert_{L^2}^{2} + \sum_{j=2}^{d} \Vert \chi_{jj} - m_{j,\mu}(D) \chi_{jj} \Vert_{L^2}^{2} \\
    &\qquad \leq C (\mu^{-2} + (\lambda \epsilon)^{-1}) E_\epsilon(u,\chi) + C \lambda^{-1} \Per(\Omega).
    \end{split}
\end{align}
To facilitate the reading, we recall the definition of the truncated cones $C_{j,\mu,\lambda_2}$ in \cref{eq:TruncCone_1}
\begin{align*}
    C_{j,\mu,\lambda_2} = \{k \in \Z^d: |k|^2 - k_j^2 \leq \mu^2 |k|^2, |k_j| \leq \frac{2}{|\nu_1|} \lambda_2\},
\end{align*}
and that $m_{j,\mu,\lambda_2}(D)$ denote their smooth Fourier multipliers as in \cref{eq:multiplier}.
By \cref{cor:CommutatorEst_Applied}, in combination with \cref{eq:Localization_mult_wells}, we have for $\lambda_2 = M \mu \lambda$
\begin{align*}
\sum_{j=2}^{d} \Vert \chi_{jj} - m_{j,\mu,\lambda_2}(D) \chi_{jj} \Vert_{L^2}^{2} \leq C 2^d \psi_{1-\gamma}\Big((\mu^{-2}+(\lambda \epsilon)^{-1}) E_\epsilon(u,\chi)  + \lambda^{-1} \Per(\Omega)\Big).
\end{align*}
Here we exploited that $\alpha_k \in [2^{-d},1]$ for $k \in \{2,3,\dots,d\}$.
Then an iterative application of the higher-order variant of \cref{cor:CommutatorEst_Applied} (\cref{rmk:corollary-variant}) yields, in combination with \cref{eq:Localization_mult_wells}, after $\ell-1$ many iterations
\begin{align} \label{eq:iterated_commutator_mult_wells}
    \sum_{j=\ell}^{d} \Vert \chi_{jj} - m_{j,\mu,\lambda_{\ell}}(D) \chi_{jj} \Vert_{L^2}^{2} \leq  C \psi_{(1-\gamma)^{\ell-1}}\Big((\mu^{-2} + (\lambda \epsilon)^{-1}) E_\epsilon(u,\chi) +\lambda^{-1} \Per(\Omega)\Big),
\end{align}
with $\lambda_\ell = M \mu \lambda_{\ell-1} = M^{\ell-1} \mu^{\ell-1} \lambda$ and a constant $C = C(d,F,\ell) > 0$.

As we have already seen in the proof of \cref{thm:L1_3wells} for $F \in \K_3^1$, it might happen, that we need information on more than one $\chi_{jj}$, cf. \cref{rmk:lower_bound_first_laminate}.
Depending on $\ell \in \{1,\dots,N-2\}$ we have for some $\alpha \in (0,1)$, given by the boundary data,
\begin{align*}
    (\chi_{\ell \ell},\chi_{\ell+1,\ell+1}) \in \{ (-\alpha,0), (1-\alpha,0), (\frac{1}{2}-\alpha,1), (\frac{1}{2}-\alpha,\frac{1}{2})\}.
\end{align*}
In particular, $(\chi_{\ell \ell}, \chi_{\ell+1,\ell+1}) \neq 0$.
For $\ell = N-1$, we already have $\chi_{N-1,N-1} \in \{-\alpha, 1-\alpha\}$ with $\alpha \in (0,1)$, it holds $\chi_{N-1,N-1} \neq 0$.

Thus, if $N = d+1$, setting $\chi_{NN} \equiv 0$, we obtain that if $F \in \K_N^{\ell}$ for some $\ell \in \{1,2,\dots,N-1\}$, then it holds $(\chi_{\ell \ell},\chi_{\ell+1,\ell+1}) \neq 0$. Hence, we note that the control for two diagonal components $\chi_{jj}$ is sufficient for deducing the desired lower bound.

By \cref{eq:iterated_commutator_mult_wells} we have
\begin{align*}
    \Vert \chi_{\ell \ell} - m_{\ell,\mu,\lambda_{\ell}}(D) \chi_{\ell \ell} \Vert_{L^2}^{2} &+ \Vert \chi_{\ell+1,\ell+1} - m_{\ell+1,\mu,\lambda_{\ell}}(D) \chi_{\ell+1,\ell+1} \Vert_{L^2}^{2} \\
    & \leq C \psi_{(1-\gamma)^{\ell-1}}\Big((\mu^{-2} + (\lambda \epsilon)^{-1}) E_\epsilon(u,\chi) + \lambda^{-1} \Per(\Omega)\Big).
\end{align*}

As in the proof of \cref{prop:L1_4wells} we use \cref{lem:LowFreqEst} to gain control over $\Vert m_{\ell,\mu,\lambda_{\ell}}(D) \chi_{\ell \ell} \Vert_{L^2}^2$ and $\Vert m_{\ell+1,\mu,\lambda_{\ell}}(D) \chi_{\ell+1,\ell+1} \Vert_{L^2}^2$.
For this let $\bar{\lambda} = 4 \lambda_{\ell}/|\nu_1|$, then
\begin{align*}
    \Vert m_{\ell,\mu,\lambda_{\ell}}(D) \chi_{\ell \ell} \Vert_{L^2}^2 &+ \Vert m_{\ell+1,\mu,\lambda_{\ell}}(D) \chi_{\ell+1,\ell+1} \Vert_{L^2}^2 \\
    & \leq \sum_{|k_{\ell}|  \leq 4 \lambda_{\ell}/|\nu_1|} |\hat{\chi}_{\ell \ell}(k)|^2 + \sum_{|k_{\ell+1}| \leq 4 \lambda_{\ell}/|\nu_1|} |\hat{\chi}_{\ell+1,\ell+1}(k)|^2 \\
    & \leq C \frac{\lambda_{\ell}^2}{|\nu_1|^2} E_{\epsilon}(u,\chi).
\end{align*}
These two estimates combined give
\begin{align*}
    \Vert \chi_{\ell \ell} \Vert_{L^2}^2 + \Vert \chi_{\ell+1,\ell+1} \Vert_{L^2}^2
    &\leq 2 \Vert \chi_{\ell \ell} - m_{\ell,\mu,\lambda_{\ell}}(D) \chi_{\ell \ell} \Vert_{L^2}^2 + 2 \Vert \chi_{\ell+1,\ell+1} - m_{\ell+1,\mu,\lambda_{\ell}}(D) \chi_{\ell+1,\ell+1} \Vert_{L^2}^2 \\
    & \quad + 2\Vert m_{\ell,\mu,\lambda_{\ell}}(D) \chi_{\ell \ell} \Vert_{L^2}^2 + 2\Vert m_{\ell+1,\mu,\lambda_{\ell}}(D) \chi_{\ell+1,\ell+1} \Vert_{L^2}^2  \\
    & = C \psi_{(1-\gamma)^{\ell-1}}\Big((\mu^{-2} + (\lambda \epsilon)^{-1}) E_\epsilon(u,\chi) + \lambda^{-1} \Per(\Omega)\Big) + C \frac{\lambda_{\ell}^2}{|\nu_1|^2} E_\epsilon(u,\chi) \\
    & \leq C \psi_{(1-\gamma)^{\ell-1}}\Big((\frac{\lambda_{\ell}^2}{|\nu_1|^2} + \mu^{-2} + (\lambda \epsilon)^{-1}) E_\epsilon(u,\chi) + \lambda^{-1} \Per(\Omega)\Big).
\end{align*}
Arguing as in the proof of \cref{prop:L1_4wells}, we optimize this in $\mu$ and $\lambda$ by fixing $\mu \sim |\nu_1|^{1/\ell} \lambda^{-1/\ell}$ and $\lambda \sim |\nu_1|^{2/(\ell+2)} \epsilon^{-\ell/(\ell+2)}$ (such a choice of parameters is compatible with the constraints on $\mu$ and $\bar\lambda$) and thus
\begin{align*}
    \Vert \chi_{\ell \ell} \Vert_{L^2}^2 + \Vert \chi_{\ell+1,\ell+1} \Vert_{L^2}^2 \leq C \psi_{(1-\gamma)^{\ell-1}}\Big(|\nu_1|^{-2/(\ell+2)} \epsilon^{-2/(\ell+2)} E_\epsilon(u,\chi) + |\nu_1|^{2/(\ell+2)} \epsilon^{\ell/(\ell+2)} \Per(\Omega)\Big).
\end{align*}
Using that
\begin{align*}
    \Vert \chi_{\ell \ell} \Vert_{L^2}^2 + \Vert \chi_{\ell+1,\ell+1} \Vert_{L^2}^2 \geq C(F) > 0,
\end{align*}
after arguing similarly as in the proof of \cref{prop:L1_2wells}, that is considering the two cases for $\psi_{(1-\gamma)^{\ell-1}}$ and absorbing the perimeter, we obtain with $C = C(d,F,\ell) > 0$ and for $\epsilon < \epsilon_0(d,F,\ell,|\nu_1|)$

\begin{align} \label{eq:HigherOrderScaling_1}
    E_\epsilon(u,\chi) \geq C |\nu_1|^{\frac{2}{\ell+2}} \epsilon^{\frac{2}{\ell+2}}.
\end{align}

\emph{The case $\nu \cdot e_1 = \dots = \nu \cdot e_{n} = 0$ and $\nu \cdot e_{k+1} \neq 0$, with $0 < n < \ell$:}
We argue similarly, but we start the iterative application of \cref{cor:CommutatorEst_Applied} for the $n+1$ diagonal entry $\chi_{n+1,n+1}$.

Note that due to the change of roles of the coordinate directions, the truncated cones in this setting are given by
\begin{align*}
    C_{j,\mu,\lambda}= \{k \in \Z^d: |k|^2-k_j^2 \leq \mu^2 |k|^2, |k_j| \leq \frac{2}{|\nu_n|} \lambda\},
\end{align*}
i.e. the truncation depends on $|\nu_n|$ instead of $|\nu_1|$ as above in \cref{thm:L1_3wells} in the case $\nu \cdot e_1 = 0$.

By \cref{lem:Localization}, with the roles of the axes changed such that $n+1$ is the first coordinate direction, we get
\begin{align*}
    &\quad \Vert \chi_{n+1,n+1} - m_{n+1,\mu,\lambda}(D) \chi_{n+1,n+1} \Vert_{L^2}^{2} + \sum_{j=2}^d \Vert \chi_{n+j,n+j} - m_{n+j,\mu}(D) \chi_{n+j,n+j} \Vert_{L^2}^{2} \\
    &\leq C (\mu^{-2} + (\lambda \epsilon)^{-1}) E_\epsilon(\chi) + C \lambda^{-1} \Per(\Omega).
\end{align*}
Applying an analogous iteration of a variant of \cref{cor:CommutatorEst_Applied} as above, now starting at $j = n+1$ instead of $j=1$, we deduce
\begin{align*}
    & \quad \sum_{j=\ell-n}^d \Vert \chi_{n+j,n+j} - m_{k+j,\mu,\lambda_{\ell-n}}(D) \chi_{n+j,n+j} \Vert_{L^2}^{2} \\
    &\leq C \psi_{(1-\gamma)^{\ell-n-1}}\Big((\mu^{-2} + (\lambda \epsilon)^{-1}) E_\epsilon(u,\chi) + \lambda^{-1} \Per(\Omega)\Big).
\end{align*}
Concluding as above for $\nu \cdot e_1 \neq 0$ to get \cref{eq:HigherOrderScaling_1}, while taking the off-set in the index into account, i.e. having $\ell-n$ instead of $\ell$, yields (where the constant can be chosen to be independent of $n$)
\begin{align*}
    E_\epsilon(u,\chi) \geq C |\nu_{n+1}|^{\frac{2}{\ell-n+2}} \epsilon^{\frac{2}{\ell-n+2}}.
\end{align*}
Here the scaling depends on $\nu_{n+1}$ instead of $\nu_1$ as above in \cref{eq:HigherOrderScaling_1} as we start our arguments with $\chi_{n+1,n+1}$ instead of $\chi_{11}$ and with the corresponding high frequency control in the direction $e_{n+1}$. This is due to the assumption that $\nu \cdot e_{n+1} \neq 0$.

\emph{The case $\nu \cdot e_1 = \dots = \nu \cdot e_{\ell} = 0$:}
If $\nu \cdot e_j = 0$ for $j=1,\dots,\ell$, we consider an $\ell$-th order simple laminate of arbitrary fine oscillations, as those directions are not penalized, yielding a minimizing sequence, with energies converging to zero.
We refer to the proof of \cref{prop:L1_2wells} for $\nu \cdot e_1 = 0$ for a similar setting.

\emph{Conclusion of the proof.}
As above, to combine the derived estimates we choose $\epsilon$ sufficiently small depending on $\nu$, that is for $0 \leq n < \ell$ being the index such that $\nu_{n+1} \neq 0$, $\nu_j = 0$ for $j \leq n$, we fix $\epsilon$ such that for all $n < j < \ell \leq d$
\begin{align*}
    |\nu_{j+1}|^{\frac{2}{\ell-j+2}} \epsilon^{\frac{2}{\ell-j+2}} \leq |\nu_{n+1}|^{\frac{2}{\ell-n+2}} \epsilon^{\frac{2}{\ell-n+2}},
\end{align*}
which is possible as $2/(\ell-j+2) > 2/(\ell-n+2)$.
Hence, we arrive at
\begin{align*}
    |\nu_{n+1}|^{\frac{2}{\ell-n+2}} \epsilon^{\frac{2}{\ell-n+2}} \geq \frac{1}{\ell-n} \left( \sum_{j=n}^{\ell-1} |\nu_{j+1}|^{\frac{2}{\ell-j+2}} \epsilon^{\frac{2}{\ell-j+2}} \right) \geq \frac{1}{\ell} \left( \sum_{j=0}^{\ell-1} |\nu_{j+1}|^{\frac{2}{\ell-j+2}} \epsilon^{\frac{2}{\ell-j+2}} \right),
\end{align*}
where we used that $\nu_{j+1} = 0$ for $0 \leq j < n$.
\end{proof}

We conclude our discussion of sharp interface models by highlighting that the above arguments also allow us to treat situations in which the set $T_{4}$ is given by the Tartar square (see \cref{sec:intro_appl}). This is of particular interest as it is an instance of an extremely rigid phase transition involving laminates of infinite order.

\begin{rmk}[On the Tartar square with anisotropic sharp interface energies]
    With the same arguments as above, it is possible to generalize the scaling of the Tartar square, cf. \cref{eq:Tartar}, that was derived in \cite{RT22} to a setting involving anisotropic surface energies.
    To be precise, for any $F \in T_{4}^{qc} \setminus T_4$, $\nu \in \S^1$ and $\eta \in (0,\frac{1}{2})$ it holds
    \begin{align*}
        \inf_{u \in \mathcal{A}_F} \inf_{\chi \in BV_\nu((0,1)^2;T_4)} \int_\Omega |\nabla u - \chi|^2 dx + \epsilon \Vert D_\nu \chi \Vert_{TV((0,1)^2)} \geq C \exp ( -  c_\eta |\log(\epsilon)|^{\frac{1}{2}+\eta}),
    \end{align*}
    for some constant $c_\eta>0$.
    Compared to the settings from above, there are no degenerate directions for the lower scaling bound in the Tartar square.
    This is due to the fact, that there are no rank-one connections present in the Tartar square and that, hence, each diagonal entry determines the corresponding other entry uniquely.
    As a consequence, if $\nu \cdot e_1 \neq 0$, we start by truncating the cone in $e_1$ direction, else we have $\nu \cdot e_2 \neq 0$ and thus can start by truncating the cone in $e_2$ direction.
\end{rmk}

\subsection{Proof of \cref{thm:frac_energy}}
\label{sec:frac_proof}

Building on the observations from \cref{sec:prelim_frac}, we conclude the proof of \cref{thm:frac_energy} analogously as in the (local) sharp interface arguments.

\begin{proof}[Proof of \cref{thm:frac_energy}]
\emph{Lower bound.}
We note that for $t_1,t_2 \in (0,1)$
\begin{align*}
    \psi_{t_1} \circ \psi_{t_2} (x) = \psi_{t_1t_2}(x),
\end{align*}
and thus
\begin{align*}
    \psi_{1-\gamma} \circ \psi_{2s} = \psi_{(1-\gamma)2s}.
\end{align*}
Setting $\gamma_s = 1-2s+2s \gamma \in (0,1)$, we combine \cref{cor:CommutatorEst_Applied,lem:LocalizationFrac} and get with a constant $C = C(d,F,s) > 0$
\begin{align*}
    \sum_{j=2}^d |\alpha_j| \Vert \chi_{jj}-m_{j,\mu,\lambda}(D)\chi_{jj}\Vert_{L^2}^{2} &\leq C \psi_{1-\gamma} \circ \psi_{2s}((\mu^{-2}+(\lambda \epsilon)^{-1})E_{\epsilon,s}(\chi)) \\
    &= C \psi_{1-\gamma_s}((\mu^{-2}+(\lambda\epsilon)^{-1})E_{\epsilon,s}(\chi)).
\end{align*}
Therefore, the remainder of the proof follows as the proof of \cref{thm:L1_mult_wells} also for the fractional surface energy.

\emph{Upper bound.}
Thanks to an interpolation argument, we prove that we can directly exploit the $BV$-regular upper bounds $\chi$ of \cref{thm:L1_mult_wells}.
For this, we argue similarly to \cite[Prop. 1.3]{BO13} but taking into account the anisotropy of the surface energy.
Let $\chi\in BV_\nu(\Omega;\K_N)$.
We first claim that for $\chi$ seen as a function on the torus $\T^d$ it holds
\begin{align*}
    \sum_{k \in \Z^d \setminus \{0\}} |k \cdot \nu|^{2s} |\hat{\chi}(k)|^2 \leq C(s) \int_{-\frac{1}{2}}^{\frac{1}{2}} \int_{\T^d} \frac{|\chi(x+h\nu)-\chi(x)|^2}{|h|^{1+2s}} dx dh + \Vert \chi \Vert_{L^2}^2.
\end{align*}
Indeed, this follows from observing that by Plancherel's theorem
\begin{align*}
    \int_{-\frac{1}{2}}^{\frac{1}{2}} \int_{\T^d} \frac{|\chi(x+h\nu)-\chi(x)|^2}{|h|^{1+2s}} dx dh = \int_{-\frac{1}{2}}^{\frac{1}{2}} \sum_{k \in \Z^d} \frac{|e^{2\pi i h k \cdot \nu}-1|^2}{|h|^{1+2s}} |\hat{\chi}(k)|^2 dh,
\end{align*}
and by noting that for $k \in \Z^d$ with $|k \cdot \nu|>1$ we have $(2|k\cdot \nu|)^{-1} \leq 2^{-1}$ and hence
\begin{align*}
    B_\nu(k) := |k \cdot \nu|^{-2s} \int_{-\frac{1}{2}}^{\frac{1}{2}} \frac{|e^{2\pi i h k \cdot \nu}-1|^2}{|h|^{1+2s}} dh \geq 8 |k \cdot \nu|^{-2s} \int_{0}^{\frac{1}{2|k \cdot \nu|}} \frac{\sin^2(\pi h |k \cdot \nu|)}{|h|^{1+2s}} dh \geq C(s)^{-1} > 0
\end{align*}
for a constant $C(s)$ independent of $k$.
With this in hand, by monotone convergence we have
\begin{align*}
    \sum_{k \in \Z^d \setminus\{0\}} |k \cdot \nu|^{2s} |\hat{\chi}(k)|^2 & = \sum_{|k \cdot \nu|>1} |k \cdot \nu|^{2s} |\hat{\chi}(k)|^2 + \sum_{|k \cdot \nu|\leq 1} |k \cdot \nu|^{2s} |\hat{\chi}(k)|^2 \\
    & \leq C(s) \sum_{|k \cdot \nu|>1} |k \cdot \nu|^{2s} B_\nu(k) |\hat{\chi}(k)|^2 + \sum_{|k \cdot \nu|\leq 1} |\hat{\chi}(k)|^2 \\
    & \leq C(s) \int_{-\frac{1}{2}}^{\frac{1}{2}} \int_{\T^d} \frac{|\chi(x+h\nu)-\chi(x)|^2}{|h|^{1+2s}} dx dh + \Vert \chi \Vert_{L^2}^2.
\end{align*}
We now fix $\tilde{\chi}: \R^d \to \K_N$ by setting
\begin{align*}
    \tilde{\chi}(x) := \begin{cases} \chi(x) & x \in (-1,2)^d, \\
    0 & \text{else}.
    \end{cases}
\end{align*}
Noting that for all $x \in (0,1)^d$ and $h \in [-\frac{1}{2},\frac{1}{2}]$ it holds that $x + h \nu \in (-1,2)^d$, by the definition of $\tilde{\chi}$ we infer that
\begin{align*}
    \sum_{k \in \Z^d \setminus \{0\}} |k \cdot \nu|^{2s} |\hat{\chi}(k)|^2 \leq C(s) \int_{\R} \int_{\R^d} \frac{|\tilde{\chi}(x+h\nu)-\tilde{\chi}(x)|^2}{|h|^{1+2s}} dxdh + \Vert \chi \Vert_{L^2}^2.
\end{align*}
Using a slicing argument together with a Gagliardo-Nirenberg type inequality \cite[Theorem 1]{BM18} (see also \cite[Theorem 2]{RZZ19})
\begin{align*}
    \int_\R \int_{\R^d} \frac{|\tilde{\chi}(x+h\nu) - \tilde{\chi}(x)|^2}{|h|^{1+2s}} &= \int_{\nu^\perp} \int_{\R} \int_{\R} \frac{|\tilde{\chi}_y^\nu(t+h)-\tilde{\chi}_y^\nu(t)|^2}{|h|^{1+2s}} dhdtdy \\
    &\leq C(s,d,\K) \int_{\nu^\perp} (\Vert \tilde{\chi}^\nu_y\Vert_{L^1(\R)} + \Vert D\tilde{\chi}^\nu_y \Vert_{TV(\R)})^{2s} dy.
\end{align*}
By the uniformly compact support of the functions $\tilde{\chi}$ on each slice and as $2s < 1$, we can further bound this by Jensen's inequality
\begin{align*}
    \int_{\nu^\perp} (\Vert \tilde{\chi}^\nu_y\Vert_{L^1(\R)} + \Vert D\tilde{\chi}^\nu_y \Vert_{TV(\R)})^{2s} dy &\leq C \Big( \int_{\nu^\perp} \Vert \tilde{\chi}^\nu_y\Vert_{L^1(\R)} + \Vert D\tilde{\chi}^\nu_y \Vert_{TV(\R)} dy \Big)^{\frac{1}{2s}} \\
    &= C \Big( \Vert \tilde{\chi} \Vert_{L^1(\R^d)} + \Vert D_\nu \tilde{\chi} \Vert_{TV(\R^d)} \Big)^{\frac{1}{2s}}.
\end{align*}
Thus, by the relation between $\chi$ and $\tilde{\chi}$, we conclude
\begin{align*}
    \sum_{k \in \Z^d \setminus \{0\}} |k \cdot \nu|^{2s} |\hat{\chi}(k)|^2 \leq C(s,d,\K)(1 + \Per(\Omega) + \Vert D_\nu \chi \Vert_{TV(\Omega)})^{\frac{1}{2s}}.
\end{align*}
In particular, this implies
\begin{align*}
    E_{\epsilon,s}(\chi) \leq C(s,d,\K) E_{\epsilon}(\chi) + C(s,d,\K) \epsilon (1+\Per(\Omega)).
\end{align*}
Thus, the matching upper bounds for the sharp interface model in \cref{thm:L1_mult_wells} are still applicable which yields the desired result.

\end{proof}

\section{Diffuse surface energies}
\label{sec:diffuse}

\subsection{Diffuse to sharp interface model -- the lower bound} \label{sec:diffuse1}
Following the arguments of \cite{KK11}, we show that in the continuous model,
the minimal diffuse energy can be controlled by the lower bound for the sharp interface energy. Hence,
the lower scaling estimates for the sharp interface model which had been deduced in the previous section for various anisotropic situations also give rise to lower scaling estimates for the diffuse energy.
We show this lower bound for any \emph{discrete set} of wells and also for a more general framework covering both the gradient and symmetrized gradient settings.
Combined with the upper bounds from the next subsection, this will lead to sharp scaling results in our model problems also for anisotropic, diffuse interface energies.

\subsubsection{The scalar-valued setting}
We first show the lower bound in the scalar-valued case, and afterwards will reduce the general vector-valued setting to the one-dimensional one by a projection argument.
\begin{lem} \label{lem:LowerDiffSharp_1D}
  Let $p,q \in [1,\infty)$ and $\Omega \subset \R^d$ be bounded and let $\K = \{A_{1}, \dots, A_{N}\} \subset \R$.
  For any $U \in L^p(\Omega;\R)\cap W^{1,q}(\Omega;\R)$, $f \in L^\infty(\Omega;\K)$ there exist $\tilde{f} \in BV(\Omega;\K)$ and a constant $C = C(\K,p,q) > 0$ such that for any $\epsilon >0$
  \begin{align}
  \label{eq:comparison_diff_sharp}
    \int_{\Omega} |U-f|^{p} + \epsilon^q |\nabla U|^{q} dx \geq C \epsilon \Vert D \tilde{f} \Vert_{TV(\Omega)}
    \quad\text{and}\quad
    |U-f| \geq C |U-\tilde{f}|.
  \end{align}
  For $q = 1$, the same result holds with $\Vert D U \Vert_{TV(\Omega)}$ instead of $\|\nabla U\|_{L^1(\Omega)}$ (on the left-hand side of \cref{eq:comparison_diff_sharp}), for $U \in BV(\Omega;\R)$ instead of $U \in W^{1,1}(\Omega;\R)$.
\end{lem}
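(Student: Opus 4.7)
The plan is a Modica-Mortola-type argument, following \cite{KK11}: construct $\tilde{f}$ as a piecewise-constant projection of $U$ onto $\K$, with thresholds between consecutive wells selected via a co-area/averaging argument.

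First I would order the wells as $A_1 < A_2 < \dots < A_N$, set $\eta := \tfrac{1}{4}\min_j (A_{j+1}-A_j)$, introduce the midpoints $t_j := (A_j+A_{j+1})/2$, and the pairwise-disjoint buffer intervals $I_j := [t_j - \eta, t_j + \eta]$. The key pointwise observation is that for every $x$ with $U(x) \in I_j$ and any $f(x) \in \K$, one has $|U(x) - f(x)| \ge \eta$, providing uniform coercivity on these ``transition regions''.

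The bridge between elastic and surface contributions is Young's inequality. For $q > 1$ with conjugate exponent $q'$, one has $a^p + \epsilon^q b^q \ge \epsilon\, a^{p/q'}\, b$ for all $a,b \ge 0$; applied with $a = |U-f|$, $b = |\nabla U|$ and restricted to the set $\{U \in I_j\}$, where $|U-f|^{p/q'} \ge \eta^{p/q'}$, this yields the pointwise bound
\begin{align*}
|U-f|^p + \epsilon^q |\nabla U|^q \,\ge\, C(\K,p,q)\, \epsilon\, |\nabla U| \quad \text{on } \{U \in I_j\}.
\end{align*}
(For $q = 1$ the corresponding bound is the trivial $|U-f|^p + \epsilon |\nabla U| \ge \epsilon |\nabla U|$, and $\|DU\|_{TV(\Omega)}$ replaces $\int_\Omega|\nabla U|\, dx$ in what follows.) Integrating, applying the co-area formula
\begin{align*}
\int_{\{U \in I_j\}} |\nabla U|\, dx = \int_{I_j} \mathcal{H}^{d-1}(\{U=t\} \cap \Omega)\, dt,
\end{align*}
and summing over $j$ gives
\begin{align*}
\int_\Omega \bigl(|U-f|^p + \epsilon^q|\nabla U|^q\bigr)\, dx \,\ge\, C\, \epsilon \sum_{j=1}^{N-1} \int_{I_j} \mathcal{H}^{d-1}(\{U=t\} \cap \Omega)\, dt.
\end{align*}

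A mean-value selection on each $I_j$ then produces $t_j^* \in I_j$ with $\mathcal{H}^{d-1}(\{U=t_j^*\} \cap \Omega) \le (2\eta)^{-1}\int_{I_j} \mathcal{H}^{d-1}(\{U=t\} \cap \Omega)\, dt$. Defining $\tilde{f}(x) := A_j$ whenever $U(x) \in [t_{j-1}^*, t_j^*)$ (with $t_0^* := -\infty$ and $t_N^* := +\infty$), one obtains $\tilde{f} \in BV(\Omega;\K)$ with jump set contained in $\bigcup_j \{U = t_j^*\}$ and jump heights at most $\max_j |A_{j+1}-A_j|$, whence $\|D\tilde{f}\|_{TV(\Omega)} \le C(\K)\sum_j \mathcal{H}^{d-1}(\{U=t_j^*\} \cap \Omega)$; combined with the previous chain this yields the first claim. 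The second, pointwise inequality $|U-f| \ge C|U-\tilde{f}|$ follows by a case analysis: when $f(x) = \tilde{f}(x)$ it is an equality, and when $f(x) \ne \tilde{f}(x)$ both $|U(x)-f(x)|$ and $|U(x)-\tilde{f}(x)|$ are bounded above and below by constants depending only on $\K$, since $U(x)$ lies in the slab $[t_{j-1}^*, t_j^*]$ whose endpoints are within $\eta$ of the midpoints $t_{j-1}, t_j$. The main subtleties I anticipate are the measure-theoretic selection of the $t_j^*$ and, for $q=1$, reading the gradient term as the BV total variation and invoking the $BV$ co-area formula; both are routine.
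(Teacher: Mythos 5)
Your proof is correct and follows essentially the same strategy as the paper's: Young's inequality to produce a term $\epsilon\,|U-f|^{p(q-1)/q}|\nabla U|$, the coarea formula, a mean-value (Lebesgue-point) selection of thresholds $t_j^*$ in the buffer zones where $\dist(\cdot,\K)$ is bounded below, the definition $\tilde{f}=h\circ U$ with $h$ a step function, and the same two-case analysis for the pointwise bound. The only point the paper spells out that you leave implicit is the verification that $h\circ U\in BV$ with the asserted total-variation bound, which (since $h$ is not Lipschitz) is not a direct chain rule; the paper handles this by mollifying $h$ and using $L^1$-lower semicontinuity of the $BV$ seminorm, while for $q=1$ it uses the representation $\tilde f=A_1+\sum_k(A_{k+1}-A_k)\chi_{\{U>t_k\}}$ and the $BV$ coarea formula --- both of which you could slot into your argument with no change of structure.
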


\begin{proof}
  Up to relabelling, we can assume that $-\infty < A_1 < A_2 < \dots < A_N < + \infty$.
We start by giving the argument for the first inequality in \cref{eq:comparison_diff_sharp}.
We assume for simplicity that $U$ coincides with its Lebesgue representative.
In this case, for scalar-valued functions, the validity of the coarea formula
for Sobolev functions is known (cf.\ \cite{MSZ03}).
We, hence, invoke Young's inequality for $q>1$ and the coarea formula to get
\begin{align*}
  \int_\Omega |U-f|^p + \epsilon^q |\nabla U|^q dx & \geq \epsilon \int_\Omega |U-f|^{\frac{p(q-1)}{q}} |\nabla U| dx = \epsilon \int_\R \int_{U^{-1}(t) \cap \Omega} |t-f|^{\frac{p(q-1)}{q}}
                                                        d \mathcal{H}^{d-1}(x) dt \\
  & \geq \epsilon \sum_{k=1}^{N-1} \int_{A_{k}+\frac{c}{4}}^{A_{k+1}-\frac{c}{4}} \int_{U^{-1}(t) \cap \Omega} |t-f|^{\frac{p(q-1)}{q}} d\mathcal{H}^{d-1}(x) dt,
\end{align*}
where in the last inequality we introduced $c = \min \{ A_{k+1}-A_k : k = 1, \dots, N-1\}$.

As for $t \in (A_k + \frac{c}{4},A_{k+1}-\frac{c}{4})$ we have $\dist(t,\K) \geq \frac{c}{4}$, we can control $|t-f(x)|^{\frac{p(q-1)}{q}} \geq C$ with a constant $C = C(\K,p,q) > 0$.
Plugging this lower bound into the above inequality yields
\begin{align*}
  \int_\Omega |U-f|^p + \epsilon^q |\nabla U|^q dx & \geq C \epsilon \sum_{k=1}^{N-1} \int_{A_k + \frac{c}{4}}^{A_{k+1} - \frac{c}{4}} \mathcal{H}^{d-1}(\{x \in \Omega: U(x) =t\}) dt.
\end{align*}

For every $k \in \{1,\dots,N-1\}$ there is $t_k \in (A_k+\frac{c}{4}, A_{k+1}-\frac{c}{4})$, which, without loss of generality, we may assume to be a Lebesgue point for the function $t \mapsto \mathcal{H}^{d-1}(\{x
\in \Omega: U(x) = t \})$, such that
\begin{align*}
  \int_{A_k+\frac{c}{4}}^{A_{k+1}-\frac{c}{4}} \mathcal{H}^{d-1}(\{x \in \Omega: U(x) = t\}) dt &\geq (A_{k+1}-A_k - \frac{c}{2}) \mathcal{H}^{d-1}(\{x \in \Omega: U(x) = t_k\}) \\
  &\geq \frac{c}{2}
  \mathcal{H}^{d-1}(\{x \in \Omega: U(x) = t_k\}).
\end{align*}
Therefore, combined with the previous bound, we obtain
\begin{align*}
  \int_\Omega |U-f|^p + \epsilon^q |\nabla U|^q dx \geq C \epsilon \sum_{k=1}^{N-1} \mathcal{H}^{d-1}(\{x \in \Omega: U(x) = t_k\}).
\end{align*}
In particular, we obtain a sequence of $t_k$ such that $t_k < t_{k+1}$ for all $k \in \{1,\dots,N-2\}$.

The idea now is to define a new phase indicator with BV-seminorm which is exactly determined by these measures.
To this end let $h: \R \to \R$ be given as $h(t)= \sum_{k=1}^{N} A_k \chi_{(t_{k-1},t_k]}(t) \in \K$ and with slight abuse of notation, we set $t_0 := -\infty, t_N := \infty$. We define $\tilde{f} = (h \circ U) : \Omega \to \K$, and
claim that $\tilde{f} \in BV(\Omega;\K)$ and that it satisfies
\begin{align*}
  \Vert D \tilde{f} \Vert_{TV(\Omega)} \leq C \sum_{k=1}^{N-1} \mathcal{H}^{d-1}(\{x \in \Omega: U(x) = t_k\}).
\end{align*}

The claim will be shown by an approximation argument. To this end, let $h_j \in C^\infty(\R;\R)$ be a mollification of $h$ fulfilling $\supp(h-h_j) \subset \bigcup_{k=1}^{N-1}
(t_k-\frac{1}{j},t_k+\frac{1}{j})$, $|h_j'| \leq C j \chi_{\supp(h-h_j)}$, and $|h_j| \leq \max\K$.
By construction, it holds that $h_j(t) \to h(t)$ for every $t\neq t_k$, $k=1,\dots, N-1$.
Since the sets $\{U=t_k\}$ have Hausdorff dimension $d-1$, by the boundedness of $\Omega$, and
boundedness of $h_j$, the dominated convergence theorem implies $h_j \circ U  \to h \circ U = \tilde{f}$ in $L^1(\Omega)$.
As the BV-seminorm is lower semicontinuous with respect to the (strong) $L^1$ convergence, we deduce
\begin{align*}
  \Vert D \tilde{f} \Vert_{TV(\Omega)} \leq \liminf_{j \to \infty} \Vert D (h_j \circ U) \Vert_{TV(\Omega)}.
\end{align*}

As $h_j$ is smooth, we can use the chain rule to further bound the total variation norm.
Indeed, by the coarea formula
\begin{align*}
  \Vert D (h_j \circ U) \Vert_{TV(\Omega)} &= \int_\Omega |h_j'(U(x))| |\nabla U(x)| dx = \int_\R |h_j'(t)| \mathcal{H}^{d-1}(\{x \in \Omega: U(x) = t\}) dt \\
  & \leq C j \sum_{k=1}^{N-1} \int_{t_k-\frac{1}{j}}^{t_k + \frac{1}{j}} \mathcal{H}^{d-1}(\{x \in \Omega: U(x) = t\}) dt \\
  & \to \frac{C}{2} \sum_{k=1}^{N-1} \mathcal{H}^{d-1}(\{x \in \Omega: U(x) = t_k\}).
\end{align*}
Here we used that the $t_k$ are Lebesgue points of the function $t \mapsto \mathcal{H}^{d-1}(\{x \in \Omega: U(x) = t \})$.

Thus, with this definition, $\tilde{f} \in BV(\Omega;\K)$ and also the desired upper bound from \cref{eq:comparison_diff_sharp} follows.

If $q = 1$, we do not use Young's inequality, but neglect the first term. We give the argument for $U \in BV(\Omega;\R)$ and note that the statement for $U \in W^{1,1}(\Omega;\R)$ is also covered by this.
By the coarea formula for BV functions \cite[Thm 3.40]{AFP00}
\begin{align*}
    \int_\Omega |U - f|^p dx &+ \epsilon \Vert DU \Vert_{TV(\Omega)} \geq \epsilon \Vert DU \Vert_{TV(\Omega)} = \epsilon \int_{\R} \Vert D \chi_{\{U > t\}} \Vert_{TV(\Omega)} dt \\
    &\geq \epsilon \sum_{k=1}^{N-1} \int_{A_{k}}^{A_{k+1}} \Vert D \chi_{\{U>t\}} \Vert_{TV(\Omega)} dt \geq \epsilon \sum_{k=1}^{N-1} |A_{k+1}-A_k| \Vert D \chi_{\{U> t_{k}\}} \Vert_{TV(\Omega)},
\end{align*}
for some $t_k \in (A_k,A_{k+1})$.
Considering the function $h: \R \to \R$ and $\tilde{f} = h \circ U$ as above, we write $h$ as
\begin{align*}
    h(t) = A_1 + \sum_{k=1}^{N-1} (A_{k+1}-A_k) \chi_{(t_k,\infty)}(t),
\end{align*}
and thus it holds
\begin{align*}
    \tilde{f}(x) = h \circ U(x) &= A_{1} + \sum_{k=1}^{N-1} (A_{k+1}-A_k) \chi_{\{U>t_k\}}(x).
\end{align*}
In particular, we have
\begin{align*}
    \epsilon \Vert D \tilde{f} \Vert_{TV(\Omega)} & \leq \epsilon \sum_{k=1}^{N-1} |A_{k+1}-A_k| \ \Vert D \chi_{\{U>t_k\}}\Vert_{TV(\Omega)} \\
    & \leq \int_\Omega |U-f|^p dx + \epsilon \Vert DU\Vert_{TV(\Omega)}.
\end{align*}

Now we prove the second inequality from \cref{eq:comparison_diff_sharp}. 
For this, let $x \in \Omega$ and $k \in \{1,\dots,N-1\}$ be such that $U(x) \in (\frac{A_k+A_{k-1}}{2}, \frac{A_k+A_{k+1}}{2}]$, where we set $A_0 = -\infty$ and $A_{N+1} = \infty$. In particular, $|U(x)-A_k| =
\dist(U(x),\K)$.
We distinguish two cases, which are illustrated in \cref{fig:LowerDiffSharp_1D}.

On the one hand, if $U(x) \in (t_{k-1},t_k]$, we have $\tilde{f}(x) = A_k$ and thus $|U(x)-\tilde{f}(x)| = \dist(U(x),\K) \leq |U(x)-f(x)|$.

On the other hand, if $U(x) \notin (t_{k-1},t_k]$, we have either
\begin{align*}
U(x) > t_k > A_k + \frac{c}{4}\quad \text{or} \quad U(x) \leq t_{k-1} < A_k - \frac{c}{4},
\end{align*}
and hence $\dist(U,\K) = |U(x)-A_k| > \frac{c}{4}$.
This means that 
$|U(x)- f(x)| \geq \frac{c}{4}$ and furthermore
\begin{align*}
    |U(x) - \tilde{f}(x)| \leq |U(x)-f(x)| + |f(x) -\tilde{f}(x)| \leq |U(x)-f(x)| + \diam(\K) \leq C |U(x)-f(x)|.
\end{align*}
This concludes the proof. 

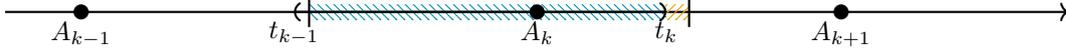
\begin{figure}

 \centering
  \begin{tikzpicture}[thick]
    \draw[->] (-7,0) -- (7,0);

    \fill[draw=none, pattern = north west lines, pattern color = Blue] (-3,-0.1) rectangle (1.7,0.1);
    \fill[draw=none, pattern = north east lines, pattern color = Orange] (1.7,-0.1) rectangle (2,0.1);

    \fill (0,0) circle (0.1) node[below] {$A_{k}$};
    \fill (-6,0) circle (0.1) node[below] {$A_{k-1}$};
    \fill (4,0) circle (0.1) node[below] {$A_{k+1}$};

    \draw[{(-)}] (-3.2,0) node[below] {$t_{k-1}$} -- (1.7,0) node[below] {$t_k$};

    \draw (-3,0.2) -- (-3,-0.2);
    \draw (2,0.2) -- (2,-0.2);

  \end{tikzpicture}
  \caption{Picture of the two cases which arise in our proof of the second estimate in \cref{eq:comparison_diff_sharp}. The blue hashed region depicts the first case, where $\tilde{f}(x) = A_k$, the orange region the second case, in which $\dist(U(x),\K)
    \geq \frac{c}{4}$. The vertical dashes are the center points between $A_{k-1}$ and $A_k$ and $A_{k}$ and $A_{k+1}$ respectively.}
   \label{fig:LowerDiffSharp_1D}
\end{figure}
\end{proof}

Seeking to deduce the same scaling laws from \cref{sec:intro_L1} for diffuse surface energies,
we also generalize \cref{lem:LowerDiffSharp_1D} to anisotropic surface energies:
\begin{cor} \label{cor:DiffuseToSharp_Aniso_1D}
  Let $p,q \in [1,\infty)$, $\nu \in \S^{d-1}$ and $\Omega \subset \R^d$ be a bounded Lipschitz domain and let $\K = \{A_1,\dots,A_N\} \subset \R$. For any $U \in L^p(\Omega;\R)$ such that the weak directional
  derivative satisfies $\p_{\nu} U \in L^q(\Omega;\R)$ and $f \in L^\infty(\Omega; \K)$ there exist $\tilde{f} \in BV_{\nu}(\Omega;\K)$ and a constant $C = C(\K,p,q) > 0$ such that for any $\epsilon >0$
  \begin{align*}
    \int_\Omega |U-f|^p + \epsilon^q |\p_{\nu} U |^q dx \geq C \epsilon \Vert D_{\nu} \tilde{f} \Vert_{TV(\Omega)},
    \quad \text{and}\quad
    \int_\Omega|U-f|^p dx \geq C \int_\Omega|U - \tilde{f}|^p dx.
  \end{align*}
\end{cor}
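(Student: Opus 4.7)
The strategy is a slicing reduction to \cref{lem:LowerDiffSharp_1D}. Writing $\Omega_\nu = \Pi_{\nu^\perp}\Omega$ and $\Omega_y^\nu = \{t \in \R : y + t\nu \in \Omega\}$, Fubini implies that for a.e. $y \in \Omega_\nu$ the slice functions $U_y^\nu(t):=U(y+t\nu)$ and $f_y^\nu(t):=f(y+t\nu)$ satisfy $U_y^\nu \in L^p(\Omega_y^\nu)$, $(U_y^\nu)' = (\partial_\nu U)_y^\nu \in L^q(\Omega_y^\nu)$, and $f_y^\nu \in L^\infty(\Omega_y^\nu;\K)$. To produce a \emph{globally} measurable $\tilde f$ rather than a family $(\tilde f_y)_y$, I will fix a single set of thresholds $t_1<\dots<t_{N-1}$, independent of $y$, and set $\tilde f := h\circ U$ with $h:\R\to\K$ the piecewise-constant function taking value $A_k$ on $(t_{k-1},t_k]$ (convention $t_0:=-\infty$, $t_N:=+\infty$).

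The thresholds are chosen by integrating slice-wise level-set counts. Setting $g(t) := \int_{\Omega_\nu} \mathcal{H}^0((U_y^\nu)^{-1}(t))\,dy$, Fubini and the one-dimensional coarea formula give $\int_\R \phi(t)\,g(t)\,dt = \int_\Omega \phi(U)\,|\partial_\nu U|\,dx$ for every bounded Borel $\phi$. With $c := \min_k(A_{k+1}-A_k)$ and $\phi = \chi_{(A_k+c/4,\,A_{k+1}-c/4)}$, the bound $|U-f|^{p(q-1)/q} \ge C(\K,p,q)$ on $\supp\phi$ combined with Young's inequality $\epsilon|\partial_\nu U|\,|U-f|^{p(q-1)/q} \le C(|U-f|^p + \epsilon^q|\partial_\nu U|^q)$ yields
\begin{align*}
\epsilon\int_{A_k+c/4}^{A_{k+1}-c/4} g(t)\,dt \le C(\K,p,q)\int_\Omega |U-f|^p + \epsilon^q|\partial_\nu U|^q\,dx.
\end{align*}
A mean-value argument, combined with the density of Lebesgue points of $g$, lets me pick $t_k \in (A_k+c/4, A_{k+1}-c/4)$ which is simultaneously a Lebesgue point of $g$ and satisfies $\epsilon\,g(t_k) \le C'(\K,p,q)\int_\Omega |U-f|^p + \epsilon^q|\partial_\nu U|^q\,dx$.

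With these thresholds fixed, the slicing formula \cref{eq:AnisoBV_slicing} reduces the surface estimate to a slice-wise one-dimensional analysis. The mollification-plus-chain-rule computation from the proof of \cref{lem:LowerDiffSharp_1D} applied to each $U_y^\nu$ gives, via lower semicontinuity of the TV seminorm on each slice and Fatou's lemma in $y$, the bound $\|D_\nu\tilde f\|_{TV(\Omega)} = \int_{\Omega_\nu}\|D(h\circ U_y^\nu)\|_{TV(\Omega_y^\nu)}\,dy \le C(\K)\sum_k g(t_k)$; here the Lebesgue-point property of each $t_k$ ensures that the smoothing $h_j$ of $h$ produces the correct limit upon integration in $y$. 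Combined with the threshold bound this yields the first claimed inequality. The second inequality follows at once from the pointwise estimate $|U - \tilde f| \le C(\K)|U-f|$ established in the proof of \cref{lem:LowerDiffSharp_1D}, whose case distinction depends only on the thresholds lying in $(A_k+c/4, A_{k+1}-c/4)$ and not on the ambient dimension.

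The main technical subtlety --- and the reason for selecting a single $h$ globally rather than slice-dependent thresholds --- is the measurability of $\tilde f$: with the uniform choice $\tilde f = h\circ U$ is automatically Borel measurable and no measurable selection theorem is needed. The case $q = 1$ will be handled analogously by dropping the Young inequality step and invoking the directional BV coarea formula on each slice to bound $g$ directly against the total variation of $D_\nu U$.
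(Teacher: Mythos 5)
Your argument is correct and takes a genuinely different route from the paper's, one that is arguably cleaner. The paper also reduces to \cref{lem:LowerDiffSharp_1D} by slicing, but handles the measurability problem differently: the thresholds $t_k^y$ there are chosen separately per slice, made piecewise constant over a tiling of $\Omega_\nu$ by $\delta$-cubes with a single representative $y_j$ per cube, and the resulting discretization errors (both in the surface term and, more delicately, in the elastic term, where $U^\nu_y$ is compared to $U^\nu_{y_j}$) are controlled by letting $\delta\to 0$; this further requires first working with $U \in W^{1,p}$ and closing with an approximation argument. Your single global choice of thresholds via the aggregated level-set density $g(t)=\int_{\Omega_\nu}\mathcal{H}^0\big((U_y^\nu)^{-1}(t)\big)\,dy$ bypasses all of this at once: $\tilde f = h\circ U$ is automatically Borel, the surface estimate follows by Fubini/Fatou together with the fact that the $t_k$ are Lebesgue points of $g$, and the elastic estimate $|U-\tilde f|\le C|U-f|$ carries over \emph{pointwise} from the one-dimensional lemma because the case distinction there depends only on the fixed intervals $(A_k+c/4,A_{k+1}-c/4)$, not on the slicing. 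The one small detail to make explicit is that, in addition to being Lebesgue points of $g$ and satisfying the mean-value bound, the $t_k$ should be chosen to avoid the at most countably many values $t$ with $\mathcal{L}^d(\{U=t\})>0$, so that $h_j\circ U_y^\nu\to h\circ U_y^\nu$ in $L^1(\Omega^\nu_y)$ for a.e.\ $y$; this is a compatible (full-measure) constraint and is the same implicit requirement already present in the proof of \cref{lem:LowerDiffSharp_1D}. With that caveat, your proof is a valid and more direct alternative to the paper's.
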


\begin{proof}
The proof uses similar methods as above, with the addition of a splitting in direction $\nu$ and $\nu^\perp$.
We start by considering $U\in W^{1,p}(\R^d;\R)$.
For almost every $y \in \Omega_\nu$ (see \cref{sec:prelim_directBV} for the notation), we follow the arguments from the previous proof and obtain
  \begin{align*}
    \int_\Omega |U-f|^p + \epsilon^q |\p_\nu U|^q dx& \geq \epsilon \int_{\Omega_\nu} \int_{\Omega^\nu_y} |U^\nu_y(t)-f^\nu_y(t)|^{\frac{p(q-1)}{q}} |(U_y^\nu)'(t)| dt dy \\
    &\geq C \epsilon
    \int_{\Omega_\nu} \sum_{k=1}^{N-1} \mathcal{H}^0(\{s \in \Omega^\nu_y: U^\nu_y(s) = t_k^{y}\}) dy,
  \end{align*}
  where, for a.e.\ $y\in\Omega_\nu$, $t_k^y\in(A_k+\frac{c}{4},A_{k+1}-\frac{c}{4})$, with $c$ defined as in the proof of \cref{lem:LowerDiffSharp_1D}.

  The main difference with respect to the isotropic case, is that here we use a slicing argument, hence in the definition of the modified phase indicator $\tilde f$, the thresholds $t_k^y$ change from slice to slice, giving in principle measurability issues.
  To avoid this, we define $\tilde f$ to be piecewise constant in the variable $y$ as follows.

  Given $\delta>0$, we define $\mathcal{L}_\delta:=\{z\in\delta\Z^{d-1} : Q_\delta(z)\subset\Omega_\nu\}$, where $Q_\delta(z)=(0,\delta)^{d-1}+z$, and we use the notation $\mathcal{L}_\delta=\{z_j\}_{j=1}^{M}$, with $M=\#\mathcal{L}_\delta$ (the dependence on $\nu$ is omitted for the sake of clarity of exposition).
  From the above estimate, by an average argument, we get
  \begin{align}\label{eq:slicing-MM1}
  \begin{split}
      \int_\Omega |U-f|^p + \epsilon^q |\p_\nu U|^q dx& \geq C\epsilon \sum_{j=1}^M \int_{Q_\delta(z_j)}\sum_{k=1}^{N-1}\mathcal{H}^0(\{s \in \Omega^\nu_y: U^\nu_y(s) = t_k^y\}) dy\\
      &\ge C\epsilon\delta^{d-1} \sum_{j=1}^M \sum_{k=1}^{N-1}\mathcal{H}^0(\{s \in \Omega^\nu_{y_j}: U^\nu_{y_j}(s) = t_k^{y_j}\})
      \end{split}
  \end{align}
  with $y_j\in Q_\delta(z_j)$.
  In every cube, we now work as in the proof of \cref{lem:LowerDiffSharp_1D}, namely we define $h_j=\sum_{k=1}^N A_k\chi_{(t_{k-1}^{y_j},t_k^{y_j}]}$ and $\tilde f_j^\nu:=h_j\circ U_{y_j}^\nu$.
  Notice that as $U \in W^{1,p}(\R^d;\R)$ the functions $\tilde f_j^\nu$ are defined on the whole $\R$.
  Working as in the previous proof, for every $j$ we have
  \begin{equation}\label{eq:slicing-MM2}
    \|D\tilde f_j^\nu\|_{TV(\Omega_{y_j}^\nu)} \le C\sum_{k=1}^{N-1}\mathcal{H}^0(\{s \in \Omega^\nu_{y_j}: U^\nu_{y_j}(s)  = t_k^{y_j}\}).
    \end{equation}
  Defining
  \begin{align*}
  \tilde f(x) :=\begin{cases}
    \tilde f^\nu_j(t), &\Pi_{\nu^\perp}x=y\in Q_\delta(z_j),\, x=y+t\nu,\, j\in\{1,\dots,M\} \\
    A_1 & \text{otherwise},
  \end{cases}
  \end{align*}
  by slicing $\tilde f\in BV_\nu(\Omega;\K)$ and combining \cref{eq:slicing-MM1,eq:slicing-MM2} we get
  \begin{align*}
  \epsilon\|D_\nu\tilde f\|_{TV(\Omega)} &= \epsilon\sum_{j=1}^M \|D_\nu\tilde f\|_{TV(C_\delta(z_j))} = \epsilon\delta^{d-1}\sum_{j=1}^M \|D\tilde f_j^{\nu}\|_{TV(\Omega_{y_j}^\nu)}\\
  &\le C\int_\Omega|U-f|^p+\epsilon^q|\p_\nu U|^qdx,
  \end{align*}
  where $C_\delta(z_j):=\{x\in\Omega : \Pi_{\nu^\perp}x\in Q_\delta(z_j)\}.$

  For $U \in BV_\nu(\R^d;\R)$ we can then also follow the arguments in the proof of \cref{lem:LowerDiffSharp_1D} combined with the methods above to show the estimate.

We now prove the estimate regarding elastic energies.
Notice preliminarily that we can assume that $U\neq f$ in $L^p$ otherwise we can simply choose $\tilde
f=U$.

By definition of $\tilde f_j^\nu$, arguing as in the proof of \cref{lem:LowerDiffSharp_1D} the pointwise estimate holds $|U_{y_j}^\nu(t)-\tilde f_j^\nu(t)|\le C\dist (U_{y_j}^\nu(t),\K)$ a.e.\ in $\R$.
Hence, applying the triangle inequality twice, we may write
\begin{align*}
    &\int_{Q_\delta(z_j)}\int_{\Omega_y^\nu}|U_y^\nu(t)-(\tilde f)_y^\nu(t)|^p dt dy \\
    &\qquad \le 2^{p-1}\int_{Q_\delta(z_j)}\int_{\Omega_y^\nu}|U_y^\nu(t)-U_{y_j}^\nu(t)|^p dt dy + 2^{p-1}\int_{Q_\delta(z_j)}\int_{\Omega_y^\nu}|U_{y_j}^\nu(t)-\tilde f_j^\nu(t)|^p dt dy \\
    &\qquad \le 2^p\int_{Q_\delta(z_j)}\int_{\Omega_y^\nu}|U_y^\nu(t)-U_{y_j}^\nu(t)|^p dt dy + 2^{p-1}\int_{Q_\delta(z_j)}\int_{\Omega_{y}^\nu}\dist^p(U_y^\nu(t),\K) dt dy
\end{align*}
Eventually, since $U\in W^{1,p}(\R^d;\R)$ by assumption, we infer
\begin{align*}
    &\int_{Q_\delta(z_j)}\int_{\Omega_y^\nu}|U_y^\nu(t)-U_{y_j}^\nu(t)|^pdtdy = \int_{Q_\delta(z_j)}\int_{\Omega_y^\nu} |U(y+t\nu)-U(y_j+t\nu)|^p dtdy \\
    &\qquad\le \sup_{|h|\le\delta\sqrt{d-1}}\int_{C_\delta(z_j)\cap\Omega_\delta}|U(x)-U(x+h)|^pdx+C\int_{C_\delta(z_j)\setminus\Omega_\delta}|U(x)|^pdx \\
    &\qquad\le C \delta^p\|\nabla U\|_{L^p(\Omega_\delta^j)}^p+C\|U\|^p_{L^p(C_\delta(z_j)\setminus\Omega_\delta)},
\end{align*}
where $\Omega_\delta:=\{x\in\Omega:\dist(x,\p\Omega)>2\sqrt{d-1}\delta\}$,
and where we used the notation $\Omega_\delta^j=(B^{d-1}_{2\delta\sqrt{d-1}}(z_j)\times\R\nu)\cap\Omega$ which intersect a finite number of times.
Gathering the two above inequalities, and summing over $j$ we obtain
\begin{align*}
    \int_\Omega|U(x)-\tilde f(x)|^pdx\le C\int_\Omega\dist^p(U(x),\K)dx+C\delta^p\|\nabla U\|_{L^p(\Omega)}^p+C\int_{\Omega\setminus\Omega_\delta}|U(x)|^pdx + C |\Omega \setminus \Omega_\delta|.
\end{align*}
Taking $\delta$ sufficiently small, the result follows for every $U\in W^{1,p}(\Omega;\R)$.
By an approximation argument, let $U_n\in W^{1,p}(\R^d;\R)$ converge to $U$ strongly in $L^p(\Omega)$ and $\p_\nu U_n$ converge to $\p_\nu U$ strongly in $L^q(\Omega)$ and let $\tilde f_n$ be their phase indicators (defined as above), we have that
\begin{align*}
\int_\Omega |U-\tilde f_n|^pdx \le C\int_\Omega|U-U_n|^pdx +C\int_\Omega|U_n-\tilde f_n|^pdx.
\end{align*}
By taking $n$ sufficiently large (and $\delta>0$ sufficiently small such
  that $C\delta^p\|\nabla U\|_{L^p(\Omega)}^p+C\|U\|_{L^p(\Omega\setminus\Omega_\delta)} + C |\Omega \setminus \Omega_\delta|<\frac{\eta}{2}$), the result follows:
\begin{align*}
&\int_\Omega |U-\tilde f_n|^pdx \le \eta +C\int_\Omega\dist^p(U_n(x),\K)dx\\
&\le \eta +C\int_\Omega\dist^p(U(x),\K)dx + C\int_\Omega|U-U_n|^pdx
\le 2 \eta +C\int_\Omega\dist^p(U(x),\K)dx ,
\end{align*}
after choosing $\eta \leq \int_\Omega\dist^p(U(x),\K)dx$.
Analogously, for $n$ sufficiently large we get
\begin{align*}
\|D_\nu\tilde f_n\|_{TV(\Omega)} \le C\int_\Omega|U_n-f|^p+\epsilon^q|\p_\nu U_n|^qdx \le C\int_\Omega|U-f|^p+\epsilon^q|\p_\nu U|^qdx+C \eta
\end{align*}
which gives the result.
\end{proof}

\subsubsection{The vector-valued setting}
In this section, we translate the results from the previous section to the vector-valued setting.

\begin{prop} \label{prop:DiffToSharp_Full}
  Let $p,q \in [1,\infty)$ and $\Omega \subset \R^d$ be bounded and let $\K =
  \{A_1,\dots,A_N\} \subset \R^n$. For any $U \in L^p(\Omega;\R^n)\cap W^{1,q}(\Omega;\R^n)$ and $\chi \in L^\infty(\Omega;\K)$ there exist $\tilde{\chi} \in BV(\Omega;\K)$ and a constant $C = C(\K,p,q) > 0$ such that for any $\epsilon >0$
  \begin{align*}
    \int_\Omega |U-\chi|^p + \epsilon^q |\nabla U|^q dx \geq C \epsilon \Vert D \tilde{\chi} \Vert_{TV(\Omega)}
    \quad\text{and}\quad
    |U-\chi|  \geq C  |U-\tilde{\chi}|.
  \end{align*}
  In particular,
  \begin{align*}
    \int_\Omega |U-\chi|^p + \epsilon^q |\nabla U|^q dx & \geq C \left( \int_\Omega |U-\tilde{\chi}|^p dx + \epsilon \Vert D \tilde{\chi} \Vert_{TV(\Omega)} \right).
  \end{align*}
\end{prop}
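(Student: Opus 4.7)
The plan is to reduce to the scalar setting of \cref{lem:LowerDiffSharp_1D} by a single judicious projection. Since $\K=\{A_1,\dots,A_N\}$ is finite, only finitely many hyperplanes in $\S^{n-1}$ must be avoided, so I may pick $r\in\S^{n-1}$ such that $r\cdot A_1,\dots,r\cdot A_N$ are pairwise distinct, and set $c_r:=\min_{j\neq k}|r\cdot(A_j-A_k)|>0$. I will apply \cref{lem:LowerDiffSharp_1D} to the scalar data $V:=U\cdot r\in L^p(\Omega;\R)\cap W^{1,q}(\Omega;\R)$ and scalar phase indicator $g:=\chi\cdot r$, with wells $\K_r:=r\cdot\K\subset\R$. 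This yields $\tilde g=h\circ V\in BV(\Omega;\K_r)$, where $h$ is a step function whose jump thresholds $t_k$ lie in the gap intervals $(r\cdot A_k+c_r/4,\, r\cdot A_{k+1}-c_r/4)$, and which satisfies the two scalar estimates. Since $r\,\cdot$ is a bijection $\K\to\K_r$, I lift $\tilde g$ to a vector-valued $\tilde\chi:\Omega\to\K$ by requiring $r\cdot\tilde\chi(x)=\tilde g(x)$.

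For the surface bound, the elementary Cauchy--Schwarz estimates $|V-g|\le|U-\chi|$ and $|\nabla V|\le|\nabla U|$ immediately transport the scalar inequality to $\int_\Omega|U-\chi|^p+\epsilon^q|\nabla U|^q\,dx\ge C\epsilon\|D\tilde g\|_{TV(\Omega)}$. At each jump of $\tilde\chi$ from $A_j$ to $A_k$ the corresponding jump of $\tilde g$ has magnitude $|r\cdot(A_j-A_k)|\ge c_r$, while $|A_j-A_k|\le\diam(\K)$, so $\|D\tilde\chi\|_{TV(\Omega)}\le(\diam(\K)/c_r)\|D\tilde g\|_{TV(\Omega)}$, closing the surface bound.

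The pointwise estimate is the delicate point, since the scalar lemma only controls $|r\cdot(U-\tilde\chi)|=|V-\tilde g|$, not the full vector norm $|U-\tilde\chi|$. I intend to deduce it via a dichotomy on $|U(x)-\chi(x)|$ with threshold $c_r/4$. In the ``far'' regime $|U(x)-\chi(x)|\ge c_r/4$, the triangle inequality together with $|\chi-\tilde\chi|\le\diam(\K)$ gives $|U-\tilde\chi|\le(1+4\diam(\K)/c_r)|U-\chi|$. In the ``near'' regime $|U(x)-A_\ell|<c_r/4$ (where $A_\ell=\chi(x)$), the projection obeys $|V(x)-r\cdot A_\ell|<c_r/4$; by the placement of the thresholds, $V(x)$ then lies in the unique interval on which $h$ equals $r\cdot A_\ell$, forcing $\tilde\chi(x)=A_\ell=\chi(x)$ and rendering the inequality trivial.

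The main conceptual obstacle is exactly this last step: a pointwise vector inequality cannot follow from a pointwise scalar projection inequality by linear algebra alone, and the upgrade relies crucially on the explicit constructive form $\tilde g=h\circ V$ and on the gap structure of the thresholds, which guarantee that whenever $U$ is close to its current well in $\R^n$ the modified phase $\tilde\chi$ coincides with $\chi$. Once both estimates are proved, the combined inequality follows by raising the pointwise bound to the $p$-th power, integrating over $\Omega$, and adding the resulting $L^p$ estimate to the surface bound.
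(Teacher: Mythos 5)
Your proposal is correct and follows essentially the same route as the paper's proof: choose a generic direction $\zeta\in\mathbb{S}^{n-1}$ (your $r$) so that the projected wells remain distinct, apply \cref{lem:LowerDiffSharp_1D} to $U\cdot\zeta$ and $\chi\cdot\zeta$, lift the resulting scalar phase indicator back to $\K$ by the bijection, and then deduce the two vector-valued estimates. The only cosmetic differences are that the paper obtains the TV bound via the BV chain rule with a Lipschitz lift $h:\R\to\R^n$ rather than by your direct jump-size comparison, and the paper's pointwise dichotomy is organized around whether $\tilde\chi(x)$ agrees with the \emph{nearest} well to $U(x)$, whereas yours is around the threshold $|U(x)-\chi(x)|\lessgtr c_r/4$ — both dichotomies rest on the same margin property of the thresholds from the proof of \cref{lem:LowerDiffSharp_1D}, and both lead to the same conclusion.
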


\begin{proof}
  The main idea is to invoke \cref{lem:LowerDiffSharp_1D} for a suitable projection of the wells onto a one-dimensional subspace in which there are still $N$ distinct wells.
  For this we choose
  \begin{align*}
    \zeta \in \mathbb{S}^{n-1} \setminus \left( \bigcup_{i \neq j} (A_i-A_j)^\perp \right)\neq \emptyset.
  \end{align*}
  By this choice of $\zeta$, we know that $A_i \cdot \zeta \neq A_j \cdot \zeta$ for any $i,j \in \{1,\dots,N\}, i \neq j$, hence there is a one-to-one correspondence of $\K' = \{A_1 \cdot \zeta, A_2 \cdot
  \zeta, \dots, A_N \cdot \zeta\}$ and $\K$.
  For further use in what follows below, we note that due to the discreteness of $\K$ and $\K'$ we can define a Lipschitz map
  \begin{align}
  \label{eq:map_h}
  h\in C^1(\R;\R^n) \mbox{ with } h(A_j \cdot \zeta) = A_j \mbox{ for all } j \in \{1,2,\dots,N\}.
  \end{align}

  Now we project the energy onto the direction $\zeta$ in the image:
  \begin{align*}
    \int_\Omega |U-\chi|^p + \epsilon^q |\nabla U|^q dx \geq \int_\Omega |U \cdot \zeta - \chi \cdot \zeta|^p + \epsilon^q |\nabla (U \cdot \zeta)|^q dx.
  \end{align*}

  The functions $U\cdot \zeta: \R^d \to \R$ and $\chi \cdot \zeta: \R^d \to \K'$ are admissible for \cref{lem:LowerDiffSharp_1D}, and thus there is $\tilde{f} \in BV(\Omega;\K')$ and $C =C(\K,p,q)>0$ such that
  \begin{align}
  \label{eq:first_step}
    \int_\Omega |U-\chi|^p + \epsilon^q |\nabla U|^q dx \geq C \epsilon \Vert D \tilde{f} \Vert_{TV(\Omega)}, \quad |U\cdot \zeta- \chi \cdot \zeta| \geq C |U \cdot \zeta - \tilde{f}|.
  \end{align}
  Using the determinedness of $\K$ in terms of $\K'$, i.e. using the function $h:\R \to \R^n$ from \cref{eq:map_h}, and the chain rule in BV \cite[Thm 3.96]{AFP00} hence shows that $\tilde{\chi} = h \circ \tilde{f} \in BV(\Omega;\K)$ and
  \begin{align*}
    \Vert D \tilde{\chi}\Vert_{TV(\Omega)} \leq C(\K) \Vert D \tilde{f} \Vert_{TV(\Omega)}.
  \end{align*}
  Combining this bound with \cref{eq:first_step} yields
  \begin{align*}
    \int_\Omega |U-\chi|^p + \epsilon^q |\nabla U|^q dx \geq C \epsilon \Vert D \tilde{\chi} \Vert_{TV(\Omega)}.
  \end{align*}

  The pointwise bound follows with the same argument as in the one-dimensional case. Let $x \in \Omega$ and $k \in \{1,\dots,N\}$ be such that $\dist(U(x),\K) = |U(x)-A_k|$.
  If $\tilde{\chi}(x) = A_k$, it is direct that $|U(x)-\chi(x)| \geq |U(x)-A_k| = |U(x)-\tilde{\chi}(x)|$.
  If $\tilde{\chi}(x) \neq A_k$, also $\tilde{f}(x) \neq A_k \cdot \zeta$, and we can deduce that $|U(x) \cdot \zeta - A_{k} \cdot \zeta| \geq C$ and thus also $\dist(U(x),\K) \geq C$ and hence
  \begin{align*}
    |U(x)-\tilde{\chi}(x)|& \leq |U(x)-\chi(x)| + |\chi(x)-\tilde{\chi}(x)| \leq |U(x)-\chi(x)| + \diam(\K) \\
    & \leq |U(x) - \chi(x)| + c \dist(U(x),\K) \leq C |U(x)-\chi(x)|.
  \end{align*}
  The two cases are illustrated in \cref{fig:LowerDiffSharp}.
  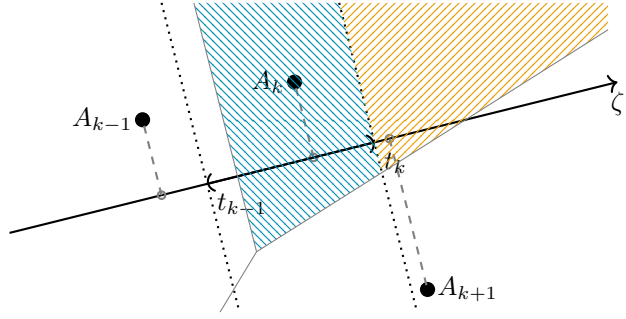
\begin{figure}
    \centering
    \begin{tikzpicture}[thick]
      \draw[->] (-4,-1) -- (4,1) node[below] {$\zeta$};

      \fill (-1/4,1) circle (0.1) node[left] {$A_{k}$};
      \fill (-2.25,0.5) circle (0.1) node[left] {$A_{k-1}$};
      \fill (1.5,-1.75) circle (0.1) node[right] {$A_{k+1}$};

      \draw[dashed, gray] (0,0) -- (-1/4,1); 
      \draw[dashed, gray] (1,0.25) -- (1.5,-1.75); 
      \draw[dashed, gray] (-2,-0.5) -- (-2.25,0.5); 

      \draw[gray] (0,0) circle (0.05);
      \draw[gray] (1,0.25) circle (0.05);
      \draw[gray] (-2,-0.5) circle (0.05);

      \draw[{(-)}] (-1.4,-0.35) node[below right] {$t_{k-1}$} -- (0.8,0.2) node[below right] {$t_k$};
      \draw[dotted] (-119/80-41/80,2.05) -- (-119/80+41/80,-2.05);
      \draw[dotted] (17/20-41/80,2.05) -- (17/20+41/80,-2.05);

      \fill[draw = none, pattern = north west lines, pattern color = Blue] (-63/40,2.05) -- (-3/4,-5/4) -- (9/10,-1/5) -- (17/20-41/80,2.05) -- cycle;
      \fill[draw = none, pattern = north east lines, pattern color = Orange] (17/20-41/80,2.05) -- (31/8,2.05) -- (31/8,149/88) -- (9/10,-1/5) -- cycle;

      \draw[gray,thin] (-63/40,2.05) -- (-3/4,-5/4);

      \draw[gray,thin] (31/8,149/88) -- (-3/4,-5/4);

      \draw[gray,thin] (-1.23,-2.05) -- (-3/4,-5/4);

    \end{tikzpicture}
    \caption{Picture of the two cases to show the estimate on the elastic energy in \cref{prop:DiffToSharp_Full}. The hashed region is the set in which $U(x)$ lies within. In particular, the blue region illustrates the first case, in which $\tilde{\chi} =
    A_k$. The orange region illustrates the second case, in which $\dist(U(x),\K) \geq C$. The smaller circles denote the projections $A_{k-1} \cdot \zeta, A_k \cdot \zeta, A_{k+1} \cdot \zeta$. The thin lines are the
    boundaries of the Voronoi-regions. }
    \label{fig:LowerDiffSharp}
  \end{figure}
\end{proof}

Also this result can be translated to anisotropic surface energies which proves \cref{thm:comparison}.

\begin{proof}[Proof of \cref{thm:comparison}]
  We split the energy into the $r$ component and the remainder.
  By an application of \cref{cor:DiffuseToSharp_Aniso_1D} it holds
  \begin{align*}
    \int_\Omega |U-\chi|^p + \epsilon^q |\p_\nu (U \cdot r)|^q dx
    & \geq \int_\Omega |U\cdot r -
    \chi \cdot r|^p + \epsilon^q |\p_\nu (U \cdot r)|^q dx\\
    & \geq C (\int_\Omega |U \cdot r -
    f_{r}|^p dx + \epsilon\Vert D_\nu f_{r} \Vert_{TV(\Omega)}),
  \end{align*}
  for some $f_{r}: \Omega \to \{A_1 \cdot r, A_2 \cdot r, \dots, A_N \cdot r\}$.
  We now define $\tilde{\chi} \in BV_{\nu}(\Omega, \K)$ by setting $\tilde{\chi} \cdot r = f_{r}$. To fix the part
  perpendicular to $r$, we choose $\Pi_{r^\perp} \tilde{\chi} = \tilde{\chi} - (\tilde{\chi} \cdot r) r$ as the projection
  of $\Pi_{r^\perp} U$ onto $\Pi_{r^\perp} \K = \{A_1 - (A_1 \cdot r) r, \dots, A_N -
  (A_N \cdot r) r\}$ with the additional constraint that $\tilde{\chi} = f_{r} r + \Pi_{r^\perp} \tilde{\chi} \in \K$.
  That is $\tilde{\chi}$ is chosen such that
  \begin{align*}
      |U(x) - \tilde{\chi}(x)| = \min \{|U(x)-A_k|: A_k \cdot r = f_{r}(x)\}, \quad \text{for a.e. } x\in\Omega.
  \end{align*}
  Then we have
  \begin{align*}
    \int_\Omega\left| U- \chi \right|^p dx \geq c \int_\Omega\left| U - \tilde{\chi} \right|^pdx.
  \end{align*}
    Indeed, by \cref{cor:DiffuseToSharp_Aniso_1D} and the pointwise estimate in the orthogonal part we get
\begin{align*}
\int_\Omega|U-\chi|^pdx &= \int_\Omega|U\cdot r-\chi\cdot r|^p+|\Pi_{r^\perp}U-\Pi_{r^\perp}\chi|^pdx \\
&\ge \int_\Omega|U\cdot r-f_r|^p + \dist^p(\Pi_{r^\perp}U,\Pi_{r^\perp}\K) dx = \int_\Omega |U-\tilde\chi|^pdx.
\end{align*}
  In conclusion, by \cref{cor:DiffuseToSharp_Aniso_1D}, we obtain
  \begin{align*}
    \int_\Omega |U-\chi|^p + \epsilon^q |\p_\nu (U\cdot r)|^q dx &\geq C \int_\Omega
    |U - \tilde{\chi}|^p + |U \cdot r - \chi \cdot r|^p + \epsilon^q
                                                                      |\p_\nu (U \cdot r)|^q dx \\
    & \geq C \int_\Omega |U - \tilde{\chi}|^p dx + C \epsilon \Vert D_\nu f_{r} \Vert_{TV(\Omega)} \\
    & \geq C \left( \int_\Omega |U - \tilde{\chi}|^p dx + \epsilon \Vert D_{\nu} (\tilde{\chi}
      \cdot r) \Vert_{TV(\Omega)} \right).
  \end{align*}
\end{proof}

\subsection{Diffuse to sharp interface model -- the upper bound} \label{sec:diffuse2}

Based on the estimates in \cref{prop:DiffToSharp_Full,thm:comparison} in this section we provide a complementary upper bound. Due to the presence of ``lower order errors'', on its own, this upper bound does not show that the diffuse and sharp interface models display the same scaling behaviour. However, with the knowledge of known upper bounds (e.g. in our prototypical model scenarios), it implies that for all our applications the diffuse and sharp interface models admit the same $\epsilon$ scaling.

\begin{lem} \label{lem:DiffToSharp_Mollified}
    Let $\Omega = (0,1)^d$, $p,q \in [1,\infty)$. For any $U \in L^p(\R^d;\R^n)\cap BV(\R^d;\R^n)$ with $U = 0$ outside $\Omega$, $\chi \in L^\infty(\R^d;\R^n) \cap BV(\R^d;\R^n)$, and $\epsilon >0$ there exist $U_\epsilon \in C^\infty_0(\Omega;\R^n)$ 
    and a constant $C = C(\Vert \chi \Vert_{L^\infty}, p, q, d) > 0$ such that
    \begin{align*}
        \int_{\Omega} |U_\epsilon - \chi|^p + \epsilon^q |\nabla U_\epsilon|^q dx &\leq C \Big( \int_{\Omega} |U - \chi|^p dx + \epsilon (1+\max\{\Vert U \Vert_{L^\infty}^{p-1}, \Vert U \Vert_{L^\infty}^{q-1}\})\Vert D \chi \Vert_{TV(\Omega)} \\
        & \quad +  \epsilon \max\{\Vert U \Vert_{L^\infty}^{p-1}, \Vert U \Vert_{L^\infty}^{q-1}\} ( \Vert DU - D\chi \Vert_{TV(\R^d)} +  \Per(\Omega)) \Big).
    \end{align*}
    Moreover, if we have $\operatorname{curl} U= 0$ or $\operatorname{curl} \operatorname{curl} U = 0$ in $\R^d$ it also holds $\operatorname{curl} U_{\epsilon} = 0$ or $\operatorname{curl} \operatorname{curl} U_\epsilon = 0$ in $\R^d$, respectively.
\end{lem}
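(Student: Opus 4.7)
My strategy is to define $U_\epsilon$ by mollifying a suitably rescaled version of $U$. The rescaling is needed because naively mollifying $U$ would produce a function whose support extends into an $\epsilon$-neighbourhood of $\partial\Omega$, and any cut-off to enforce compact support would destroy the differential constraints $\operatorname{curl} U=0$ or $\operatorname{curl}\operatorname{curl} U=0$. To circumvent this, I would fix $x_0$ the centre of $\Omega$, a standard mollifier $\rho_\epsilon$ with $\supp\rho_\epsilon\subset B_\epsilon$, and an affine rescaling $T_\epsilon(x):=x_0+(1+c\epsilon)(x-x_0)$ with $c>0$ chosen (depending on $\diam(\Omega)$) so that $\supp(U\circ T_\epsilon)+B_\epsilon\subset\Omega$. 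I would then set $U_\epsilon:=(U\circ T_\epsilon)*\rho_\epsilon\in C^\infty_0(\Omega;\R^n)$. The differential constraints are preserved because $U\circ T_\epsilon=(1+c\epsilon)^{-1}\nabla(u\circ T_\epsilon)$ when $U=\nabla u$, and analogously $U\circ T_\epsilon=(1+c\epsilon)^{-1}\nabla^{\textup{sym}}(u\circ T_\epsilon)$ when $U=\nabla^{\textup{sym}}u$; since mollification by a scalar convolution commutes with $\operatorname{curl}$ and $\operatorname{curl}\operatorname{curl}$, the composition inherits the constraint.

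For the $L^p$-error I would split
\begin{align*}
U_\epsilon-\chi = (U*\rho_\epsilon-\chi*\rho_\epsilon) + ((U\circ T_\epsilon - U)*\rho_\epsilon) + (\chi*\rho_\epsilon-\chi)
\end{align*}
and treat the three pieces separately. Jensen's inequality applied to convolutions bounds the first summand by $\Vert U-\chi\Vert_{L^p(\Omega)}^p$ (up to boundary-strip corrections); the third is the standard BV mollification estimate $\Vert\chi*\rho_\epsilon-\chi\Vert_{L^p}^p\leq C\Vert\chi\Vert_{L^\infty}^{p-1}\epsilon\Vert D\chi\Vert_{TV}$, obtained via $\Vert f * \rho_\epsilon - f\Vert_{L^1}\leq\epsilon\Vert Df\Vert_{TV}$ and $L^1$-$L^\infty$ interpolation; and the second is a translation estimate controlled by combining the pointwise bound $|U\circ T_\epsilon-U|\leq2\Vert U\Vert_{L^\infty}$ with the BV translation inequality $\Vert U(\cdot+h)-U\Vert_{L^1}\leq|h|\Vert DU\Vert_{TV(\R^d)}$ integrated along the affine flow $T_\epsilon$, yielding an estimate of order $\epsilon\Vert U\Vert_{L^\infty}^{p-1}\Vert DU\Vert_{TV(\R^d)}$.

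For the gradient term I would exploit the distributional identity $\nabla U_\epsilon=D(U\circ T_\epsilon)*\rho_\epsilon$, which as a convolution of a finite Radon measure with a smooth kernel yields the simultaneous bounds $\Vert\nabla U_\epsilon\Vert_{L^1(\R^d)}\leq(1+c\epsilon)\Vert DU\Vert_{TV(\R^d)}$ and $\Vert\nabla U_\epsilon\Vert_{L^\infty}\leq\Vert U\Vert_{L^\infty}\Vert\nabla\rho_\epsilon\Vert_{L^1}\leq C\epsilon^{-1}\Vert U\Vert_{L^\infty}$. The elementary $L^1$-$L^\infty$ interpolation $\Vert f\Vert_{L^q}^q\leq\Vert f\Vert_{L^\infty}^{q-1}\Vert f\Vert_{L^1}$ then produces $\epsilon^q\Vert\nabla U_\epsilon\Vert_{L^q(\Omega)}^q\leq C\epsilon\Vert U\Vert_{L^\infty}^{q-1}\Vert DU\Vert_{TV(\R^d)}$. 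Finally I would decompose $\Vert DU\Vert_{TV(\R^d)}\leq\Vert DU-D\chi\Vert_{TV(\R^d)}+\Vert D\chi\Vert_{TV(\Omega)}+C\Vert\chi\Vert_{L^\infty}\Per(\Omega)$, where the perimeter term accounts for the boundary jump of $\chi$ when passing from the $TV$-norm on $\R^d$ to that on $\Omega$; this reorganises everything into the structure of the right-hand side.

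The main obstacle, as indicated above, is the simultaneous enforcement of compact support in $\Omega$ and preservation of $\operatorname{curl}$ or $\operatorname{curl}\operatorname{curl}$. A cut-off of the form $\eta(U*\rho_\epsilon)$ would generate a spurious curl contribution $U*\rho_\epsilon\otimes\nabla\eta$ concentrated on a boundary strip of volume $\sim\epsilon\Per(\Omega)$, whose $L^q$-mass is too large to be absorbed by the $\epsilon^q$ prefactor. The affine rescaling trick resolves this by ensuring that $U\circ T_\epsilon$ already vanishes in an $\epsilon$-neighbourhood of $\p\Omega$ by design, and by transporting the differential constraint through the chain rule, so that no boundary correction is needed and the only price is the mild $\Per(\Omega)$ term appearing in the triangle-inequality decomposition above.
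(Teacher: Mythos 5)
Your proposal is correct and is essentially the same as the paper's proof: both define $U_\epsilon$ by mollifying a slightly shrunk copy of $U$ so that compact support inside $\Omega$ is guaranteed by construction and the differential constraint is preserved because rescaling and convolution commute with $\operatorname{curl}$ and $\operatorname{curl}\operatorname{curl}$, and both estimate the resulting error terms via $L^1$--$L^\infty$ interpolation together with the BV bounds $\|f\ast\rho_\epsilon-f\|_{L^1}\lesssim\epsilon\|Df\|_{TV}$, $\|\nabla(f\ast\rho_\epsilon)\|_{L^1}\leq\|Df\|_{TV}$, and $\|\nabla(f\ast\rho_\epsilon)\|_{L^\infty}\lesssim\epsilon^{-1}\|f\|_{L^\infty}$. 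The only difference is organizational: the paper first bounds the energy of the unrescaled mollification $U\ast\psi_\epsilon$ and then separately controls the discrepancy introduced by the rescaling, whereas you fold both contributions into a single three-term decomposition of $U_\epsilon-\chi$; this yields the same ingredients and the same bound.
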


We highlight that similar arguments would also work for more general $\mathcal{A}$-free differential inclusions as in \cite{RRTT24}.

\begin{proof}
We split the proof into two parts, first presenting the construction of the main estimate without adjusting the boundary conditions of the function $U_{\epsilon}$ and then correcting for this.

\emph{Step 1: The estimate without the boundary conditions.}
    Let us start by giving the argument for $U \in L^{p}(\R^d;\R^n) \cap BV(\R^d;\R^n)$ with $U = 0$ outside $\Omega$ and $\chi \in L^\infty(\R^d;\R^n) \cap BV(\R^d;\R^n)$, but without preserving the ``boundary condition''. We use this to show the full result with the preserved boundary conditions in what follows below.
    We set $U_\epsilon = U \ast \psi_\epsilon$, where $\psi_\epsilon$ is a radially symmetric mollifier with $\psi_\epsilon(x)=\epsilon^{-d}\psi(\epsilon^{-1}x)$, $\int_{\R^d} \psi_\epsilon dx = 1$, $\supp( \psi_\epsilon) \subset B_\epsilon(0)$.

    First let us consider some auxiliary results.
    By Young's convolution inequality, it holds for any $f \in L^r(\R^d;\R^n)$, $1 \leq r \leq \infty$, that
    \begin{align} \label{eq:DiffToSharp_Young}
        \Vert \nabla (f \ast \psi_\epsilon) \Vert_{L^r} \leq \Vert \nabla \psi_\epsilon \Vert_{L^1} \Vert f \Vert_{L^r} \leq \epsilon^{-1} \Vert \nabla \psi \Vert_{L^1} \Vert f \Vert_{L^r},
    \end{align}
    where we used
    \begin{align*}
        \int_{\R^d} |\nabla \psi_\epsilon(x)| dx = \int_{\R^d} \epsilon^{-d-1} |\nabla \psi(x/\epsilon)| dx = \epsilon^{-1} \Vert \nabla \psi \Vert_{L^1}.
    \end{align*}

    Next, since $U \in BV(\R^d;\R^n)$,
    \begin{align*}
        \int_{\R^d} U_\epsilon(x) \cdot \di \phi(x) dx & = \int_{\R^d} \int_{\R^d} U(x) \psi_\epsilon(y) \cdot \di \phi(x+y) dy dx \\ & = \int_{\R^d} U(x) \cdot \di(\phi \ast \psi_\epsilon)(x) dx \leq \Vert D U \Vert_{TV(\R^d)},
    \end{align*}
    and thus
    \begin{align} \label{eq:DiffToSharp_Gradient}
        \Vert \nabla U_\epsilon \Vert_{L^1} \leq \Vert D U \Vert_{TV(\R^d)}.
    \end{align}
    The same then holds for $\chi$, and $U - \chi$.
    As a last ingredient, we observe that
    \begin{align} \label{eq:DiffToSharp_ConvRate}
        \Vert \chi \ast \psi_\epsilon - \chi \Vert_{L^1} \leq C \epsilon \Vert D \chi \Vert_{TV(\R^d)}.
    \end{align}

    With this we can prove the claim.
    Starting with the diffuse elastic energy, we have
    \begin{align*}
        \int_{\R^d} |U_\epsilon - \chi|^p dx &\leq C(p) \left( \Vert (U-\chi) \ast \psi_\epsilon \Vert_{L^p}^p + \Vert \chi \ast \psi_\epsilon - \chi \Vert_{L^p}^p \right) \\
        & \leq C(p) \left( \Vert U - \chi\Vert_{L^p}^p + \Vert \chi \ast \psi_\epsilon - \chi \Vert_{L^\infty}^{p-1} \Vert \chi \ast \psi_\epsilon - \chi \Vert_{L^1} \right) \\
        & \leq C(p) \left( \Vert U - \chi \Vert_{L^p}^p + \epsilon \Vert \chi \Vert_{L^\infty}^{p-1} \Vert D \chi \Vert_{TV(\R^d)}\right),
    \end{align*}
    where in the last step we applied \cref{eq:DiffToSharp_ConvRate}.

    For the diffuse surface energy, 
    by \cref{eq:DiffToSharp_Young,eq:DiffToSharp_Gradient} we obtain that

    \begin{align*}
        \int_{\R^d} |\nabla U_\epsilon|^q dx & \leq \Vert \nabla U_\epsilon \Vert_{L^\infty}^{q-1} \Big( \Vert \nabla(U-\chi) \ast \psi_\epsilon \Vert_{L^1} + \Vert \nabla \chi \ast \psi_\epsilon \Vert_{L^1} \Big) \\ & \leq \epsilon^{1-q} \Vert \nabla \psi \Vert_{L^1}^{q-1} \Vert U \Vert_{L^\infty}^{q-1} \Big( \Vert DU - D\chi \Vert_{TV(\R^d)} + \Vert D\chi \Vert_{TV(\R^d)} \Big).
    \end{align*}

    As a consequence of the two inequalities above we obtain that
    \begin{align} \label{eq:DiffToSharp_NoBdry}
    \begin{split}
        \int_{\R^d} |U_\epsilon - \chi|^p + \epsilon^q |\nabla (U_\epsilon)|^q dx & \leq C(\Vert \chi \Vert_{L^\infty},p,q) \Big( \int_{\R^d} |U-\chi|^p dx + \epsilon \Vert D \chi \Vert_{TV(\R^d)} \\
        & \qquad  + \epsilon \Vert U \Vert_{L^\infty}^{q-1} \Vert D \chi \Vert_{TV(\R^d)} + \epsilon \Vert U \Vert_{L^\infty}^{q-1} \Vert DU - D \chi \Vert_{TV(\R^d)} \Big).
        \end{split}
    \end{align}

\emph{Step 2: Adjusting the boundary conditions.}

    Now we turn to the construction preserving the exterior data, i.e. we aim to construct $U_\epsilon \in L^p(\R^d;\R^n) \cap W^{1,q}(\R^d;\R^n)$ such that $U_\epsilon = 0$ outside $\Omega$.
    Let $U \in L^p(\R^d;\R^n) \cap BV(\R^d;\R^n)$ such that $U = 0$ outside $\Omega$ and $\chi \in L^\infty(\R^d;\R^n) \cap BV(\R^d;\R^n)$.

    By a translation we consider $\Omega' = (-1/2,1/2)^d$, $V := U(\cdot + 1/2(1,\dots,1)):\Omega'\to\R^n$, and $\chi' := \chi(\cdot + 1/2(1,\dots,1)):\Omega' \to \R^n$ in the following.
    We fix
    \begin{align*}
        \Omega'_\epsilon := (1-2\epsilon) \Omega' = (-\frac{1}{2}+\epsilon,\frac{1}{2}-\epsilon)^d \subset \Omega'.
    \end{align*}
    In particular, we have
    \begin{align*}
        \dist(\p\Omega',\p\Omega'_\epsilon) \geq \epsilon.
    \end{align*}

    We now choose $V_\epsilon = V^{(\epsilon)} \ast \psi_\epsilon$, where
    \begin{align*}
        V^{(\epsilon)}(y) = V(\frac{1}{1-2\epsilon}y), \ y \in \R^d.
    \end{align*}
    As $V = 0$ outside $\Omega'$, 
    we can infer $V_\epsilon = 0$ outside $\Omega'$.
    We now compare the energy for $V_\epsilon$ to that of $V \ast \psi_\epsilon$, which we can control with the arguments from step 1, as follows

    \begin{align*}
        \int_{\Omega'} |V_\epsilon - \chi'|^p + \epsilon^q |\nabla V_{\epsilon}|^q dy & \leq C(p,q) \Big( \int_{\Omega'} |V \ast \psi_{\epsilon} - \chi'|^p + \epsilon^q |\nabla (V \ast \psi_\epsilon)|^q dy \\
        & \quad + \Vert V_\epsilon - V \ast \psi_\epsilon \Vert_{L^p}^p + \epsilon^q \Vert \nabla V_\epsilon - \nabla (V \ast \psi_\epsilon) \Vert_{L^q}^q \Big).
    \end{align*}

    Hence, it remains to control
    \begin{align*}
        \Vert (V^{(\epsilon)} - V) \ast \psi_\epsilon \Vert_{L^p}^p + \epsilon^q \Vert \nabla (V^{(\epsilon)}-V) \ast \psi_\epsilon \Vert_{L^q}^q &\leq \Vert V^{(\epsilon)} - V \Vert_{L^p}^p + C \Vert V^{(\epsilon)} - V\Vert_{L^q}^q \\
        & \leq C \max\{\Vert U \Vert_{L^\infty}^{p-1}, \Vert U \Vert_{L^\infty}^{q-1}\} \Vert V^{(\epsilon)} - V \Vert_{L^1},
    \end{align*}
    where we invoked Young's convolution inequality and \cref{eq:DiffToSharp_Young}.
    Thus, using that $V, V^{(\epsilon)} = 0$ outside $\Omega'$ it holds
    \begin{align*}
        \int_{\R^d} |V^{(\epsilon)}(x) - V(x) | dx & \leq C \int_1^{\frac{1}{1-2\epsilon}} t^{-d} dt \, \Vert D V \Vert_{TV(\R^d)} \leq C (1- (1-2\epsilon)^{d}) \Vert D V \Vert_{TV(\R^d)} \\
        & \leq C(d) \epsilon \Vert D U \Vert_{TV(\R^d)},
    \end{align*}
    and, therefore, for $U_\epsilon(x) = V_\epsilon(x-1/2(1, \dots,1))$
    \begin{align*}
        \int_\Omega |U_\epsilon - \chi|^p + \epsilon^q |\nabla U_\epsilon|^q dx & \leq C(\Vert \chi \Vert_{L^\infty},p,q,d) \Big( \int_\Omega |U-\chi|^p dx \\
        & \quad + \epsilon (1+\max\{\Vert U \Vert_{L^\infty}^{p-1},\Vert U \Vert_{L^\infty}^{q-1}\}) \Vert D\chi \Vert_{TV(\Omega)}\\
        & \quad + \epsilon \max\{\Vert U \Vert_{L^\infty}^{p-1}, \Vert U \Vert_{L^\infty}^{q-1} \} (\Vert D U - D\chi \Vert_{TV(\R^d)} + \Per(\Omega))\Big).
    \end{align*}
    As $U_\epsilon$ is a combination of a rescaling and convolution of $U$, the differential constraint is preserved.
\end{proof}

For completeness, we also derive an estimate of the above type for the anisotropic setting.
\begin{cor} \label{cor:DiffToSharp_Aniso_Upper}
    Let $p,q \in [1,\infty)$, $\nu \in \S^{d-1}$, $\Omega = (0,1)^d$. For any $U, \chi \in L^p(\R^d;\R^n) \cap BV_\nu(\R^d;\R^n)$, with $U = 0$ outside $\Omega$ and $\epsilon > 0$ there exists $U_\epsilon \in L^p(\R^d;\R^n)$ such that $\p_\nu U_\epsilon \in L^q(\R^d;\R^n)$, $U_\epsilon = 0$ outside $\Omega$, and a constant $C = C(\Vert \chi \Vert_{L^\infty},p,q,d) > 0$ such that
    \begin{align*}
        \int_{\Omega} |U_\epsilon - \chi|^p + \epsilon^q |\p_\nu U_\epsilon|^q dx
        & \leq C \biggl( \int_{\Omega} |U-\chi|^p dx + \epsilon (1+\max\{\Vert U \Vert_{L^\infty}^{p-1},\Vert U \Vert_{L^\infty}^{q-1}\})\Vert D_\nu \chi \Vert_{TV(\Omega)} \\
        & \quad + \epsilon \max\{1,\Vert U \Vert_{L^\infty}^{p-1},\Vert U \Vert_{L^\infty}^{q-1}\}( \Vert D_\nu U - D_\nu \chi \Vert_{TV(\Omega)} +\Per(\Omega) \biggr)
    \end{align*}
    Moreover, if we have $\operatorname{curl} U= 0$ or $\operatorname{curl} \operatorname{curl} U = 0$ in $\R^d$ it also holds $\operatorname{curl} U_{\epsilon} = 0$ or $\operatorname{curl} \operatorname{curl} U_\epsilon = 0$, respectively.
\end{cor}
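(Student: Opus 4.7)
The plan is to mirror the two-step argument from the proof of \cref{lem:DiffToSharp_Mollified}, replacing the isotropic mollifier by a one-dimensional mollifier acting only in the direction $\nu$. Fix $\psi\in C^\infty_c((-1,1))$ symmetric with $\int_\R\psi\,dt=1$, set $\psi_\epsilon(t):=\epsilon^{-1}\psi(t/\epsilon)$, and define the directional convolution
\begin{align*}
(f\star_\nu\psi_\epsilon)(x):=\int_\R f(x-t\nu)\psi_\epsilon(t)\,dt.
\end{align*}
This operation commutes with every constant-coefficient differential operator, so it preserves $\operatorname{curl}U=0$ and $\operatorname{curl}\operatorname{curl}U=0$. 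The three directional analogues of \cref{eq:DiffToSharp_Young}--\cref{eq:DiffToSharp_ConvRate} that I would first establish read: (i) $\|\partial_\nu(f\star_\nu\psi_\epsilon)\|_{L^r}\le\epsilon^{-1}\|\psi'\|_{L^1}\|f\|_{L^r}$, (ii) $\|\partial_\nu(U\star_\nu\psi_\epsilon)\|_{L^1}\le\|D_\nu U\|_{TV(\R^d)}$, and (iii) $\|\chi\star_\nu\psi_\epsilon-\chi\|_{L^1}\le C\epsilon\|D_\nu\chi\|_{TV(\R^d)}$; the last follows by slicing in direction $\nu$ via \cref{eq:AnisoBV_slicing} combined with the one-dimensional fundamental theorem for $BV$.

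Setting $\hat U_\epsilon:=U\star_\nu\psi_\epsilon$, the energy estimate would be derived verbatim from Step 1 of \cref{lem:DiffToSharp_Mollified}. For the elastic part, the split $\hat U_\epsilon-\chi=(U-\chi)\star_\nu\psi_\epsilon+(\chi\star_\nu\psi_\epsilon-\chi)$, Young's convolution inequality, item (iii), and the pointwise bound $\|\chi\star_\nu\psi_\epsilon-\chi\|_{L^\infty}\le 2\|\chi\|_{L^\infty}$ yield $\int|\hat U_\epsilon-\chi|^p\,dx\le C(\int|U-\chi|^p\,dx+\epsilon\|\chi\|_{L^\infty}^{p-1}\|D_\nu\chi\|_{TV})$. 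For the surface part, interpolating $\int|\partial_\nu\hat U_\epsilon|^q\le\|\partial_\nu\hat U_\epsilon\|_{L^\infty}^{q-1}\|\partial_\nu\hat U_\epsilon\|_{L^1}$ and applying (i) to the $L^\infty$-factor with $f=U$ and (ii) to the $L^1$-factor with $f=\chi$ and $f=U-\chi$ produces $\epsilon^{1-q}\|U\|_{L^\infty}^{q-1}(\|D_\nu U-D_\nu\chi\|_{TV}+\|D_\nu\chi\|_{TV})$.

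The main technical obstacle lies in the boundary-adjustment step enforcing $U_\epsilon=0$ outside $\Omega$. The isotropic rescaling $V^{(\epsilon)}(x):=V(x/(1-2\epsilon))$ used in \cref{lem:DiffToSharp_Mollified} requires $\|V^{(\epsilon)}-V\|_{L^1}\le C\epsilon\|DV\|_{TV}$, which is unavailable here because only the directional BV-norm exists and the shift $x/(1-2\epsilon)-x$ is not parallel to $\nu$. I would replace it with the anisotropic-in-$\nu$ dilation $\tilde T_\lambda(x):=\Pi_{\nu^\perp}x+\lambda(x\cdot\nu)\nu$: the homothety calculation $\frac{d}{d\lambda}V(\tilde T_{1/\lambda}(x))=-\lambda^{-2}(x\cdot\nu)(\partial_\nu V)(\tilde T_{1/\lambda}(x))$ only involves the $\nu$-derivative, so integrating over $\lambda\in[1-2\epsilon,1]$ and performing the change of variables $y=\tilde T_{1/\lambda}(x)$ (with Jacobian $\lambda$) gives the desired bound $\|V\circ\tilde T_{1/(1-2\epsilon)}-V\|_{L^1}\le C\epsilon\,\textup{diam}(\Omega)\|D_\nu V\|_{TV(\R^d)}$. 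Since the rescaled support $\tilde T_{1-2\epsilon}(\Omega')$ may protrude from the mollification-safe set $\Omega'^{-\epsilon\nu}=\prod_i(-1/2+\epsilon|\nu_i|,1/2-\epsilon|\nu_i|)$, I would supplement this by a cutoff of width $O(\epsilon)$ near $\partial\Omega$; the cutoff commutator is pointwise of order $\|U\|_{L^\infty}$ on a set of measure $O(\epsilon\Per(\Omega))$, contributing an error of order $\epsilon\|U\|_{L^\infty}^p\Per(\Omega)$ which is absorbed into the $\epsilon\max\{1,\|U\|_{L^\infty}^{p-1},\|U\|_{L^\infty}^{q-1}\}\Per(\Omega)$-term of the claimed bound. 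To preserve the $\operatorname{curl}$ or $\operatorname{curl}\operatorname{curl}$ constraint through the cutoff, one passes via the associated potential $u$ with $u=0$ outside $\Omega$ (the additive constant being fixed by the connectedness of $\R^d\setminus\Omega$) and uses the Hardy-type estimate $|u(x)|\le\|U\|_{L^\infty}\textup{dist}(x,\partial\Omega)$ to tame the cutoff commutator.
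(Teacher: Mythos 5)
Your directional-mollification step reproduces the paper's argument faithfully, and your diagnosis that the isotropic rescaling $V^{(\epsilon)}(x)=V(x/(1-2\epsilon))$ from \cref{lem:DiffToSharp_Mollified} requires the full-gradient estimate $\|V^{(\epsilon)}-V\|_{L^1}\lesssim\epsilon\|DV\|_{TV}$, unavailable when only $D_\nu V$ is controlled, is correct and sharper than the paper's one-line ``done by the same methods'' (a compactly supported smooth $V$ depending on a coordinate orthogonal to $\nu$ has $D_\nu V=0$ yet $V^{(\epsilon)}\neq V$). The line-integral computation bounding $\|V\circ\tilde T_{1/(1-2\epsilon)}-V\|_{L^1}$ in terms of $\epsilon\,\diam(\Omega)\|D_\nu V\|_{TV}$ is also correct.

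However, the anisotropic dilation you introduce creates a problem that you do not flag: $V\circ\tilde T_\lambda$ is \emph{not} curl-free. If $V=\nabla v$, then since $D\tilde T_\lambda=I+(\lambda-1)\nu\otimes\nu$ is constant and symmetric, $\nabla(v\circ\tilde T_\lambda)=D\tilde T_\lambda(V\circ\tilde T_\lambda)$, so in $d=2$ a direct computation gives
$\operatorname{curl}(V\circ\tilde T_\lambda)=(\lambda-1)\big[(\nu_1^2-\nu_2^2)\p_2V_1+\nu_1\nu_2(\p_2V_2-\p_1V_1)\big]\circ\tilde T_\lambda$,
which is generically nonzero (even for axis-aligned $\nu$, e.g.\ $\nu=e_1$ leaves $(\lambda-1)\p_2V_1\circ\tilde T_\lambda$). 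You pass to the potential $u$ only when discussing the cutoff, but the constraint is already violated at the dilation step. The tension is precisely that the isotropic rescaling preserves the constraint but needs full $BV$, while the anisotropic one is compatible with $BV_\nu$ but breaks the constraint; to close the argument one must run the entire construction at the level of the potential (setting $u_\epsilon:=\eta\cdot((u\circ\tilde T_{1/(1-2\epsilon)})\star_\nu\psi_\epsilon)$ and $U_\epsilon:=\nabla u_\epsilon$, and analogously with the symmetric gradient for $\operatorname{curl}\operatorname{curl}U=0$), which brings in an $O(\epsilon)\nu\otimes\nu$-correction to $U_\epsilon$ that needs its own absorbable-but-not-negligible error tracking. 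A smaller loose end is your cutoff-commutator error $\sim\epsilon\|U\|_{L^\infty}^p\Per(\Omega)$, which exceeds the claimed $\epsilon\max\{1,\|U\|_{L^\infty}^{p-1},\|U\|_{L^\infty}^{q-1}\}\Per(\Omega)$ whenever $\|U\|_{L^\infty}>1$ and $p>q-1$, so that exponent bookkeeping does not yet close.
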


\begin{proof}
    For given $U \in L^p(\R^d;\R^{n})$, we define $U_\epsilon(x) = \int_{\R} U(x-t\nu) \psi_\epsilon(t)dt$ with a one-dimensional mollifier $\psi_\epsilon$. That is, by $U_\epsilon$ we denote the function which is obtained by mollifying $U$ in direction $\nu$ on a scale $\epsilon$; we omit the $\nu$ dependence in the notation.
    Due to the fact that $(U_\epsilon)^\nu_y(t) = (U^\nu_y \ast \psi_\epsilon)(t)$, using the notation introduced in \cref{sec:prelim_directBV}, all the required estimates \cref{eq:DiffToSharp_Young,eq:DiffToSharp_Gradient,eq:DiffToSharp_ConvRate} have their respective analogous inequalities:
    \begin{align*}
        \Vert \p_\nu U_\epsilon \Vert_{L^r} &\leq \epsilon^{-1} \Vert \psi' \Vert_{L^1(\R)} \Vert U \Vert_{L^r}, \\
        \Vert \p_\nu U_\epsilon \Vert_{L^1} & \leq \Vert D_\nu U \Vert_{TV(\R^d)}, \\
        \Vert U_\epsilon - U \Vert_{L^1} & \leq C \epsilon \Vert D_\nu U \Vert_{TV(\R^d)}.
    \end{align*}
    Hence, the proof works as that of \cref{lem:DiffToSharp_Mollified}.  In particular, the changes required to not change the exterior data outside $\Omega$ are done by the same methods.

    As the function $U_\epsilon$ is essentially defined via a convolution and scaling, the differential constraint of the form $\operatorname{curl} U_\epsilon = 0$ or $\operatorname{curl curl} U_\epsilon = 0$ is still fulfilled in $\R^d$.
\end{proof}

\begin{rmk}
    We want to conclude this general analysis with two remarks.
    First, the same results as in \cref{sec:diffuse1,sec:diffuse2} hold if we consider periodic functions instead of prescribing the exterior data outside of $\Omega$.
    Second, we could also consider other differential constraints besides $\operatorname{curl} U = 0$ or $\operatorname{curl} \operatorname{curl} U = 0$ in $\R^d$.
    In general, in \cref{lem:DiffToSharp_Mollified,cor:DiffToSharp_Aniso_Upper} we can preserve the constraint $L(D) U = 0$ in $\R^d$ for any linear, homogeneous, constant coefficient differential operator $L(D)$, e.g. we could also consider $\di U = 0$ in $\R^d$.
    The role of the divergence operator for lower scaling bounds was discussed in \cite{RRT23}.
\end{rmk}

\subsection{Applications -- proof of \cref{cor:Lp_3wells}}
We now use the derived comparison results of the diffuse and sharp interface models to show \cref{cor:Lp_3wells}.
\begin{proof}[Proof of \cref{cor:Lp_3wells}]
Let us begin by recalling the upper bound constructions from \cref{sec:L1AnisotropicSurf} for \cref{thm:L1_3wells} for $U = \nabla u$. We recall that, as highlighted in \cref{eq:PropertiesBranching} and in the proofs of \cref{prop:L1_2wells,thm:L1_3wells}, we always have
\begin{align*}
    \Vert DU - D \chi \Vert_{TV(\Omega)} &\leq C(\K,F) (\Vert D \chi \Vert_{TV(\Omega)} + \Per(\Omega)), \\
    \Vert U \Vert_{L^\infty} &\leq C(\K,F).
\end{align*}
Thus, considering the functions $u_\epsilon,\chi_\epsilon$ defined in the respective upper bound, we use \cref{thm:comparison,cor:DiffToSharp_Aniso_Upper} (with $U = \nabla u_\epsilon$, $\chi = \chi_\epsilon$) to derive the scaling bounds.
Indeed, we obtain that
\begin{align*}
    & \quad c \inf_{\tilde{\chi}  \in BV_\nu(\Omega;\K_3)} \inf_{u \in \mathcal{A}_F}\left( \int_\Omega |\nabla u - \tilde{\chi}|^2 dx + \epsilon \Vert D_{\nu} \tilde\chi \Vert_{TV(\Omega)} \right) \\
    &\leq \inf_{\chi \in L^2(\Omega;\K_3)} \inf_{\substack{u \in \mathcal{A}_F \\ \p_\nu \nabla u \in L^{q}(\Omega;\R^{2 \times 2})}} \left( \int_\Omega |\nabla u - \chi|^2 + \epsilon^q |\p_\nu \nabla u|^q dx \right) \\
    & \leq C \left( \int_{\Omega} |\nabla u_\epsilon - \chi_\epsilon|^2 dx + \epsilon \Vert D_\nu \chi_{\epsilon} \Vert_{TV(\Omega)} + \epsilon \Per(\Omega) + \epsilon \Vert D_\nu\nabla u_{\epsilon} - D_\nu \chi_{\epsilon} \Vert_{TV(\Omega)} \right) \\
    & \leq C \left( \int_{\Omega} |\nabla u_{\epsilon} - \chi_{\epsilon}|^2 dx + \epsilon \Vert D_\nu \chi_{\epsilon} \Vert_{TV(\Omega)} + \epsilon \Per(\Omega) \right),
\end{align*}
where $C$ may depend on $\mathcal{K}$ and $F$.
In particular, after an absorption of the perimeter term, both the upper and lower bounds from \cref{cor:Lp_3wells} are proven by inserting the upper and lower bounds for the associated sharp interface models.
\end{proof}

\section{Discrete models and anisotropic surface energies}
\label{sec:discrete}
As a final prototypical example, we complement our discussion on possible regularizations by now considering the case of discrete models.
Indeed, discrete systems prevent oscillations at scales smaller than the grid size of the lattice.
As already known in the literature (see e.g.\ \cite{CM99,L09}), this produces a regularization effect comparable to the addition of a singular surface energy term in the continuous case. In our discussion, we recover these results in our model settings. Our main focus and novelty here, however, is on the analysis of anisotropic situations.
In this context, the orientation of the lattice naturally introduces an anisotropy which may affect the scaling depending on the geometry of $\K$.
This effect can be tracked into its continuous counterpart. In particular, this will allow us to invoke the scaling results from the previous sections.

\subsection{Quantitative surface penalization in discrete models}
In this section we consider discrete energies in the sense that, for $h \in (0,1)$ and $R \in SO(2)$, we fix a triangulation $\mathcal{T}_h^R$ of $\Omega$ on the scale $h$ and assume that $\nabla u$ and $\chi$ are constant on the triangles $\tau \in \mathcal{T}_h^R$.
By doing this we hence rule out -- and, in particular, ``penalize" -- oscillations on a scale finer than $h$.

For this reason, in certain instances, this discrete energy can be bounded from below by the continuous elastic energy contribution singularly perturbed by a sharp (anisotropic) surface penalization of the form given in \cref{eq:L1_energy}.

To observe this, we will make the energy contribution of three adjacent triangles explicit, and note that when a change of phase occurs, the ``middle" triangle $T_h'$ has to pay elastic energy, giving rise to a contribution which resembles the surface energy $\Vert D_\nu \chi \Vert_{TV(\Omega)}$.
We recall the notation for the lattice structures from \cref{eq:lattice}, \cref{eq:Triangulation}, the admissible deformations from \cref{eq:admissible_discrete} and the discrete energy from \cref{eq:energy_discrete}. We begin by deducing ``interior'' estimates. In the subsequent result, we will also incorporate associated boundary conditions.

In what follows, we focus on the anisotropic setting, more precisely, on the case in which we assume that there is a compatible direction of the wells which is perpendicular to one of the sides of the triangles. We allow for rank-one connections which are in exactly one of the directions $R e_1$, $R e_2$, or $R(e_1+e_2)$ for a fixed matrix $R\in SO(2)$. We note that this cannot yield a full surface penalization for the associated continuum model but will give rise to an anisotropic surface penalization.

\begin{lem}
\label{lem:DiscreteToSharp-aniso}
    Let $\K = \{A_1, A_2, \dots, A_N\} \subset \R^{2 \times 2}$. Let $T_h, T_h'$ denote the triangles from \cref{eq:lattice}. Assume that there exists $v \in \{e_1, e_2, 2^{-1/2}(e_1+e_2)\}$  such that for any $A_j, A_k \in \K $ with $j \neq k$ it holds
    \begin{align}
    \label{eq:assumption-wells-deg-lem}
        (Rw) \times [A_j - A_k] \neq 0, \quad \text{for every } w\in\{e_1,e_2,2^{-1/2}(e_1+e_2)\}\setminus\{v\}.
    \end{align}
    Let $p \in [1,\infty)$, $z \in h \Z^2$ and consider
    \begin{align}\label{eq:DefLargeTriangles}
        \Omega_j := 
        (RT_h \cup RT_h' \cup (RT_h+hRe_j)) + Rz,
    \end{align}
    for $j = 1,2$, illustrated in \cref{fig:UnionTriangles}.
    Then there is a constant $C = C(\K,R,p,v) > 0$ such that for any $u \in W^{1,p}(\Omega_1 \cup  \Omega_2;\R^2)$ and any $\chi \in L^\infty(\Omega_1 \cup \Omega_2;\K)$ which are affine or, respectively, constant on the triangles $R(T_h+z),R(T_h'+z),R(T_h+z+he_1),R(T_h+z+he_2)$,  it holds
    \begin{align*}
        \int_{\Omega_1 \cup \Omega_2} |\nabla u - \chi|^p dx \geq C h \Vert D_{R\nu} \chi \Vert_{TV(\Omega_1 \cup \Omega_2)},
    \end{align*}
    where $\nu \in \S^1$ is such that $\nu \cdot v = 0$.

    Moreover, defining
    \begin{align*}
    \Omega_j' := (RT_h \cup RT_h' \cup (RT_h'-hRe_j)) + Rz,
    \end{align*}
    for $j=1,2$, the same result holds with $\Omega_j'$ in place of $\Omega_j$.
\end{lem}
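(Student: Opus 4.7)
The plan is to reduce the estimate to a pair-wise analysis on adjacent triangles, then sum with a simple counting argument. Consider two triangles $\tau_1,\tau_2\subset\Omega_1\cup\Omega_2$ sharing an interior edge with unit normal $n=Rw$, where $w\in\{e_1,e_2,2^{-1/2}(e_1+e_2)\}$. Since $u\in W^{1,p}$ is affine on each triangle, continuity across $\Gamma$ forces the gradient jump to be rank-one: $\nabla u|_{\tau_1}-\nabla u|_{\tau_2}=a\otimes n$ for some $a\in\R^2$. Writing $G_i:=\nabla u|_{\tau_i}$ and $A_{k_i}:=\chi|_{\tau_i}$, the identity
\begin{equation*}
A_{k_1}-A_{k_2} = a\otimes n - \bigl[(G_1-A_{k_1})-(G_2-A_{k_2})\bigr]
\end{equation*}
yields $|G_1-A_{k_1}|+|G_2-A_{k_2}|\ge\dist(A_{k_1}-A_{k_2},\{b\otimes n:b\in\R^2\})$. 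Under the hypothesis \cref{eq:assumption-wells-deg-lem}, whenever $w\neq v$ and $A_{k_1}\neq A_{k_2}$, the non-degeneracy $(Rw)\times(A_{k_1}-A_{k_2})\neq 0$ combined with the finiteness of $\K$ produces a uniform bound $\dist(A_{k_1}-A_{k_2},\{b\otimes Rw\})\ge c_0=c_0(\K,R,v)>0$. Combined with the convexity inequality $|x|^p+|y|^p\ge 2^{1-p}(|x|+|y|)^p$ and multiplication by the triangle area $h^2/2$, I will conclude the pair-wise bound
\begin{equation*}
\int_{\tau_1\cup\tau_2}|\nabla u-\chi|^p\,dx\ge c_1 h^2,\qquad c_1=c_1(\K,R,p,v)>0,
\end{equation*}
valid whenever $\chi|_{\tau_1}\neq\chi|_{\tau_2}$ and the shared edge has \emph{forbidden} normal, meaning $n=Rw$ with $w\neq v$.

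Next, I would enumerate the (at most) three interior edges of $\Omega_1\cup\Omega_2$: after rotation by $R$, these have normals $R(e_1+e_2)/\sqrt{2}$ (the diagonal shared by the two base triangles) and $Re_1$, $Re_2$ (the two catheti attaching $\tau_3:=R(T_h+z+he_1)$ and $\tau_4:=R(T_h+z+he_2)$ to the upper triangle). A direct case analysis on $v\in\{e_1,e_2,2^{-1/2}(e_1+e_2)\}$ shows that the two \emph{forbidden} normals are precisely those edges that contribute to $\Vert D_{R\nu}\chi\Vert_{TV(\Omega_1\cup\Omega_2)}$; the unique allowed edge, with normal $Rv$, contributes $0$ because $Rv\cdot R\nu=v\cdot\nu=0$. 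Since every edge has length at most $h\sqrt{2}$, each individual TV contribution is bounded above by $2\diam(\K)\cdot h$.

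Finally, denoting by $\mathcal{J}$ the set of forbidden interior edges across which $\chi$ actually jumps, the pair-wise estimate gives $c_1 h^2\ge c_2 h\cdot(\text{TV contribution of }e)$ for each $e\in\mathcal{J}$, and hence
\begin{equation*}
\sum_{e\in\mathcal{J}}\int_{\tau_1(e)\cup\tau_2(e)}|\nabla u-\chi|^p\,dx\ge c_2 h\,\Vert D_{R\nu}\chi\Vert_{TV(\Omega_1\cup\Omega_2)}.
\end{equation*}
As each triangle in $\Omega_1\cup\Omega_2$ lies on at most three interior edges (the middle triangle $\tau_2$ achieves this maximum), the left-hand sides over-count the total elastic energy by at most a factor of three, and dividing by $3$ concludes the argument. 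The main subtlety lies precisely in these last two steps: one must confirm that the allowed edge contributes \emph{nothing} to $D_{R\nu}\chi$ — which is the very reason for choosing $\nu\perp v$ — and that the combinatorial overlap stays bounded, which follows from the fixed local geometry of the triangulation. The mirror configuration $\Omega_j'$ is handled identically, as reflecting the triangulation yields the same three admissible interface normals.
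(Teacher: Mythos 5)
Your proposal is correct, and it takes a genuinely different organizational route from the paper's proof. The paper works pointwise: it fixes reference points $x_0\in T_h$, $x_1\in T_h'$, $x_0+he_j$, writes out the tangential-continuity relations across the three interior edges as explicit identities for $\partial_1 u$, $\partial_2 u$ at these points, and then performs a single intertwined computation showing that the elastic energy concentrated on the \emph{middle} triangle $T_h'$ dominates the $\chi$-jumps across all three edges at once, at the cost of a negative error term supported on the outer triangles that has to be absorbed at the end. You instead invoke the Hadamard jump condition $\nabla u|_{\tau_1}-\nabla u|_{\tau_2}=a\otimes n$ across each interior edge abstractly (this is equivalent to the paper's tangential-continuity relations, just packaged more cleanly), derive the pair-wise energy lower bound $\int_{\tau_1\cup\tau_2}|\nabla u-\chi|^p\geq c_1 h^2$ from the uniform distance bound $\dist(A_{k_1}-A_{k_2},\{b\otimes Rw\})\ge c_0$ guaranteed by \cref{eq:assumption-wells-deg-lem} and finiteness of $\K$, and conclude by a straight counting/overlap argument. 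This avoids the absorption step entirely, makes the role of the rank-one incompatibility transparent, and scales more easily to other patch geometries. The one point worth making fully explicit in a write-up is the verification that for every choice of $v$, the two forbidden normals $w\neq v$ satisfy $\nu\cdot w\neq 0$, so that those edges are exactly the ones producing a nonzero contribution to $\|D_{R\nu}\chi\|_{TV}$ while the allowed edge drops out because $\nu\cdot v=0$; you mention this but assert it, and a short case check ($v=e_1\Rightarrow\nu=\pm e_2$, etc.) closes the loop. Also note that the diagonal edge has length $h\sqrt 2$ versus $h$ for the catheti, which you correctly absorb into the constant.
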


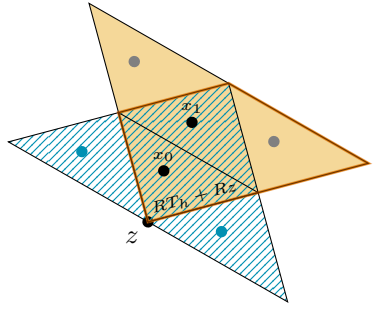
\begin{figure}
    \centering
    \begin{tikzpicture}
    \begin{scope}[rotate=15, scale=1.5]
        \fill[color = Orange, opacity = 0.4] (0,0) -- (2,0) -- (0,2) -- cycle;
        \fill (0,0) circle (0.05) node[below left] {$z$};
        \fill[pattern = north east lines, pattern color = Blue] (-1,1) -- (1,1) -- (1,-1) -- cycle;

        \draw[thick, Orange] (0,0) -- (0,1) -- (1,1) -- (2,0) -- cycle;

        \draw (0,0) -- (2,0) -- (0,2) -- cycle;
        \draw (1,0) -- (1,1) -- (0,1) -- cycle;

        \draw (0,1) -- (-1,1) -- (0,0) -- cycle;
        \draw (1,0) -- (1,-1) -- (0,0) -- cycle;

        \draw[very thick, Red, opacity = 0.5] (0,0) -- (0,1) -- (1,1) -- (2,0) -- cycle;

        \node at (0.45,0.1) [, rotate = 15] {\tiny$RT_h + Rz$};
        \fill (0.25,0.4) circle (0.05) node[above] {\tiny$x_0$};
        \fill (0.6,0.75) circle (0.05) node[above] {\tiny$x_1$};
        \fill[gray] (1.25,0.4) circle (0.05);
        \fill[gray] (0.25,1.4) circle (0.05);
        \fill[Blue] (-0.4,0.75) circle (0.05);
        \fill[Blue] (0.6,-0.25) circle (0.05);
    \end{scope}
    \end{tikzpicture}
    \caption{Illustration of the sets in \cref{lem:DiscreteToSharp-aniso} for a rotation $R$ by $15^\circ$. The set $\Omega_1 \cup \Omega_2$ is filled in orange. The fixed points $x_0$, $x_1$ are marked in black and their translations $x_0 + h e_j$ are shown in gray. The set $\Omega_1$ is highlighted with a red border.
    The corresponding setting for $\Omega_1' \cup \Omega_2'$ is shown in blue.}
    \label{fig:UnionTriangles}
\end{figure}

\begin{proof}
    Without loss of generality, we can assume $z = 0$.
    We also start by considering $R=Id$; the general case will be reduced to this one below.
    We fix $x_0 \in T_h$ and $x_1 \in T_h'$ (see \cref{fig:triangulation}).
    For simplicity of exposition 
    {we prove the estimate for $\Omega_1 \cup \Omega_2$ only,} with the estimate on $\Omega_1' \cup \Omega_2'$ being completely analogous.

\emph{Step 1: The case of $R= \operatorname{Id}$.}
    We first quantify the energy contribution of a possible change of phase in $\Omega_j$.
    Its main contribution concentrates on $T_h'$ and amounts to
    \begin{align}\label{eq:trinagle1}
    \begin{split}
        \int_{T_h'} |\nabla u - \chi|^p dx &\geq c h^2 \big| e_j \times (\chi(x_0+he_j) - \chi(x_1)) \big| - 2^{p} \int_{T_h \cup (T_h + h e_j)} |\nabla u - \chi|^p dx,
    \end{split}
    \end{align}
    for some $c=c(\K,p)>0$.
    To prove \cref{eq:trinagle1}, we first work on $\Omega_1$, i.e. with $j=1$. The estimate for $\Omega_2$ follows by switching the roles of the coordinates.
    Since $u$ is affine on each triangle in $\Omega_1\cup \Omega_2$, by tangential continuity along the edges of $T_{h}'$ it holds that
    \begin{align*}
        \p_1 u (x_1)= \p_1 u (x_0+h e_2), \ \p_2 u (x_1)= \p_2 u (x_0+h e_1), \ \p_1 u (x_1)- \p_2 u (x_1) = \p_1 u (x_0)- \p_2 u (x_0),
    \end{align*}
    and, hence,
    \begin{align}\label{eq:triangle2}
    \begin{split}
        \p_1 u (x_1) & =  \p_2 u(x_0+he_1) + \p_1 u(x_0) - \p_2 u(x_0) = \p_1 u(x_0+h e_2), \\
        \p_2 u (x_1) &= \p_2 u(x_0+he_1) = \p_1 u(x_0+he_2) + \p_2 u(x_0) - \p_1u(x_0).
    \end{split}
    \end{align}
    By exploiting the fact that $A^\frac{p}{2}+B^\frac{p}{2}\le\sqrt{2}(A+B)^\frac{p}{2}$ for $A,B\ge0$, relation \cref{eq:triangle2},
    by Jensen's inequality (in the form $|A-B|^p-2^{p-1}|B|^p\leq 2^{p-1}|A|^p$), and
    writing $\chi_j = \chi e_j$ for the $j$th column of $\chi$, we obtain
    \begin{align*}
        &\int_{T_h'} |\nabla u - \chi|^p dx \\
        & \quad = \frac{h^2}{2} \left( | \p_2 u(x_0+he_1) + \p_1u(x_0) - \p_2 u(x_0) - \chi_1(x_1)|^2 + |\p_2 u(x_0+he_1)-\chi_2(x_1)|^2 \right)^{\frac{p}{2}} \\
        & \quad \geq \frac{h^2}{2} \frac{1}{2^p\sqrt{2}} \left( |\p_1 u(x_0) - \p_2 u(x_0) - (\chi_1(x_1) - \chi_2(x_1)) |^p + |\p_2 u(x_0+he_1) - \chi_2(x_1)|^p \right)\\
        & \quad \geq \frac{h^2}{2} \frac{1}{2^p\sqrt{2}} \Big( 2^{1-p} |\chi_1(x_0) - \chi_1(x_1) - (\chi_2(x_0) - \chi_2(x_1))|^p +2^{1-p} |\chi_2(x_0+he_1) - \chi_2(x_1)|^p \\
        & \qquad - |\p_1 u(x_0) - \p_2 u(x_0) - (\chi_1(x_0) - \chi_2(x_0))|^p  - |\p_2 u(x_0+h e_1) - \chi_2(x_0+he_1)|^p \Big).
    \end{align*}
    Let $C_0 = \min\big\{|w \times(A_j-A_k)| : j\neq k, w\in\{e_1,e_2,2^{-1/2}\begin{psmallmatrix}1\\1\end{psmallmatrix}\} \setminus \{v\}\big\} > 0$ which is strictly positive thanks to \cref{eq:assumption-wells-deg-lem}.
    For simplicity of exposition, in the next two estimates we assume $v=2^{-1/2}(e_1+e_2)$. Hence, we deduce
    \begin{align*}
       & \int_{T_h'} |\nabla u - \chi|^p dx \\
       &\quad\geq 2^{-2p-1} h^2 \Big( | \begin{psmallmatrix} 1 \\ 1 \end{psmallmatrix} \times (\chi(x_0) - \chi(x_1))|^p + |e_1 \times (\chi(x_0+he_1) - \chi(x_1))|^p \Big) \\
        & \qquad - \frac{h^2}{2} \Big( |\p_1 u(x_0) - \p_2 u(x_0) - (\chi_1(x_0) - \chi_2(x_0))|^p  + |\p_2 u(x_0+h e_1) - \chi_2(x_0+he_1)|^p \Big) \\
        & \quad\geq 2^{-2p-1} h^2 \Big( | \begin{psmallmatrix} 1 \\ 1 \end{psmallmatrix} \times (\chi(x_0) - \chi(x_1))|^p + |e_1 \times (\chi(x_0+he_1) - \chi(x_1))|^p \Big) \\
        & \qquad - \frac{h^2}{2} \Big( 2^p|\nabla u(x_0) - \chi(x_0)|^p  + |\nabla u(x_0+h e_1) - \chi(x_0+he_1)|^p \Big) \\
        & \quad\geq 2^{-2p-1} C_0^{p-1} h^2  |e_1 \times (\chi(x_0+he_1)-\chi(x_1))| - 2^{p} \int_{T_h \cup T_h+he_1} |\nabla u - \chi|^p dx.
    \end{align*}
    Here, we used $\begin{psmallmatrix} 1 \\ 1 \end{psmallmatrix} \times \nabla u= \begin{psmallmatrix} -1 \\ 1 \end{psmallmatrix} \cdot \nabla u= \p_2 u - \p_1 u$ to denote the tangential components for the edges with normal $\begin{psmallmatrix} 1 \\ 1 \end{psmallmatrix}$, see the notation introduced in \cref{sec:intro_Tartar}.
    This proves \cref{eq:trinagle1} with $c =2^{-2p-1} C_0^{p-1} > 0$ depending only on $\K$, $v$, and $p$.

    We now combine \cref{eq:trinagle1} with the analogous inequality with $e_1$ and $e_2$ switched.
    We thus have
    \begin{align*}
    & \quad \int_{\Omega_1 \cup \Omega_2} |\nabla u - \chi|^p dx \\
    &  \geq \int_{T_h \cup ( T_h + h e_1) \cup (T_h + h e_2) } |\nabla u - \chi|^p dx + 2^{-p-1}\int_{T_h'} |\nabla u - \chi|^p dx + 2^{-p-1}\int_{T_h'} |\nabla u - \chi|^p dx\\
    &  \geq c h^2 2^{-p-1} \sum_{j=1}^2 \Big|e_j \times [\chi(x_0+he_j)-\chi(x_1)]\Big| \\
    & \quad + (1-2^{-1}-2^{-1}) \int_{T_h} |\nabla u - \chi|^{p} dx + (1-2^{-1}) \int_{(T_h + h e_1) \cup (T_h + h e_2)} |\nabla u - \chi|^{p} dx \\
    & \geq c 2^{-p-1} h^2 \sum_{j=1}^2 \Big| e_j \times [\chi(x_0+he_j) - \chi(x_1)] \Big|.
    \end{align*}

    For general $v \in \{e_1, e_2, 2^{-1/2}(e_1+e_2)\}$ we follow the same arguments as above and get
    \begin{align}\label{eq:lb-bigt}
    \begin{split}
        \int_{\Omega_1 \cup \Omega_2} |\nabla u - \chi|^p dx &\geq c 2^{-p-1} h^2 \Big( \sum_{j=1}^2 \Big| e_j \times [\chi(x_0+he_{j})- \chi(x_1)] \Big| + \Big| \begin{pmatrix}
           1/\sqrt{2} \\ 1/\sqrt{2}
        \end{pmatrix} \times [\chi(x_0) - \chi(x_1)] \Big| \\
        & \quad - \Big| v \times [\chi(x_v)-\chi(x_1)] \Big| \Big),
        \end{split}
    \end{align}
    where $x_v=x_0$ if $v=2^{-1/2}(e_1+e_2)$, $x_v=x_0+he_j$ if $v=e_j$ 
    , such that exactly the term with $v$ cancels.

    To relate this expression to the desired surface energy, we note that for $\nu \in \S^1$ such that $\nu \cdot v = 0$, it follows that
    \begin{align*}
        \Vert D_\nu \chi \Vert_{TV(\Omega_1 \cup \Omega_2)} & = h | \nu \cdot 2^{-1/2}(e_1 + e_2)| \ |\chi(x_0) - \chi(x_1)| + \sum_{j=1}^2 h |\nu \cdot e_j| \ |\chi(x_1)-\chi(x_0+he_j)| \\
        & \leq C h \Big( |\chi(x_0)-\chi(x_1)| + \sum_{j=1}^2 |\chi(x_1)-\chi(x_0+he_j)| - |\chi(x_1) - \chi(x_v)|\Big).
    \end{align*}
    From \cref{eq:lb-bigt}, exploiting the boundedness of $\chi$, we can also infer that
    \begin{align*}
        \int_{\Omega_1 \cup \Omega_2} |\nabla u - \chi|^p dx & \geq c 2^{-p-1} \frac{h^2}{\diam(\K)} \Big( \Big| \begin{pmatrix} 1/\sqrt{2} \\ 1/\sqrt{2} \end{pmatrix} \times [\chi(x_0) - \chi(x_1)] \Big| |\chi(x_0)- \chi(x_1)|  \\
        & \quad  + \sum_{j=1}^2 \Big| e_j \times [\chi(x_0+he_j) - \chi(x_1)] \Big| |\chi(x_0+he_j) -\chi(x_1)| \\
        & \quad - \Big| v \times [\chi(x_1) - \chi(x_v)] \Big| |\chi(x_1) - \chi(x_v)| \Big).
    \end{align*}
    Hence, gathering the two inequalities above and again by \cref{eq:assumption-wells-deg-lem}
    for a constant $C = C(\K,p,v) > 0$ we get
    \begin{align*}
        \int_{\Omega_1 \cup \Omega_2} |\nabla u - \chi|^p dx &\geq C h \left( h|\chi(x_0)-\chi(x_1)| + \sum_{j=1}^2 h |\chi(x_0+he_j)-\chi(x_1)| - h |\chi(x_1)-\chi(x_v)|\right) \\
        & \geq C h \Vert D_\nu \chi \Vert_{TV(\Omega_1 \cup \Omega_2)}.
    \end{align*}

Step 2: General rotations $R \in SO(2)$.
    Now we turn to the general (rotated) case.
    For given $R \in SO(2)$, consider $\tilde{\Omega}_j = R^T \Omega_j = (T_h \cup T_h' \cup (T_h + h e_j)) + z$ and the functions
    \begin{align*}
        \tilde{u}&: \tilde{\Omega}_1 \cup \tilde{\Omega}_2 \to \R^2, & \tilde{u}(x) &= u(Rx), \\
        \tilde{\chi}&: \tilde{\Omega}_1 \cup \tilde{\Omega}_2 \to \K R, & \tilde{\chi}(x) &= \chi(Rx) R.
    \end{align*}
    For those functions it holds that $\nabla \tilde{u}(x) = \nabla u (Rx) R$, and thus
    \begin{align*}
        \int_{\Omega_1 \cup \Omega_2} |\nabla u - \chi|^p dx &= \int_{R^T(\Omega_1 \cup \Omega_2)} |\nabla u (Rx) - \chi(Rx)|^p dx = \int_{\tilde{\Omega}_1 \cup \tilde{\Omega}_2} |\nabla \tilde{u} - \tilde{\chi}|^p dx, \\
        \Vert D_{R \nu} \chi \Vert_{TV(\Omega_1 \cup \Omega_2)} & = \Vert D_\nu \tilde{\chi} \Vert_{TV(\tilde{\Omega}_1 \cup \tilde{\Omega}_2)}.
    \end{align*}
    For the wells $A_j,A_k \in \K$, $j \neq k$, from \cref{eq:assumption-wells-deg-lem} it holds
    \begin{align*}
        e_j \times (A_j R - A_k R) = (Re_j) \times (A_j-A_k), \ \begin{pmatrix} 1 \\ 1 \end{pmatrix} \times (A_j R - A_k R) = (R \begin{pmatrix} 1 \\ 1 \end{pmatrix}) \times (A_j - A_k).
    \end{align*}
    Therefore the set $\tilde{\K} = \K R$ fulfils the compatibility conditions \cref{eq:assumption-wells-deg-lem} for $R = \operatorname{Id}$.
    Hence, the claim follows by applying the argument for $R = \operatorname{Id}$ to the functions $\tilde{u}$, $\tilde{\chi}$, and thus
    \begin{align*}
        \int_{\Omega_1 \cup \Omega_2} |\nabla u - \chi|^p dx &= \int_{\tilde{\Omega}_1 \cup \tilde{\Omega}_2} |\nabla \tilde{u} - \tilde{\chi}|^p dx \geq C(\tilde{\K},p,v) h \Vert D_\nu \tilde{\chi} \Vert_{TV(\tilde{\Omega}_1 \cup \tilde{\Omega}_2)} \\
        & = C(\K,R,p,v) h \Vert D_{R \nu} \chi \Vert_{TV(\Omega_1 \cup \Omega_2)}.
    \end{align*}
\end{proof}

With the above auxiliary result on ``interior estimates'' in hand, we now cover $\Omega$ with copies of $\Omega_1 \cup \Omega_2$ to relate the discretized setting to a sharp interface model. This, together with an estimate of the energy contributions which arise at the boundary allows us to provide the proof of \cref{cor:DiscreteToSharp}.

\begin{proof}[Proof of \cref{cor:DiscreteToSharp} in the anisotropic setting]
    Only the proof for the anisotropic setting is given, for the full isotropic surface penalization the arguments are analogous with the corresponding changes for the full derivative.
    By possibly replacing $\Omega$ by $R^T \Omega$, in the following we assume $R = \operatorname{Id}$.
    The general result follows by a rotation as done in the proof of \cref{lem:DiscreteToSharp-aniso}.
    Without loss of generality and for brevity of exposition, we assume that $v=2^{-\frac{1}{2}}(1,1)^T$ and thus $\nu = 2^{-\frac{1}{2}}(-1,1)^T$.
    We work in multiple steps, separating the analysis in the interior of the domain and close to the boundary.

    \emph{Step 1: Estimate on the interior.}
    We cover $\Omega$ by sets of the form $\Omega_1^z \cup \Omega_2^z$ as in \cref{eq:DefLargeTriangles}, i.e.
    \begin{align*}
        \Omega_1^z \cup \Omega_2^z = (T_h \cup T_h' \cup (T_h+h  e_1) \cup (T_h + e_2)) + z,
    \end{align*}
    where we now keep track of the dependence on $z \in h \Z^2$ in the notation.
    We define the discrete set $\mathcal{J}:=\{z\in h \Z^2 : (\Omega_{1}^z \cup
    \Omega_2^z) \subset \Omega\}$ and the set $\Omega_h := \cup_{z \in
      \mathcal{J}} \Omega_1^z \cup \Omega_2^z$.
    The interior set $\Omega_h\subset \Omega$ approximates $\Omega$ from the inside as $h\downarrow 0$ and may leave an $h$-neighbourhood of the boundary $\p \Omega$ not yet covered, which will be addressed in Step 2.

    As $\Omega_1^z \cup \Omega_2^z$ overlap at most six times for $z\in\mathcal{J}$, by an application of \cref{lem:DiscreteToSharp-aniso} we obtain the lower bound
    \begin{align} \label{eq:DiscreteToSharp_inner}
    \begin{split}
        \int_{\Omega} |\nabla u - \chi|^p dx & \geq \frac{1}{6} \sum_{z \in\mathcal{J}} \int_{(\Omega_1^z \cup \Omega_2^z) \cap \Omega} |\nabla u - \chi|^p dx \\
        & \geq \sum_{z \in \mathcal{J}} C' h \Vert D_{\nu} \chi \Vert_{TV(\Omega_1^z \cup \Omega_2^z)} \geq C' h \Vert D_{ \nu} \chi \Vert_{TV(\Omega_h)},
        \end{split}
    \end{align}
    with $C'>0$ depending on $\K$, $v$, and $p$.

    \emph{Step 2: Boundary layer.}
    To estimate the energy contribution of the boundary layer, we introduce the set of boundary triangles
    \begin{align*}
        \mathcal{I} := \{ \tau \in \mathcal{T}_h:
        |\tau \cap \Omega| > 0,  \
        \mathcal{H}^1(\overline{\tau} \cap \p \Omega) > 0\}.
    \end{align*}

    For a constant $\sigma=\sigma(\Omega) \in (0,\frac{1}{2})$ which will be fixed below, we decompose the set of boundary triangles into $\mathcal{I} = \mathcal{I}_{\textup{small}} \cup \mathcal{I}_1 \cup \mathcal{I}_2$, where
    \begin{align*}
        \mathcal{I}_{\textup{small}} &:= \{\tau \in \mathcal{I}: |\tau \cap \Omega| < \frac{1}{2}\sigma h^2\},\\
        \mathcal{I}_1 &:= \{ \tau \in \mathcal{I}\setminus \mathcal{I}_{\textup{small}}:  |\tau \setminus \Omega| > 0\}, \\
         \mathcal{I}_2  &:= \{ \tau \in \mathcal{I} \setminus \mathcal{I}_{\textup{small}}: |\tau \setminus \Omega| = 0\}.
    \end{align*}
    The triangles in $\mathcal{I}_1$ and $\mathcal{I}_2$ have non-degenerate area inside $\Omega$
    and, in particular, those in $\mathcal{I}_2$ have at least one edge that has a common line segment with $\p \Omega$.

    We will show that the energy on this boundary layer produces a contribution of order (at least) $h$.
    To prove this, the case in which the edges of $\Omega$ are aligned with the grid necessitates a careful treatment, since (a priori) in this case $u\in\mathcal{A}^{p,\Id}_{h,F}$ could laminate with the boundary condition without paying any energy.
    We thus split this analysis into two substeps, depending on the geometry of the domain.

    \emph{Step 2.1: Domains aligned with the grid.}
    Consider first the case in which $\Omega$ is a polygon with edges parallel to the directions of the triangulation, i.e.\ such that $n_{\p\Omega}\in\{\pm e_1,\pm e_2, \pm v\}$ $\mathcal{H}^1$-a.e.\ on $\p\Omega$, where $n_{\p\Omega}$ is the outer unit normal vector to $\p\Omega$.
    Notice that there exist $h_0=h_0(\Omega)>0$ and $C_0>0$ a universal constant such that, for every edge $\Gamma$ of $\Omega$ and for every $h<h_0$ it holds
    \begin{equation}\label{eq:C0}
    \#\{\tau\in\mathcal{I} : \bar{\tau}\cap\Gamma\neq\emptyset\} \ge \frac{C_0}{h}\mathcal{H}^1(\Gamma).
    \end{equation}
    We now split our discussion in two cases.

    \begin{figure}
        \centering
        \begin{subfigure}[t]{0.4\textwidth}
        \centering
            \begin{tikzpicture}
                \draw[very thick, Blue] (-0.4,2.2) -- (-0.4,-2.2) node[below] {$\Gamma$};
                \draw[|-|] (-0.4,2.3) -- (0,2.3) node[above,midway] {$\sigma h$};
                \fill[color = Blue, fill opacity = 0.3] (-0.4,-2.2) rectangle (1.5,2.2);

                \foreach \y in {-2,-1,...,2}{
                    \draw (-1,\y) -- (1.5,\y);
                }

                \foreach \x in {-1,0,1}{
                    \draw (\x,-2.2) -- (\x,2.2);
                }

                \draw (0.8,2.2) -- (1.5,1.5);
                \draw (-0.2,2.2) -- (1.5,0.5);
                \draw (-1,2) -- (1.5,-0.5);
                \draw (-1,1) -- (1.5,-1.5);
                \draw (-1,0) -- (1.2,-2.2);
                \draw (-1,-1) -- (0.2,-2.2);
                \draw (-1,-2) -- (-0.8,-2.2);

                \fill[pattern = north east lines, pattern color = Purple, fill opacity = 0.5] (-1,-2.2) rectangle (0,2.2);
            \end{tikzpicture}
            \caption{Case 1: The distance of $\Gamma$ to the (interior) edges of the triangulation is at least $\sigma h$. The boundary triangles (hashed in purple) have uniform area inside $\Omega$.}
            \label{fig:discrete_case1}
        \end{subfigure}%
        ~
        \begin{subfigure}[t]{0.4\textwidth}
        \centering
            \begin{tikzpicture}
                \draw[very thick, Blue] (1.5,1.2) -- (-0.1,1.2) -- (-0.1,-2.2) node[below] {$\Gamma$};
                \fill[color = Blue, fill opacity = 0.3] (-0.1,-2.2) rectangle (1.5,1.2);
                \draw[|-|] (1.6,1) -- (1.6,1.4) node[right, midway] {$\sigma h$};

                \foreach \y in {-2,-1,...,2}{
                    \draw (-1,\y) -- (1.5,\y);
                }

                \foreach \x in {-1,0,1}{
                    \draw (\x,-2.2) -- (\x,2.2);
                }

                \draw (0.8,2.2) -- (1.5,1.5);
                \draw (-0.2,2.2) -- (1.5,0.5);
                \draw (-1,2) -- (1.5,-0.5);
                \draw (-1,1) -- (1.5,-1.5);
                \draw (-1,0) -- (1.2,-2.2);
                \draw (-1,-1) -- (0.2,-2.2);
                \draw (-1,-2) -- (-0.8,-2.2);

                \draw[very thick, color = Orange] (0,-2.2) -- (0,1) -- (1.5,1) node[below right] {$\p \Omega'$};
                \fill[pattern = north east lines, pattern color = Orange, fill opacity = 0.5] (0,-2.2) rectangle (1.5,1);

            \end{tikzpicture}
            \caption{Case 2: The two edges have distance less that $\sigma h$ to the edges. We replace $\Omega$ with $\Omega'$, which is highlighted in orange (hashed).}
            \label{fig:discrete_case2}
        \end{subfigure}
        \caption{Illustration for cases 1, 2 in step 2. The interior of $\Omega$ is highlighted in blue  with its boundary segment $\Gamma$ parallel to the edges of the triangles.}
    \end{figure}
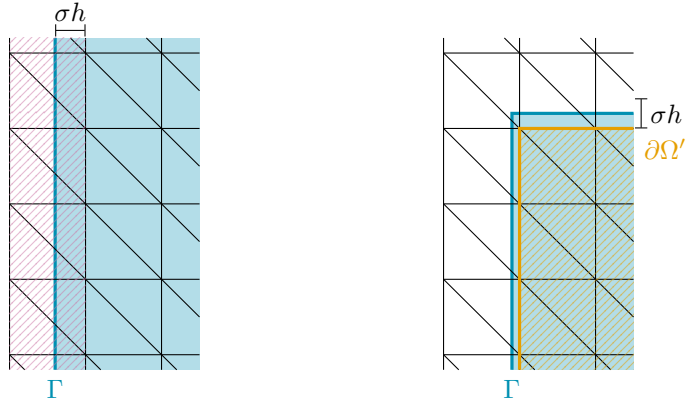

    \textbf{Case 1:} Assume that there exists an edge $\Gamma$ of $\Omega$ whose distance (towards the interior of $\Omega$, i.e.\ oriented in direction $-n_\Gamma$) from the lattice lines orthogonal to $n_\Gamma$ is larger than $\sigma h$, see \cref{fig:discrete_case1}.
    In formulas, we assumed that $d_h(\Gamma)\ge\sigma h$, where
    \begin{equation*}\label{eq:distance-latt}
    d_h(\Gamma):=\min\{t\ge0 : \Gamma-t n_\Gamma\subset\mathbb{L}_h(n_\Gamma)\}, \quad
    \mathbb{L}_h(n_\Gamma):=\begin{cases}
    h\Z\times\R & n_\Gamma=\pm e_1,\\
    \R\times h\Z & n_\Gamma=\pm e_2,\\
    \{(t,z-t):t\in\R,z\in h\Z\} & n_\Gamma=\pm v.
    \end{cases}
    \end{equation*}
    In this case
    \begin{align*}
    \#\{\tau\in\mathcal{I}_1 : \bar{\tau}\cap\Gamma\neq\emptyset\} \ge \frac{1}{2}\#\{\tau\in\mathcal{I} : \bar{\tau}\cap\Gamma\neq\emptyset\}.
    \end{align*}
    Since $u\in\mathcal{A}^{p,\Id}_{h,F}$ and $\nabla u\equiv F$ outside $\Omega$, for every $\tau\in\mathcal{I}_1$ we have that $u(x) = Fx$ on $\tau$ and hence, using $F \notin \K$,
    \begin{align*}
    \int_{\tau\cap\Omega}|\nabla u-\chi|^p dx = \int_{\tau \cap \Omega} |F- \chi|^p dx \geq \sigma C' h^2.
    \end{align*}
    Consequently, by \cref{eq:C0} and the estimates above, we get
    \begin{equation}\label{eq:step3}
        \int_{\Omega} |\nabla u - \chi|^p dx \geq \sum_{\tau \in \mathcal{I}_1} \int_{\tau \cap \Omega} |\nabla u - \chi|^p dx \geq \sum_{\tau \in \mathcal{I}_1} \sigma C' h^2 \geq \frac{1}{2}C' C_0 \sigma h \mathcal{H}^1(\Gamma).
    \end{equation}

    \textbf{Case 2:} If case 1 does not take place, the edges of $\Omega$ have distance (towards the interior) from the lattice lines (parallel to that edge) smaller than $\sigma h$, i.e.\ $d_h(\Gamma)<\sigma h$.
    If there is one edge $\Gamma$ for which this distance is not zero, we define $\Omega'\subset\Omega$ as the polygon obtained by replacing $\Gamma$ with the closest lattice line orthogonal to $n_\Gamma$.
    Notice that, $u(x) = Fx$ also outside of $\Omega'$.
    Up to repeating this procedure a finite number of times, we obtain that all the edges of $\Omega'$ lie on lattice lines, depicted in \cref{fig:discrete_case2}.
    With a slight abuse of notation, we still denote $\Omega'$ as $\Omega$, in particular we have that $\mathcal{I}=\mathcal{I}_2$.

    In this case, due to the anisotropy, there might be an energy-free phase transition between $F$ and $\K$ on the boundary triangles.
    We separate the cases in which this phase transition does not happen (case 2(a)) with the one in which the transition takes place, case 2(b).
    In case 2(b) we need global arguments to prove that the lower bound is non-degenerate.

    For $w \in \{-e_1,-e_2,v\}$ we introduce the set
    \begin{align*}
        \mathcal{I}_2^w := \{ \tau \in \mathcal{I}_2: E_w(\tau) \subset \p \Omega\},
    \end{align*}
    where by $E_w(\tau)$ we denote the edge of $\tau$ with normal $w$.
    These are the sets of lattice triangle touching the edges of $\Omega$ that are orthogonal to $w$.

    \textbf{Case 2(a):}
    First let us assume that there is one edge $\Gamma$ of $\Omega$ with normal $w \in \{-e_1,-e_2,v\}$ such that in the majority of triangles at this edge $\chi$ is in an incompatible well to the boundary condition $F$, i.e.
    \begin{align*}
        \# \{\tau \in \mathcal{I}_2^w: \overline{\tau} \cap \Gamma \neq \emptyset, w \times[\chi - F] \neq 0 \text{ on } \tau\} \geq \frac{C_0}{2h} \mathcal{H}^1(\Gamma).
    \end{align*}
    For any such triangle $\tau$ it then holds
    \begin{align*}
        \int_{\tau \cap \Omega} |\nabla u - \chi|^p dx \geq \int_\tau |w \times (F-\chi)|^p dx \geq C' h^2.
    \end{align*}
    Here we have used that, as there is a line segment in $\p \tau \cap \Gamma$ perpendicular to $w$, by the boundary condition and the fact that $\nabla u$ is constant on $\tau$, we have
    \begin{align*}
        w \times \nabla u = w \times F.
    \end{align*}
    This yields that
    \begin{align} \label{eq:step2a}
        \int_\Omega |\nabla u- \chi|^p dx \geq \frac{1}{2}C' C_0 h \mathcal{H}^1(\Gamma).
    \end{align}
    This case can either happen, if all the wells are incompatible to $F$ in direction $w$, or if $\chi$ is ``badly'' chosen.

    \textbf{Case 2(b):} We are then left to consider the case in which for every edge of $\Omega$ (orthogonal to $w$) there exists (at least) one $A_w$ compatible to $F$ in direction $w$, i.e. $w \times (A_w-F) = 0$, and such that $\chi=A_w$ in the majority of the boundary triangles.
    Hence, there exist at least two (different) vectors $w_1,w_2\in\{-e_1,-e_2,v\}$, two edges $\Gamma_1,\Gamma_2\subset\p\Omega$ (orthogonal to $w_1$ and $w_2$ respectively) and two matrices (up to relabelling) $A_1,A_2\in\K$ such that $w_1\times(F-A_1)=0$, $w_2\times(F-A_2)=0$ and
    \begin{align*}
    &\#\{\tau \in \mathcal{I}_{2}^{w_1} : \bar\tau\cap\Gamma_1\neq\emptyset, \chi=A_{1} \text{ on } \tau\} \ge \frac{C_0}{2hN}\mathcal{H}^1(\Gamma_1), \\
    &\#\{\tau \in \mathcal{I}_{2}^{w_2} : \bar\tau\cap\Gamma_2\neq\emptyset, \chi=A_{2} \text{ on } \tau\} \ge \frac{C_0}{2hN}\mathcal{H}^1(\Gamma_2).
    \end{align*}
    From the fact that $F\notin\K$ we also infer that $A_1\neq A_2$. The factor $\frac{1}{N}$ here accounts for the fact that there might be multiple possible choices of $A_1, A_2$.

    As the edges of $\Omega$ lie on the lattice lines, we can cover the whole $\Omega$ by copies of $\Omega_1^z \cup \Omega_2^z$ and its flipped version $(\Omega_1')^z \cup (\Omega_2')^z$.
    Hence, by reasoning as in \cref{eq:DiscreteToSharp_inner} of Step 1, we have
    \begin{align*}
        \int_\Omega |\nabla u - \chi|^p dx \geq C' h \Vert D_{\nu} \chi \Vert_{TV(\Omega)}.
    \end{align*}
    We now prove that, either there exists a non-degenerate interface oriented in a direction not orthogonal to $v$ or we have a non-vanishing elastic energy contribution in the bulk.

    As $w_1 \neq w_2$, we can assume, without loss of generality, that $w_2=-e_2$.
    Let
    \begin{align*}
        E:=\bigcup_{\tau\in \mathcal{T}_{h} :\ \chi\equiv A_2 \text{ on } \tau} \tau
    \end{align*}
    and let $E'$ be a connected component of $E$ whose closure intersects $\Gamma_2$ and let $\ell:=\mathcal{H}^1(\p E'\cap\Gamma_2)$.
    For simplicity, assume also
    $\p E'\cap\Gamma_2=(0,\ell)\times\{0\}$.

    Denote $\gamma=\p E'\setminus\Gamma_2$, which is a connected piecewise affine path.
    We decompose it into the part inside $\Omega$ and the part on its boundary, that is, we write $\gamma=\gamma_{\rm in}\cup\gamma_{\rm b}:=(\gamma\setminus\p\Omega)\cup(\gamma\cap\p\Omega)$.
    Consider the stripe
    \begin{align*}
    S:=\Big\{(x_1,x_2)\in\Omega : x_2>0, \frac{\ell}{4}<x_1+x_2<\frac{3\ell}{4}\Big\},
    \end{align*}
    and let $f(x_1)$ be the length of the segment in direction $\nu$ originating
    from $(x_1,0)$ and ending on $\p\Omega$ (but not necessarily in $E'$).
    Here, we recall that $v = 2^{-1/2} (1, 1)^T$ and $\nu \cdot v = 0$.
    Notice that, as $\Omega$ is a polygonal domain and $S\cap\Gamma_2$
        is well-contained in $\Gamma_2$, $f(x_1)$ is bounded from below, hence up to reducing the
    value of $\sigma$, $\sigma< f(x_1)< \diam(\Omega)$ for every
    $x_1\in(\ell/4,3\ell/4)$.
    By connectedness of $\gamma$, we notice that
    \begin{align*}
    \int_{\gamma\cap \bar S}|\nu\cdot n_{\p E'}|d\mathcal{H}^1=\int_{\gamma_{\rm in}\cap S}|\nu\cdot n_{\p E'}|d\mathcal{H}^1+\int_{\gamma_{\rm b}\cap \bar S}|\nu\cdot n_{\p E'}|d\mathcal{H}^1\ge\frac{\ell}{2 \sqrt{2}}.
    \end{align*}
    Indeed, as $E'$ is connected $\gamma$ is a continuous path from $(0,0)$ to $(\ell,0)$ hence intersects $\p S$ both in $\{(\ell/4,0) + t \nu: t \in \R\}$ and $\{(3\ell/4,0) + t \nu: t \in \R\}$.
    By the choice of $S$, that is, $\p S \cap \Omega$ has normal $v$, and the
    fact that the segment $\gamma$ of $\p E'$ consists of line segments
    with possible directions $e_1,e_2,$ and $\nu$, $\gamma\cap S$ has to
    move a distance larger than $\ell/2$ in the directions $e_1$
    or $e_2$.

    \begin{figure}
        \centering
        \begin{subfigure}[t]{0.4\textwidth}
            \centering
            \begin{tikzpicture}
                \fill[color = Blue, fill opacity = 0.3] (0,0) -- (0,3) -- (1,2) -- (5.5,2) -- (5.5,0) -- cycle;

                \draw[->] (5,1) -- (5.3,1.3) node [above] {$v$};
                \draw[->] (5,1) -- (4.7,1.3) node [above] {$\nu$};

                \draw (2,0) -- (0,2);
                \draw (4,0) -- (2,2);
                \node at (3,0) [above] {$S$};

                \draw[thick, Purple] (1,0) -- (1,1.2) -- (2,1.2) -- (1.2,2);
                \draw[thick, Purple] (5,0) -- (5,0.5) -- (4.1,0.5) -- (3.5,1.1) -- (3,1.1) -- (2.1,2);
                \draw[very thick, Red, opacity = 0.8] (1,1) -- (1,1.2) -- (2,1.2);
                \node at (1,0) [above right, Purple] {$E'$};
                \draw[|-|] (1,-0.2) -- (5,-0.2) node[below, midway] {$\ell$};

                \draw[very thick, color = Blue] (5.5,2) -- (1,2) -- (0,3) -- (0,0) -- (5.5,0) node[right] {$\p \Omega$};
            \end{tikzpicture}
        \caption{The case $\int_{\gamma_{\rm in} \cap S} |\nu \cdot n_{\p E'}| d \mathcal{H}^1 \geq \ell/(4\sqrt{2})$. The line segment relevant for the energy contribution in \cref{eq:anis-lb-h} is highlighted in orange.}
        \label{fig:cases_levelset_1}
        \end{subfigure}%
        ~
        \begin{subfigure}[t]{0.4\textwidth}
            \centering
            \begin{tikzpicture}
                \fill[color = Blue, fill opacity = 0.3] (0,0) -- (0,3) -- (1,2) -- (5.5,2) -- (5.5,0) -- cycle;

                \draw[->] (5,1) -- (5.3,1.3) node [above] {$v$};
                \draw[->] (5,1) -- (4.7,1.3) node [above] {$\nu$};

                \draw (2,0) -- (0,2);
                \draw (4,0) -- (2,2);
                \node at (3,0) [above] {$S$};
                \draw[dashed, gray] (3.7,0) -- (1.7,2);

                \draw[thick, Purple] (1,0) -- (1,0.6) -- (0.5,1.1) -- (0.5,1.3) -- (0,1.8);
                \draw[thick, Purple] (5,0) -- (4,1) -- (4,1.5) -- (3.5,1.5) -- (3,2);
                \draw[thick, Purple] (0,2.8) -- (1,1.8) -- (1.3,1.8) -- (1.1,2);
                \node at (1,0) [above right, Purple] {$E'$};
                \draw[|-|] (1,-0.2) -- (5,-0.2) node[below, midway] {$\ell$};

                \draw[very thick, color = Blue] (5.5,2) -- (1,2) -- (0,3) -- (0,0) -- (5.5,0) node[right] {$\p \Omega$};

                \draw[very thick, color = Red] (2,0) -- (2.8,0) node[midway, above] {$I$};
                \draw[very thick, color = Red] (3.1,0) -- (4,0);
            \end{tikzpicture}
            \caption{The case $\int_{\gamma_{\rm in} \cap S} |\nu \cdot n_{\p E'}| d \mathcal{H}^1 < \ell/4$. The line segment of length $f(x_1)$ in $S$ with direction $\nu$ is shown as the dashed line. The set $I$ for which we consider these line segments is highlighted in orange.}
        \label{fig:cases_levelset_2}
        \end{subfigure}
        \caption{The two cases for the sets $S$ and $E'$ in case 2(b).}
        \label{fig:cases_levelset}
    \end{figure}
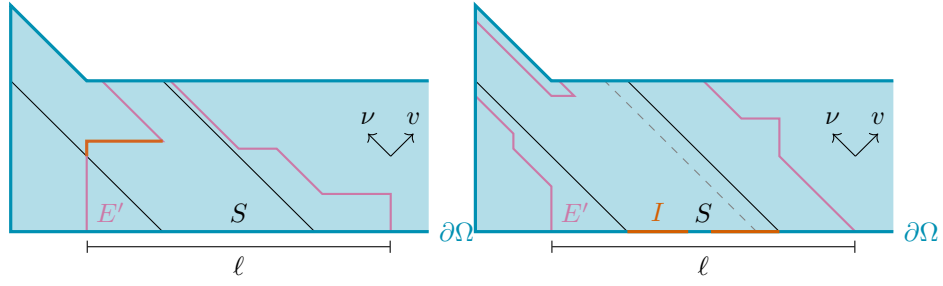

    Since we have a control only of the interfaces inside $\Omega$, if
    \begin{align*}
    \int_{\gamma_{\rm in}\cap S}|\nu\cdot n_{\p E'}|d\mathcal{H}^1\ge\frac{\ell}{4 \sqrt{2}}
    \end{align*}
    then we infer that
    \begin{equation}\label{eq:anis-lb-h}
        \|D_{\nu}\chi\|_{TV(\Omega)}\geq C \int_{\gamma_{\rm in} \cap S} |\nu \cdot n_{\p E'}| d \mathcal{H}^2 \geq C_1\ell,
    \end{equation}
    with $C_1>0$ depending on $\K$.

    Consider now the opposite case, i.e.\ that
    \begin{align*}
    \int_{\gamma_{\rm in}\cap S}|\nu\cdot n_{\p E'}|d\mathcal{H}^1\le\frac{\ell}{4 \sqrt{2}} \leq \frac{\ell}{4}.
    \end{align*}
    In this case, loosely speaking, we can prove that there is a union of stripes (contained in $S$ and oriented in its same direction) fully contained in $E'$, in which we can perform an integration argument which provides a lower bound (see below).
    To be precise, let $I:=\{x_1\in(\ell/4,3\ell/4):((x_1,0)+\R\nu)\cap\gamma_{\rm in} = \emptyset\}$ be the set of starting points on $\p S \cap \Gamma_2$ such that the ray in direction $\nu$ does not intersect $\gamma_{\rm in}$.
    Then, with a slicing argument
    \begin{align*}
        \frac{\ell}{4} &\ge \int_{\gamma_{\rm in}\cap S}|\nu \cdot n_{\p E'}|d\mathcal{H}^1 \\
        &= \int_\frac{\ell}{4}^\frac{3\ell}{4}\mathcal{H}^0(\{x_2\in(0,f(x_1)):(x_1-x_2,x_2)\in\gamma_{\rm in}, n_{\p E'}(x_1-x_2,x_2)\neq\nu\})dx_1 \ge \frac{\ell}{2}-|I|.
    \end{align*}
    Hence, after a suitable change of variables and an application of Jensen's inequality and the fundamental theorem of calculus, we get
    \begin{align*}
    \int_{E'}|\nabla u-\chi|^pdx & \ge \int_{I}\int_0^{f(x_1)}|\p_{\nu} \tilde{u}(x_1-x_2,x_2)-A_2 \nu|^p dx_2 dx_1 \\
    &\ge \sigma\int_{I}\Big|\frac{1}{f(x_1)}\int_0^{f(x_1)}(\p_{\nu} u(x_1-x_2,x_2)-A_2 \nu) dx_2\Big|^p dx_1 \\
    &= \sigma\int_{I}\Big|\frac{u(x_1-f(x_1),f(x_1))-u(x_1,0)}{\sqrt{2}\ f(x_1)}-A_2\nu\Big|^p dx_1.
    \end{align*}
    Since, by the boundary conditions, $u(x_1-f(x_1),f(x_1))=F(x_1-f(x_1),f(x_1))^T$, $u(x_1,0)=F(x_1,0)^T$, and $F-A_2=a\otimes e_2$ for some $a\in\R^2\setminus\{0\}$ we infer that
    \begin{equation}\label{eq:anis-lb-h2}
    \int_{E'}|\nabla u-\chi|^pdx \ge \sigma\frac{\ell}{4} |(a\otimes e_2) \nu|^p\ge \sigma C_1\ell,
    \end{equation}
    up to reducing the value of $C_1$ if needed.
    Gathering \cref{eq:anis-lb-h,eq:anis-lb-h2} and repeating it for all the connected components of $E$ we infer that also in this case
    \begin{equation}\label{eq:step3c}
    \int_\Omega|\nabla u-\chi|^pdx \ge C_1\min\{h,\sigma\}\frac{C_0}{2}\mathcal{H}^1(\Gamma_2).
    \end{equation}

    \emph{Step 2.2: Domains not aligned with the grid.}
    Consider now a general polygonal domain not admissible in Step 2.1, namely a polygon with an edge $\Gamma$ such that
    \begin{equation}\label{eq:misalign}
    \min\big\{|n_{\Gamma}-w| : w\in\{\pm e_1,\pm e_2, \pm v\}\big\}\ge\sigma_0,
    \end{equation}
    for some $\sigma_0 \in (0,\frac{1}{2})$ depending on the domain, see \cref{fig:misalign_boundary}.

    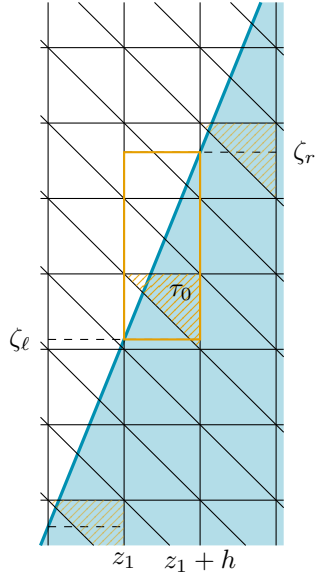
\begin{figure}
        \centering
        \begin{tikzpicture}
            \draw[very thick, Blue] (-0.1,-0.6) -- (2.8,6.6);
            \fill[Blue, opacity = 0.3] (-0.1,-0.6) -- (2.8,6.6) -- (3.1,6.6) -- (3.1,-0.6) -- cycle;

            \foreach \y in {0,1,2,...,6}{
                    \draw (-0.1,\y) -- (3.1,\y);
                }

            \foreach \x in {0,1,2,3}{
                    \draw (\x,-0.6) -- (\x,6.6);
                }
            \node at (1,-0.8) {$z_1$};
            \node at (2,-0.8) {$z_1 + h$};

            \draw (2.4,6.6) -- (3.1,5.9);
            \draw (1.4,6.6) -- (3.1,4.9);
            \draw (0.4,6.6) -- (3.1,3.9);
            \foreach \y in {3,4,5,6}{
                \draw (-0.1,\y+0.1) -- (3.1,\y-3.1);
            }
            \draw (-0.1,2.1) -- (2.6,-0.6);
            \draw (-0.1,1.1) -- (1.6,-0.6);
            \draw (-0.1,0.1) -- (0.6,-0.6);

            \draw[dashed] (0,-0.6+7.2/29) -- (1,-0.6+7.2/29);
            \draw[dashed] (0,-0.6+79.2/29) -- (2,-0.6+79.2/29);
            \draw[dashed] (1,-0.6+151.2/29) -- (3,-0.6+151.2/29);

            \draw[thick, Orange] (1,-0.6+79.2/29) rectangle (2,-0.6+151.2/29);
            \node [left] at (-0.1,-0.6+79.2/29) {$\zeta_\ell$};
            \node [right] at (3.1,-0.6+151.2/29) {$\zeta_r$};

            \fill[pattern = north east lines, pattern color = Orange, fill opacity = 0.9] (2,2) -- (2,3) -- (1,3) -- cycle;
            \fill[pattern = north east lines, pattern color = Orange, fill opacity = 0.5] (0,0) -- (1,0) -- (1,-0.6) -- (0.6,-0.6) -- cycle;
            \fill[pattern = north east lines, pattern color = Orange, fill opacity = 0.5] (2,5) -- (3,5) -- (3,4) -- cycle;

            \node[] at (1.75,2.75) {$\tau_0$};
        \end{tikzpicture}
        \caption{In the case of an edge $\Gamma$ misaligned with the triangulations, there are always ``many'' triangles with a majority of the area inside $\Omega$. The rectangle $E$ is highlighted in orange, with the part inside $\Omega$ in blue. The triangle $\tau_0 \in \mathcal{I}_\Gamma \cap \mathcal{I}_1$ is hashed in orange. For the adjacent vertical segments the corresponding triangles in $\mathcal{I}_\Gamma \cap \mathcal{I}_1$ are also hashed in orange.}
        \label{fig:misalign_boundary}
    \end{figure}
    Due to the misalignment with the grid there are no boundary triangles around $\Gamma$ in the set $\mathcal{I}_2$, namely, denoting $\mathcal{I}_\Gamma:=\{\tau\in\mathcal{I} : \bar\tau\cap\Gamma\neq\emptyset\}$, we have
    \begin{align*}
        \mathcal{I}_\Gamma=(\mathcal{I}_\Gamma\cap\mathcal{I}_1)\cup(\mathcal{I}_\Gamma\cap\mathcal{I}_{\rm small}).
    \end{align*}
    We now show that, for every $h<h_0$, $\#(\mathcal{I}_\Gamma\cap\mathcal{I}_1)\ge C_0 h^{-1}\mathcal{H}^1(\Gamma)$, which then leads to the desired lower bound by arguing as in Case 1 of Step 3.
    By \cref{eq:misalign}, consider the case $0<|(n_\Gamma)_2|\le|(n_\Gamma)_1|\leq 1-\frac{\sigma_0^2}{2}$ (the case with switched coordinates works analogously).
    Loosely speaking, $\Gamma$ has ``slope'' larger then 1 but is not vertical.
    Hence, for $h$ sufficiently small, it intersects $h^{-1}$ many vertical lattice lines, and (as proved in the following) for each line intersected there is a non-degenerate boundary triangle.

    Indeed, let $z_1\in h\Z$ such that $\Gamma\cap(\{z_1\}\times\R),\Gamma\cap(\{z_1+h\}\times\R)\neq\emptyset$ and consider the rectangle $E$ with vertices $\{(z_1,\zeta_\ell)\}:=\Gamma\cap(\{z_1\}\times\R)$ and $\{(z_1,\zeta_r)\}:=\Gamma\cap(\{z_1+h\}\times\R)$.
    Note that, as $|\zeta_\ell-\zeta_r|=h\big|\frac{(n_\Gamma)_1}{(n_\Gamma)_2}\big|$, there are at most $2\big\lfloor\big|\frac{(n_\Gamma)_1}{(n_\Gamma)_2}\big|\big\rfloor+4$ triangles intersecting $E$ which all belong to $\mathcal{I}_\Gamma$.
    Now we have
    \begin{align*}
    \Big|\frac{(n_\Gamma)_1}{(n_\Gamma)_2}\Big|\frac{h^2}{2}=|E\cap\Omega|=\sum_{\substack{\tau\in\mathcal{I}_\Gamma \\ \tau\cap E\neq\emptyset}}|\tau\cap\Omega| \le |\tau_0\cap\Omega|\Big(2\Big\lfloor\Big|\frac{(n_\Gamma)_1}{(n_\Gamma)_2}\Big|\Big\rfloor+4\Big)
    \end{align*}
    where $\tau_0\in\mathcal{I}_\Gamma$ is such that $|\tau_0\cap\Omega|\ge|\tau\cap\Omega|$ for every $\tau\in\mathcal{I}_\Gamma$ with $\tau\cap E\neq\emptyset$.
    Since $|(n_\Gamma)_2|\le|(n_\Gamma)_1|$, the previous inequality yields that $|\tau_0\cap \Omega|\geq\frac{h^2}{16}$.
    By $|(n_\Gamma)_1|\le 1-\frac{\sigma_0^2}{2}$, $\Gamma$ intersect $\cup_{z \in h \Z} \{z\} \times \R$ at least
    \begin{align*}
    \big\lfloor h^{-1}|(n_\Gamma)_2|\mathcal{H}^1(\Gamma)\big\rfloor\ge\Big\lfloor \frac{\sigma_0^2}{2h}\mathcal{H}^1(\Gamma)\Big\rfloor
    \end{align*}
    many times.
    Thus, up to reducing the value of $\sigma$ we have $\tau_0 \in \mathcal{I}_\Gamma \cap \mathcal{I}_1$, and repeating the argument above, we find $\frac{\sigma^2}{4}h^{-1}\mathcal{H}^1(\Gamma)$ many triangles in $\mathcal{I}_\Gamma\cap\mathcal{I}_1$.
    Eventually, arguing as in \cref{eq:step3} we infer
    \begin{equation}\label{eq:step4}
    \int_\Omega|\nabla u-\chi|^p dx\ge C' C_0 \sigma h \mathcal{H}^1(\Gamma),
    \end{equation}
    here we possibly reduced the value of $C_0$ (e.g.\ by multiplying it with a universal constant) if needed.

    \emph{Step 3: Conclusion.}
    Gathering \cref{eq:DiscreteToSharp_inner,eq:step3,eq:step2a,eq:step3c,eq:step4} we get
    \begin{align}\label{eq:final-discrete}
        \int_\Omega |\nabla u - \chi|^p dx & \geq \frac{1}{3} \int_\Omega |\nabla u - \chi|^p dx + \frac{1}{3} C h \Vert D_{R\nu} \chi \Vert_{TV(\Omega_h)} + \frac{1}{3} \sigma C'' h
    \end{align}
    for suitable constants $C'',C>0$ depending on $\Omega$, $R$, $p$, $v$, $F$, and $\K$.

    Furthermore, since
    \begin{align*}
        \Vert D_{R\nu} \chi \Vert_{TV(\Omega \setminus \Omega_h)} \leq C(\K,d,\Omega) h^{-1} \Per(\tau) \leq C(\K,d,\Omega),
    \end{align*}
    we get that
    \begin{align*}
    \|D_{R\nu}\chi\|_{TV(\Omega)}+1\le C(\|D_{R\nu}\chi\|_{TV(\Omega_h)}+1)
    \end{align*}
    which then gives the result together with \cref{eq:final-discrete}.
\end{proof}

With this we can deduce lower scaling bounds for a discrete model by using the corresponding lower bounds for the sharp interface model, provided we do not argue on scales smaller than $h$. We will comment on this in more detail below.

\begin{rmk}
    The proof of \cref{cor:DiscreteToSharp} in the isotropic setting follows the same idea as for the anisotropic setting, hence we omit it at this point.
\end{rmk}

We expect the same result to hold in higher dimensions, where the triangulation has to be replaced by the corresponding higher-dimensional generalization.

\subsection{Applications}

In this last section we apply the result of \cref{cor:DiscreteToSharp} to derive scaling laws for the discretized Lorent and Tartar settings.

\subsubsection{The discrete Lorent three-well problem}

Consider $\K_3 \subset \R^{2 \times 2}$ given in \cref{eq:Lorent_wells}. For the (anisotropic, continuous) sharp interface model the scaling is given by \cref{thm:L1_3wells}. By \cref{cor:DiscreteToSharp} we expect the scaling law to carry over to the discretized model as soon as the rank-one connection in $\K_3$ is misaligned with the triangulation $\mathcal{T}_h^R$.
Building on these observations, we present the proof of \cref{cor:Lorent_discrte}.

\begin{proof}[Proof of the lower bounds in \cref{cor:Lorent_discrte}]
We recall that the only rank-one connection within the wells forming the set $\K_3$ is between $A_1$ and $A_2$ and is given by the vector $e_1$.

    In \cref{itm:Lorent_discrete_i} we have, $e_1 \not \in \{\pm R e_1, \pm R e_2, \pm 2^{-1/2} R(e_1+e_2)\}$.
    Hence, since $e_1$ is the only available rank-one direction,
    \begin{align*}
        (Re_1) \times [A_j-A_k] \neq 0, \ (Re_2) \times [A_j - A_k] \neq 0, \ (R \begin{pmatrix} 1 \\ 1 \end{pmatrix}) \times [A_j-A_k] \neq 0,
    \end{align*}
    for all $A_j, A_k \in \K_3$, $j \neq k$.
    As a consequence, by \cref{cor:DiscreteToSharp} we have
    \begin{align*}
        \int_\Omega |\nabla u - \chi|^2 dx \geq C \left( \int_\Omega |\nabla u - \chi|^2 dx + h \Vert D \chi \Vert_{TV(\Omega)} \right).
    \end{align*}
    The statement then follows by an application of \cref{thm:L1_3wells}. 

    In the second case, that is in the case that the compatible direction is in one of the ``bad'' directions, we use the anisotropic version of \cref{cor:DiscreteToSharp}. Thus, there is $v \in \{e_1, e_2, 2^{-1/2}(e_1 + e_2)\}$ such that $Rv = \pm e_1$ and, hence, $Rv \times [A_1 - A_2] = 0$.
    By \cref{cor:DiscreteToSharp}, we get with $\nu \in \S^1, \nu \cdot v = 0$
    \begin{align*}
        \int_\Omega |\nabla u - \chi|^2 dx \geq C \left( \int_\Omega |\nabla u - \chi|^2 dx + h \Vert D_{R \nu} \chi \Vert_{TV(\Omega)}+h \right).
    \end{align*}
    As $\nu \cdot v = R\nu \cdot Rv = \pm R\nu \cdot e_1 = 0$ it holds, without loss of generality, that $R \nu = e_2$. As a consequence, we infer that
    \begin{align*}
        \int_\Omega |\nabla u - \chi|^2 dx \geq C \left( \int_\Omega |\nabla u - \chi|^2 dx + h \Vert D_{e_2} \chi \Vert_{TV(\Omega)}+h \right).
    \end{align*}
    The desired lower bound then follows by \cref{thm:L1_3wells}.
\end{proof}

We can use our upper bound constructions from \cref{sec:L1AnisotropicSurf} and interpolate them on the triangulation to get matching upper bounds.

\begin{proof}[Proof of upper bounds in \cref{cor:Lorent_discrte}]
    We follow the arguments of \cite{L09,C99,CM99}.
    Let us start by considering the isotropic setting.

    Taking the corresponding upper bound construction $u, \chi$ from \cref{thm:L1_3wells} with $\epsilon=h$,
    we define $u_h \in \mathcal{A}_{h,F}^{2,R}$ as the piecewise affine approximation of $u$, preserving the boundary condition.
    For this it holds
    \begin{align*}
        \int_\Omega \dist^2(\nabla u_h,\K_3) dx \leq \int_\Omega |\nabla u_h - \nabla u|^2 dx + E_{el}(u,\chi).
    \end{align*}
    As $\nabla u$ is bounded for each of our constructions (see \cref{eq:PropertiesBranching}), we get
    \begin{align*}
        \int_\Omega \dist^2(\nabla u_h,\K_3) dx \leq C(\K_3) |\{u_h \neq u\}| + E_{el}(u,\chi).
    \end{align*}
    We now seek to bound the volume $|\{u_h \neq u\}|$.
    For this we note that, firstly, $u$ is piecewise affine in our constructions (but not on the triangulation), and, secondly, that when we are at least at distance $2h$ from the jump set of $\nabla u$ and $\p \Omega$, then it holds $u_h = u$.
    By this we conclude
    \begin{align*}
        |\{u_h \neq u\}| \leq C h ( \Vert D(\nabla u) \Vert_{TV(\Omega)} +\Per(\Omega) ).
    \end{align*}
    Moreover, by construction of the continuous upper bound deformation we also have, see \cref{eq:PropertiesBranching} and the proof of \cref{thm:L1_3wells},
    \begin{align*}
        \Vert D(\nabla u) \Vert_{TV(\Omega)} \leq C ( \Vert D \chi \Vert_{TV(\Omega)} + \Per(\Omega)).
    \end{align*}

    In conclusion this yields
    \begin{align*}
        \int_\Omega \dist^2(\nabla u_h,\K_3) dx \leq C(E_{el}(u,\chi) + h \Vert D \chi\Vert_{TV(\Omega)} + h \Per(\Omega)).
    \end{align*}
    Thus, after choosing $\chi_h$ pointwise as the projection of $\nabla u_h$ onto $\K_3$ (with an arbitrary choice where the projection is not well-defined) the upper bound follows also in the discrete set-up.

    Compared to the above isotropic argument, additional care is needed for the anisotropic setting.
    If $F \in \K_3^1$ in the anisotropic case, the oscillation of the upper bound construction is finer than $h$. Thus, an unmodified variant of the above argument would yield the trivial bound
    \begin{align*}
        \int_\Omega \dist^2(\nabla u_h,\K_3) dx \leq C |\Omega|.
    \end{align*}
    To achieve the desired upper bound which displays an $h$ scaling behaviour, we consider a simple laminate of $A_1$ and $A_2$ on a scale $h$ with a suitable cut-off. Then, in the bulk of $\Omega$ we have $\nabla u_h \in \K_3$ due to the compatibility and as we can laminate exactly on the boundaries of triangles.
    Therefore, the energy concentrates in a boundary region, and we obtain
    \begin{align*}
        \int_\Omega \dist^2(\nabla u_h,\chi) dx \leq C |\{\dist(x,\p\Omega) \leq 2 h\}| \leq C h.
    \end{align*}
    Again choosing $\chi_h$ as the pointwise projection of $\nabla u_h$ onto $\K_3$ yields the desired result.

    For $F \in \K_3^2$ in the anisotropic case, we remark that the rescaled unit cells always have lengths and heights such that $\ell_j, h_j \gg \epsilon$. In this case, we can therefore directly translate the continuous upper bound construction (laminate within branching) by discretization. More precisely, we fix the laminate on the scale $\epsilon = h$ and apply analogous arguments as in the isotropic setting above which allow us to transfer the bound from the continuum to the discrete framework.
\end{proof}

\subsubsection{The discrete Tartar square}
Last but not least, we turn to the proof of the bounds for the Tartar square.

\begin{proof}[Proof of \cref{cor:Tartar_discrete}]
Recalling that there are no rank-one connections in the Tartar square, the proof follows from applying the isotropic version of \cref{cor:DiscreteToSharp} and invoking the lower scaling result from \cite{RT22}.
\end{proof}

The almost matching upper bound can be found in \cite{C99}.

\subsection*{Acknowledgements}
A.R. and C.T. gratefully acknowledge support by the Deutsche Forschungsgemeinschaft (DFG, German Research Foundation)
under Germany’s Excellence Strategy -- EXC-2047/1.
A.R. and A.T. gratefully acknowledge support by the Deutsche Forschungsgemeinschaft (DFG, German Research Foundation) through SPP 2256, project ID 441068247.
C.Z. gratefully acknowledges support by the Deutsche Forschungsgemeinschaft (DFG, German Research Foundation) -- Project-ID 258734477-SFB 1173.

\bibliographystyle{alpha}
\bibliography{citations1}

\end{document}